\documentclass[11pt]{article}
\usepackage[margin=1.1in]{geometry}
\usepackage{amsmath,amssymb,amsthm,mathtools}
\usepackage{enumitem}
\usepackage{xcolor}
\usepackage[colorlinks=true,linkcolor=blue,citecolor=blue]{hyperref}
\usepackage{graphicx}
\usepackage{bbm}
\usepackage{extarrows}
\usepackage{tikz}
\usetikzlibrary{positioning,calc,decorations.pathmorphing}

\newtheorem{theorem}{Theorem}[section]
\newtheorem{proposition}[theorem]{Proposition}
\newtheorem{lemma}[theorem]{Lemma}
\newtheorem{corollary}[theorem]{Corollary}
\theoremstyle{definition}

\newtheorem{namedassumptioninner}{Assumption}

\theoremstyle{remark}
\newtheorem{remark}[theorem]{Remark}

\newcommand{\R}{\mathbb{R}}
\newcommand{\Z}{\mathbb{Z}}
\newcommand{\E}{\mathbb{E}}
\newcommand{\bP}{\mathbb{P}}
\newcommand{\bN}{{\mathbb{N}_0}}                 
\newcommand{\cM}{{\mathcal{M}_F(\R^d)}}          
\newcommand{\cMM}{{\mathcal{M}_F(\mathcal{M}_F(\R^d))}}
\newcommand{\Cc}{C_c^+(\R^d)}
\newcommand{\Ccg}{C_c(\R^d)}                   
\newcommand{\T}{\mathbb{T}}                    
\newcommand{\supp}{\operatorname{supp}}
\newcommand{\dist}{\operatorname{dist}}
\newcommand{\Ran}{\mathcal{R}}                 
\newcommand{\eps}{\varepsilon}
\newcommand{\nuh}{\hat{\nu}}
\newcommand{\Nh}{\hat{\mathbb{N}}_0}
\newcommand{\fl}[1]{\lfloor #1\rfloor}

\title{Range convergence and sharp one-arm asymptotics\\for the critical percolation cluster in high dimensions}
\author{%
  Manuel Cabezas\thanks{Facultad de Matem\'aticas, Pontificia Universidad Cat\'olica de Chile, Santiago, Chile.\ \texttt{mncabeza@uc.cl}}
  \and David A.\ Croydon\thanks{Research Institute for Mathematical Sciences, Kyoto University, Kyoto, Japan.\ \texttt{croydon@kurims.kyoto-u.ac.jp}}
  \and Alexander Fribergh\thanks{{D\'epartement de math\'ematiques et de statistique, Universit\'e de Montr\'eal, Montr\'eal, Canada}.\ \texttt{alexander.fribergh@umontreal.ca}}
  \and Noe Kawamoto\thanks{Department of Mathematics, Graduate School of Science, Kyoto University, Kyoto, Japan.\ \texttt{kawamoto.noe.4u@kyoto-u.ac.jp}}
}
\date{\today}

\numberwithin{equation}{section}

\definecolor{OldOrange}{rgb}{0.85,0.45,0}

\definecolor{ReviewPink}{rgb}{0.90,0.10,0.55}

\theoremstyle{plain}
\newtheorem{thm}[theorem]{Theorem}
\newtheorem{lem}[theorem]{Lemma}

\theoremstyle{definition}
\newtheorem{defn}[theorem]{Definition}
\theoremstyle{remark}

\theoremstyle{plain}
\newcommand{\ind}[1]{\mathbbm{1}\big\{#1\big\}}
\newcommand{\tnorm}[1]{\vert\!\vert\!\vert #1 \vert\!\vert\!\vert}

\begin{document}
\maketitle

\begin{abstract}
For critical Bernoulli bond percolation on $\Z^d$ in the high-dimensional
regime, we prove that the rescaled empirical measure of the cluster of the
origin converges, in a suitable $\sigma$-finite sense, to the total occupation measure of super-Brownian motion. Combined with
a uniform lower mass bound for the critical cluster with respect to the extrinsic (Euclidean) metric, which we also prove and which is of independent interest, the measure
convergence further yields the convergence of the rescaled cluster as a compact set,
in the Hausdorff metric. As a consequence, we are able to obtain the sharp one-arm asymptotics
$r^2\,\bP(0\leftrightarrow \partial B_r)\to \theta_1\in(0,\infty)$, hence refining a result of Kozma and Nachmias.
\end{abstract}

\tableofcontents

\section{Introduction}

Consider critical Bernoulli bond percolation on the integer lattice $\Z^d$ in
high dimensions, and denote by $\mathcal C$ the open cluster of the origin. A large critical cluster is an intricate random fractal, and a central conjecture in the field --- going back to Aizenman \cite{Aiz97} and to Hara and Slade \cite{HS00a,HS00b} --- is that, after rescaling, it is described by the total occupation
measure of a super-Brownian motion (SBM).

In a recent article, Chatterjee, Chinmay, Hanson and Sosoe \cite{CCHS26}
proved that the rescaled $k$-point functions of the critical cluster converge.
Their limits are, after a suitable normalization, precisely the moment
densities of the total occupation measure
$\mu:=\int_0^\infty X_t\,dt$ of a super-Brownian motion $(X_t)_{t\geq 0}$ under
its canonical measure $\bN$ (both recalled in Section~\ref{sec:SBM}). Our first
main theorem upgrades this moment-level information to convergence of the law
of the cluster itself. Writing
\begin{equation}\label{eq:empirical}
\mu_n \;:=\; \frac{1}{\mathfrak a\, n^4}\sum_{x\in \mathcal C}\delta_{x/n},
\qquad
\nu_n \;:=\; n^2\,\bP\bigl(\mu_n\in\cdot\,\bigr),
\end{equation}
for the rescaled empirical measure and its intensity (where $\mathfrak a$ is the
two-point constant appearing in \eqref{eq:guiding-kp} below), we show that $\nu_n$ converges
to $\bN$ in a suitable $\sigma$-finite sense.

Our second result is a uniform \emph{lower mass bound} for the critical
cluster with respect to the extrinsic (Euclidean) metric: no macroscopic region that the cluster reaches can carry
asymptotically negligible volume. It is the analogue, for critical
percolation, of the lower mass bounds that underlie Gromov--Hausdorff--Prohorov
convergence of random metric measure spaces \cite{ALW16}. It is proved by
percolation methods independent of the rest of the paper and is of interest in
its own right.

Combining these two results, we show that the cluster of the origin --- conditioned
to be large and suitably rescaled --- converges, as a compact set, to the
support $\Ran=\supp\mu$ of the occupation measure. As a consequence, we obtain
the exact asymptotics of the one-arm probability, refining a result of Kozma and Nachmias \cite{KN11}.

\subsection{Main results}\label{sec:mainresults}

We now state the four main results.

Since the limiting measure $\bN$ is $\sigma$-finite with infinite total mass, the convergence in our first theorem is formulated restricted to the events $\{\mu(\R^d)>\eps\}$, in the same spirit as Holmes and Perkins \cite{HP07} (to which we refer for this mode of convergence towards $\sigma$-finite limits). Towards stating the result, we define $\mathcal{M}_F(X)$ to be the space of finite measures on a space $X$.

\begin{theorem}[Measure convergence]\label{thm:mainA}
Consider critical nearest-neighbour bond percolation on $\Z^d$ with $d\geq 11$. For $\gamma>0$, let $\mathbb{N}_{0,\gamma}$ denote the canonical measure of the $(\tfrac12\Delta,\gamma)$-super-Brownian motion (defined in Section~\ref{sec:SBM}). Then, for every
$\eps>0$,
\[
\nu_n\bigl(\,\cdot\,;\ \mu(\R^d)>\eps\bigr)
\;\xRightarrow[n\to\infty]{}\;
\mathbb{N}_{0,\lambda/\mathfrak a}\bigl(\,\cdot\,;\ \mu(\R^d)>\eps\bigr)
\]
weakly in $\cMM$, where $\mathfrak a,\lambda>0$ are the constants 
of \eqref{eq:guiding-kp} below.

\end{theorem}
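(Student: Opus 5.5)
The plan is a method-of-moments argument combined with tightness, following the strategy of Holmes and Perkins \cite{HP07}. The input is the $k$-point function convergence of \cite{CCHS26}, recalled in \eqref{eq:guiding-kp}. It says that for each $k\ge1$ and test functions $\phi_1,\dots,\phi_k\in\Ccg$,
\[
\int \prod_{i=1}^k \langle \mu,\phi_i\rangle\, d\nu_n(\mu)
=\frac{n^2}{\mathfrak a^k n^{4k}}\sum_{x_1,\dots,x_k}\bP(0\leftrightarrow x_1,\dots,0\leftrightarrow x_k)\prod_i\phi_i(x_i/n)
\]
converges to $\bN_{0,\lambda/\mathfrak a}\bigl(\prod_i\langle\mu,\phi_i\rangle\bigr)$. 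I would first extend this from compactly supported $\phi_i$ to bounded continuous $\phi_i$, in particular $\phi_i\equiv1$. For this I would use the uniform tail bounds on the $k$-point functions: the tree-graph bounds and the two-point decay $\tau(x)\asymp|x|^{2-d}$ give a summable majorant uniform in $n$. This shows that the mass moments $\int \mu(\R^d)^k\,d\nu_n$ converge to $\bN(\mu(\R^d)^k)$ for every $k\ge1$. The same bounds show that the mass of $\mu$ outside $B_R$ has vanishing moments as $R\to\infty$, uniformly in $n$.

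Next I would prove tightness of the finite measures $\nu_n(\cdot\,;\mu(\R^d)>\eps)$ on $\cM$. Their total masses satisfy
\[
\nu_n(\mu(\R^d)>\eps)\le \eps^{-1}\int\mu(\R^d)\,d\nu_n,
\]
which is bounded. Tightness in $\cM$ (weak topology) reduces to two conditions: tightness of $\mu(\R^d)$, which follows from the second moment bound, and uniform control of $\mu(B_R^c)$, which follows from the tail bound above via Markov's inequality. Hence every subsequence has a further subsequence along which the measures $\mu(\R^d)^2\,\nu_n(d\mu)$ converge to some finite measure, say $\mu(\R^d)^2\Xi(d\mu)$. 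I weight by $\mu(\R^d)^2$, or more generally work with the family of all polynomially weighted measures, to avoid the mass escaping to $\mu=0$. The weight $\mu(\R^d)^2$ is chosen because it is integrable under $\bN$, whereas $\bN(\mu(\R^d)>0)=\infty$.

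Then I would identify the limit. Along the subsequence, uniform integrability from the moments of order $k+1$ lets us pass the convergence of all weighted moments $\int\prod_i\langle\mu,\phi_i\rangle\,\mu(\R^d)^2\,d\nu_n$ to the limit. The limit therefore has the same moments as $\mu(\R^d)^2\,\bN(d\mu)$. Under $\bN$ the total mass has exponential moments (its Laplace exponent is explicit, $\bN(1-e^{-\theta\mu(\R^d)})=\sqrt{2\theta/\gamma}$ up to constants). So the moments of $\mu(\R^d)$ under the finite measure $\mu(\R^d)^2\bN$ grow at most like $k!\,C^k$, Carleman's condition holds, and the moment problem for random measures (via the joint laws of $\langle\mu,\phi_1\rangle,\dots,\langle\mu,\phi_m\rangle$, each dominated by the mass) is determinate. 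Thus $\mu(\R^d)^2\,\nu_n\Rightarrow\mu(\R^d)^2\,\bN_{0,\lambda/\mathfrak a}$. Finally, divide by the weight on $\{\mu(\R^d)>\eps\}$, where $\mu(\R^d)^{-2}$ is bounded. The only remaining issue is the boundary $\{\mu(\R^d)=\eps\}$. It is handled because $\mu(\R^d)$ has an absolutely continuous law under $\bN$, so this boundary is null, and the portmanteau theorem gives the claimed convergence.

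The main obstacle I expect is the first step: the uniform control of the $k$-point sums with non-compactly supported test functions, i.e., uniform integrability at spatial infinity and near the origin in the rescaled sums. \cite{CCHS26} presumably gives pointwise convergence with bounds, and I would try to combine the tree-graph inequality $\bP(0\leftrightarrow x_1,\dots,x_k)\le\sum_{\text{trees}}\prod\tau$ with the Gaussian-type two-point bounds. These should yield $\sum_{x_i}$ bounds of the form $C_k n^{4k-2}$, and the corresponding bounds restricted to $|x_1|>Rn$ should be $o_R(1)n^{4k-2}$. A secondary concern is that moment determinacy needs the factorial growth of these moment constants uniformly; the tree-graph counting gives $(2k-3)!!$ trees, which suffices.
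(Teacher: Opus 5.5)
There is a genuine gap, and it is in the first step: every total-mass moment you want to use is infinite, for the percolation measures and for the limit alike. Already for $k=1$ and $\phi_1\equiv1$,
\[
\int\mu(\R^d)\,d\nu_n=\frac{n^2}{\mathfrak a\,n^4}\sum_{x\in\Z^d}\tau_2(0,x)=\frac{\E|\mathcal C|}{\mathfrak a\,n^2}=\infty ,
\]
because $\tau_2(0,x)\asymp|x|^{2-d}$ is not summable on $\Z^d$ (the susceptibility diverges at $p_c$). No tree-graph majorant can rescue this, since its continuum counterpart contains $\int_{\R^d}G(y)\,dy=\infty$.

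In the limit, the total mass under $\bN$ has density $(2\pi\gamma)^{-1/2}x^{-3/2}$ (Proposition~\ref{prop:totalmass}). Hence $\bN[\mu(\R^d)^s;\,\mu(\R^d)>\eps]=\infty$ for every $s\ge1/2$, and this has several consequences:
\begin{itemize}
\item $\mu(\R^d)^2\,\bN$ is not a finite measure.
\item The total mass has no exponential moment. The identity $\bN[1-e^{-\theta\mu(\R^d)}]=\sqrt{2\theta/\gamma}$ holds only for $\theta\ge0$, and $\sqrt\theta$ is not analytic at $0$.
\item Your Carleman/determinacy argument therefore has nothing to apply to.
\end{itemize}
The same divergence breaks your far-field and tightness steps. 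The mass $\mu(\R^d\setminus B_R)$ has infinite first moment for every $R$, under $\nu_n$ and under $\bN$, because $\int_{|y|>R}G(y)\,dy=\infty$. Likewise, the Markov bound $\nu_n(\mu(\R^d)>\eps)\le\eps^{-1}\int\mu(\R^d)\,d\nu_n$ is vacuous.

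Two ideas are missing: (a) localizing all moments with a compactly supported weight, and (b) a non-moment input that controls escape of mass to infinity.

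For (a), the paper size-biases by $\mu(\varphi_0)$ with $\varphi_0\in\Cc$. The mixed moments $\int\mu(\varphi_0)\prod_i\mu(\phi_i)\,d\nu_n$ with $\phi_i\in\Cc$ are then finite and grow at most factorially, uniformly in $n$. This uses the tree-graph inequality plus the bound on tree integrals against compactly supported functions, which needs only $d>4$. These moments converge by \eqref{eq:guiding-kp} and dominated convergence, which is your majorant argument, valid only for compact supports. Under the finite measure $\Nh=\mu(\varphi_0)\,\bN$, each $\mu(\phi)$ with $\phi\in\Cc$ has an exponential moment, and this gives determinacy.

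For (b), the Kozma--Nachmias bound $\bP(0\leftrightarrow\partial B_r)\le K_2r^{-2}$ is indispensable, and your proposal never uses it. First, it gives tightness via
\[
\nuh_n\bigl(\supp\mu\not\subset\overline B_K\bigr)\le\Bigl(\int\mu(\varphi_0)^2\,d\nu_n\Bigr)^{1/2}\bigl(n^2\,\bP(0\leftrightarrow\partial B_{Kn})\bigr)^{1/2}\lesssim K^{-1},
\]
combined with $\int\mu(\varphi_0)\,\mu(\chi_L)\,d\nu_n\lesssim L^2$ for a cutoff $\chi_L$ equal to $1$ on $\overline B_L$ to control the total mass. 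Second, after un-biasing on $\{\mu(\chi_K)>\eps'\}$ at admissible levels $\eps'$, it bounds the $\nu_n$-mass of the symmetric difference of $\{\mu(\chi_K)>\eps\}$ and $\{\mu(\R^d)>\eps\}$ by $K_2K^{-2}$, uniformly in $n$. Together with atomlessness of $\mu(\R^d)$ under $\bN$ and a level-gap argument, this yields the statement on $\{\mu(\R^d)>\eps\}$.

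Without such an input the total-mass event is out of reach: localized moment information says nothing about mass beyond every ball, and polynomial moments of that far mass are infinite.
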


We write
$Q_{z,r}:=\{y\in\Z^d:\lVert y-z\rVert_\infty\le r\}$ for the $\ell_\infty$ lattice
box of radius $r$ centred at $z$.

\begin{theorem}[Uniform lower mass bound]\label{thm:mainLMB}
Consider critical nearest-neighbour bond percolation on $\Z^d$ with $d\geq 11$. For every $\delta>0$,
\[
\lim_{\eta\rightarrow 0}\;\limsup_{R\rightarrow\infty}\;
\bP\left(\inf_{x\in \mathcal C}\bigl|\mathcal C\cap Q_{x,\delta R}\bigr|
\leq \eta R^4 \:\Big\vert\:0\leftrightarrow (Q_{0,R})^c\right)=0,
\]
and the same holds when the conditioning event is replaced by
$\{|\mathcal C|\geq R^4\}$.
\end{theorem}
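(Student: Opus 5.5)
We prove the bound under the conditioning $\{0\leftrightarrow (Q_{0,R})^c\}$, whose probability is of order $R^{-2}$ by Kozma--Nachmias \cite{KN11}. The case $\{|\mathcal C|\ge R^4\}$ will follow the same way, since $\bP(|\mathcal C|\ge R^4)\asymp R^{-2}$ by the standard tail bound for the cluster size. The overall strategy is a first-moment bound over a mesoscopic grid of boxes, combined with a lower tail estimate for the volume that a cluster places near a point it reaches.

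\textbf{Step 1: reduction to a grid and a localization.} Cover $\Z^d$ by boxes $Q_{z,\delta R/4}$ with $z\in (\delta R/4)\Z^d$. If some $x\in\mathcal C$ has $|\mathcal C\cap Q_{x,\delta R}|\le \eta R^4$, then the grid box $Q_z\ni x$ is hit by $\mathcal C$, and the concentric box $Q_{z,\delta R/2}\subset Q_{x,\delta R}$ carries at most $\eta R^4$ points of $\mathcal C$. Only boxes at distance at most $KR$ from $0$ need to be considered, because
\[
\bP\bigl(0\leftrightarrow (Q_{0,KR})^c\bigr)\big/\bP\bigl(0\leftrightarrow (Q_{0,R})^c\bigr)\le C K^{-2}
\]
by \cite{KN11}. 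After this reduction there are $O((K/\delta)^d)$ boxes, so it suffices to show the following: uniformly in $z$ with $|z|\ge \delta R$,
\[
\bP\bigl(0\leftrightarrow Q_{z,\delta R/4},\ |\mathcal C\cap Q_{z,\delta R/2}|\le \eta R^4\bigr)\le \epsilon(\eta)\,R^{-2},
\qquad \epsilon(\eta)\to0 .
\]
The $O(1)$ boxes near the origin are handled separately. There, conditioning on the one-arm event forces $\mathcal C$ to exit $Q_{0,\delta R}$, and the same argument as below applies with $0$ in place of the entry point.

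\textbf{Step 2: the key local estimate.} Let $y$ be the first point at which $\mathcal C$ enters $Q_{z,\delta R/4}$; more precisely, explore $\mathcal C$ from $0$ and stop at the boundary of the inner box. Before the stopping time, the probability of reaching the box is at most $C(\delta R)^{-(d-2)}(\delta R)^{d-2}\cdot$(one-arm factor), which is of order $R^{-2}$ up to $\delta$-dependent constants, using the two-point function $\bP(0\leftrightarrow y)\asymp |y|^{2-d}$. The required gain is therefore a factor $\epsilon(\eta)$, and it comes from the following claim. Conditioned on the exploration up to reaching $y$, the cluster continued from $y$ off the explored set, restricted to $Q_{y,\delta R/4}$, has volume at most $\eta R^4$ with probability at most $\epsilon(\eta)$. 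For this we use a second-moment (Paley--Zygmund) argument on the ``$r$-good'' subcluster. Let $r=\delta R$ and let $Z$ be the number of points of $Q_{y,r/4}$ connected to $y$ inside $Q_{y,r/2}$, off the explored set. Conditioned on $y$ having an open arm to distance $\sim r/8$ (which has probability bounded below in the appropriate conditional sense), the triangle-condition bounds of Hara--Slade/Kozma--Nachmias give
\[
\E[Z]\gtrsim r^4\cdot r^{-2}\ \text{(arm-normalized)}\asymp r^2\ \text{per unit arm},\qquad \E[Z^2]\lesssim r^6 .
\]
This yields a conditional lower bound $\bP(Z\ge c r^4\mid \text{arm})\ge c'>0$. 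A positive probability is not enough, however, so we iterate. Along the arm from $y$ to distance $r/8$, we choose $m$ disjoint sub-boxes at scales $r/m$, chosen so that $\eta R^4\ll c(r/m)^4$ with $m=m(\eta,\delta)\to\infty$ as $\eta\to0$. In each sub-box the arm passes through, there is an independent-ish chance $c'$ of finding a local cluster of size $c(r/m)^4$. The failure probability is then at most $(1-c')^{m}\to 0$.

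\textbf{Main obstacle.} The hard part is making the iteration in Step 2 rigorous. The events in different sub-boxes are not independent: they depend on the arm and on the already-explored cluster, and conditioning on the exploration introduces negative information (closed edges). I would first try a BK/van den Berg--Kesten style decoupling. Concretely, I would explore the arm sequentially, reveal only the edges needed, and in each new sub-box apply the second-moment bound to connections that avoid the revealed set. The mean lower bound under such ``off-$A$'' conditioning would be controlled via the standard estimate
\[
\bP(u\leftrightarrow v \text{ off } A)\ge \bP(u\leftrightarrow v)-\sum_{a\in A}\bP(u\leftrightarrow a)\,\bP(a\leftrightarrow v),
\]
which is small in $d\ge11$ for the thin explored sets involved. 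The case $d\ge 11$ is used exactly here and in the triangle bound for $\E[Z^2]$.
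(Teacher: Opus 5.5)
Your Step~1 is essentially how the paper deduces Theorem~\ref{thm:mainLMB} from the local estimate Theorem~\ref{thm:ulmb}: a cut-off at distance $KR$ by the one-arm bound, then a union bound over $O((K/\delta)^d)$ grid boxes, each reached with probability $\lesssim R^{-2}$. Near the origin no separate estimate is needed. A cluster of diameter $\ge R$ must hit a grid box $Q_{z_i,\delta R/10}$ with $z_i$ at distance between $3\delta R/10$ and $4\delta R/10$ from $0$. The doubled copy $Q_{z_i,\delta R/5}$ then lies inside $Q_{x,\delta R}$ for every $x\in\mathcal C\cap Q_{0,3\delta R/10}$.

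The genuine gap is Step~2, the local estimate itself. After you stop the exploration at the entry point $y$, the cluster grown afresh from $y$ off the explored set has expected size $O(r^2)$ in $Q_{y,r/4}$. It reaches distance $r/8$ only with probability $\lesssim r^{-2}$. The event $\{0\leftrightarrow Q_{z,\delta R/4}\}$ gives you no such fresh arm; the only arm you are given is the explored one. So your central claim is false: by Markov, that fresh volume is $\le\eta R^4$ with probability tending to $1$. Your Paley--Zygmund computation is fine \emph{conditionally} on a fresh arm, but you cannot afford the factor $r^{-2}$ that this conditioning costs. The iteration ``along the arm'' then needs, in each sub-box, exactly this missing positive-probability estimate at a smaller scale. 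It also needs a decoupling that ``independent-ish'' does not supply.

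The missing idea is that the mass comes from many seeds, not one. The paper explores the cluster of $0$ outside a box $\mathcal Q_{z,i}=Q_{z,(i\varepsilon+1)r}$ and counts the pioneer points on $\partial\mathcal Q_{z,i}$. Each pioneer continues inward to $Q_{z,r}$ with probability $\lesssim(i\varepsilon r)^{-2}$, while $\bP(0\leftrightarrow Q_{z,r})\asymp r^{-2}$. Hence having fewer than $\alpha r^2$ pioneers has conditional probability $O(\alpha/(i\varepsilon)^2)$ (Lemma~\ref{pioneerpoint}). Each pioneer then seeds an \emph{unconditioned} fresh cluster with mean $\asymp L^2$ points in a layer of width $L=\varepsilon r/4$. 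First and second moments $\gtrsim L^2X$ and $\lesssim L^4X^2$ give a positive conditional probability of $\gtrsim\varepsilon^4r^4$ points.

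Two further ingredients replace what your sketch waves through.
\begin{itemize}
\item Your off-$A$ inequality is not a small correction when the seed sits on $A$: terms with $a$ next to $u$ are of the same order as $\bP(u\leftrightarrow v)$. The paper shows most pioneers are $K$-regular and uses $K$-line-good points. An open segment of length $K$ moves the seed to distance $K$ from the explored set at cost $p_c^{O(dK)}$, which makes the correction $O(K^{-1/2})$.
\item The iteration is made rigorous by nesting. Annuli are explored from the outside in. Conditioning on the extended exterior cluster $\mathcal C_e(0;\mathcal Q_{z,i}^c)$ fixes the events for all outer annuli, while the volume in $A_{z,i}$ uses fresh edges. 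So each annulus costs a factor $1-c$ (Lemma~\ref{Iterating}), and $\varepsilon^{-1}/2$ annuli give $(1-c)^{\varepsilon^{-1}/2}$, with $\varepsilon\asymp\eta^{1/4}$.
\end{itemize}
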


The remaining two results are geometric consequences; for the relevant
literature see Section~\ref{sec:related}. Combining the measure
convergence (Theorem~\ref{thm:mainA}) with a variant of the
lower mass bound (Lemma~\ref{lem:nothin}), we obtain convergence of the cluster as a compact
set.

\begin{theorem}[Range convergence]\label{thm:mainD}
Consider critical nearest-neighbour bond percolation on $\Z^d$ with $d\geq 11$. For every $\eps>0$, conditionally on the event $\{\mu_n(\R^d)>\eps\}$, the pair
$\bigl(\mu_n,\ n^{-1}\mathcal C\bigr)$ converges in distribution to $(\mu,\supp\mu)$ under the
conditioned canonical measure $\mathbb{N}_{0,\lambda/\mathfrak a}(\,\cdot\mid\mu(\R^d)>\eps)$, with $\mathfrak a,\lambda$ as in \eqref{eq:guiding-kp}), jointly in $\cM\times\mathcal K(\R^d)$. In particular the rescaled cluster $n^{-1}\mathcal C$
converges in distribution to the range of super-Brownian motion in the Hausdorff metric on the
space $\mathcal K(\R^d)$ of compact sets.
\end{theorem}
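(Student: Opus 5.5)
Theorem~\ref{thm:mainA} already gives convergence of the first coordinate, so I will show that the second coordinate is asymptotically determined by the first. The key fact is that $\supp\mu_n = n^{-1}\mathcal C$ exactly. I will work under the conditioned laws $P_n:=\bP(\,\cdot\mid\mu_n(\R^d)>\eps)$. Since $\nu_n(\mu(\R^d)>\eps)\to \mathbb N_{0,\lambda/\mathfrak a}(\mu(\R^d)>\eps)\in(0,\infty)$, and this value is not an atom of the total-mass law under $\mathbb N$ (the total mass has a density), Theorem~\ref{thm:mainA} gives $P_n\circ\mu_n^{-1}\Rightarrow \mathbb N(\,\cdot\mid\mu(\R^d)>\eps)$ in $\cM$.

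\textbf{Step 1: Skorokhod coupling.} I realize $\mu_n\to\mu$ almost surely in the weak topology. I then need to show that $d_H(\supp\mu_n,\supp\mu)\to0$ in probability, and this splits into two inclusions.

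\textbf{Step 2: inner bound.} I claim that $\supp\mu\subseteq(\supp\mu_n)^\delta$ eventually. This follows from weak convergence alone. Cover the compact set $\supp\mu$ by finitely many balls $B(y_i,\delta/2)$ with $y_i\in\supp\mu$, so that $\mu(B(y_i,\delta/2))>0$. By the portmanteau theorem for open sets, $\liminf_n\mu_n(B(y_i,\delta/2))>0$, so each ball eventually meets $\supp\mu_n$.

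\textbf{Step 3: outer bound.} I claim that $\supp\mu_n\subseteq(\supp\mu)^{2\delta}$ with high probability, and this is where the lower mass bound is needed. Suppose a point $x/n\in n^{-1}\mathcal C$ lies at distance more than $2\delta$ from $\supp\mu$. By Lemma~\ref{lem:nothin} (the variant of Theorem~\ref{thm:mainLMB} adapted to the conditioning $\{\mu_n(\R^d)>\eps\}$), with probability at least $1-o_\eta(1)$ uniformly in large $n$, every such point satisfies $\mu_n(\bar B(x/n,\delta))\ge\eta'$ for some $\eta'=\eta'(\eta,\delta)>0$, which comes from converting $\eta R^4$ with $R\asymp n$ and accounting for the $\mathfrak a$ normalization. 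If there are infinitely many such $n$, the centres have a limit point $z$ by tightness of the supports. Tightness follows from the one-arm bound of Kozma and Nachmias \cite{KN11}, $\bP(0\leftrightarrow\partial B_{Kn})\lesssim (Kn)^{-2}$, combined with $\bP(\mu_n(\R^d)>\eps)\asymp n^{-2}$. The limit point satisfies $\dist(z,\supp\mu)\ge2\delta$. Weak convergence applied to the closed ball $\bar B(z,3\delta/2)$, which contains the small balls eventually, gives $\mu(\bar B(z,3\delta/2))\ge\eta'>0$. This contradicts $\dist(z,\supp\mu)\ge 2\delta$. The contradiction holds on the event from Lemma~\ref{lem:nothin}, whose probability tends to $1$ as $\eta\to0$.

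\textbf{Main obstacle.} I expect the main obstacle to be the precise form of the lower mass bound. Theorem~\ref{thm:mainLMB} conditions on $\{0\leftrightarrow Q_{0,R}^c\}$ or on $\{|\mathcal C|\ge R^4\}$, not on $\{\mu_n(\R^d)>\eps\}$. My first attempt will be to use $\{\mu_n(\R^d)>\eps\}=\{|\mathcal C|>\eps\mathfrak a n^4\}$ and take $R=(\eps\mathfrak a)^{1/4}n$ in the second form of the theorem. The box radius $\delta R$ then becomes a Euclidean radius comparable to $\delta n$, adjusting $\delta$ by the constants $(\eps\mathfrak a)^{1/4}$ and $\sqrt d$. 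Presumably this is exactly what Lemma~\ref{lem:nothin} packages. Combining Steps 2 and 3 with $\delta\downarrow0$ gives joint convergence in $\cM\times\mathcal K(\R^d)$, and the ``in particular'' statement follows by projecting onto the second coordinate.
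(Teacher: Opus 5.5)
Your proposal is correct, and its mechanism matches the paper's. The lower inclusion of supports comes from weak convergence via the open-set portmanteau bound. The upper inclusion comes from a lower mass bound plus the closed-set portmanteau bound. Compactness comes from the one-arm bound, as in Lemma~\ref{lem:diamtight}.

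The bookkeeping differs in two ways.
\begin{itemize}
\item \emph{Coupling.} The paper Skorokhod-couples the triple formed by $\mu_n$, $n^{-1}\mathcal C$ and countably many indicators of thin-region events. It extracts a random threshold $\eta_0$ by Borel--Cantelli and feeds pathwise hypotheses into the deterministic Lemma~\ref{lem:suppconv}. You couple only $\mu_n$. This is enough because $n^{-1}\mathcal C=\supp\mu_n$ makes every thin-region event a measurable function of $\mu_n$, so its probability transfers to the coupled copies. You get Hausdorff convergence in probability rather than almost surely, which suffices for convergence in distribution.
\item \emph{Form of the lower mass bound.} The paper uses the annulus-restricted Lemma~\ref{lem:nothin}, so the region near the origin must be covered by $0\in\supp\mu$ (Proposition~\ref{prop:range}). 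You use the global second form of Theorem~\ref{thm:mainLMB}, with $R=(\eps\mathfrak a)^{1/4}n$, $Q_{x,\delta_0R}\subset\overline B(x,\sqrt d\,\delta_0R)$ and mass threshold $\eta\eps$. The difference between $\{|\mathcal C|\ge R^4\}$ and $\{|\mathcal C|>R^4\}$ is harmless, since both have probability of order $n^{-2}$. This removes the need for the anchored core. The cost is that you use all of Theorem~\ref{thm:mainLMB}, including \eqref{voltail}, instead of only Theorem~\ref{thm:ulmb}.
\end{itemize}

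Two statements in your write-up need correcting.

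First, Lemma~\ref{lem:nothin} is not a conditioned global lower mass bound. It controls the unconditioned quantity $n^2\bP(\mathcal E_n)$, and only for points with $\kappa n\le|x|\le Kn$. If you use it rather than Theorem~\ref{thm:mainLMB}, you must divide by $\bP(\mu_n(\R^d)>\eps)\ge\tfrac12 b_\eps n^{-2}$, where $b_\eps=\bN(\mu(\R^d)>\eps)$. You must also treat $|x|<\kappa n$ by taking $\kappa<2\delta$ and invoking $0\in\supp\mu$.

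Second, ``the centres have a limit point by tightness of the supports'' is not a pathwise statement. Moreover, the good event $G_n$ (every point of $\supp\mu_n$ carries mass at least $\eta'$ within distance $\delta$) depends on $n$. On the coupled space the correct form is
\begin{multline*}
\bP\bigl(\supp\mu_n\not\subseteq(\supp\mu)^{2\delta}\bigr)\le\bP(G_n^c)+\bP\bigl(\supp\mu_n\not\subset\overline B_K\bigr)\\
+\bP\bigl(\{\supp\mu_n\not\subseteq(\supp\mu)^{2\delta}\}\cap G_n\cap\{\supp\mu_n\subset\overline B_K\}\bigr).
\end{multline*}
Your subsequence argument shows that the indicator of the last event tends to $0$ almost surely, so that term vanishes by dominated convergence. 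Then let $\eta\downarrow0$ and $K\to\infty$. The paper sidesteps this by putting $n^{-1}\mathcal C$ into the coupling, so that Hausdorff convergence to a compact limit gives the pathwise bound $R_\ast<\infty$.
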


A variant of the range convergence also yields precise asymptotics for the one-arm probability.

\begin{theorem}[Sharp one-arm asymptotics]\label{thm:mainC}
Consider critical nearest-neighbour bond percolation on $\Z^d$ with $d\geq 11$. Then
\[
\lim_{r\to\infty}\ r^2\,\bP\bigl(0\leftrightarrow\partial B_r\bigr)
\;=\;\theta_1\;\in(0,\infty),
\]
where, with $\mathfrak a,\lambda$ as in \eqref{eq:guiding-kp}
\[
\theta_1\;=\;\mathbb{N}_{0,\lambda/\mathfrak a}\bigl(\Ran\not\subset\overline B_1\bigr)\;=\;\frac{\mathfrak a}{\lambda}\,\mathbb{N}_{0,1}\bigl(\Ran\not\subset\overline B_1\bigr),
\]
and $\mathbb{N}_{0,1}$ is the standard canonical measure of super-Brownian motion (i.e.\ the $(\tfrac12\Delta,1)$-super-Brownian motion); thus $\theta_1$ equals $\mathfrak a/\lambda$ times the probability that the range of standard super-Brownian motion is not contained in the unit ball.
\end{theorem}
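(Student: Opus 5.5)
We will deduce the one-arm asymptotics from the range convergence. Take $n=r$. The event $\{0\leftrightarrow\partial B_r\}$ coincides, up to boundary conventions, with $\{r^{-1}\mathcal C\not\subset B_1\}$. Hence we need to show
\[
r^2\,\bP\bigl(r^{-1}\mathcal C\not\subset B_1\bigr)\;\to\;\mathbb{N}_{0,\lambda/\mathfrak a}\bigl(\Ran\not\subset\overline B_1\bigr).
\]

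\textbf{Step 1: splitting by mass.} Fix $\eps>0$ and split according to whether $\mu_r(\R^d)>\eps$. On the large-mass part we use Theorem~\ref{thm:mainD}. Under $\nu_r(\,\cdot\,;\mu(\R^d)>\eps)$, whose total mass converges to $\mathbb{N}_{0,\lambda/\mathfrak a}(\mu(\R^d)>\eps)$ by Theorem~\ref{thm:mainA}, the rescaled cluster converges in the Hausdorff metric to $\Ran$. The set $\{K:K\not\subset \overline B_1\}$ is open in $\mathcal K(\R^d)$, and its boundary is contained in $\{K: K\subset\overline B_1,\ K\cap\partial B_1\neq\emptyset\}$.

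To apply the portmanteau theorem we must check that $\mathbb{N}_{0,\gamma}$ assigns zero mass to $\{\sup_{x\in\Ran}|x|=1\}$. This follows from scaling of the canonical measure: $\mathbb{N}_{0,\gamma}(\sup_{\Ran}|x|>s)=c\,s^{-2}$ is continuous in $s$, so the law of the radius has no atoms.

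The convergence is for fixed $\eps$. Convergence along integers $n=\lfloor r\rfloor$ extends to real $r$ by monotonicity in $r$, together with continuity of the limit in the radius.

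\textbf{Step 2: small clusters reaching far (main obstacle).} It remains to show that
\[
\lim_{\eps\to0}\limsup_{r\to\infty} r^2\,\bP\bigl(0\leftrightarrow\partial B_r,\ |\mathcal C|\le \eps\,\mathfrak a r^4\bigr)=0,
\]
and correspondingly that $\mathbb{N}_{0,\gamma}(\Ran\not\subset\overline B_1,\ \mu(\R^d)\le\eps)\to0$. The latter holds because $\mathbb{N}_{0,\gamma}(\Ran\not\subset \overline B_1)<\infty$ and $\mu(\R^d)>0$ a.e. on that event.

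For the percolation bound we use the lower mass bound in the form of Theorem~\ref{thm:mainLMB}, or its variant Lemma~\ref{lem:nothin}. Kozma--Nachmias give $\bP(0\leftrightarrow\partial B_r)\asymp r^{-2}$. Given $0\leftrightarrow\partial B_r$, with conditional probability at least $1-o_\eta(1)$ the cluster has mass at least $\eta r^4$ near its extremal point, uniformly in $r$. Take $\delta=1/2$, note that the $\ell_\infty$ boxes are comparable to Euclidean balls, and set $\eps\mathfrak a<\eta$. Then
\[
\limsup_r r^2\,\bP\bigl(0\leftrightarrow\partial B_r,\ |\mathcal C|\le\eps\mathfrak a r^4\bigr)\le C\cdot o_\eta(1),
\]
which vanishes as $\eta\to0$.

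The delicate point is matching the conditioning event of Theorem~\ref{thm:mainLMB}, which uses boxes $Q_{0,R}$, with Euclidean balls $B_r$. We handle this by sandwiching $B_r$ between boxes of comparable radii and using the $O(r^{-2})$ upper bound.

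\textbf{Step 3: identifying the constant.} Let $\eps\to0$ to obtain $\theta_1=\mathbb{N}_{0,\lambda/\mathfrak a}(\Ran\not\subset\overline B_1)$. This is finite by the scaling above and positive because the range is nontrivial. The second expression for $\theta_1$ follows from the standard relation $\mathbb{N}_{0,\gamma}=\gamma^{-1}\,\mathbb{N}_{0,1}\circ(\gamma\,\cdot)^{-1}$, under which the range is unchanged. This yields the factor $\mathfrak a/\lambda$.
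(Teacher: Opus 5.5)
Your proposal is correct, and it reaches the result by a shorter route than the paper.

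\textbf{How the paper argues.} The paper does not evaluate Theorem~\ref{thm:mainD} under total-mass conditioning. It fixes a scale $\delta$ and a weight $\varphi_0\equiv1$ on $\overline B_{K/4+3\sqrt d\,\delta}$. It then invokes the $\varphi_0$-conditioned variant of range convergence (Remark~\ref{rem:rangephi}) at admissible levels $\eps_\eta=\eta\delta^4/\mathfrak a$. Finally, it disposes of the deficit event $\{0\leftrightarrow\partial B_{Kn},\ \mu_n(\varphi_0)\le\eps_\eta\}$ with the annulus estimate of Lemma~\ref{lem:nothin}: the cluster must cross $\{K/8\le|y|/n\le K/4\}$, and off the thin-region event the crossing point deposits mass inside $\{\varphi_0\equiv1\}$.

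\textbf{How your route differs.} You condition on $\mu_r(\R^d)>\eps$ instead. Theorem~\ref{thm:mainD} then applies exactly as stated, and every level is admissible since $\mu(\R^d)$ is atomless. You control $\{0\leftrightarrow\partial B_r,\ |\mathcal C|\le\eps\mathfrak a r^4\}$ with the global bound of Theorem~\ref{thm:mainLMB}, using three facts:
\begin{itemize}
\item the inclusion $\{\mathcal C\not\subset\overline B_r\}\subset\{0\leftrightarrow Q_{0,R}^c\}$ with $R=\lfloor r/\sqrt d\rfloor$;
\item the trivial inequality $|\mathcal C|\ge\inf_{x\in\mathcal C}|\mathcal C\cap Q_{x,\delta R}|$, so the ``extremal point'' plays no role;
\item the $O(r^{-2})$ one-arm bound.
\end{itemize}
This shows the localization by $\varphi_0$ is dispensable. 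Even in the paper's own argument $\mu_n(\R^d)\ge\mu_n(\varphi_0)$, so total mass would serve there too. Your route also avoids the bookkeeping of nested radii and admissible levels.

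What the paper's route buys in exchange is that it uses only the annulus form of the lower mass bound. That is the same Lemma~\ref{lem:nothin} already needed for range convergence, obtained from Theorem~\ref{thm:ulmb} by a union bound over a net. It does not need the global Theorem~\ref{thm:mainLMB}, whose proof adds a covering argument near the origin and a diameter cutoff $\Lambda\to\infty$. Since Theorem~\ref{thm:mainLMB} is proved independently in Section~\ref{sec:lmb}, your use of it is legitimate.

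You also justify the second expression for $\theta_1$, which the paper's proof leaves implicit. The mass rescaling $X\mapsto X/\gamma$ maps the $(\tfrac12\Delta,\gamma)$-SBM from $m$ to the $(\tfrac12\Delta,1)$-SBM from $m/\gamma$ and leaves the range unchanged.

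\textbf{One justification needs repair.} Scaling gives only $\theta_s=\theta_1 s^{-2}$; it does not give $\theta_1<\infty$. You need finiteness in two places: for atomlessness of the exit radius (hence the a.e.\ continuity in Step~1), and for $\bN(\Ran\not\subset\overline B_1,\ \mu(\R^d)\le\eps)\to0$. Obtain it first, as in Proposition~\ref{prop:range}. The portmanteau lower bound for the open set $\{F:F\not\subset\overline B_1\}$ needs no continuity, and gives
\[
\bN\bigl(\Ran\not\subset\overline B_1,\ \mu(\R^d)>\eps\bigr)\le\liminf_n n^2\,\bP(0\leftrightarrow\partial B_n)\le K_2
\quad\text{for every }\eps>0 .
\]
Hence $\theta_1\le K_2$ by monotone convergence. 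Positivity of $\theta_1$ then follows most cleanly from the Kozma--Nachmias lower bound once the limit has been identified, rather than from ``the range is nontrivial''.
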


This sharpens the Kozma--Nachmias bound
$\bP(0\leftrightarrow\partial B_r)\asymp r^{-2}$ \cite{KN11} to an exact
asymptotic and identifies its constant.

\begin{remark}
For brevity, in the remainder of the paper we write $\bN$ for $\mathbb{N}_{0,\lambda/\mathfrak a}$.
\end{remark}

\subsection{Discussion on the dimension}

Our results rest on two properties of critical percolation in high dimensions,
whose history is reviewed in Section~\ref{sec:assumptions}. For
$x_1,\dots,x_k\in\Z^d$, let $\tau_k(x_1,\dots,x_k)$ denote the probability
that the vertices $x_1,\dots,x_k$ all belong to a common open cluster; in
particular
$\tau_2(0,x)$ is the probability that $x$ is connected to the origin. Let us
introduce the two key properties.
\begin{itemize}
\item \emph{Uniform two-point function bound.} There is a constant $K_1<\infty$ such that
\begin{equation}\label{eq:2pt}
K_1^{-1}\bigl(1\vee|x|\bigr)^{-(d-2)}
\;\le\;\tau_2(0,x)\;\le\;
K_1\bigl(1\vee|x|\bigr)^{-(d-2)}
\qquad\text{for all }x\in\Z^d,
\end{equation}
where $|x|$ denotes the Euclidean norm and $1\vee|x|=\max(1,|x|)$: the
two-point function decays at the same rate as the Green function of simple
random walk.
\item \emph{Convergence of $k$-point functions.} For every $m\ge1$ and all pairwise distinct
$y_1,\dots,y_m\in\R^d\setminus\{0\}$,
\begin{equation}\label{eq:guiding-kp}
n^{(d-4)m+2}\,\tau_{m+1}\bigl(0,\fl{ny_1},\dots,\fl{ny_m}\bigr)
\;\xrightarrow[n\to\infty]{}\;
\mathfrak a\,\lambda^{\,m-1}\sum_{T\in\T_m}I_T(y_1,\dots,y_m),
\end{equation}
where $\fl{ny}\in\Z^d$ is the componentwise integer part of $ny$, the sum runs
over the set $\T_m$ of binary trees with $m+1$ labelled leaves $\{0,1,\dots,m\}$, the leaf $0$ being pinned at the origin, $I_T$ is the
\emph{tree integral} obtained by placing a Green kernel on every edge of $T$
and integrating out the internal vertices (Section~\ref{sec:treedef}), and $\mathfrak a,\lambda>0$ are explicit positive constants determined by the model and given in \cite{CCHS26} (their decomposition is recorded in Appendix~\ref{app:dictionary}). The convergence \eqref{eq:guiding-kp} is the main theorem of
Chatterjee, Chinmay, Hanson and Sosoe \cite{CCHS26}.
\end{itemize}

We point out that \eqref{eq:guiding-kp} does not imply \eqref{eq:2pt}: the
former is a scaling limit at macroscopic distances $|x|\asymp n$, whereas the
latter is a uniform two-sided bound valid at all scales, and it is in this
uniform form that the two-point function enters several of our arguments.

In the body of the paper the results are proved with \eqref{eq:2pt} and
\eqref{eq:guiding-kp} as hypotheses; the condition $d\ge 11$ serves only to
guarantee them. More precisely:
\begin{itemize}
\item Theorem~\ref{thm:mainA} (measure convergence) requires $d>6$ together
with \eqref{eq:2pt} and \eqref{eq:guiding-kp};
\item Theorem~\ref{thm:mainLMB} (lower mass bound) requires $d>6$ together
with \eqref{eq:2pt} only;
\item Theorems~\ref{thm:mainD} and~\ref{thm:mainC} (range convergence and
one-arm asymptotics) require $d>6$, \eqref{eq:2pt} and \eqref{eq:guiding-kp},
since they combine the two previous theorems.
\end{itemize}

Both properties \eqref{eq:2pt} and \eqref{eq:guiding-kp} are known to hold for the \emph{nearest-neighbour} model in
dimension $d\ge 11$ (the history and references are reviewed in
Section~\ref{sec:assumptions}).
The two further inputs we
use are the Aizenman--Newman tree-graph inequality \cite{AN84}, which holds
unconditionally, and the Kozma--Nachmias one-arm bound \cite{KN11}, which
holds whenever $d>6$ and \eqref{eq:2pt} are verified. For this reason we state
the four main theorems for the nearest-neighbour model with $d\ge 11$.

For sufficiently spread-out models, \eqref{eq:2pt} is known in all $d>6$
(Hara, van der Hofstad and Slade \cite{HHS03}), and the authors of \cite{CCHS26}
expect \eqref{eq:guiding-kp} to extend to that setting. It
is natural to expect that our results extend as well, but we have not verified
that our arguments apply verbatim to spread-out models, and we do not pursue
this here.

\subsection{Related work}\label{sec:related}

The results connect two lines of work.

\emph{Super-Brownian motion as a universal scaling limit.} That large critical
structures in high dimensions are governed by super-Brownian motion is a theme
with a long history, surveyed in \cite{Sla99} and in the monograph of
Heydenreich and van der Hofstad \cite{HvdH17}; its analytic
backbone is the lace expansion, developed for percolation by Hara and Slade
\cite{HS90,HS00a,HS00b}, with the sharpest information on the two- and
higher-point functions now available through
\cite{CCHS26,FH17,Har08,HHS03}.

The first rigorous
scaling limits concerned models conditioned by their size, for which the limit
is the integrated super-Brownian excursion (ISE) of Aldous \cite{AldISE}:
Derbez and Slade proved that sufficiently spread-out lattice trees in
dimensions $d>8$, conditioned to contain exactly $n$ vertices, converge to ISE
\cite{DS98}, and Hara and Slade proved partial results in that direction for the
critical percolation cluster in high dimensions \cite{HS00a,HS00b}. In the
unconditioned setting, van der Hofstad and Slade proved convergence of the
moment measures for critical oriented percolation above $4+1$ dimensions
\cite{vdHS03}, and Bramson, Cox and Le Gall proved convergence in distribution
of rescaled voter-model clusters to super-Brownian motion \cite{BCL01}.
For unoriented percolation, a definitive connection with
super-Brownian motion was established only recently, in a breakthrough of
Chatterjee, Chinmay, Hanson and Sosoe \cite{CCHS26} proving the convergence
\eqref{eq:guiding-kp} of the $k$-point functions. We also note here that Hutchcroft has recently derived a super-Brownian motion limit similar to our Theorem \ref{thm:mainA} for a long-range model in high dimensions \cite{hutchcroft}.

Holmes and Perkins \cite{HP} studied a problem closely
related to ours: they isolated general conditions on a lattice model under
which the rescaled range converges to the range of super-Brownian motion, and
verified them for the voter model ($d\ge2$), for sufficiently spread-out
critical oriented percolation and contact processes ($d>4$), and for
sufficiently spread-out critical lattice trees ($d>8$). The present paper adds
unoriented Bernoulli percolation to this picture.

\emph{Random walk on critical clusters, and the origin of the lower mass
bound.} Our initial motivation for proving the lower mass bound came not from percolation itself but from the study of random walk on critical structures --- the ``ant in the labyrinth'', a phrase coined by de Gennes \cite{deGennes76}. In the program of Ben Arous, Cabezas and Fribergh \cite{BACF1} on the scaling limit of the random walk on large critical clusters in high dimensions (which includes deriving convergence for the random walk on a critical branching random walk \cite{BACF2}), one of the sufficient conditions for convergence asks, intuitively, that no part of the graph that is large (in $\Z^d$ or in intrinsic distance) may carry negligible volume. Such volume growth conditions are standard in the theory of scaling limits of random walks via resistance forms, where they underlie Gromov--Hausdorff-type convergence; see Athreya, L\"ohr and Winter \cite{ALW16} for general topological background, and also Croydon, Hambly and Kumagai \cite{CHK}, Croydon \cite{Cres} and Noda \cite{Noda} for general resistance form convergence results (also \cite{Cint} for a specific conjecture about the random walk on a critical percolation cluster in high-dimensions. (As for applications of this general theory, see \cite{Cer} for work on the critical Erd\H{o}s-R\'enyi graph, \cite{ACHS, BCK17} for work on the uniform spanning tree, \cite{AC} for work on critical percolation clusters on hyperbolic random half-planar triangulations, and the recent remarkable work of Dankovic et al.\ on the random walk on critical percolation cluster on the two-dimensional triangular lattice \cite{Miller}.) We also note that understanding volume growth has been similarly important in deriving heat kernel estimates on critical structures (see the surveys of \cite{Kumanom,Kumstf}, as well as \cite{BJKS,BK,HHH,KM08,KNAO} for important examples, including the work of Kozma and Nachmias in which the spectral dimension of the incipient infinite cluster was derived). These considerations motivated us to derive the lower mass bound for the critical cluster. Combined with the $k$-point convergence of \cite{CCHS26}, it also yields the one-arm constant, by the following mechanism. The measure convergence determines the asymptotic probability that the cluster deposits macroscopic mass at distance $r$, while the lower mass bound shows that a cluster cannot \emph{reach} distance $r$ without doing so; reaching and depositing mass are therefore asymptotically the same event, and the exit probability inherits the exact super-Brownian asymptotics.

\emph{Independent parallel work of Blanc-Renaudie and Hutchcroft.} As this paper was being finalized, the preprint of Blanc-Renaudie and Hutchcroft \cite{BRH26} appeared. There, the first-order asymptotics of the $k$-point function --- the conjecture of Aizenman and Newman \cite{AN84} in its general form, with the mutual distances between the $k$ points tending to infinity at arbitrary relative rates --- are established under the perturbative hypotheses of the lace expansion (nearest-neighbour in sufficiently high dimensions, or sufficiently spread-out when $d>6$), and are combined with a detailed analysis of the intrinsic geometry of the cluster to obtain a scaling limit in topologies considerably stronger than the ones considered here: the cluster converges as an embedded metric measure space to the continuum random tree with its Brownian embedding, jointly for the chemical, pivotal and resistance distances, and first-order one-arm asymptotics with identified constants follow, in the Euclidean as well as the chemical metric. The two works were carried out independently, and the approaches are complementary. In \cite{BRH26} the lace expansion is confined to a single step, namely the proof of a ``two-blob'' mixing estimate stating that, conditionally on a connection between two distant vertices, the configurations near the two endpoints decouple into independent copies of the incipient infinite cluster; beyond this estimate and the first-order asymptotics of the two-point function, no further lace expansion input is required there (see \cite[Theorems~1.11 and~1.12]{BRH26}). Taking the $k$-point convergence \eqref{eq:guiding-kp} of \cite{CCHS26} as an input, the route followed here is short and self-contained, makes no use of the lace expansion, and applies in particular to the nearest-neighbour model in dimensions $d\ge11$, which is not covered by the unconditional results of \cite{BRH26}: in that setting the two-point asymptotics are known \cite{FH17}, but the two-blob estimate has at present not been verified (see \cite[Remark~1.3]{BRH26}, where it is suggested that such an estimate could be derived from the construction of the incipient infinite cluster of \cite{CCHSiic}, though this is not pursued there). Finally, no extrinsic lower mass bound in the form of Theorem~\ref{thm:mainLMB} is stated in \cite{BRH26}: the tightness analysis there proceeds via a lower volume bound for chemical balls together with an equicontinuity estimate for the Euclidean embedding, conditionally on the total volume of the cluster and under the sharp two-point asymptotics (see \cite[Lemmas~6.5 and~6.6]{BRH26}). While an extrinsic statement could be assembled from those ingredients in that setting, Theorem~\ref{thm:mainLMB} is proved by a direct percolation argument, requires only the up-to-constants two-point bound \eqref{eq:2pt}, and is stated conditionally on the one-arm event, which is the form required by the mechanism described above.

\subsection{The canonical measure of super-Brownian motion}\label{sec:SBM}

Let $\gamma>0$. For a finite measure $m$ on $\R^d$, let $\bP_m$ denote the law of the
$(\tfrac12\Delta,\gamma)$-super-Brownian motion $(X_t)_{t\ge0}$ started from
$m$: that is, the $\cM$-valued continuous strong Markov process characterized by
\begin{equation}\label{eq:laplace}
\E_m\bigl[e^{-\langle\phi,X_t\rangle}\bigr]=e^{-\langle u_t,m\rangle},
\qquad
\partial_t u=\tfrac12\Delta u-\tfrac{\gamma}{2}u^2,\quad u_0=\phi,
\end{equation}
for $\phi\in\Cc$; see \cite{Eth00,LG99,Per02}. The \emph{canonical measure}
$\mathbb{N}_x$ is the $\sigma$-finite measure on continuous $\cM$-valued paths
defined by $\mathbb{N}_x:=\lim_{\eps\downarrow0}\eps^{-1}\bP_{\eps\delta_x}$
\cite[Exercice II.7.2]{Per02};
equivalently, under $\bP_{\eps\delta_x}$ the process is the superposition of a
Poisson collection of excursions with intensity $\eps\,\mathbb{N}_x$
\cite[Theorem~IV.4]{LG99}. Under $\bN$, the canonical measure at the origin, the
extinction time $\sigma:=\inf\{t:X_t=0\}$ is finite a.e., with the classical
survival formula
\begin{equation}\label{eq:survival}
\bN\bigl(\sigma>t\bigr)=\bN\bigl(X_t\neq0\bigr)=\frac{2}{\gamma t},
\qquad t>0,
\end{equation}
and the total occupation measure $\mu=\int_0^\infty X_t\,dt$, introduced in Section~\ref{sec:mainresults}, is a.e.\ a finite
measure on $\R^d$ with compact support; $\Ran:=\supp\mu$ is its closed support.
In particular the total mass is not integrable under $\bN$: $\bN[\mu(\R^d)]=\infty$
(Proposition~\ref{prop:totalmass}). Further properties of $\bN$ are recalled in
Section~\ref{sec:sbmstructure} as needed.

\subsection{The key formulas linking percolation to the SBM universality class}\label{sec:guiding}

The argument rests on an exact correspondence between the moments of the
cluster and those of super-Brownian motion. On the super-Brownian side, the moments of the
occupation measure $\mu$ under $\bN$ are given by the following identity: for
$\phi_1,\dots,\phi_k\in\Ccg$,
\begin{equation}\label{eq:guiding-adler}
\bN\Bigl[\prod_{i=1}^k\mu(\phi_i)\Bigr]
=\gamma^{\,k-1}\sum_{T\in\T_k}\int_{(\R^d)^k}
I_T(z_1,\dots,z_k)\prod_{i=1}^k\phi_i(z_i)\,dz_i ,
\end{equation}
where the sum runs over the binary trees with $k+1$ labelled leaves, one pinned at the origin, and $I_T$ is
the tree integral already encountered in \eqref{eq:guiding-kp}. The
identity \eqref{eq:guiding-adler} is proved in Proposition~\ref{prop:occmoments}: it is
a consequence, obtained by integrating out the times, of the classical
fixed-time moment formulas for superprocesses that go back to Dynkin
\cite{Dyn88} and, in the graphical form used here, to Adler \cite{Adl93}. On
the percolation side, the main theorem of \cite{CCHS26} states that the
rescaled $(m{+}1)$-point functions converge to \emph{precisely these tree
integrals}: this is the convergence \eqref{eq:guiding-kp}, whose left-hand
side, summed against test functions, is exactly the $m$-th moment of the
measures $\nu_n$ of \eqref{eq:empirical}. The two displays together say that
\emph{every moment of $\nu_n$ converges to the corresponding moment of $\bN$},
with $\gamma=\lambda/\mathfrak a$ (the constants $\mathfrak a,\lambda$ of \eqref{eq:guiding-kp}); in particular $\mathbb{N}_{0,\gamma}=\bN$.
Section~\ref{sec:measureconv} is devoted to upgrading this convergence
of moments to convergence of the measures themselves.

The method of moments, however, does not apply directly: the obstructions, and
their resolution by size-biasing, are sketched at the beginning of
Section~\ref{sec:measureconv}, and the passage from convergence of measures to
convergence of supports at the beginning of Section~\ref{sec:onearm}. The
paper is organized as follows: Section~\ref{sec:lmb} proves the lower mass
bound, Section~\ref{sec:measureconv} establishes the measure convergence
(Theorem~\ref{thm:mainA}), and Section~\ref{sec:onearm} derives the geometric
consequences (Theorems~\ref{thm:mainD} and~\ref{thm:mainC}). Each of
Sections~\ref{sec:lmb}--\ref{sec:onearm} opens with a brief sketch of its
proofs.

\subsection{Notation}\label{sec:notation}

We write $B(x,r)$ for the \emph{open} Euclidean ball of radius $r$ centred at
$x$, and $\overline B(x,r)$ for the closed ball; $B_r:=B(0,r)$ and
$\overline B_r:=\overline B(0,r)$. Lattice $\ell_\infty$ balls (boxes) are denoted
$Q_{z,r}:=\{y\in\Z^d:\lVert y-z\rVert_\infty\le r\}$. We write $\{0\leftrightarrow\partial B_r\}:=\{\mathcal C\not\subset\overline B_r\}$, and
$\fl{ny}\in\Z^d$ for the componentwise floor of $ny$.

As already introduced, $\cM$ is the space of finite Borel measures on $\R^d$ with the weak topology
(i.e.\ test functions $C_b(\R^d)$), which is a Polish space; $\cMM$ is the space of finite
Borel measures on $\cM$, again with the weak topology. We write
$\mu(\phi)=\int\phi\,d\mu$. Test functions against which moments are computed
lie in $\Ccg=C_c(\R^d)$, while the size-biasing weight $\varphi_0$ will always be
taken in $\Cc=C_c^+(\R^d)$, the nonnegative compactly supported continuous
functions. The Green
function of Brownian motion is $G(x)=c_d|x|^{-(d-2)}$ with
$c_d=\Gamma(d/2-1)/(2\pi^{d/2})$, and $p_s(x)=(2\pi s)^{-d/2}e^{-|x|^2/2s}$ is
the corresponding heat kernel, so that $G(x)=\int_0^\infty p_s(x)\,ds$ for $x\neq0$.

Generic constants $c,C$ change from line to line, with dependence on
parameters indicated when it matters. We write $A\lesssim B$ if $A\le CB$ for such a constant $C$, $A\gtrsim B$ if $B\lesssim A$, and $A\asymp B$ if both $A\lesssim B$ and $B\lesssim A$; a subscript, as in $A\lesssim_\Lambda B$, emphasizes that the implicit constant may depend on the parameter~$\Lambda$. A handful of symbols are local to a
single section and defined there at first use (for instance the density
parameter $\alpha$ and the regularity scale $K$ of Section~\ref{sec:lmb});
outside such a declaration, the notation is global.

For convenience we collect the most frequently used notation.
\begin{center}
{\small
\begin{tabular}{@{}lll@{}}
\multicolumn{3}{@{}l}{\textbf{Frequently used notation}}\\[3pt]
$\tau_k(x_1,\dots,x_k)$ & $k$-point connection function & \S\ref{sec:guiding}\\
$\mathcal C$ & open cluster of the origin & \\
$B(x,r)$, $B_r$; $\overline B(x,r)$ & open \emph{Euclidean} ball (centre $x$ resp.\ $0$); its closure & \S\ref{sec:notation}\\
$Q_{z,r}$ & lattice $\ell_\infty$ box $\{y:\lVert y-z\rVert_\infty\le r\}$ & \S\ref{sec:notation}\\
$\mu_n$; $\nu_n=n^2\bP(\mu_n\in\cdot)$ & rescaled empirical measure; its intensity & \eqref{eq:empirical}\\
$\varphi_0$; $\nuh_n$, $\Nh$ & size-biasing weight; size-biased measures & \eqref{eq:sizebias}\\
$\bN$; $\mu=\int_0^\infty X_t\,dt$; $\Ran=\supp\mu$ & canonical measure; occupation measure; range & \S\ref{sec:SBM}\\
$\cM$; $\cMM$ & finite measures on $\R^d$; on $\cM$ & \S\ref{sec:notation}\\
$\Cc$; $\Ccg$ & nonnegative resp.\ general $C_c$ test functions & \S\ref{sec:notation}\\
$\T_k$; $I_T$ & binary trees with $k+1$ labelled leaves; tree integrals & \S\ref{sec:treedef}\\
$G(x)$; $p_s(x)$ & Green function; heat kernel & \S\ref{sec:notation}\\
$\mathfrak a,\lambda$; $\gamma=\lambda/\mathfrak a$ & constants of the $k$-point limit; SBM branching parameter & \eqref{eq:guiding-kp}\\
$\theta_K=\theta_1K^{-2}$ & exit mass of the range & Prop.~\ref{prop:range}\\
\end{tabular}}
\end{center}

\section{Extrinsic lower mass bound}
\label{sec:lmb}

This section, which is self-contained given the classical inputs recalled below at \eqref{onearm}, \eqref{taubound} and \eqref{voltail}, proves the lower mass bound of Theorem~\ref{thm:mainLMB}. Providing a key input for the proof of this result is the following estimate, which establishes that if $\mathcal{C}$ hits a box of size $r$, then there will be of order $r^4$ points of $\mathcal{C}$ in close proximity to the box in question (with high probability). This is proved in Section \ref{proofsec}. Furthermore, in Section \ref{proofsec2}, we explain how it leads to Theorem \ref{thm:mainLMB}.

\begin{thm}\label{thm:ulmb} Assume $d>6$ and the two-point bound \eqref{eq:2pt} (in particular, this holds for nearest-neighbour percolation with $d\ge11$). Let $3< \Lambda <\infty$. There exists a constant $c<\infty$ such that, for every $\eta\in(0,1)$,
\[\limsup_{r \rightarrow \infty}\sup_{z\in Q_{0,\Lambda r}\backslash Q_{0,3r}}\bP\left(\left|\mathcal C\cap Q_{z,2r}\right|\le \eta r^4 \mid 0 \leftrightarrow Q_{z,r}\right)\leq c\eta^{1/4}.\]
\end{thm}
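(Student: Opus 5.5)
We will prove the bound by a first-hitting decomposition at the box $Q_{z,r}$, followed by a second-moment (Paley--Zygmund type) argument for the volume that the cluster deposits near the hitting point. The inputs are the one-arm bound \eqref{onearm}, the two-point bound \eqref{taubound}/\eqref{eq:2pt}, the volume tail \eqref{voltail}, and the tree-graph inequality. First, by \eqref{eq:2pt} and the tree-graph bound, since $z$ lies at distance between $3r$ and $\Lambda r$ from the origin,
\[
\bP(0\leftrightarrow Q_{z,r})\asymp_\Lambda r^{2-d}\,r^{d-4}=r^{-2}.
\]
The lower bound here comes from a second-moment argument on $|\mathcal C\cap Q_{z,r}|$, whose first moment is of order $r^{d}r^{2-d}=r^2$ and whose second moment is of order $r^6$. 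The upper bound follows from the one-arm bound \eqref{onearm} applied at a point of $Q_{z,r}$, combined with a union bound over the $O(1)$ boxes of side $r/2$ covering $Q_{z,r}$ together with BK. It therefore suffices to show
\[
\bP\bigl(0\leftrightarrow Q_{z,r},\ |\mathcal C\cap Q_{z,2r}|\le\eta r^4\bigr)\lesssim_\Lambda \eta^{1/4}r^{-2}.
\]

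The plan is to split according to whether the cluster is well connected inside $Q_{z,2r}$. Partition $Q_{z,3r/2}$ into subboxes of side $\eta^{a}r$, for a small exponent $a$ to be chosen. On $\{0\leftrightarrow Q_{z,r}\}$ there is an open path from the boundary of $Q_{z,2r}$ into $Q_{z,r}$, and so it crosses an annulus of width $r/2$. The key claim is a \emph{local volume estimate}: for a vertex $y$ that the cluster reaches, the cluster of $y$ restricted to $Q_{y,s}$ has volume at least $\eta s^4$-ish with probability $1-O(\eta^{1/4})$, given that it crosses to distance $s$. We will prove this via the conditional second moment. Let
\[
V_s(y):=\bigl|\{w\in Q_{y,s}: y\leftrightarrow w \text{ inside } Q_{y,2s}\}\bigr|.
\]
Then $\E[V_s]\asymp s^2$ and $\E[V_s^2]\lesssim s^6$ by the tree-graph inequality. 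Conditioning on the one-arm event, which has probability $\asymp s^{-2}$, the first moment becomes $\gtrsim s^4$ and the second $\lesssim s^8$, which gives a positive-probability lower bound but not a probability close to $1$. To boost this to $1-c\eta^{1/4}$, we use that the crossing path passes through many ($\asymp\eta^{-1/4}$) disjoint annuli at scales between $\eta^{1/4}r$ and $r$. In each annulus, the path independently has a chance bounded below to spawn a local blob of volume of order (scale)$^4$. By a decoupling or BK-type argument, using that the events in disjoint annuli are essentially independent given the backbone, the probability that all of them fail is small. A blob at scale $\eta^{1/4} r$ only contributes $\eta r^4$, however, so we instead take scales $2^{-j}r$ for $j\le J$ and require success at some scale $s\ge \eta^{1/4}r$. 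Each such scale contributes volume at least $c s^4\ge \eta r^4$ once $\eta$ is small.

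An alternative, simpler route, which we will attempt first, is a direct Chebyshev-type bound. We write
\[
\bP(0\leftrightarrow Q_{z,r},\,|\mathcal C\cap Q_{z,2r}|\le \eta r^4)\le \sum_{x}\bP\bigl(0\leftrightarrow x \text{ off } Q,\ \text{local cluster of } x \text{ in } Q_{x,r/2}\text{ reaches distance } s \text{ but has volume}\le \eta r^4\bigr),
\]
where $x$ is the first entry point, and we decouple by BK into $\tau_2(0,x)$ times a local probability. The local factor is controlled using the volume tail \eqref{voltail}, $\bP(|\mathcal C|\ge t)\asymp t^{-1/2}$: a cluster of volume at most $\eta r^4$ that still reaches distance $r$ has probability of order $r^{-2}\eta^{1/4}$, by a small-volume/large-diameter estimate. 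That estimate says a cluster of volume $\le m$ reaching distance $r$ needs a path of length $\ge r$ whose sausage has volume $\gtrsim r^2\cdot(\text{something})$, and it is obtained from the one-arm bound at scale $\sqrt{m}^{1/2}=\eta^{1/4} r$ together with the iteration above.

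The main obstacle is this decoupling. The first-hitting point $x$ is not a stopping set compatible with BK, because the local blob around $x$ may use edges of the arm. We will handle this by exploring the cluster from $0$ until the box $Q_{z,r}$ is first reached, which is a stopping set. Conditional on this explored set, the remaining edges are independent, and the local volume estimate is applied in the unexplored region. This requires controlling the boundary effects, namely showing that the explored set does not block most of $Q_{x,s}$, and we will do this via the $d>6$ triangle-type sums derived from \eqref{eq:2pt}.
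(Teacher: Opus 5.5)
Your reduction to $\bP(0\leftrightarrow Q_{z,r},\,|\mathcal C\cap Q_{z,2r}|\le\eta r^4)\lesssim_\Lambda\eta^{1/4}r^{-2}$ is fine. So is the estimate $\bP(0\leftrightarrow Q_{z,r})\asymp r^{-2}$; the upper bound is simply $\{0\leftrightarrow Q_{z,r}\}\subseteq\{0\leftrightarrow Q_{0,r}^c\}$.

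The gap is the step that carries the whole theorem: boosting the Kozma--Nachmias positive-probability bound to $1-O(\eta^{1/4})$ because ``events in disjoint annuli are essentially independent given the backbone''. This is asserted, not proved, and it does not hold as set up.
\begin{itemize}
\item The backbone is selected by the configuration and is not a stopping set, so conditioning on it does not leave the other edges independent.
\item Your second moment for $V_s(y)$ is for a \emph{fixed} $y$ conditioned on its own arm. It says nothing about the conditional law given an exploration that has already revealed how the cluster reached $y$, and that conditional law is exactly what an iteration over annuli needs.
\item Your proposed repair does not help. If you explore from $0$ until $Q_{z,r}$ is first hit at $x$, the event $\{0\leftrightarrow Q_{z,r}\}$ is already realized by the explored set. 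The fresh cluster of the single point $x$ in the unexplored region then has at least $\eta r^4$ vertices only with probability $O(\eta^{-1/2}r^{-2})$, by stochastic domination and $\bP(|\mathcal C|\ge m)\asymp m^{-1/2}$.
\end{itemize}
The ``simpler route'' also fails concretely.
\begin{itemize}
\item BK cannot split $\{0\leftrightarrow x\}$ from ``the local cluster of $x$ reaches distance $s$ but has at most $\eta r^4$ vertices''. That event is not increasing, and it is not witnessed disjointly from the path, since the local cluster contains the last stretch of the path to $0$.
\item Once the first-entry constraint is dropped, the sum costs $\sum_{x\in\partial Q_{z,r}}\tau(x)\asymp r$. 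Even granting your local factor, this gives at best $\eta^{1/4}r^{-1}$ instead of $\eta^{1/4}r^{-2}$.
\item The ``small-volume/large-diameter estimate'' is the theorem itself.
\end{itemize}
Finally, requiring success at one of $O(\log(1/\eta))$ dyadic scales yields only some unspecified power $\eta^{\kappa}$, not $\eta^{1/4}$.

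Your count of $\asymp\eta^{-1/4}$ annuli of width $\asymp\eta^{1/4}r$ is the right architecture. What makes it work in the paper, following \cite{KN11,ASS}, is exploring from \emph{outside} and using many entry points at once.
\begin{itemize}
\item For $\mathcal Q_{z,i}=Q_{z,(i\varepsilon+1)r}$, one conditions on the restricted cluster $\mathcal C(0;\mathcal Q_{z,i}^c)$, which is a genuine stopping set.
\item The event $\{0\leftrightarrow Q_{z,r}\}$ is used only to force $\gtrsim\alpha r^2$ pioneer points on $\partial\mathcal Q_{z,i}$, with failure probability $\lesssim\alpha/(i\varepsilon)^2$. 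If there are at most $\alpha r^2$ pioneers, one of them must carry an arm of length $i\varepsilon r$, so a union bound applies.
\item An exploration/Azuma argument shows that most pioneers are regular, i.e.\ the explored cluster is not too dense near them. This is what makes connections \emph{off} the explored cluster comparable to free ones, via a capacity lower bound and a BK bound on connections through the explored set, using $d>6$.
\item A first/second moment computation over $\gtrsim\varepsilon^2r^2$ such pioneers, each contributing $\asymp L^2$ in expectation with $L=\varepsilon r/4$, gives probability $\ge c$ of $\gtrsim\varepsilon^4r^4$ new vertices in the annulus. This holds uniformly over the outer configuration, and it is a collective effect that no single point can produce.
\end{itemize}
Iterating inward over the annuli gives a failure probability $(1-c)^{\varepsilon^{-1}/2}$. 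The rate $\eta^{1/4}$ comes from the union bound $\varepsilon^{-1}\cdot O(\varepsilon^2)$ over annuli for the pioneer counts, with $\varepsilon\asymp\eta^{1/4}$.
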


The inspiration for the proof of the above result comes from \cite[Theorem 2]{KN11}, which provides a lower bound on the number of points of $\mathcal{C}$ within an annulus. An important point to note about the statement of Theorem \ref{thm:ulmb} is that the upper bound can be made arbitrarily small by adjusting $\eta$. By contrast, in \cite{KN11}, the corresponding upper bound simply gives that the probability is bounded away from 1, uniformly over the various scales. We highlight a simpler adaptation of the part of the argument from \cite{KN11} that we follow can be found in \cite{ASS}, and a bound related to Theorem \ref{thm:ulmb} for the mass at the origin of a large percolation cluster appears in \cite{HHH} (see Proposition 5.1 and Lemma 6.1).

To obtain our stronger conclusion, we iterate the approach of \cite{KN11} over a range of annuli. A schematic for our strategy is provided in Figure \ref{fig:proofidea}. In particular, if $\mathcal{C}$ hits a box $Q_{z,r}$ (which is assumed to be at a distance of order $r$ from the origin), we will show that the mass of $r^4$ points that we are looking for can be found in $Q_{z,2r}\backslash Q_{z,3r/2}$. To demonstrate this, we subdivide this annulus into finer annuli of width $\varepsilon r$. It is straightforward to show that the number of pioneer points on the outer boundary of each of these annuli (that is, the points of $\mathcal{C}$ that are connected to $0$ outside of the box in question) is at least of order $r^2$ (see Lemma \ref{pioneerpoint}; pioneer points are shown on Figure \ref{fig:proofidea} as solid black circles). Moreover, following \cite{KN11}, or more precisely the adaptation of \cite{KN11} provided by \cite{ASS}, we can show that most of these pioneer points are regular, in the sense that the cluster seen outside the current annuli is not too dense around them (see Lemmas \ref{Kregpoint} and \ref{KLgood}). Similarly to the second moment argument of \cite{KN11}, this allows us to suitably decouple the contributions to the volume from separate regular points and, as a consequence, deduce that there is a strictly positive probability that each annulus contains an order $r^4$ points (see Lemma \ref{Iterating}). Finally, through an iterative argument, which gives multiple chances to find a good annuli, we can make this probability arbitrarily close to 1 by adjusting $\varepsilon$ suitably, see Section \ref{proofsec} for details.

\begin{figure}
    \centering
    \includegraphics[width=0.8\textwidth]{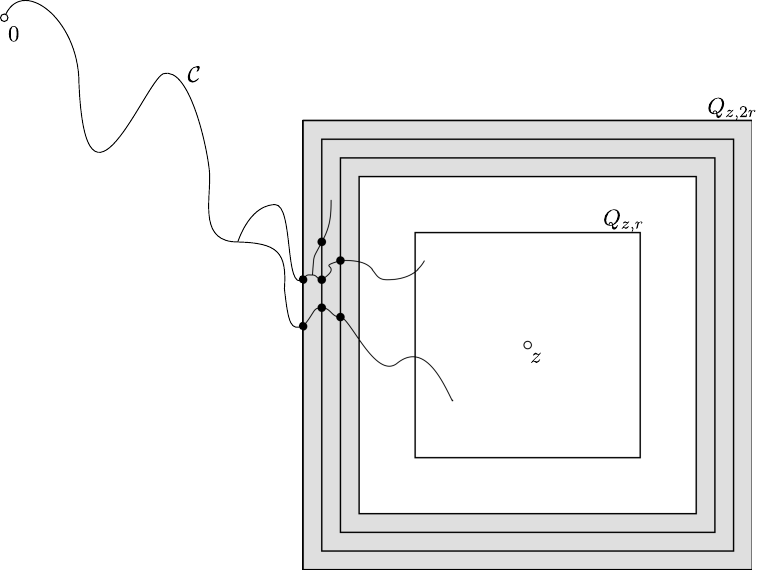}
    \caption{Schematic for the proof of Theorem \ref{thm:ulmb}. The solid black circles represent pioneer points.}
    \label{fig:proofidea}
\end{figure}

For the rest of this section we assume $d>6$ and the two-point bound \eqref{eq:2pt}. In particular, this allows us to use the cluster-tail asymptotics $\bP(|\mathcal C|\ge n)\asymp n^{-1/2}$ (see \eqref{voltail}), which hold under the so-called triangle condition of Barsky and Aizenman \cite{BA91} and Hara and Slade \cite{HS90}, and which are in turn readily implied by \eqref{eq:2pt}.

\subsection{Probability of connecting to a box}

Before proceeding with the program for proving Theorem \ref{thm:ulmb} outlined above, we give a bound on the probability that $0$ is connected to $Q_{z,r}$ for $z$ in an appropriate range. It is for this bound that we require $z$ is at a distance of order $r$
from the origin. Indeed, we believe that Theorem \ref{thm:ulmb} is true if we take the supremum over $z\not\in Q_{0,3r}$, but then we would have to handle more carefully the situation when $z$ is very far from the origin, as the probability of event we are conditioning upon would be much smaller than $r^{-2}$. We recall that we are assuming the one-arm estimate from \cite[Theorem 1]{KN11}:
\begin{equation}\label{onearm}
\bP[0\longleftrightarrow Q_{0,r}^c]\asymp r^{-2},\qquad r\geq 1,
\end{equation}
and the two-point function estimate
\begin{equation}\label{taubound}
\tau(x)\asymp \tnorm{x}^{2-d},\quad \forall x\in\Z^d,
\end{equation}
where we have written $\tnorm{x}=\|x\|\vee1$. Here $\tau(x)=\tau_2(0,x)$ is the two-point function of \eqref{eq:2pt}, and $\|\cdot\|$ denotes the Euclidean norm (any fixed norm gives the same estimates up to constants). In particular, the two-sided estimate \eqref{taubound} is the uniform two-point function bound~\eqref{eq:2pt}, and \eqref{onearm} is Proposition~\ref{prop:KN}; both hold unconditionally for the nearest-neighbour model in $d\ge11$ (see Section~\ref{sec:assumptions}).

\begin{lem}\label{lambda}
Let $\Lambda>3$. There exist positive constants $c_{\Lambda}$ and $c$, where $c_{\Lambda}$ depends only on $\Lambda$ (and $d$), such that
    \begin{align*}
        c_{\Lambda}r^{-2}\le \bP[0 \leftrightarrow Q_{z,r}]\le cr^{-2},\qquad \forall z\in Q_{0,\Lambda r}\setminus Q_{0,3r}.
    \end{align*}
\end{lem}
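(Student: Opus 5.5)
The upper bound is a direct consequence of the one-arm estimate \eqref{onearm}. For $z\in Q_{0,\Lambda r}\setminus Q_{0,3r}$ we have $\lVert z\rVert_\infty>3r$. Hence every point of $Q_{z,r}$ has $\ell_\infty$ norm at least $2r$, so $Q_{z,r}\subset Q_{0,2r-1}^c$. Consequently
\[
\{0\leftrightarrow Q_{z,r}\}\subset\{0\leftrightarrow Q_{0,2r-1}^c\},
\]
and \eqref{onearm} gives $\bP[0\leftrightarrow Q_{z,r}]\le C(2r-1)^{-2}\le cr^{-2}$. The constant $c$ does not depend on $\Lambda$.

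For the lower bound I will use a second moment (Cauchy--Schwarz) argument on the random variable $N:=|\mathcal C\cap Q_{z,r}|$. Since $\{N>0\}=\{0\leftrightarrow Q_{z,r}\}$, we have
\[
\bP[0\leftrightarrow Q_{z,r}]\ge \frac{\E[N]^2}{\E[N^2]},
\]
so it suffices to bound the two moments.

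\emph{First moment.} For $x\in Q_{z,r}$ we have $\|x\|\le \sqrt d(\Lambda+1)r$. By the lower bound in \eqref{taubound}, $\tau(x)\ge c(\Lambda r)^{2-d}$ up to $d$-dependent constants. Summing over the roughly $r^d$ points of the box gives $\E[N]\gtrsim_\Lambda r^{2}$.

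\emph{Second moment.} By the Aizenman--Newman tree-graph inequality,
\[
\E[N^2]=\sum_{x,y\in Q_{z,r}}\tau_3(0,x,y)\le\sum_{x,y\in Q_{z,r}}\sum_{w\in\Z^d}\tau(w)\tau(x-w)\tau(y-w).
\]
To bound the right-hand side, first sum over $x$ and $y$ for fixed $w$. For any $w$, the upper bound in \eqref{taubound} gives $\sum_{x\in Q_{z,r}}\tau(x-w)\lesssim r^2$, since the sum of $\tnorm{u}^{2-d}$ over a box of radius $r$ is maximized when the box is centred at $w$. Next split the sum over $w$ into two regions.
\begin{itemize}
\item If $\dist(w,Q_{z,r})\le r$, then $\|w\|\ge r$ (because $Q_{z,r}$ lies at distance at least $2r$ from the origin), so $\tau(w)\lesssim r^{2-d}$. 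There are $O(r^d)$ such $w$, and each contributes at most $r^{2-d}\cdot r^2\cdot r^2$. This region therefore contributes $O(r^6)$.
\item If $\dist(w,Q_{z,r})=s>r$, then $\tau(x-w)\lesssim s^{2-d}$ for every $x$ in the box, so $\sum_{x}\tau(x-w)\lesssim r^d s^{2-d}$, and the same holds for $y$. The contribution is bounded by
\[
\sum_{w}\tau(w)\,r^{2d}\,(\dist(w,Q_{z,r})\vee r)^{4-2d}.
\]
In the part of this region with $\|w\|\le \Lambda' r$, where $\Lambda'$ is a multiple of $\Lambda$, the factor $r^{2d}(\cdot)^{4-2d}\lesssim r^4$ and $\sum_{\|w\|\le\Lambda' r}\tau(w)\lesssim(\Lambda r)^2$, giving $O_\Lambda(r^6)$. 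In the part with $\|w\|$ large, $s\asymp\|w\|$, and the sum is bounded by $r^{2d}\sum_{\|w\|\ge\Lambda' r}\|w\|^{6-3d}$. This converges since $3d-6>d$, and it is $O(r^{2d}\cdot r^{6-2d})=O(r^6)$.
\end{itemize}
Altogether $\E[N^2]\lesssim_\Lambda r^6$.

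Combining the two moment bounds gives $\bP[0\leftrightarrow Q_{z,r}]\gtrsim_\Lambda r^4/r^6=c_\Lambda r^{-2}$, which is the lower bound.

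The main obstacle is the bookkeeping in the second-moment tree-graph sum: one has to check that no region of $w$ produces more than $r^6$. This is exactly where $d>6$ and the separation of the box from the origin are used, and the estimates are routine convolution bounds. The first moment is where the $\Lambda$-dependence enters. This dependence is unavoidable, because the distance from the origin to $z$ is of order $\Lambda r$.
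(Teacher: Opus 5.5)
Your proof is correct and follows the paper's route: the upper bound comes from inclusion in a one-arm event plus \eqref{onearm}, and the lower bound from the second-moment (Paley--Zygmund) inequality for $|\mathcal C\cap Q_{z,r}|$, with the first moment from the lower two-point bound and the second moment from the tree-graph/BK bound. The only difference is in the bookkeeping: the paper bounds $\sum_u \tnorm{u}^{2-d}\tnorm{w-u}^{2-d}\lesssim\tnorm{w}^{4-d}$ via \eqref{HHSbound} and then sums over the box with \eqref{bxbd}, which avoids your region split and shows that the $r^6$ bound uses neither $d>6$ nor the separation of the box from the origin.
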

\begin{proof}
    For the upper bound, observe that if $z \in Q_{0,\Lambda r}\setminus Q_{0,3r}$ and the event $0 \leftrightarrow Q_{z,r}$ occurs, then there must exist an open path from $0$ to $\partial Q_{0,r}$. Hence we obtain that
    \begin{align*}
       \bP[0 \leftrightarrow Q_{z,r}]\le \bP[0 \leftrightarrow Q_{0,r}^c] \lesssim r^{-2},
           \end{align*}
The second inequality is provided by \eqref{onearm}.

    For the lower bound, by the Paley--Zygmund inequality (see \cite[Lemma 4.1]{Kall}, for example), we have
    \begin{align*}
      \bP[0 \leftrightarrow Q_{z,r}]= \bP[|\mathcal C\cap Q_{z,r}|\ge 1]\ge \frac{\E[|\mathcal C\cap Q_{z,r}|]^2}{\E[|\mathcal C\cap Q_{z,r}|^2]}.
    \end{align*}
    Applying \eqref{taubound}, the numerator is bounded below by
    \begin{align*}
       \E[|\mathcal C\cap Q_{z,r}|]=\sum_{x \in Q_{z,r}}\tau(x)\gtrsim \sum_{x \in Q_{z,r}}\tnorm{x}^{2-d}\ge ((\Lambda+1) r)^{2-d}|Q_{z,r}|\gtrsim (\Lambda+1)^{2-d}r^2.
    \end{align*}
For the upper bound, by applying the BK-inequality (see \cite[Theorem 2.12]{Grim}, for example), we obtain
    \begin{align}\label{2ndmmt}
       \E[|\mathcal C\cap Q_{z,r}|^2]&=\sum_{y,w \in Q_{z,r}}\bP[0 \leftrightarrow y, 0\leftrightarrow w]\nonumber\\
       &=\sum_{y \in Q_{z,r}}\tau(y)+\sum_{\substack{y,w \in Q_{z,r}\\ y\neq w}}\bP[0 \leftrightarrow y, 0\leftrightarrow w]\nonumber\\
       &\lesssim r^2+\sum_{\substack{y,w \in Q_{z,r}\\ y\neq w}}\sum_{u \in \Z^d}\bP[\{0 \leftrightarrow u\} \circ\{ u\leftrightarrow w\} \circ\{ u\leftrightarrow y\}]\nonumber\\
       &\le  r^2+\sum_{\substack{y,w \in Q_{z,r}\\ y\neq w}}\sum_{u \in \Z^d}\tau(u)\tau(w-u)\tau(y-u).
    \end{align}
    Here, in the first inequality, $A \circ B$ denotes the bond-disjoint occurrence of the events $A$ and $B$, while $u$ is chosen to be the last intersection point of a path from $0$ to $w$
 and a path from $0$ to $y$, where ``last'' is taken along the path from $0$ to $w$.

 We observe that for $0\le h<d$, uniformly over $u,x \in \Z^d$
 \begin{align}\label{bxbd}
     \sum_{y \in Q_{x,L}}\tnorm{u-y}^{h-d}=\sum_{\substack{y \in Q_{x,L}\\ \tnorm{u-y}\le L}}\tnorm{u-y}^{h-d}+\sum_{\substack{y \in Q_{x,L}\\ \tnorm{u-y}> L}}\tnorm{u-y}^{h-d}\lesssim L^h.
 \end{align}
 Furthermore, it is proved in \cite[Proposition~1.7]{HHS03} that for $a\ge b>0$, there exists a constant $C$ depending on $a,b,d$ such that
 \begin{align}\label{HHSbound}
     \sum_{y \in \Z^d}\tnorm{y}^{-a}\tnorm{x-y}^{-b}\le
     \begin{cases}
         C\tnorm{x}^{-b},& \mbox{ if }a>d\\
         C\tnorm{x}^{d-(a+b)},& \mbox{ if } a<d~\text{and}~a+b>d.
     \end{cases}
 \end{align}
By applying \eqref{bxbd}--\eqref{HHSbound}, the second term in \eqref{2ndmmt} is bounded by
    \begin{align*}
       \sum_{y,w \in Q_{z,r}}\sum_{u \in \Z^d}\tnorm{u}^{2-d}\tnorm{w-u}^{2-d}\tnorm{y-u}^{2-d}
       &=\sum_{w \in Q_{z,r}}\sum_{u \in \Z^d}\tnorm{u}^{2-d}\tnorm{w-u}^{2-d}\sum_{y\in Q_{z,r}}\tnorm{y-u}^{2-d}\\
       &\lesssim r^2 \sum_{w \in Q_{z,r}}\tnorm{w}^{4-d}\\
       &\lesssim r^6.
    \end{align*}
    Combining the above estimates yields the desired lower bound for the probability $\bP[0 \leftrightarrow Q_{z,r}]$.
\end{proof}

\subsection{Pioneers and regularity}

To describe the strategy presented in Figure \ref{fig:proofidea} above more precisely, we introduce a decomposition of the annulus $Q_{z,2r}\setminus Q_{z,r}$ into smaller annuli $A_{z,i}$ by setting, for a given $\varepsilon>0$ and $i=1,\dots,\varepsilon^{-1}$,
\begin{align*}
A_{z,i}=Q_{z, (i\varepsilon+1)r}\setminus Q_{z, ((i-1)\varepsilon+1)r}.
\end{align*}
We treat $\varepsilon$ as a continuous parameter and, with a slight abuse of notation, write $\varepsilon^{-1}$ for $\lfloor\varepsilon^{-1}\rfloor$ throughout this section. (Alternatively, it would be enough for our arguments to consider the case when $\varepsilon^{-1}\in\mathbb{N}$.) For simplicity, we will subsequently abbreviate $Q_{z, (i\varepsilon+1)r}$ to $\mathcal Q_{z,i}$, so that $A_{z,i}={\mathcal Q_{z,i}\setminus \mathcal Q_{z,i-1}}$.

As indicated above, a key part of our argument is a regularity analysis. To present this, we next set out some key definitions. (Our definitions are essentially those of \cite{ASS}, though with a modification to take into account the fact that we are approaching a box from outside, whereas they were studying the situation inside a box.) Firstly, we define the number of pioneer points on $\partial \mathcal Q_{z,i}$, where $\partial \mathcal Q_{z,i}$ denotes the subset of $\mathcal Q_{z,i}$ containing those vertices with at least one neighbor in $\mathcal Q_{z,i}^c$, by setting
\begin{align*}
    X_{r,i}=\left|\left\{x \in \partial \mathcal Q_{z,i}: 0 \overset{\mathcal Q_{z, i}^c}{\longleftrightarrow} x \right\}\right|.
\end{align*}
Here, for subsets $A,B, C \subseteq \Z^d$, the event $A \overset{C}{\longleftrightarrow} B$ means that there exists an open path connecting a vertex of $A$ to a vertex of $B$ whose elements lie entirely within $C$, apart from possibly the terminal vertices. For $x \in \partial \mathcal Q_{z, i}$ and $s>0$, we define an event concerning the global density of the cluster containing $x$ by setting
\begin{align*}
\mathcal T_{s}(i;x)=\left\{\left|\mathcal C(x; \mathcal Q_{z,i}^c)\cap Q_{x,s}\right|\le s^4(\log s)^7\right\}\cap \left\{\left|\mathcal C(x; \mathcal Q_{z,i}^c)\cap Q_{x,s}\cap \partial \mathcal Q_{z,i}\right|\le s^2(\log s)^7\right\},
\end{align*}
where, for a subset $A \subseteq \Z^d$, and a vertex $x \in A$, we define the restricted cluster in $A$ by setting
\begin{align*}
 \mathcal C(x; A)=\{y \in A: x \overset{A}{\longleftrightarrow} y \}.
\end{align*}
With these definitions in place, we are now able to state what it means for a point on the boundary of $\mathcal Q_{z,i}$ to be regular.

\begin{defn}\label{def:Kreg}
For $x \in \partial \mathcal Q_{z, i}$, we say that $x$ is $s$-good if $\mathcal T_{s}(i;x)$ holds. Otherwise, we say that $x$ is $s$-bad. For $x \in \partial \mathcal Q_{z, i}$, we call $x$ a $K$-regular point if the following two conditions hold:
\begin{enumerate}
\item $x$ is $s$-good for $s\ge K$;
\item $x$ is connected to $0$ in $\mathcal Q_{z, i}^c$.
\end{enumerate}
Similarly, if $x$ is connected to $0$ in $\mathcal Q_{z, i}^c$, but it is not $K$-regular, then we say it is $K$-irregular. We write $X_{r,i}^{K-\mathrm{reg}}$ (resp.\ $X_{r,i}^{K-\mathrm{irr}}$) for the number of $K$-regular (resp.\ irregular) points on $\partial \mathcal Q_{z, i}$.
\end{defn}

Following \cite{ASS}, we also introduce a special kind of regular point, called a `line good' point.

\begin{defn}\label{def:KLgood}
Consider the event $\{\mathcal C(0; \mathcal Q_{z, i}^c)=B\}$ for a given $B$.
\begin{itemize}
\item Denote by $\mathrm{Reg}_i(B)$ a maximal subset of $K$-regular points on $\partial \mathcal Q_{z, i}$ whose pairwise $\ell_\infty$-distances are at least $3K$. (If there is more than one possibility for this set, then we choose one arbitrarily according to some fixed procedure.)
\item For each $x \in  \mathrm{Reg}_i(B)$, we define $L_x$ to be a line segment of length $K$ extending from $x$ inside $\mathcal Q_{z, i}$, perpendicularly to the face upon which $x$ is sited. If $x$ lies within a distance $K$ from the intersection of at least two faces of $\mathcal Q_{z, i}$, then $L_x$ is defined to be the union of (at most) $d$ line segments inside $\mathcal Q_{z, i}$ of total length at most $dK$ that connect $x$ to a point that is at a distance $K$ from $\partial \mathcal Q_{z, i}$. Note that, since the vertices are separated by a distance at least $3K$, two distinct $L_x$ are separated by a distance at least $K$.
\item For each $x \in  \mathrm{Reg}_i(B)$, we consider the maximal line segment from $x$ that is contained in $L_x$ and formed by open edges. We write $L(B)$ for the union of all these line segments. (Note this is a random set, depending on $B$ and the configuration within $\mathcal Q_{z, i}$.)
\end{itemize}
A point $x\in \mathrm{Reg}_i(B)$ is said to be $K$-line good if all edges of $L_x$ are open. For such points, write $x'$ for the point of $L_x$ at maximal distance from $x$, and let $\mathrm{Reg}'_i(B)$ be the collection of such points. We write $X_{r,i}^{K-\mathrm{line-good}}$ for the number of $K$-line good points on $\partial Q_{z, (i\varepsilon+1)r}$. (See Figure \ref{fig:KLgooddef}.) Moreover, for the set $\mathcal Q_{z, i}^c$, we define the corresponding extended cluster by setting
\begin{align}
\mathcal C_e(0; \mathcal Q_{z, i}^c)=\mathcal C(0;\mathcal Q_{z, i}^c)\cup L(\mathcal C(0;\mathcal Q_{z, i}^c)). \label{extclust}
\end{align}
\end{defn}

\begin{figure}
    \centering
    \includegraphics[width=0.54\textwidth]{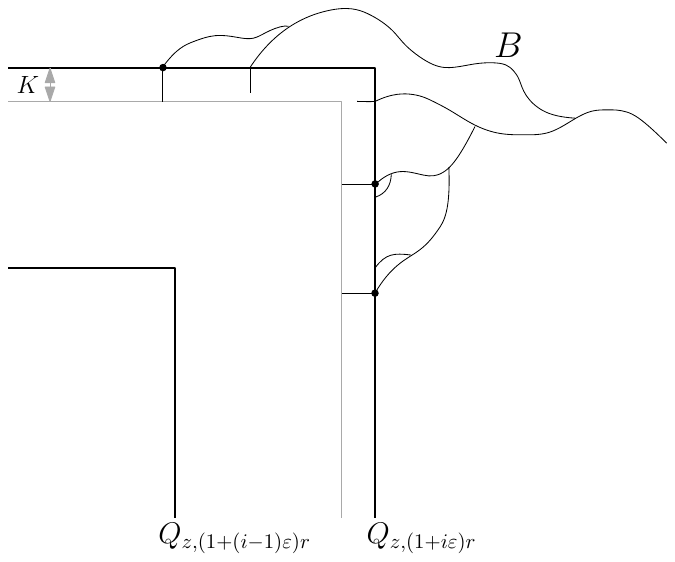}\qquad\includegraphics[width=0.36\textwidth]{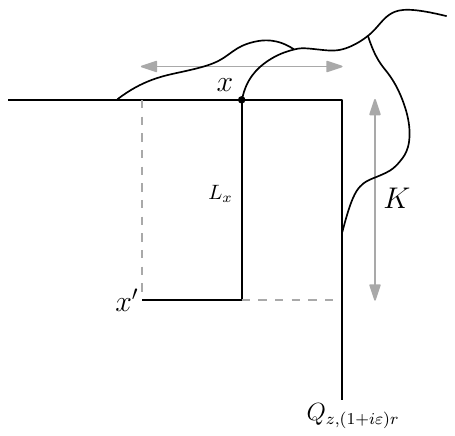}
    \caption{Left: Schematic for the definition of $K$-line good points. The line segments represent $L(B)$, and the solid black circles represent the $K$-line good points of $B$. Right: Schematic for the definition of $L_x$, for $x$ lying within distance $K$ of an edge of $\partial \mathcal Q_{z,i}$. The segment first extends orthogonally into $\mathcal Q_{z,i}$ (perpendicular to the face on which $x$ lies), then bends and extends orthogonally again -- this time parallel to the first face -- until it reaches distance $K$ from the second face. The resulting endpoint $x'$ is then at distance at least $K$ from all of $\partial \mathcal Q_{z,i}$. }
    \label{fig:KLgooddef}
\end{figure}

In the next three lemmas, we give estimates that allow us to understand the number of pioneer points, $K$-regular points and $K$-line good points occurring on the boundary of each annuli in our decomposition. Going forward, the key estimate of this section will be the bound given in Lemma \ref{KLgood}.

\begin{lem}\label{pioneerpoint}
Let $\Lambda>3$. There exists a $\Lambda$-dependent constant $C_{\Lambda}$ such that, for all $z \in Q_{0,\Lambda r}\setminus Q_{0,3r}$ and every $\varepsilon>0$, $i \in \{1, \cdots, \varepsilon^{-1}\}$ and $\alpha>0$,
\begin{align*}
       \bP[X_{r,i}\le \alpha r^2| 0 \leftrightarrow Q_{z,r}]\le C_{\Lambda} \frac{\alpha}{(i\varepsilon)^2}.
    \end{align*}
\end{lem}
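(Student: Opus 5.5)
The plan is a first-moment bound on the pioneer points combined with the one-arm estimate \eqref{onearm}, followed by division by the lower bound of Lemma~\ref{lambda}. Fix $z\in Q_{0,\Lambda r}\setminus Q_{0,3r}$ and $i$. Since $0\notin\mathcal Q_{z,i}$ and $Q_{z,r}\subseteq \mathcal Q_{z,i}$, any open path from $0$ to $Q_{z,r}$ must enter $\mathcal Q_{z,i}$. Its first vertex in $\mathcal Q_{z,i}$ lies on $\partial\mathcal Q_{z,i}$ and is connected to $0$ in $\mathcal Q_{z,i}^c$, so it is a pioneer point. The rest of the path, from that pioneer $x$ to $Q_{z,r}$, may use edges inside $\mathcal Q_{z,i}$.

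The first step is to decompose according to the value of the restricted cluster. Partition the event $\{0\leftrightarrow Q_{z,r},\,X_{r,i}\le\alpha r^2\}$ according to $\{\mathcal C(0;\mathcal Q_{z,i}^c)=B\}$. On this event the set of pioneers $P(B)=B\cap\partial\mathcal Q_{z,i}$ is determined by $B$, and we restrict to those $B$ with $|P(B)|\le\alpha r^2$.

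The event $\{\mathcal C(0;\mathcal Q_{z,i}^c)=B\}$ is measurable with respect to the edges having at least one endpoint in $\mathcal Q_{z,i}^c$. These are the edges explored in building the restricted cluster: the edges inside $\mathcal Q_{z,i}^c$ and those joining it to $\partial\mathcal Q_{z,i}$. On this event, $0\leftrightarrow Q_{z,r}$ forces some $x\in P(B)$ to be joined to $Q_{z,r}$ by an open path all of whose edges lie inside $\mathcal Q_{z,i}$, so that both endpoints of every edge are in $\mathcal Q_{z,i}$. This connection event depends only on edges with both endpoints in $\mathcal Q_{z,i}$, which are disjoint from the edges determining $\{\mathcal C(0;\mathcal Q_{z,i}^c)=B\}$. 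By independence and a union bound,
\[
\bP\bigl[\mathcal C(0;\mathcal Q_{z,i}^c)=B,\ 0\leftrightarrow Q_{z,r}\bigr]\le \bP\bigl[\mathcal C(0;\mathcal Q_{z,i}^c)=B\bigr]\sum_{x\in P(B)}\bP\bigl[x\leftrightarrow Q_{z,r}\bigr].
\]

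Each $x\in\partial\mathcal Q_{z,i}$ is at $\ell_\infty$-distance at least $i\varepsilon r-1$ from $Q_{z,r}$. Hence $x\leftrightarrow Q_{z,r}$ implies $x\leftrightarrow Q_{x,i\varepsilon r-2}^c$, and translation invariance together with \eqref{onearm} gives $\bP[x\leftrightarrow Q_{z,r}]\lesssim (i\varepsilon r)^{-2}$. The degenerate case $i\varepsilon r$ of order one is covered by enlarging the constant. Summing over $B$ with $|P(B)|\le\alpha r^2$ yields
\[
\bP\bigl[0\leftrightarrow Q_{z,r},\ X_{r,i}\le\alpha r^2\bigr]\lesssim \alpha r^2\,(i\varepsilon r)^{-2}=\frac{\alpha}{(i\varepsilon)^2}.
\]
Dividing by $\bP[0\leftrightarrow Q_{z,r}]\ge c_\Lambda r^{-2}$ from Lemma~\ref{lambda} gives a bound of the form $C\alpha(i\varepsilon)^{-2}r^{2}$.

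This is off by a factor $r^2$, so the naive count has to be sharpened, and I expect this to be the main obstacle. The fix is to keep the conditioning probability as $\bP[\mathcal C(0;\mathcal Q_{z,i}^c)=B]$ summed only over those $B$ that actually reach $\partial\mathcal Q_{z,i}$. Since $z\notin Q_{0,3r}$, any such $B$ forces $0\leftrightarrow Q_{0,r}^c$, so $\sum_{B:\,P(B)\neq\emptyset}\bP[\mathcal C(0;\mathcal Q_{z,i}^c)=B]\le\bP[0\leftrightarrow Q_{0,r}^c]\lesssim r^{-2}$ by \eqref{onearm}. Inserting this into the sum above gives
\[
\bP\bigl[0\leftrightarrow Q_{z,r},\ X_{r,i}\le\alpha r^2\bigr]\lesssim r^{-2}\cdot\frac{\alpha}{(i\varepsilon)^2}.
\]
Dividing by $c_\Lambda r^{-2}$ then yields the claimed bound $C_\Lambda\,\alpha/(i\varepsilon)^2$.

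The delicate points to check carefully are these:
\begin{enumerate}[label=(\roman*)]
\item the edge-disjointness used for independence, in particular how edges between two vertices of $\partial\mathcal Q_{z,i}$ are assigned;
\item that every pioneer relevant to the connection is counted in $X_{r,i}$;
\item that the reduction to the one-arm event is uniform in $i$ and $\varepsilon$.
\end{enumerate}
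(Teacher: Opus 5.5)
Your overall route is the paper's. You condition on $\{\mathcal C(0;\mathcal Q_{z,i}^c)=B\}$, union-bound over the at most $\alpha r^2$ pioneers, and bound each connection by the one-arm estimate at scale $i\varepsilon r$. You then use $\sum_{B:\,1\le|P(B)|\le\alpha r^2}\bP[\mathcal C(0;\mathcal Q_{z,i}^c)=B]\le\bP[0\leftrightarrow Q_{0,r}^c]\lesssim r^{-2}$ before dividing by Lemma~\ref{lambda}. The ``obstacle'' you ran into is only the loss from bounding that sum by $1$, and the paper sharpens it in exactly the same way.

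There is, however, a step that is false as written. On $\{\mathcal C(0;\mathcal Q_{z,i}^c)=B\}\cap\{0\leftrightarrow Q_{z,r}\}$ there need \emph{not} be a pioneer $x\in P(B)$ joined to $Q_{z,r}$ by an open path with all edges inside $\mathcal Q_{z,i}$. Here is a configuration with positive probability:
\begin{enumerate}
\item the cluster reaches a pioneer $y$ and steps into $\mathcal Q_{z,i}$;
\item it leaves the box again through another boundary vertex, to a vertex of $\mathcal Q_{z,i}^c\setminus B$ (the restricted cluster does not grow past its pioneers);
\item it travels in $\mathcal Q_{z,i}^c$ and re-enters at a boundary vertex $u$ that is not a pioneer;
\item it reaches $Q_{z,r}$ only from $u$.
\end{enumerate}
Then every route from $P(B)$ to $Q_{z,r}$ leaves the box. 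Taking the first pioneer on the path does not help either: the remainder of the path may exit $\mathcal Q_{z,i}$ and even revisit $B$, which destroys the independence you need.

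The repair is the one used in the paper. Take the \emph{last} vertex $v_j$ of an open path from $0$ to $Q_{z,r}$ that lies in $B$. Every open neighbour of a vertex of $B\cap\mathcal Q_{z,i}^c$ belongs to $B$, so $v_j\in P(B)$, and the rest of the path shows $v_j\overset{\mathrm{off}~B}{\longleftrightarrow}Q_{z,r}$. This event uses only edges with no endpoint in $B\cap\mathcal Q_{z,i}^c$. On the other hand, $\{\mathcal C(0;\mathcal Q_{z,i}^c)=B\}$ is measurable with respect to edges having an endpoint in $B\cap\mathcal Q_{z,i}^c$. Your coarser description, ``edges with an endpoint in $\mathcal Q_{z,i}^c$'', is not enough here, since the off-$B$ path may use edges outside the box. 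Independence then gives $\bP[\mathcal C(0;\mathcal Q_{z,i}^c)=B,\ 0\leftrightarrow Q_{z,r}]\le\bP[\mathcal C(0;\mathcal Q_{z,i}^c)=B]\sum_{x\in P(B)}\bP[x\leftrightarrow Q_{z,r}]$. This is your displayed inequality, and the remainder of your argument goes through unchanged.
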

\begin{proof}
We say that a cluster $A$ is $i$-admissible if $\bP[\mathcal C(0; \mathcal Q_{z, i}^c)=A]>0$ and $1\leq |A\cap \partial \mathcal{Q}_{z,i}|\leq  \alpha r^2$. We then have that
\begin{align*}
       \bP[X_{r,i}\le \alpha r^2,\: 0 \leftrightarrow Q_{z,r}]&=\sum_{\substack{A\:\text{admissible},\\ A \neq \varnothing }}\bP[\mathcal C(0; \mathcal Q_{z, i}^c)=A,\:0 \leftrightarrow Q_{z,r}]\\
       &=\sum_{\substack{A\:\text{admissible},\\ A \neq \varnothing }}\bP[\mathcal C(0; \mathcal Q_{z, i}^c)=A]\bP[0 \leftrightarrow Q_{z,r}|\mathcal C(0; \mathcal Q_{z, i}^c)=A].
    \end{align*}
Whenever the event $\{0 \leftrightarrow Q_{z,r}\}$ occurs, there exists a vertex  $y \in A \cap \partial \mathcal Q_{z,i}$ such that $y \overset{\mathrm{off }A}{\longleftrightarrow}Q_{z,r}$ occurs. Here, for subsets of vertices $A,B,C \subseteq \Z^d$, the event $A \overset{\mathrm{off}~C}{\longleftrightarrow}B$ means that there exists an open path connecting a vertex of $A$ to a vertex of $B$ that avoids all vertices of $C$, apart from possibly the terminal vertices. Thus, a union bound gives that
    \begin{align*}
      \bP[0 \leftrightarrow Q_{z,r}|\mathcal C(0; \mathcal Q_{z, i}^c)=A]
      &\le \sum_{y \in A \cap \partial \mathcal Q_{z,i}}\bP[y \overset{\mathrm{off}~A}{\longleftrightarrow}Q_{z,r}|\mathcal C(0; \mathcal Q_{z, i}^c)=A]  \\
      &\le\sum_{y \in A \cap \partial \mathcal Q_{z,i}}\bP[y \overset{\mathrm{off}~A}{\longleftrightarrow}Q_{z,r}] \\
      &\le | A \cap \partial \mathcal Q_{z,i}|\sup_{y \in A \cap \partial \mathcal Q_{z,i}}
      \bP[y \longleftrightarrow Q_{y,i\varepsilon r}^c].
    \end{align*}
Since $| A \cap \partial \mathcal Q_{z,i}|\le \alpha r^2$, we obtain that
    \begin{align*}
       \bP[X_{r,i}\le \alpha r^2| 0 \leftrightarrow Q_{z,r}]&\le \alpha r^2 \frac{\bP[0 \longleftrightarrow Q_{0,i\varepsilon r}^c] }{\bP[0\leftrightarrow Q_{z,r}]}\bP[1\le X_{r,i}\le \alpha r^2] \\
       &\le \alpha r^2 \frac{\bP[0 \longleftrightarrow Q_{0,i\varepsilon r}^c] }{\bP[0\leftrightarrow Q_{z,r}]}\bP[0\longleftrightarrow Q_{0,r}^c]\\
       &\lesssim_{\Lambda} \frac{\alpha}{(i\varepsilon)^2},
    \end{align*}
    where the final inequality follows from \eqref{onearm} and Lemma \ref{lambda}.
\end{proof}

In proving a good fraction of pioneer points are regular, as we do in the next lemma, we consider local versions of the quantities introduced in Definition \ref{def:Kreg}. In particular, we define a local density condition by setting, for $x \in \partial \mathcal Q_{z, i}$ and $s>0$,
\begin{align*}
\mathcal T_{s}^{\mathrm{loc}}(i;x)=&\bigcap_{y \in Q_{x,s}}\big\{ |\mathcal C(y; Q_{x,s^d}\cap \mathcal Q_{z,i}^c)\cap Q_{x,s}|\le s^4(\log s)^4\big\} \\
&\bigcap \bigcap_{y \in Q_{x,s} \cap \partial \mathcal Q_{z,i}}\big\{ |\mathcal C(y; Q_{x,s^d}\cap \mathcal Q_{z,i}^c)\cap Q_{x,s} \cap \partial \mathcal Q_{z,i}|\le s^2(\log s)^4\big\}\\
&\bigcap \big\{ \exists \:\text{at most}~(\log s)^3~\text{disjoint path from}~Q_{x,s}~\text{to}~Q_{x,s^d}~\text{in}~\mathcal Q_{z,i}^c \big\}.
\end{align*}
Again, this is based on the corresponding definition from \cite{ASS}. We note that for any $i \in \{1,\cdots, \varepsilon^{-1}\}$ and $x \in \partial \mathcal Q_{z, i}$, the event $\mathcal T_{s}^{\mathrm{loc}}(i;x)$ implies $\mathcal T_{s}(i;x)$. (Cf.\ \cite[Claim 5.4]{ASS} or \cite[Claim 4.1]{KN11}.) We also define $s$-locally good and $s$-locally bad points, similarly to $s$-good and $s$-bad points (resp.), replacing $\mathcal T_{s}(i;x)$ by $\mathcal T_{s}^{\mathrm{loc}}(i;x)$ in Definition \ref{def:Kreg}.

\begin{lem}\label{Kregpoint}
Let $\Lambda>3$. There exist positive constants $c$ and $C_{\Lambda}$ such that, for all sufficiently large $K$, if $z \in Q_{0,\Lambda r}\setminus Q_{0,3r}$, $\varepsilon>0$,  $i \in \{1, \cdots, \varepsilon^{-1}\}$, $r\geq1$, $\alpha>0$ and $\delta'>0$ satisfy $\alpha>2\delta'$, then
    \begin{align*}
        \bP[X_{r,i}\ge \alpha r^2,X_{r,i}^{K-\mathrm{reg}}\le \delta' r^2 | 0 \leftrightarrow Q_{z,r}]\le C_{\Lambda}r^{d+1}e^{-c(\log \alpha r^2)^4}.
    \end{align*}
\end{lem}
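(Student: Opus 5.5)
Since $X^{K-\mathrm{reg}}_{r,i}\ge X_{r,i}-X^{K-\mathrm{irr}}_{r,i}$ and $\alpha>2\delta'$, the event $\{X_{r,i}\ge \alpha r^2,\ X^{K-\mathrm{reg}}_{r,i}\le\delta' r^2\}$ forces $X^{K-\mathrm{irr}}_{r,i}\ge X_{r,i}/2\ge \alpha r^2/2$. The plan is to show that this cannot happen except with super-polynomially small probability in $\alpha r^2$, following \cite[Lemma 5.3]{ASS} and \cite[Section 4]{KN11}. The conditioning is harmless: by Lemma~\ref{lambda}, $\bP[0\leftrightarrow Q_{z,r}]\ge c_\Lambda r^{-2}$, so it suffices to bound the unconditional probability by $r^{d-1}e^{-c(\log\alpha r^2)^4}$ and absorb the factor $r^2$.

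\textbf{Step 1 (reduction to local badness at large scales).} Since $\mathcal T^{\mathrm{loc}}_s(i;x)\subseteq\mathcal T_s(i;x)$, every $K$-irregular pioneer $x$ is $s$-locally bad for some $s\ge K$. I split according to $s$. Large scales, $s\ge r^{C}$ for a suitable $C$, are disposed of directly: all relevant clusters live in $Q_{0,(\Lambda+3)r}$, so the density conditions in $\mathcal T_s$ hold trivially once $s^4(\log s)^7$ exceeds the volume of that box. The harder requirement is on scale. Because at least $\alpha r^2/2$ pioneers are irregular while the number of boundary points within distance $s$ of a given point is of order $s^{d-1}$, some irregular point must be $s$-locally bad at a scale $s$ with $(\log s)\gtrsim \log(\alpha r^2)$. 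Otherwise, I intend to argue that each pioneer on the boundary of $\mathcal C(0;\mathcal Q^c_{z,i})$ lies within distance $s$ of a point $y$ carrying at least $\alpha r^2/2$ pioneers in $Q_{y,s}$, which violates the $s^2(\log s)^4$ restricted-cluster bound on $\partial\mathcal Q_{z,i}$ when $s$ is small relative to $\alpha r^2$. Making this dichotomy precise, so that the relevant scale really satisfies $\log s\gtrsim\log(\alpha r^2)$ rather than only $s\ge K$, is where I would first try the argument of \cite{ASS} verbatim.

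\textbf{Step 2 (local badness is super-polynomially rare).} For fixed $x$ and $s$, the probability that $\mathcal T^{\mathrm{loc}}_s(i;x)$ fails is at most $e^{-c(\log s)^4}$, uniformly in the half-space-type restriction to $\mathcal Q^c_{z,i}$. I would verify each of the three components separately.
\begin{itemize}
\item Restricted cluster volume in $Q_{x,s}$: by the tree-graph inequality and \eqref{taubound}, the $k$-th moment of $|\mathcal C(y;\cdot)\cap Q_{x,s}|$ is at most $k!\,C^k s^{4k}$. Hence the probability of exceeding $s^4(\log s)^4$ is at most $e^{-c(\log s)^4}$.
\item Boundary count: using \eqref{bxbd}, the same moment bound holds with $s^{2k}$ in place of $s^{4k}$, giving the same tail for exceeding $s^2(\log s)^4$.
\item Disjoint paths: more than $(\log s)^3$ disjoint crossings from $Q_{x,s}$ to $Q_{x,s^d}$ have probability at most $(s^{d}\cdot s^{-(d-2)(d-1)}\cdots)^{(\log s)^3}$ by the BK inequality together with the one-arm bound \eqref{onearm} applied to each crossing. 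This is again at most $e^{-c(\log s)^4}$.
\end{itemize}
A union bound over $y\in Q_{x,s}$ costs only a polynomial factor $s^d$.

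\textbf{Step 3 (union bound).} I then sum over $x\in Q_{0,(\Lambda+3)r}$, which contributes $r^d$ terms, and over scales $s\ge \max(K,(\alpha r^2)^{c'})$, at most $r^{C}$ integer values but only $O(\log r)$ dyadic ones. Combined with monotonicity in $s$, this gives $C_\Lambda r^{d+1}e^{-c(\log\alpha r^2)^4}$ after adjusting $c$. Taking $K$ large absorbs the constants in Step 2.

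I expect the main obstacle to be the scale-selection argument in Step 1, that is, extracting a bad scale with $\log s\gtrsim\log(\alpha r^2)$ from merely having many irregular pioneers. Step 2 is routine moment and BK bookkeeping.
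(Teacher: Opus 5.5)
Your Step~1 is where the argument breaks, and it is where the content of the lemma lies. The claim that $\ge\alpha r^2/2$ irregular pioneers force some site to be $s$-locally bad at a scale with $\log s\gtrsim\log(\alpha r^2)$ is false as a deterministic statement. The face $\partial\mathcal Q_{z,i}$ contains $\asymp r^{d-1}\gg r^2$ sites, so the irregular pioneers may be pairwise far apart. Each can be locally bad only at scales of order $K$, because of a dense configuration inside its own box $Q_{x,K^d}$, while no site is bad at any scale that is a power of $\alpha r^2$. Such configurations have positive probability, and no pigeonhole over positions produces a $y$ with $\alpha r^2/2$ pioneers in $Q_{y,s}$. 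Your Steps~2--3 therefore control only the scales $s\ge(\alpha r^2)^{c'}$; this part is the paper's union bound over $s\ge K_0=(\alpha r^2)^{1/8d^2}$. For $K\le s<(\alpha r^2)^{c'}$, a union bound over sites gives only $r^{d-1}e^{-c(\log K)^4}$, which does not decay in $r$. (Also, the large scales are not handled trivially: $\mathcal Q_{z,i}^c$ is unbounded, and the restricted clusters are not confined to $Q_{0,(\Lambda+3)r}$. This is harmless only because Step~3 covers those scales anyway.)

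What is missing is a concentration argument based on the near-independence of local badness at distinct pioneers; this is the core of the paper's proof. First pigeonhole over scales with weights $s^{-2}$: since $X_{r,i}^{K-\mathrm{irr}}\le\sum_{s\ge K}X_{r,i}^{s-\mathrm{loc-bad}}$ and $\sum_{s\ge K}s^{-2}\le1/2$, on the event there is a single $s\ge K$ with $X_{r,i}^{s-\mathrm{loc-bad}}\ge X_{r,i}/s^2$. For $K\le s\le K_0$, run the box exploration of \cite{KN11,ASS} with boxes of size $\asymp s^d$. Use $2^d$ shifted partitions, so that each $B(x,s^d)\cap\mathcal Q_{z,i}^c$ lies in a single box. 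Badness of a newly explored crossing box depends only on edges inside it, so given the past it has probability $\lesssim e^{-c(\log s)^4}$. Restricting to crossing boxes costs at most a factor $e^{C(\log s)^2}$, since each explored box has a crossing with conditional probability $\ge ce^{-C(\log s)^2}$. Hence the number of bad crossing boxes is dominated by a Binomial$(N(w),Ce^{-c(\log s)^4})$ variable with $N(w)\le X_{r,i}$. Azuma's inequality then bounds the probability of a bad fraction $\gtrsim s^{-2-d^2}$ by $\exp(-\kappa\alpha r^2s^{-2d^2-4})$, a stretched exponential in $\alpha r^2$ when $s\le K_0$.

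A secondary issue in Step~2 concerns the boundary count. Using only \eqref{taubound} and the tree-graph inequality, the $k$-th moment of the number of sites on a face of $Q_{x,s}$ is bounded only by $k!\,C^ks^{3k-2}$: an unrestricted cluster meets a hyperplane in $\asymp s^3$ sites at scale $s$. The threshold $s^2(\log s)^4$ therefore genuinely needs the restriction to $\mathcal Q_{z,i}^c$; the paper does not prove it but imports \eqref{slocgoodbound} from \cite[Claim~5.5]{ASS}.
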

\begin{proof}Since $\mathcal T_{s}^{\mathrm{loc}}(i;x)\subseteq \mathcal T_{s}(i;x)$, it follows from Definition \ref{def:Kreg} that
\begin{align*}
    X_{r,i}^{K-\mathrm{irr}}\le \sum_{s\ge K}X_{r,i}^{s-\mathrm{loc-bad}},
\end{align*}
where $X_{r,i}^{s-\mathrm{loc-bad}}$ denotes the number of $s$-locally bad points. Consequently,
    \begin{align*}
        \bP[X_{r,i}\ge \alpha r^2,X_{r,i}^{K-\mathrm{reg}}\le \delta' r^2 | 0 \leftrightarrow Q_{z,r}]&\le \frac{1}{ \bP[0 \leftrightarrow Q_{z,r}]} \bP[X_{r,i}\ge \alpha r^2,X_{r,i}^{K-\mathrm{irr}}\ge (1-\delta'/\alpha) X_{r,i}]\\
        &\lesssim_{\Lambda} r^2\sum_{s\ge K}\bP[X_{r,i}\ge \alpha r^2,X_{r,i}^{s-\mathrm{loc-bad}}\ge \frac{X_{r,i}}{s^2}].
    \end{align*}
In the second inequality, we used Lemma~\ref{lambda}, and the fact that the event $\sum_{s\ge K}X_{r,i}^{s-\mathrm{loc-bad}}\ge (1-\delta'/\alpha) X_{r,i}\geq X_{r,i}/2$ implies that there exists $s \ge K$ such that
  \begin{align*}
     X_{r,i}^{s-\mathrm{loc-bad}}\ge \frac{X_{r,i}}{s^2}.
  \end{align*}
(We may assume $K$ is large enough so that $\sum_{s\geq K}s^{-2}\leq 1/2$.) Next, let $K_0=(\alpha r^2)^{1/8d^2}$. For the sum over $s\ge K_0$, we obtain
  \begin{align*}
    \sum_{s\ge K_0}\bP[X_{r,i}\ge \alpha r^2,X_{r,i}^{s-\mathrm{loc-bad}}\ge \frac{X_{r,i}}{s^2}]&\le \sum_{s\ge K_0}\bP[X_{r,i}^{s-\mathrm{loc-bad}}\ge1]\\
    &\le  \sum_{s\ge K_0}\sum_{ x \in \partial \mathcal Q_{z,i}}\bP[\mathcal T_{s}^{\mathrm{loc}}(i;x)^c]\\
    &\lesssim r^{d-1}\sum_{s\ge K_0}e^{-c(\log s)^4}\\
    &\lesssim r^{d-1}e^{-c(\log \alpha r^2)^4}
  \end{align*}
  where for the third inequality, we used the fact that for every $x \in \partial \mathcal Q_{z,i}$ and $s\geq 1$, there exists a constant $c$ such that
  \begin{align}\label{slocgoodbound}
    \bP[\mathcal T_{s}^{\mathrm{loc}}(i;x)]\ge 1-e^{-c(\log s)^4}.
  \end{align}
This estimate is an analogue of \cite[Claim~5.5]{ASS} for the exterior-of-a-box geometry used here (in contrast to the interior-of-a-box geometry of \cite{ASS}); the same argument remains valid when our definition of $\mathcal T_{s}^{\mathrm{loc}}(i;x)$ is used. We highlight that, in \cite{ASS}, the range of $s$ for which \eqref{slocgoodbound} applies is naturally restricted to $s\in (0,r)$, but this is unnecessary in our setting, since the exterior of the box is unbounded.

For the sum over all $K\le s\le K_0$, we use essentially the same exploration procedure as in \cite{ASS, KN11}, with minor modifications to accommodate our definition of regularity. In the remaining proof, we aim to show
\begin{align}\label{smalls}
     \sum_{K\le s\le K_0}\bP[X_{r,i}\ge \alpha r^2,X_{r,i}^{s-\mathrm{loc-bad}}\ge \frac{X_{r,i}}{s^2}]\lesssim e^{-\kappa\alpha^{1/2}r}
  \end{align}
  for some constant $\kappa>0$.

Let  $\mathcal U:=\{u \in \Z^d: u_i\in \{0,s^d\}, \forall i=1,\dots, d\}$ denote the collection of shifts. For $w\in \mathcal U$, define $\mathcal B_{z,i}(w):=\{\mathcal Q_{z,i}^c\cap B(v,2s^d): v\in w+4s^d\cdot \Z^d \}$. This yields a partition of $\mathcal Q_{z,i}^c$ into boxes. We now explain the exploration process with a slight modification of the one originally introduced in \cite{KN11}. First, we fix an arbitrary ordering of the elements of $\mathcal B_{z,i}(w)$. The exploration process consists of two sequences of subsets of $\mathcal B_{z,i}(w)$: the explored boxes $E_j$, and the active boxes $A_j$. To start the procedure, we define $E_1$ and $A_1$ as follows:
\[E_1=\{q \in \mathcal B_{z,i}(w): q \cap \partial \mathcal Q_{z,i}=\varnothing\},\]
\[A_1=\{q \in \mathcal B_{z,i}(w): \exists x \in \partial q,\: 0 \overset{\cup E_1}{\longleftrightarrow} x\},\]
where $\cup E_j:=\cup_{q \in E_j}q$. At step $j$, we choose a box $q_j \in A_{j-1}$ according to the prescribed ordering of the boxes in  $\mathcal B_{z,i}(w)$. We regard this as exploring $q_j$. The remaining boxes in $A_{j-1}$ not explored stay active. We then update the set of explored boxes by setting
\begin{align*}
    E_j=E_{j-1}\cup q_j.
\end{align*}
To obtain $A_j$, we add to $A_{j-1}$ all previously unexplored boxes in $\mathcal B_{z,i}(w)$ that are reached from $0$ by paths going only through boxes in $E_j$. Namely,
\begin{align*}
    A_j=\left(A_{j-1}\cup \{ q \in \mathcal B_{z,i}(w): \exists x \in \partial q, 0 \overset{\cup E_j}{\longleftrightarrow} x \}\right)\setminus E_j.
\end{align*}
See Figure \ref{fig:Exprolation} for an example. Eventually, there are no active boxes left, so no box $q_{i+1}$ can be chosen. We then say that the exploration process has finished, and denote the total number of exploration steps by $T$.

\begin{figure}
    \centering
    \includegraphics[width=0.45\textwidth]{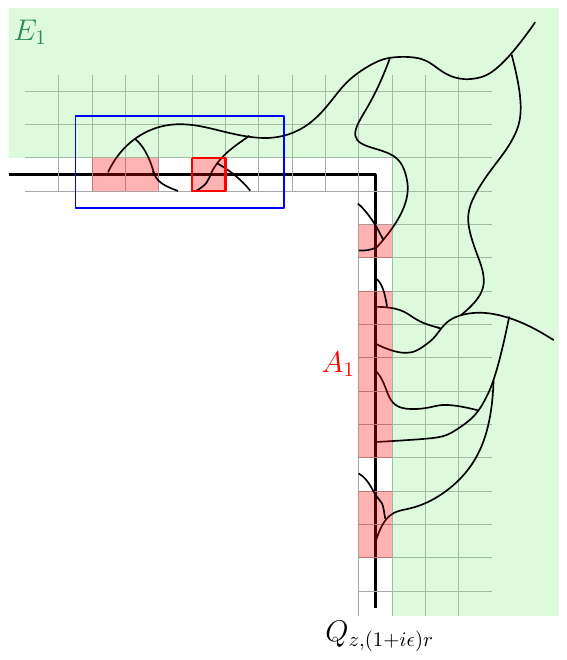}
    \hspace{40pt}
    \raisebox{50pt}{\includegraphics[width=0.3\textwidth]{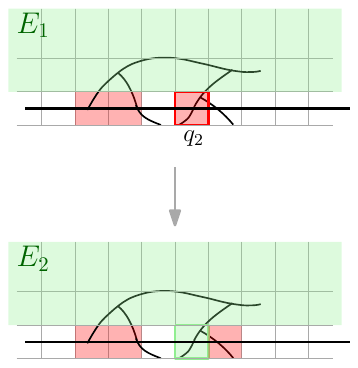}}
    \caption{Left: Schematic for the definitions of $E_1$ and $A_1$. The gray small boxes are those in $\mathcal B_{z,i}(w)$. Right: Enlarged view of the blue boxed region in the left panel. If the red boxed active box is chosen as $q_2$ at step $2$, it is added to $E_1$ to form $E_2=E_1\cup q_2$, and the box to its right becomes active, since it can be reached from $0$ by paths going only through $E_2$. }
    \label{fig:Exprolation}
\end{figure}

We say that a box $q \in B_{z,i}(w)$ has a crossing if it contains a path connecting the cluster of $0$ in the explored region to a point $x \in \partial \mathcal Q_{z,i}$ such that $B(x,s^d)\cap \mathcal Q_{z,i}^c$ is entirely contained in $q$. Furthermore, we say that a box $q \in \mathcal B_{z,i}(w)$ is s-locally bad if it contains an s-locally bad vertex $x \in \partial \mathcal Q_{z,i}$ such that $B(x, s^d)\cap \mathcal Q_{z,i}^c\subseteq q$.

Now, let $N(w)$ denote the total number of explored boxes of the partition $\mathcal B_{z,i}(w)$ that have a crossing, and let $N_s(w)$ denote the total number of explored boxes that have a crossing and are s-locally bad. Note that, since the boxes of $\mathcal B_{z,i}(w)$ are disjoint and each crossing box contains at least one pioneer point, $N(w)\le X_{r,i}$. Moreover, since for any $x \in \partial \mathcal Q_{z,i}$ there exists $w \in \mathcal U$ and a box $q \in B_{z,i}(w)$ such that $B(x,s^d) \cap \mathcal Q_{z,i}^c  \subseteq q$, we obtain
\begin{align*}
    \bP\left[X_{r,i}\ge \alpha r^2,X_{r,i}^{s-\mathrm{loc-bad}}\ge \frac{X_{r,i}}{s^2}\right]&\le  \bP\left[X_{r,i}\ge \alpha r^2,\:4^ds^{d^2}\sum_{w \in \mathcal U}N_{s}(w)\ge \frac{X_{r,i}}{s^2}\right]\\
    &\le \sum_{w \in \mathcal U}\bP\left[X_{r,i}\ge \alpha r^2,N_{s}(w)\ge \frac{X_{r,i}}{s^{2+d^2}2^{3d}}\right].
  \end{align*}
We will show shortly that conditioning on $N(w)$, the quantity $N_s(w)$ is stochastically dominated by a binomial random variable with parameters $N(w)$ and $e^{-c(\log s)^4}$. Assuming this, we obtain that for sufficiently large $K$ (and hence for all $s\ge K$), there exists a constant $\kappa$ such that
  \begin{align*}
&\bP\left[X_{r,i}\ge \alpha r^2,N_{s}(w)\ge \frac{X_{r,i}}{s^{2+d^2}2^{3d}}\right]\\
&=\E\left[\ind{X_{r,i}\ge \alpha r^2}\bP\left[N_{s}(w)\ge \frac{X_{r,i}}
{s^{2+d^2}2^{3d}}\mid N(w),X_{r,i}\right]\right]\\
&=\E\left[\ind{X_{r,i}\ge \alpha r^2}\bP\left[N_{s}(w)-N(w)e^{-c(\log s)^4}\ge \frac{X_{r,i}}
{s^{2+d^2}2^{3d}}-N(w)e^{-c(\log s)^4}\mid N(w),X_{r,i}\right]\right]\\
&\le \E\left[\ind{X_{r,i}\ge \alpha r^2}\bP\left[N_{s}(w)-N(w)e^{-c(\log s)^4}\ge \frac{X_{r,i}}
{2s^{2+d^2}2^{3d}}\mid N(w),X_{r,i}\right]\right]\\
&\le \E\left[\ind{X_{r,i}\ge \alpha r^2}\bP\left[N_{s}(w)-N(w)e^{-c(\log s)^4}\ge \frac{\sqrt{\alpha r^2 N(w)}}{2s^{2+d^2}2^{3d}}\mid N(w),X_{r,i}\right]\right]\\
&\leq \bP\left[N_{s}(w)-N(w)e^{-c(\log s)^4}\ge \frac{\sqrt{\alpha r^2 N(w)}}{2s^{2+d^2}2^{3d}}\right]\\
&\le \exp\big(-\kappa \alpha s^{-2d^2-4} r^2\big)
  \end{align*}
  where: in the first inequality, we take $s$ to be sufficiently large; in the second inequality, we use that $X_{r,i}\geq \max\{\alpha r^2,N(w)\}$ on the event in the indicator; remove the indicator to obtain the third inequality; and, for the fourth, apply Azuma's inequality; see, e.g., \cite[Theorem~7.2.1]{AS08}. Since we have $|\mathcal U|=2^d$, the desired estimate \eqref{smalls} follows.

  Finally, we show that we can upper bound $N_s(w)$ by a binomial random variable with parameters $N(w)$ and $e^{-c(\log s)^4}$. Let $q_1,q_2,\cdots$ denote the sequence of explored boxes, and let $\tilde{q}_1,\tilde{q}_2, \cdots$ denote the subsequence consisting of those boxes that contain a crossing. We define a sequence of stopping times $\{\sigma_j\}$ by $\sigma_0=0$ and
  \begin{align*}
      \sigma_{j+1}=\inf\{k\ge \sigma_j+1: q_k~\text{has a crossing to}~\partial\mathcal Q_{z,i}\}.
  \end{align*}
  Following the argument of \cite[Proposition~5.7]{ASS}, we upper bound $\bP[\tilde{q}_{\sigma_j}~\text{is $s$-locally bad}|\mathcal F_{\sigma_{j-1}}]$, where $\mathcal F_j$ denotes the percolation configuration of the explored region after $j$-th box has been added. For notational convenience, we write $\tilde{q}_j=\tilde{q}_{\sigma_{j}}$ and $\tilde{\mathcal F_j}=\mathcal F_{\sigma_{j-1}}$ for every $j\ge0$. We obtain
  \begin{align*}
     \bP[\tilde{q}_j~\text{is $s$-locally bad}|\tilde{\mathcal F_j}]&=\sum_{l,k}\bP[q_{l+k}~\text{is $s$-locally bad}, \sigma_{j-1}=l,\sigma_j-\sigma_{j-1}=k|\tilde{\mathcal F_j}]\\
     &\le \sum_{l,k}\bP[q_{l+k}~\text{is $s$-locally bad}, \sigma_{j-1}=l,\sigma_j-\sigma_{j-1}>k-1|\tilde{\mathcal F_j}]\\
     &\le \sum_{l,k}\bP[q_{l+k}~\text{is $s$-locally bad}] \bP[\sigma_{j-1}=l,\sigma_j-\sigma_{j-1}>k-1|\tilde{\mathcal F_j}].
  \end{align*}
  Here, the last inequality follows from the fact that the event that $q_{l+k}$ is $s$-locally bad depends only on the configuration inside of $q_{l+k}$. For the first probability, we use \eqref{slocgoodbound} to obtain
  \begin{align*}
     \bP[q_{l+k}~\text{is $s$-locally bad}] \lesssim s^{d^2}\max_{x \in \partial \mathcal Q_{z,i}}(1-\bP[\mathcal T_{s}^{\mathrm{loc}}(i;x)])\lesssim e^{-c(\log s)^4}.
  \end{align*}
Let $E_{l+t}^{\mathrm{cross}}$ denote the event that $q_{l+t}$ has no crossing. For the second probability, we obtain
   \begin{align*}
    \bP[\sigma_{j-1}=l,\sigma_j-\sigma_{j-1}>k-1|\tilde{\mathcal F_j}]&= \bP[\sigma_{j-1}=l,\cap_{t=1}^{k-1}\{q_{l+t}~\text{has no crossing}\}|\tilde{\mathcal F_j}] \\
    &=\bP[\sigma_{j-1}=l|\tilde{\mathcal F_j}]\bP[E_{l+1}^{\mathrm{cross}}|\tilde{\mathcal F_j}]\prod_{t=2}^{k-1}\bP[E_{l+t}^{\mathrm{cross}}|\cap^{t-1}_{m=1}E_{l+m}^{\mathrm{cross}},~\tilde{\mathcal F_j}]\\
    &\le \bP[\sigma_{j-1}=l|\tilde{\mathcal F_j}](1-ce^{-C'(\log s)^2})^{k-1},
  \end{align*}
  where the last inequality follows from \cite[Lemma~5.8]{ASS}, which implies that for any $x \in \mathcal \partial Q_{z,i}$ satisfying $B(x,s^d)\cap \mathcal Q_{z,i}^c\subseteq q$ and any $y \in \partial q \subseteq  \mathcal Q_{z,i}^c$ such that $y$ is connected to $0$ in the explored region, there exist constants $C$ and $c$ such that
  \begin{align*}
 \bP[(E_{l+t}^{\mathrm{cross}})^c|\cap^{t-1}_{m=1}E_{l+m}^{\mathrm{cross}},~\tilde{\mathcal F_j}]\ge \bP[x \overset{Q_{y,\lVert x-y\rVert }}{\longleftrightarrow} y] \ge ce^{-C(\log \lVert x-y\rVert)^2 }\ge ce^{-C'(\log s)^2}.
  \end{align*}
Therefore, combining the above estimates, we obtain
\begin{align}
 \bP[\tilde{q_j}~\text{is $s$-locally bad}|\tilde{\mathcal F_j}]&\lesssim e^{-c(\log s)^4}\sum_{l,k}\bP[\sigma_{j-1}=l|\tilde{\mathcal F_j}](1-ce^{-C'(\log s)^2})^{k-1}\nonumber\\
 &\le e^{-c(\log s)^4}\sum_{k}(1-ce^{-C'(\log s)^2})^{k-1}\nonumber\\
 &\lesssim e^{-c(\log s)^4}. \nonumber
\end{align}
This implies that, conditional on $N(w)$, the random variable $N_s(w)$ is stochastically dominated by a binomial random variable with parameter $N(w)$ and $Ce^{-c(\log s)^4}$, for some positive constant $C$ and $c$.
\end{proof}

We complete the section with our key estimate on the number of line good points.

\begin{lem}\label{KLgood}
Let $\Lambda>3$. There exist positive constants $c$ and $C_\Lambda$ such that, for all sufficiently large $K$, if $z \in Q_{0,\Lambda r}\setminus Q_{0,3r}$, $\varepsilon>0$, $i \in \{1, \cdots, \varepsilon^{-1}\}$, $\delta>0$ and $\alpha>0$ with $\alpha>2C(K)\delta$, where $C(K)=K^d/p_c^{2dK}$,
\begin{align*}
\mathbb {P}\left[X_{r,i}^{K-\mathrm{line-good}}\le \delta r^2| 0\leftrightarrow Q_{z,r}\right]\le C_{\Lambda}\left[r^{d+1}e^{-c(\log \alpha r^2)^4}+\frac{\alpha}{(i\varepsilon)^2}+ r^2\exp(-\delta r^2/2)\right].
\end{align*}
\end{lem}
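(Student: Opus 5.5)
The plan is to intersect three events and bound their complements separately:
\[
\{X_{r,i}\ge \alpha r^2\},\qquad \{X_{r,i}^{K-\mathrm{reg}}> 2C(K)\delta r^2\},\qquad \{X_{r,i}^{K-\mathrm{line-good}}>\delta r^2\}.
\]
We bound
\[
\bP\bigl[X^{K-\mathrm{line-good}}_{r,i}\le\delta r^2\mid 0\leftrightarrow Q_{z,r}\bigr]
\]
by the sum of three probabilities. The first is $\bP[X_{r,i}\le\alpha r^2\mid\cdot]$, which Lemma~\ref{pioneerpoint} bounds by $C_\Lambda\alpha/(i\varepsilon)^2$. The second is $\bP[X_{r,i}\ge\alpha r^2,\ X_{r,i}^{K-\mathrm{reg}}\le \delta' r^2\mid\cdot]$ with $\delta'=2C(K)\delta$. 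Since $\alpha>2\delta'$ is exactly what Lemma~\ref{Kregpoint} needs, that lemma bounds this term by $C_\Lambda r^{d+1}e^{-c(\log\alpha r^2)^4}$. (If instead the hypothesis is only $\alpha>2C(K)\delta$, one uses $\delta'=C(K)\delta$ and adjusts the count below by constants.) The remaining term is
\[
\bP\bigl[X_{r,i}^{K-\mathrm{reg}}>\delta' r^2,\ X^{K-\mathrm{line-good}}_{r,i}\le\delta r^2\mid 0\leftrightarrow Q_{z,r}\bigr],
\]
and bounding it is the real work.

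For this third term, we first pass from regular points to the separated set $\mathrm{Reg}_i(B)$. A maximal $3K$-separated subset of the $K$-regular points has cardinality at least $X^{K-\mathrm{reg}}_{r,i}/|Q_{0,3K}|\gtrsim X^{K-\mathrm{reg}}_{r,i}K^{-d}$. Hence $|\mathrm{Reg}_i(B)|\ge c\,\delta' r^2K^{-d}\ge 2\delta r^2 p_c^{-2dK}$, up to absorbing constants into $C(K)$.

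Next we condition on $\{\mathcal C(0;\mathcal Q_{z,i}^c)=B\}$. This event, and the regularity of points of $\partial\mathcal Q_{z,i}$, is measurable with respect to edges having at least one endpoint in $\mathcal Q_{z,i}^c$. The segments $L_x$, for $x\in\mathrm{Reg}_i(B)$, consist of edges strictly inside $\mathcal Q_{z,i}$ (we take care with edges on the boundary face, assigning them consistently to the interior). The segments are pairwise disjoint, since they are separated by at least $K$. So conditionally on $B$, the indicators $\mathbb 1\{L_x\text{ fully open}\}$ are independent Bernoulli variables with parameter at least $p_c^{dK}$.

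The number of line good points therefore dominates a $\mathrm{Bin}(N,p_c^{dK})$ variable with $N\ge 2\delta r^2p_c^{-dK}$, whose mean is at least $2\delta r^2$. A Chernoff bound then gives
\[
\bP\bigl[\mathrm{Bin}\le \delta r^2\bigr]\le \exp(-c\,\delta r^2),
\]
of the form $\exp(-\delta r^2/2)$ after choosing the constant in $C(K)$ appropriately. Summing over admissible $B$ gives an unconditional bound. To pass to the conditioning on $\{0\leftrightarrow Q_{z,r}\}$, we either divide by $\bP[0\leftrightarrow Q_{z,r}]\ge c_\Lambda r^{-2}$ from Lemma~\ref{lambda}, which produces the factor $r^2\exp(-\delta r^2/2)$, or note that the event $\{0\leftrightarrow Q_{z,r}\}$ is increasing in the interior edges, so that Harris--FKG preserves the comparison.

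The main obstacle is this conditional independence step. We must check that $K$-regularity and $\mathrm{Reg}_i(B)$ depend only on the exterior configuration, which holds because $\mathcal T_s(i;x)$ is defined through $\mathcal C(x;\mathcal Q_{z,i}^c)$. We must also check that the corner case, where $L_x$ consists of up to $d$ pieces of total length at most $dK$, still yields disjoint segments with success probability at least $p_c^{dK}$. The exponent $2dK$ in $C(K)$ leaves room for this slack.
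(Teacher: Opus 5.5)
Your proposal is correct and follows essentially the same route as the paper. You peel off $\{X_{r,i}^{K-\mathrm{reg}}\le \delta' r^2\}$ via Lemmas~\ref{pioneerpoint} and~\ref{Kregpoint}; then, conditionally on $\mathcal C(0;\mathcal Q_{z,i}^c)$, you dominate $X_{r,i}^{K-\mathrm{line-good}}$ by a binomial with parameters $|\mathrm{Reg}_i|\gtrsim X_{r,i}^{K-\mathrm{reg}}/K^d$ and $p_c^{dK}$, apply a Chernoff/Azuma bound, and divide by $\bP[0\leftrightarrow Q_{z,r}]\gtrsim_\Lambda r^{-2}$ (Lemma~\ref{lambda}). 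The only adjustment is the one you already flagged: under the stated hypothesis $\alpha>2C(K)\delta$, take $\delta'\in(C(K)\delta,\alpha/2)$, as the paper does, rather than $\delta'=2C(K)\delta$.
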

\begin{proof}
We first decompose the probability as
\begin{align*}
&\bP[X_{r,i}^{K-\mathrm{line-good}}\le \delta r^2| 0 \leftrightarrow Q_{z,r}]\\
&\le \bP[X_{r,i}^{K-\mathrm{reg}}\le \delta' r^2| 0 \leftrightarrow Q_{z,r}]+\bP[X_{r,i}^{K-\mathrm{reg}}\ge \delta'r^2, X_{r,i}^{K-\mathrm{line-good}}\le \delta r^2| 0 \leftrightarrow Q_{z,r}]\\
&\lesssim_{\Lambda}\bP[X_{r,i}^{K-\mathrm{reg}}\le \delta' r^2| 0 \leftrightarrow Q_{z,r}]+r^2\bP[X_{r,i}^{K-\mathrm{reg}}\ge \delta'r^2, X_{r,i}^{K-\mathrm{line-good}}\le \frac{\delta}{\delta'} X_{r,i}^{K-\mathrm{reg}}]
\end{align*}
where $\delta'$ is chosen so that $C(K)\delta<\delta'<\alpha/2$ with $C(K)=K^d/p_c^{2dK}$. Here, the second inequality follows from Lemma~\ref{lambda}.

For the first term, we apply Lemmas~\ref{pioneerpoint} and \ref{Kregpoint} to obtain that
\begin{align*}
       &\bP[X_{r,i}^{K-\mathrm{reg}}\le \delta' r^2| 0 \leftrightarrow Q_{z,r}] \\
       &\le \bP[X_{r,i}\ge \alpha r^2,X_{r,i}^{K-\mathrm{reg}}\le \delta' r^2 | 0\leftrightarrow Q_{z,r}]+ \bP[X_{r,i}\le \alpha r^2| 0 \leftrightarrow Q_{z,r}]\\
       &\lesssim_{\Lambda} r^{d+1}e^{-c(\log \alpha r^2)^4}+\frac{\alpha}{(i\varepsilon)^2}.
    \end{align*}

For the second term, we recall the definition of $\mathrm{Reg}_i(B)$ from Definition~\ref{def:KLgood}. Conditioning on the number of the points in $\mathrm{Reg}_i(\mathcal C(0; \mathcal Q_{z,i}^c))$, the quantity $X_{r,i}^{K-\mathrm{line-good}}$ stochastically dominates a binomial distribution with parameters $|\mathrm{Reg}_i(\mathcal C(0; \mathcal Q_{z,i}^c))|$ and $p_c^{dK}$.  Therefore, it follows from the fact that $|\mathrm{Reg}_i(\mathcal C(0; \mathcal Q_{z,i}^c))|\gtrsim X_{r,i}^{K-\mathrm{reg}}/K^d$ and Azuma's inequality \cite[Theorem~7.2.1]{AS08} that, for suitably large $K$,
\begin{align*}
      & \bP[X_{r,i}^{K-\mathrm{reg}}\ge \delta'r^2, X_{r,i}^{K-\mathrm{line-good}}\le \frac{\delta}{\delta'} X_{r,i}^{K-\mathrm{reg}}]\\
       &\le \bP[|\mathrm{Reg}_i(\mathcal C(0; \mathcal Q_{z,i}^c))|\ge \delta'r^2/K^d, X_{r,i}^{K-\mathrm{line-good}}\le \frac{\delta}{\delta'}K^d|\mathrm{Reg}_i(\mathcal C(0; \mathcal Q_{z,i}^c))|]\\
       &\lesssim \exp\bigg(-\frac{p_c^{dK} \delta'r^2}{2K^d}\bigg)\bP[|\mathrm{Reg}_i(\mathcal C(0; \mathcal Q_{z,i}^c))|\ge \delta'r^2/K^d]\\
       &\le e^{-\delta r^2/2}.
    \end{align*}
    Here, we used the relation that $C(K)\delta<\delta'$. Therefore, by combining the above estimates, we obtain
    \begin{align*}
      \mathbb {P}[X_{r,i}^{K-\mathrm{line-good}}\le \delta r^2|0 \leftrightarrow Q_{z,r}]\lesssim_{\Lambda}  r^{d+1}e^{-c(\log \alpha r^2)^4}+\frac{\alpha}{(i\varepsilon)^2}+ r^2\exp(-\delta r^2/2).
    \end{align*}
\end{proof}

\subsection{Volume estimates via the second moment method}

In order to show that the number of points of $\mathcal{C}$ close to the $K$-line good points is suitably high, we will apply the second moment method. In particular, for a given $L\geq 1$, we consider the region
\begin{align*}
S_L^i=Q_{z, (i\varepsilon+1)r-L/2}\setminus Q_{z, (i\varepsilon+1)r-L}
\end{align*}
and define
\begin{align}\label{zli}
Z_{L}^i=\sum_{y \in S_L^i}\mathbbm{1}\left\{ y \xlongleftrightarrow[\mathrm{on}~Q_{y,2L}]{\mathrm{off}~C_e(0;\mathcal Q_{z,i}^c)}\mathrm{Reg}_i'(C_e(0;\mathcal Q_{z,i}^c))\cap Q_{y,L}\right\}.
\end{align}
(We include the `on' in the connection event to clarify the definition.) The key moment estimates are provided by Lemmas \ref{PZlowerbd} and \ref{PZupperbd}, and the main conclusion of the section is given in Lemma \ref{Iterating}. In order to establish these, we start by presenting two preparatory lemmas, the first of which concerns an estimate on the two-point function for a restricted percolation cluster.

\begin{lem}\label{restricted2pt}
    For each fixed $\Lambda>1$,  it holds that
\[\bP[x \xleftrightarrow{\mathcal{Q}_{0,\Lambda r}} y ] \asymp \tnorm{x-y}^{2-d},\qquad \forall x,y\in Q_{0,r},\:r\geq 1.
\]
\end{lem}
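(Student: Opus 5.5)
The upper bound is immediate: the event $\{x \xleftrightarrow{\mathcal Q_{0,\Lambda r}} y\}$ is contained in $\{x\leftrightarrow y\}$. By translation invariance and \eqref{taubound}, its probability is therefore at most $\tau(y-x)\lesssim\tnorm{x-y}^{2-d}$. All the work is in the lower bound. Following the standard route (as in \cite{KN11}), I will write the restricted connection as the full connection minus the connections that must leave the box:
\[
\bP\bigl[x \xleftrightarrow{\mathcal Q_{0,\Lambda r}} y\bigr]\;\ge\;\tau(y-x)-\bP\bigl[x\leftrightarrow y,\ \text{every open path from $x$ to $y$ exits }\mathcal Q_{0,\Lambda r}\bigr].
\]
On the second event, $x$ is connected to some $u\in\partial \mathcal Q_{0,\Lambda r}$ and $u$ is connected to $y$. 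Taking $u$ to be a point on an open path from $x$ to $y$, the BK inequality gives $\{x\leftrightarrow u\}\circ\{u\leftrightarrow y\}$, so the subtracted term is at most $\sum_{u\in\partial\mathcal Q_{0,\Lambda r}}\tau(u-x)\tau(y-u)$.

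Every such $u$ is at distance at least $(\Lambda-1)r$ from both $x$ and $y$, and $|\partial\mathcal Q_{0,\Lambda r}|\asymp (\Lambda r)^{d-1}$. Hence the error is at most $C_\Lambda\, r^{d-1}\, r^{2(2-d)}=C_\Lambda r^{3-d}$. Since $\tnorm{x-y}\le 2\sqrt d\, r$, we have $\tnorm{x-y}^{2-d}\gtrsim r^{2-d}$, so the error is smaller than the main term $c\tnorm{x-y}^{2-d}$ by a factor $C_\Lambda \tnorm{x-y}^{d-2} r^{3-d}\lesssim C_\Lambda r^{-1}$, up to constants. It is not small a priori, however: when $\Lambda$ is close to $1$, or $r$ is small, the constant $C_\Lambda$ can exceed the lower constant $K_1^{-1}$ in \eqref{taubound}. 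So this comparison alone only settles the case $\tnorm{x-y}\le \eta_\Lambda r$ for a small constant $\eta_\Lambda$, where the ratio is at most $C_\Lambda\eta_\Lambda^{d-2}$ and can be made less than $1/2$.

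The main obstacle is the remaining case, where $x,y$ are at distance comparable to $r$ and possibly close to the boundary of $Q_{0,r}$, with $\Lambda$ near $1$. Here I would use a chaining argument based on the FKG (Harris) inequality. Choose points $x=v_0,v_1,\dots,v_m=y$ inside $Q_{0,r}$ along a straight segment, with consecutive spacing $\eta r$ for a small constant $\eta$ depending on $\Lambda$ and $d$, and with $m\le C/\eta$. Each step $v_{j}\to v_{j+1}$ can be realised inside the small box $Q_{v_j,(\Lambda-1)r}\subseteq\mathcal Q_{0,\Lambda r}$. The short-range case above, applied with the box $Q_{v_j,(\Lambda-1)r}$ in the role of $\mathcal Q_{0,\Lambda r}$ and $\eta$ small relative to $\Lambda-1$, gives each link probability at least $c(\eta r)^{2-d}$. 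These connection events are increasing, so Harris gives
\[
\bP\bigl[x \xleftrightarrow{\mathcal Q_{0,\Lambda r}} y\bigr]\ge \prod_{j}c(\eta r)^{2-d}.
\]
This product is off by powers of $r$, so I would not chain point-to-point. Instead I would chain through sets: link $x$ to the box $Q_{v_1,\eta r}$, then pass through a constant number of intermediate boxes, and finally reach $y$. Connections from a point to a box of radius $\eta r$ at distance of order $\eta r$ have probability of order $(\eta r)^{-2}$ by Lemma~\ref{lambda}, and box-to-box connections at that scale have probability bounded below by a constant. Even so, the two endpoint factors give only $r^{-4}$, not $r^{2-d}$, so combining them with Harris still loses a factor. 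The clean fix is a second-moment argument. Let $N$ be the number of pairs $(u,w)$, with $u$ in a box of radius $\eta r$ near $x$ and $w$ in a box of radius $\eta r$ near $y$, such that $x\xleftrightarrow{}u$ inside a small box around $x$, $u \leftrightarrow w$ inside $\mathcal Q_{0,\Lambda r}$, and $w \leftrightarrow y$ inside a small box around $y$. I would compute $\E N$ from the short-range lower bound (applied at scale $\eta r$) together with Harris, and bound $\E N^2$ by tree-graph bounds \eqref{HHSbound}. Then Paley--Zygmund yields a probability of order $r^{2-d}$, as in the proof of Lemma~\ref{lambda}. I expect this last step, keeping the restricted connection of order $r^{2-d}$ in this regime, to be the delicate one.
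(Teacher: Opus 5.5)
Your upper bound is correct. The lower bound fails at its foundation, the short-range case, because of an exponent slip. The ratio of your error term to the main term is $C_\Lambda K_1\tnorm{x-y}^{d-2}r^{3-d}$, and with $\tnorm{x-y}\le 2\sqrt d\,r$ this is $\lesssim C_\Lambda r$, not $C_\Lambda r^{-1}$. For $\tnorm{x-y}=\eta r$ the ratio is of order $C_\Lambda\eta^{d-2}r$, which is unbounded in $r$ for every fixed $\eta$. So the subtraction only handles the mesoscopic range $\tnorm{x-y}\le c_\Lambda r^{(d-3)/(d-2)}$. Beyond that range, your bound $C_\Lambda r^{3-d}$ on the subtracted probability is worse than the trivial bound $\tau(y-x)\lesssim\tnorm{x-y}^{2-d}$, so no tuning of constants helps. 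The loss comes from the union bound over $u\in\partial\mathcal Q_{0,\Lambda r}$: it counts every boundary point of an $x$--$y$ path that leaves the box, and heuristically there are order $r$ of them. Consequently two later inputs are unsupported: the link bounds $c(\eta r)^{2-d}$ in your chaining, and the short-range lower bound at scale $\eta r$ that you feed into $\E N$. Each of them is the lemma itself (two points at distance comparable to the box size), just one scale down.

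The second-moment step does not close either. First, bounding $\E N$ from below via Harris requires $\bP(u\xleftrightarrow{\mathcal Q_{0,\Lambda r}} w)\gtrsim r^{2-d}$ for $|u-w|\asymp r$, which is exactly the statement being proved. Second, even granting every two-point input, Harris only produces the chain diagram,
$\E N\gtrsim\sum_{u,w}\tau(x-u)\tau(u-w)\tau(w-y)\asymp(\eta r)^4r^{2-d}$.
By contrast, the tree-graph bound for $\E N^2$ is dominated by diagrams with one branch point near $x$ and one near $y$, of order $(\eta r)^{16}r^{2-d}$. This is the genuine size: given $x\leftrightarrow y$, one expects order $(\eta r)^4$ sites of each small box in the cluster, so $N\approx(\eta r)^8$. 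Paley--Zygmund then returns only $(\eta r)^{-8}r^{2-d}$. The true first moment is of order $(\eta r)^8r^{2-d}$, but proving that requires a lower bound on a restricted four-point function, which is harder than the lemma. Similarly, your box-chaining events do not imply $x\leftrightarrow y$ at all, which is why their product $r^{-4}$ can exceed $r^{2-d}$.

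The paper does not attempt a self-contained proof. It imports the estimate from Chatterjee--Hanson \cite[Theorem~1.3]{CH}, whose hypotheses follow from $d>6$ and \eqref{eq:2pt}. A macroscopic lower bound for the box-restricted two-point function with $\Lambda$ close to $1$ is a genuinely nontrivial result, and BK, Harris and Paley--Zygmund over full boxes of sites do not reach it.
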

\begin{proof} This is \cite[Theorem~1.3]{CH}, whose hypotheses are implied by our standing assumptions $d>6$ and the two-point bound \eqref{eq:2pt}.
\end{proof}

To state the next result, which is an adaptation of \cite[Lemma 6.3]{ASS}, we introduce the $(d-4)$-capacity for any finite set $A$ by setting
\begin{align*}
    \mathrm{Cap}_{(d-4)}(A)=\left(\inf \left\{\sum_{x,y\in A} \tnorm{x-y}^{4-d}\mu(x)\mu(y): \:\mu~\text{ probability measure on}~A\right\}\right)^{-1}.
\end{align*}
We highlight that the following estimate is translation invariant; in the applications below we use it with the box centred at the relevant point rather than at the origin.

\begin{lem}\label{capacity}
For any positive constants $\Lambda>1$, $c\in (0,\Lambda-1)$, there exists $c' > 0$, such that the following holds. Let $A \subseteq \Z^d$ be a finite set containing the origin, and let $\mathrm{diam}(A):=\max_{x,y \in A} \|x-y\|$ be its diameter. Then
\[\bP(x \xleftrightarrow{ Q_{0,2\Lambda\text{diam}(A)}} A) \geq c' \cdot (d(x,A))^{2-d} \cdot \mathrm{Cap}_{d-4}(A)\]
for any $x\in Q_{0,\Lambda \mathrm{diam}(A)}$ such that $d(x, A) \geq c \cdot \mathrm{diam}(A)$. Here, for $x \in \Z^d$ and a subset $A \subseteq \Z^d$, we define $d(x,A)=\inf_{a \in A}\|x-a\|$.
\end{lem}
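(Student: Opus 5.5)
We will prove the bound by a second moment argument on a weighted count of connected points in $A$, following \cite[Lemma 6.3]{ASS}. Write $D=\mathrm{diam}(A)$ and $\Gamma=Q_{0,2\Lambda D}$. Let $\mu$ be a probability measure on $A$ attaining (up to a factor $2$) the infimum in the definition of $\mathrm{Cap}_{d-4}(A)$. Set
\[
W=\sum_{a\in A}\mu(a)\,\mathbbm{1}\bigl\{x\xleftrightarrow{\Gamma}a\bigr\}.
\]
Since $\{W>0\}\subseteq\{x\xleftrightarrow{\Gamma}A\}$, Cauchy--Schwarz (Paley--Zygmund) gives $\bP(x\xleftrightarrow{\Gamma}A)\ge \E[W]^2/\E[W^2]$. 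It therefore suffices to show
\[
\E[W]\gtrsim d(x,A)^{2-d}
\qquad\text{and}\qquad
\E[W^2]\lesssim d(x,A)^{4-2d}\sum_{a,b}\mu(a)\mu(b)\tnorm{a-b}^{4-d}.
\]

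\textbf{First moment.} Here $x$ and $A$ lie in $Q_{0,\Lambda D}$, and $\Gamma=Q_{0,2\Lambda D}$ is a dilation by the factor $2>1$. Lemma \ref{restricted2pt}, applied with $r=\Lambda D$ and the dilation factor $2$, gives $\bP(x\xleftrightarrow{\Gamma}a)\gtrsim\tnorm{x-a}^{2-d}$. Since $\|x-a\|\le d(x,A)+D\le (1+c^{-1})d(x,A)$, we get $\E[W]\gtrsim_{c} d(x,A)^{2-d}$.

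\textbf{Second moment.} Drop the restriction to $\Gamma$, which can only increase the probability. The BK tree-graph bound, exactly as in \eqref{2ndmmt}, gives
\[
\bP(x\leftrightarrow a,\ x\leftrightarrow b)\le\sum_{u}\tau(x-u)\,\tau(u-a)\,\tau(u-b).
\]
We split the sum over $u$ into two regions.

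For $u$ with $\|u-x\|\ge d(x,A)/2$, bound $\tau(x-u)\lesssim d(x,A)^{2-d}$. The remaining sum $\sum_u\tnorm{u-a}^{2-d}\tnorm{u-b}^{2-d}\lesssim\tnorm{a-b}^{4-d}$ by \eqref{HHSbound}, using $d>6$ so that $2(d-2)>d$. This contributes $d(x,A)^{2-d}\tnorm{a-b}^{4-d}$.

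For $u$ with $\|u-x\|<d(x,A)/2$, we have $\|u-a\|,\|u-b\|\gtrsim d(x,A)$, and $\sum_{\|u-x\|<d(x,A)/2}\tnorm{x-u}^{2-d}\lesssim d(x,A)^2$. This contributes $d(x,A)^{2}\,d(x,A)^{4-2d}=d(x,A)^{6-2d}$.

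Since $\|a-b\|\le D\le c^{-1}d(x,A)$, both contributions are dominated by $C_c\,d(x,A)^{4-2d}\tnorm{a-b}^{4-d}$:
\begin{itemize}
\item for the first, $d(x,A)^{2-d}\le C\,d(x,A)^{4-2d}\,d(x,A)^{d-2}$ and $d(x,A)^{d-2}\cdot\tnorm{a-b}^{4-d}$ needs $\tnorm{a-b}^{4-d}\ge C\,d(x,A)^{4-d}$, which holds because $4-d<0$;
\item for the second, $d(x,A)^{6-2d}=d(x,A)^{4-2d}\,d(x,A)^{2}$, and I need to recheck this step below.
\end{itemize}

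\textbf{Main obstacle: the exponent bookkeeping.} The first contribution is actually of size $d^{2-d}\tnorm{a-b}^{4-d}$, which is larger than $d^{4-2d}\tnorm{a-b}^{4-d}$ by a factor $d^{d-2}$. So I must recheck the target. Since $\E[W]^2\asymp d^{4-2d}$, the needed second moment bound is $\E[W^2]\lesssim d^{2-d}\sum\mu\mu\tnorm{a-b}^{4-d}$. With that target the ratio is $d^{2-d}\,\mathrm{Cap}$, which is exactly the claimed bound, and the first region matches. For the second region, $d^{6-2d}=d^{2-d}\,d^{4-d}\le C\,d^{2-d}\tnorm{a-b}^{4-d}$, again using $\tnorm{a-b}\lesssim d$ and $4-d<0$.

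This pinning of exponents, together with justifying the use of Lemma \ref{restricted2pt} for points $x,a$ that may sit near the boundary of $Q_{0,\Lambda D}$, is the delicate step; the latter is handled because the enlargement factor $2$ is fixed. Combining the two moment bounds yields $\bP(x\xleftrightarrow{\Gamma}A)\ge c'\,d(x,A)^{2-d}\,\mathrm{Cap}_{d-4}(A)$.
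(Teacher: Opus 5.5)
Your proof is correct and is essentially the paper's argument. Its ingredients are: Paley--Zygmund applied to a capacity-weighted count of the points of $A$ reached from $x$ inside the box; Lemma~\ref{restricted2pt} for the restricted two-point lower bound; the estimate $\bP(x\leftrightarrow a,\ x\leftrightarrow b)\lesssim L^{6-2d}+L^{2-d}\tnorm{a-b}^{4-d}$ with $L=d(x,A)$, which the paper quotes from \cite{ASS} and you derive directly from the tree-graph bound by splitting the sum over $u$; and absorption of the $L^{6-2d}$ term via $\mathrm{diam}(A)\le c^{-1}L$. The only cosmetic difference is that the paper divides each indicator by $\bP[x\xleftrightarrow{\Gamma}a]$, so that the first moment is exactly $1$.

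In a write-up, state the corrected target $\E[W^2]\lesssim_c L^{2-d}\sum_{a,b}\mu(a)\mu(b)\tnorm{a-b}^{4-d}$ from the start. The version with $L^{4-2d}$ that you first wrote is too strong by a factor $L^{d-2}$: it would yield $\bP(x\xleftrightarrow{\Gamma}A)\gtrsim\mathrm{Cap}_{d-4}(A)$, which is absurd since the capacity can be arbitrarily large.
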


\begin{proof} Let $L=d(x,A)$, $B=Q_{0,2\Lambda\mathrm{diam}(A)}$ and let $v$ be a probability measure supported on $A$. We define
\begin{align*}
    Z=\sum_{a \in A}v(a)\frac{\ind{x\overset{B}{\longleftrightarrow }a}}{\bP[x \overset{B}{\longleftrightarrow } a]}.
\end{align*}
Note that the conditions of the lemma ensure that the probability in the denominator is strictly positive for all $x\in Q_{0,\Lambda \mathrm{diam}(A)}$ and $a\in A$. By applying the Paley-Zygmund inequality, we obtain
\begin{align*}
    \bP[x \overset{B}{\longleftrightarrow } A]\ge \bP[Z>0]\ge \frac{(\E Z)^2}{\E Z^2}.
\end{align*}
For the numerator, we have
\begin{align*}
    \E Z=\sum_{a \in A}v(a)\frac{\bP[x \overset{B}{\longleftrightarrow } a]}{\bP[x \overset{B}{\longleftrightarrow } a]}=1.
\end{align*}
The denominator is bounded above by
\begin{align*}
\E Z^2\le \sum_{a,b\in A}v(a)v(b)\frac{\bP[x  \leftrightarrow a,\: x \leftrightarrow  b]}{\bP[x \overset{B}{\longleftrightarrow } a]\bP[x \overset{B}{\longleftrightarrow } b]}\lesssim L^{2d-4}\sum_{a,b\in A}v(a)v(b)\bP[x  \leftrightarrow a, x \leftrightarrow  b].
\end{align*}
Here, in the first inequality we disregarded the restricted connections in the top line of the fraction, and in the second inequality, we applied Lemma~\ref{restricted2pt} for the probabilities in the bottom line,  together with the fact that $\lVert x-a \rVert\asymp \lVert x-b\rVert \asymp L$, which follows from $d(x,A)\ge  c\mathrm{diam}(A)$ (and the assumptions that $A$ contains the origin and $x\in Q_{0,\Lambda \mathrm{diam}(A)}$). Now, as in the proof of \cite[Lemma 6.3]{ASS}, we have that
\[\bP[x  \leftrightarrow a, x \leftrightarrow  b]\lesssim L^{6-2d}+L^{2-d}\tnorm{a-b}^{4-d}.\]
Substituting this above gives
\begin{align*}
    \E Z^2&\lesssim L^{2d-4}\sum_{a,b\in A}v(a)v(b)\left\{L^{6-2d}+L^{2-d}\tnorm{a-b}^{4-d}\right\}\\
    &\le L^2 + L^{d-2} \cdot \sum_{a,b \in A} \nu(a)\nu(b)\tnorm{a-b}^{4-d}.
\end{align*}
Hence, combining everything, this implies
\[\bP(x  \xleftrightarrow{B}A) \ge  \bP(Z > 0) \gtrsim
\frac{1}{L^2 + L^{d-2} \cdot \sum_{a,b \in A} \nu(a)\nu(b)\tnorm{a-b}^{4-d}}.
\]
Since
\[\sum_{a,b \in A} \nu(a)\nu(b)\tnorm{a-b}^{4-d} \gtrsim L^{4-d},\]
we obtain
\[
\bP(Z > 0) \gtrsim \frac{L^{2-d}}{\sum_{a,b \in A} \nu(a)\nu(b)\tnorm{a-b}^{4-d}}.
\]
Optimising over the choice of the probability measure $\nu$ proves the claim.
\end{proof}

We now give our lower bound on the first moment of $Z_L^i$, as introduced at \eqref{zli}. For the statement of this, we define $\mathcal S_i$ to be the set of possible extended configurations $\mathcal C_e(0;\mathcal Q^c_{z,i})$ (see \eqref{extclust}) for the level $i$ annulus, $C$ say, that are connected to the origin in $\mathcal Q_{z,i}^c$, and satisfy  $X_{r,i}^{K-\mathrm{line-good}}(C)> \delta r^2$, where $X_{r,i}^{K-\mathrm{line-good}}(C)$ is used to denote the number of $K$-line good points of $C$ on $\partial \mathcal Q_{z,i}$.

\begin{lem}\label{PZlowerbd} Let $\Lambda>3$. There exists a constant $c_1>0$ such that, for every $z \in Q_{0,\Lambda r}\setminus Q_{0,3r}$, $\varepsilon>0$, $i\in\{1,\ldots,\varepsilon^{-1}\}$, every $K$ satisfying $L\ge 10K$, where $L=\varepsilon r/4$, and every $C\in\mathcal S_i$,
\begin{align*}
\E\left[Z_{L}^i\ind{ \mathcal C_e(0; \mathcal Q_{z, i}^c)=C}\right]\ge c_1L^2X_{r,i}^{K-\mathrm{line-good}}(C)\bP\left[C_e(0; \mathcal Q_{z, i}^c)=C\right].
\end{align*}
\end{lem}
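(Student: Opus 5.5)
The approach is to condition on the extended cluster and exploit independence of the unexplored region. On the event $\{\mathcal C_e(0;\mathcal Q_{z,i}^c)=C\}$, the configuration of edges with no endpoint in $C$ is independent of this event. The event in \eqref{zli} uses only such edges: the connection is off $C$, except at the terminal vertex in $\mathrm{Reg}_i'(C)$. More precisely, $\{\mathcal C_e=C\}$ is measurable with respect to edges touching $C$ (together with the closed edges on its edge boundary and the edges of the segments $L_x$), and the connection event lives on edges not touching $C$ apart from the final edge into $x'$. So
\[
\E\bigl[Z_L^i\ind{\mathcal C_e=C}\bigr]=\bP[\mathcal C_e=C]\sum_{y\in S_L^i}\bP\Bigl[y\xleftrightarrow[\mathrm{on}~Q_{y,2L}]{\mathrm{off}~C}\mathrm{Reg}_i'(C)\cap Q_{y,L}\Bigr],
\]
and it suffices to show that this sum is $\gtrsim L^2X_{r,i}^{K-\mathrm{line-good}}(C)$.

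Next I reduce to individual line-good endpoints. Fix a $K$-line good $x$ with endpoint $x'$, which sits at distance $K$ inside $\mathcal Q_{z,i}$. Since $L\ge 10K$, the points $y\in S_L^i$ with $\|y-x'\|_\infty\le L/2$ (say) number $\gtrsim L^d$ and satisfy $x'\in Q_{y,L}$ and $\|y-x'\|\asymp L$. For each such $y$ I lower bound the probability by
\[
\bP\bigl[y\xleftrightarrow{\,\mathrm{off}~C,\ \mathrm{on}~Q_{y,2L}\,}x'\bigr].
\]
The points $x'$ are $3K$-separated, so each $y$ is associated to at most boundedly many $x'$ per unit of $L^d/K^d$. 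To avoid overcounting I pass from $\sum_y\bP[\cup_{x'}]$ to a sum over $x'$ by a Bonferroni/second-moment reduction. Alternatively I use the cruder bound $\sum_y\bP[\cdot]\ge \frac1M\sum_{x'}\sum_{y\sim x'}\bP[y\leftrightarrow x']$, where $M\lesssim (L/K)^{d-1}$ bounds the multiplicity. That loses too much, though, so the real route is to count pairs.

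The cleaner plan is to show that for each $x'$, $\sum_{y}\bP[y\leftrightarrow x'\text{ off }C\text{ in }Q_{y,2L}]\gtrsim L^2$. Summing over $x'$ then gives $\gtrsim L^2 X^{K-\mathrm{line-good}}$, up to the overlap issue. I handle the overlap via a union lower bound using inclusion–exclusion restricted to pairs: the pair terms $\sum_{x'\neq x''}\bP[y\leftrightarrow x',y\leftrightarrow x'']$ are controlled by the tree-graph bound and the $s$-goodness of the regular points. The goodness condition limits the number of line-good points within distance $s$ of one another to $s^2(\log s)^7$, which makes the pair sum comparable to the single sum, up to constants, after shrinking the range of $y$ to a small multiple of $L$ if needed.

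The main obstacle is the single-point bound: the connection must avoid $C$, and $C$ may be dense near $x'$. Here I use the restricted two-point function, Lemma \ref{restricted2pt}, inside a box around $y$ that lies within $\mathcal Q_{z,i}$, and hence is disjoint from $C(0;\mathcal Q_{z,i}^c)$. The only remaining obstruction is the segments $L(C)$, which are $K$-separated thin lines. Since $x'$ is at distance $K$ from $\partial\mathcal Q_{z,i}$, I connect $y$ to a point near $x'$ inside $\mathcal Q_{z,i}$ at distance $\ge K/2$ from the other segments. Lemma \ref{restricted2pt} gives probability $\asymp L^{2-d}$ for a connection within $Q_{y,2L}\cap\mathcal Q_{z,i}$. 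Avoiding the other line segments costs only a constant factor, by finite energy around the segment plus the separation. Summing over $\gtrsim L^d$ choices of $y$ yields $L^2$. The delicate points are justifying that avoiding the other line segments costs at most a constant, and the pair-overlap control via goodness; I expect the latter to be the hardest.
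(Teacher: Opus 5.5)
Your outline (independence of the unexplored edges, then a per-$y$ lower bound of order $L^{2-d}|\mathrm{Reg}_i'(C)\cap Q_{y,L}|$, then summing over $y$) matches the paper. The step you use to make the connection avoid $C$ fails, however.

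\textbf{The gap: confinement to $\mathcal Q_{z,i}$ is not free.} Lemma~\ref{restricted2pt} only controls connections between points of $Q_{c,\rho}$ inside a restricting box $Q_{c,\Lambda\rho}$, i.e.\ points whose distance to the boundary of the allowed region is comparable to their separation. Here $x'$, and any point ``near $x'$'', lies at distance only $K\ll L$ from the face of $\mathcal Q_{z,i}$, while $\|y-x'\|\asymp L$. So the lemma does not apply. A connection from depth $\asymp L$ to depth $K$ that is confined to $\mathcal Q_{z,i}$ is a near-boundary, half-space-type connection. It should be smaller than $L^{2-d}$ by a factor tending to $0$ as $L/K\to\infty$; for random walk, the half-space Green function between such points is $\asymp KL^{1-d}$. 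Summing over $y$ would then give about $KL$ per line-good point instead of $L^2$, which is useless against the $L^4X^2$ second moment of Lemma~\ref{PZupperbd}. Finite energy near $x'$ cannot repair this, because the loss accrues on all scales between $K$ and $L$, not just near the target.

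\textbf{The missing idea: let the path leave $\mathcal Q_{z,i}$ and pay only for passing through $C$.} This is where regularity enters, already at the first-moment level. The paper first opens $Q_{x',K}$ by finite energy, which replaces $C$ by $C^*$ (the line-good segments removed). It then writes the off-$C^*$ probability as the unrestricted connection in $Q_{y,2L}$ minus the connection via $C^*$.
\begin{itemize}
\item By BK, the via-term is at most $\sum_{b\in C^*}\tau(y-b)\tau(b-x')\lesssim L^{2-d}\sum_{x'}\sum_{b\in C^*}\tau(x'-b)$.
\item Since $\mathrm{dist}(x',C^*)\ge K$ and $|C\cap Q_{x,s}|\le s^4(\log s)^7$ for $s\ge K$, the inner sum is $\lesssim\sum_{t\ge\log_2K}t^7\,2^{t(6-d)}\lesssim K^{-1/2}$ for $d>6$. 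This is exactly why the segments $L_x$ push the targets a distance $K$ inward.
\item The unrestricted term is bounded below, via Lemma~\ref{capacity} and the boundary density $s^2(\log s)^7<s^{d-4}$, by $\gtrsim L^{2-d}|\mathrm{Reg}_i'(C)\cap Q_{y,L}|$. This dominates the via-term once $K$ is large.
\end{itemize}

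\textbf{Two smaller points.}
\begin{itemize}
\item Your inclusion--exclusion needs the pair sum to be a small fraction of the single sum, not merely comparable. It is small here: using the $K$-separation of the $x'$ and taking $K$ large, the pair sum per $x'$ is of order $L^{2-d}\bigl(K^{6-d}(\log K)^7+L^{6-d}(\log L)^7\bigr)$. Shrinking the range of $y$ does not help. Alternatively, use the Paley--Zygmund/capacity bound, which tolerates constants.
\item The condition $\|y-x'\|_\infty\le L/2$ captures only $\asymp KL^{d-1}$ points of $S_L^i$, because $x'$ has depth $K$ while $S_L^i$ has depth in $[L/2,L]$. Use $x'\in Q_{y,L}$ instead.
\end{itemize}
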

\begin{proof}
By the definition of $Z_{L}^i$, we have
    \begin{align*}
        \E\left[Z_{L}^i\ind{ \mathcal C_e(0; \mathcal Q_{z, i}^c)=C}\right]=\bP[\mathcal C_e(0; \mathcal Q_{z, i}^c)=C]\sum_{y \in S_L^i}\bP\left[y \xlongleftrightarrow[\mathrm{on}~Q_{y,2L}]{\mathrm{off}~C}\mathrm{Reg}_i'(C)\cap Q_{y,L}\right],
    \end{align*}
where we have used independence to separate the two events that depend on disjoint sets of edges. We will later show
    \begin{align}\label{localconnection}
        \bP\left[y \xlongleftrightarrow[\mathrm{on}~Q_{y,2L}]{\mathrm{off}~C}\mathrm{Reg}_i'(C)\cap Q_{y,L}\right]\gtrsim L^{2-d}|\mathrm{Reg}_i'(C)\cap Q_{y,L}|.
    \end{align}
Assuming \eqref{localconnection}, we obtain
\begin{align*}
   \E\left[Z_{L}^i\ind{ \mathcal C_e(0; \mathcal Q_{z, i}^c)=C}\right]&\gtrsim \bP[\mathcal C_e(0; \mathcal Q_{z, i}^c)=C]L^{2-d}\sum_{y \in S_L^i}|\mathrm{Reg}_i'(C)\cap Q_{y,L}|\\
   &= \bP[\mathcal C_e(0; \mathcal Q_{z, i}^c)=C]L^{2-d}\sum_{x \in \mathrm{Reg}_i'(C)}|Q_{x,L}\cap S_L^i| \\
   &\gtrsim L^2X_{r,i}^{K-\mathrm{line-good}}(C) \bP[\mathcal C_e(0; \mathcal Q_{z, i}^c)=C],
\end{align*}
which proves Lemma~\ref{PZlowerbd}.

In the remainder of the proof, we show \eqref{localconnection}. On the event that $y$ is connected to $\mathrm{Reg}_i'(C)\cap Q_{y,L}$ on $Q_{y,2L}$, but off $C$, there exists an $x'\in \mathrm{Reg}_i'(C)\cap Q_{y,L}$ connected to $y$ according to the relevant constraints. Let $Y$ be a point in $\mathrm{Reg}_i'(C)\cap Q_{y,L}$ such that $y$ is connected to $\partial Q_{Y,K}$ on $Q_{y,2L}$, but off $C\cup Q_{Y,K} $; if there is more than one such point, we choose the first according to an arbitrary ordering of points of $\mathrm{Reg}_i'(C)$. (See Figure \ref{fig:Ydef}.) Setting
\begin{align*}
    C^*=C\setminus \bigcup_{x\in \mathrm{Reg}_i(C):\, x~\text{is}~K\text{-line good}}L_x,
\end{align*}
we obtain
    \begin{align}
 \lefteqn{\bP\left[y \xlongleftrightarrow[\mathrm{on}~Q_{y,2L}]{\mathrm{off}~C}\mathrm{Reg}_i'(C)\cap Q_{y,L}\right]}&\nonumber\\
        &\ge \sum_{x' \in \mathrm{Reg}_i'(C)\cap Q_{y,L}}\bP\left[Y=x',\: y \xlongleftrightarrow[\mathrm{on}~Q_{y,2L}]{\mathrm{off}~C^*}Q_{x',K},\: \text{all edges in}~Q_{x',K}~\text{are open}\right]\nonumber\\
        &\ge C(K)\sum_{x' \in \mathrm{Reg}_i'(C)\cap Q_{y,L}}\bP\left[Y=x',\: y \xlongleftrightarrow[\mathrm{on}~Q_{y,2L}]{\mathrm{off}~C^*}Q_{x',K}\right]\nonumber\\
        &\ge C(K)\bP\left[ y \xlongleftrightarrow[\mathrm{on}~Q_{y,2L}]{\mathrm{off}~C^*} \bigcup_{x' \in \mathrm{Reg}_i'(C)\cap Q_{y,L}}Q_{x',K}\right]\nonumber\\
        &\ge C(K)\bP\left[y \xlongleftrightarrow[\mathrm{on}~Q_{y,2L}]{\mathrm{off}~C^*}\mathrm{Reg}_i'(C)\cap Q_{y,L}\right],\label{bound}
    \end{align}
    where the factor $C(K)=p_c^{d(2K+1)^d}$ arises from the independence of the two events in the probability. It remains to bound the last probability.

\begin{figure}
    \centering
    \includegraphics[width=0.6\textwidth]{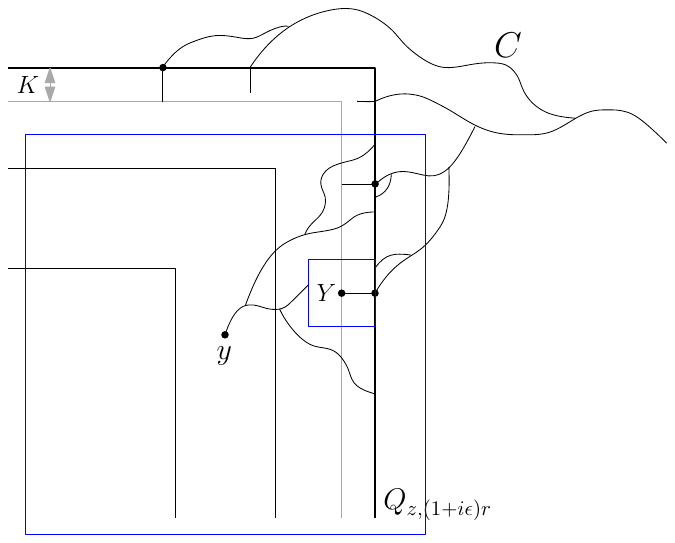}
    \caption{The blue box centered at $y$ and $Y$ are $Q_{y,L}$, and $Q_{Y,K}$, respectively. In the illustration, there is another point of $\mathrm{Reg}_i'(C)$ in $Q_{y,L}$ such that $y$ is connected, within  $Q_{y,2L}$ and off $C$, to the boundary of the box of side length $K$ centered at that point.  The point $Y$, however, is chosen as the first such point according to the ordering of $\mathrm{Reg}_i'(C)$. }
    \label{fig:Ydef}
\end{figure}

    Since the probability on the right-hand side of \eqref{bound} can be decomposed as
    \begin{align}
      &\bP\left[y \xlongleftrightarrow[\mathrm{on}~Q_{y,2L}]{\mathrm{off}~C^*}\mathrm{Reg}_i'(C)\cap Q_{y,L}\right]\nonumber\\
      &= \bP\left[y \xlongleftrightarrow[\mathrm{on}~Q_{y,2L}]{~} \mathrm{Reg}_i'(C)\cap Q_{y,L}\right]- \bP\left[y \xlongleftrightarrow[\mathrm{on}~Q_{y,2L}]{\mathrm{via}~C^*}\mathrm{Reg}_i'(C)\cap Q_{y,L}\right], \label{offrestconnection}
    \end{align}
    it suffices to derive a lower bound for the first term and an upper bound for the second term. Here, ``via $C^*$'' means that the connection uses at least one open edge of $C^*$.

    For the second term, by the BK inequality, we obtain
    \begin{align*}
        \lefteqn{\bP\left[y \xlongleftrightarrow[\mathrm{on}~Q_{y,2L}]{\mathrm{via}~C^*}\mathrm{Reg}_i'(C)\cap Q_{y,L}\right]}\\
        &\le \sum_{b \in C^*}\bP\left[y \xleftrightarrow[\text{on}~Q_{y,2L}]{~}b\right]\bP\left[b \xleftrightarrow[\text{on}~Q_{y,2L}]{~}\mathrm{Reg}_i'(C)\cap Q_{y,L}\right]\\
         &\lesssim L^{2-d}\sum_{b \in C^*}\bP\left[b \xleftrightarrow[\text{on}~Q_{y,2L}]{~}\mathrm{Reg}_i'(C)\cap Q_{y,L}\right]\\
         &\lesssim  L^{2-d}\sum_{b \in C^*}\sum_{x' \in \mathrm{Reg}_i'(C)\cap Q_{y,L}}\tau(x'-b),
    \end{align*}
where to deduce the second inequality, we apply the two-point estimate from \eqref{taubound}. (Note that the probability of $y$ being connected to $b$
for $b$ outside $Q_{y,2L}$ is zero, and otherwise $\|y-b\|\asymp L$.) By decomposing the sum according to the distance between $b$ and $x'$ (and applying \eqref{taubound}), we obtain
    \begin{align*}
        \sum_{b \in C^*}\sum_{x' \in \mathrm{Reg}_i'(C)\cap Q_{y,L}}\tau(x'-b)\asymp \sum_{x' \in \mathrm{Reg}_i'(C)\cap Q_{y,L}}\sum_{t=\log_2K}^{\infty}
        \frac{|B_t(x')|}{2^{t(d-2)}}
    \end{align*}
    where $B_t(x')=C^*\cap (Q_{x',2^{t+1}}\setminus Q_{x',2^{t}})$. (The restriction to $t\ge \log_2K$ is permitted since every point of $C^*$ is at distance at least $K$ from $x'$.) Moreover, from Definition \ref{def:Kreg}, we have that
    \begin{align*}
       |B_t(x')|&\le  (2^{t+1})^4(\log2^{t+1})^7+K(2^{t+1})^2(\log2^{t+1})^7
       \lesssim t^7(2^{t+1})^4.
    \end{align*}
(The second term is needed since $C^*$ removes only the full segments of the $K$-line good points, so the partial open segments of the other regular points remain in $C^*$; regularity bounds the number of their base points in $Q_{x',2^{t+2}}\cap\partial\mathcal Q_{z,i}$ by $(2^{t+2})^2(\log 2^{t+2})^7$, each contributing at most $K$ vertices. However, the term is absorbed since $K\le 2^{2t}$ for $t\ge\log_2K$.) Combining the above three inequalities yields
\begin{align}
    \lefteqn{\bP\left[y \xlongleftrightarrow[\mathrm{on}~Q_{y,2L}]{\mathrm{off}~C^*}\mathrm{Reg}_i'(C)\cap Q_{y,L}\right]}\nonumber\\
    &\lesssim L^{2-d}\sum_{x' \in \mathrm{Reg}_i'(C)\cap Q_{y,L}}\sum_{t=\log_2K}^{\infty}
        \frac{t^7(2^{t+1})^4}{2^{t(d-2)}}\nonumber\\
        &\lesssim \frac{L^{2-d}}{\sqrt{K}}|\mathrm{Reg}_i'(C)\cap Q_{y,L}|,\label{offconnection}
\end{align}
where the final inequality holds because $d>6$. On the other hand, by applying Lemma~\ref{capacity} and \cite[Claim~6.1]{ASS},
\begin{align}
    \bP\left[y \xlongleftrightarrow[\mathrm{on}~Q_{y,2L}]{~} \mathrm{Reg}_i'(C)\cap Q_{y,L}\right]&\gtrsim L^{2-d}\mathrm{Cap}_{d-4}(\mathrm{Reg}_i'(C)\cap Q_{y,L})\nonumber\\
    &\gtrsim L^{2-d}|\mathrm{Reg}_i'(C)\cap Q_{y,L}|. \label{restconnection}
\end{align}
Here, in the second inequality, we used the fact that $\mathrm{Reg}_i'(C)\cap Q_{y,L}$ satisfies the density condition of  \cite[Claim~6.1]{ASS}, which follows from the definition of regularity and the fact that $d>6$.
Indeed, let $x \in \mathrm{Reg}_i'(C)\cap Q_{y,L}$. The definition of  $\mathrm{Reg}_i'(C)$ implies that, whenever $s<3K$, $|Q_{x,s}\cap \mathrm{Reg}_i'(C)\cap Q_{y,L}|=1$. Next, consider $s\ge 3K$. The definition of regularity implies that, for any $x \in \mathrm{Reg}_i'(C)\cap Q_{y,L}$, any $\varepsilon>0$, and sufficiently large $K$
\begin{align*}
    |Q_{x,s} \cap \mathrm{Reg}_i'(C)\cap Q_{y,L}|\le \left| Q_{\tilde{x},2s}\cap C\cap \partial \mathcal Q_{z,i}\right|\lesssim s^2(\log s)^7 \lesssim s^{2+\varepsilon},
\end{align*}
where $\tilde{x}$ is the regular point  corresponding to $x$. Therefore, if $d>6$, there exist positive constants $C$ and $\alpha <d-4$ such that
\begin{align*}
 |Q_{x,s} \cap \mathrm{Reg}_i'(C)\cap Q_{y,L}|\le Cs^{\alpha},
\end{align*}
which is the density condition of \cite[Claim~6.1]{ASS}.

Inserting \eqref{offconnection} and \eqref{restconnection} into \eqref{offrestconnection}, we arrive at \eqref{localconnection}.
\end{proof}

The next result gives the corresponding second moment upper bound.

\begin{lem}\label{PZupperbd} Let $\Lambda >3$. There exists a constant $C_1>0$ such that, for every $z \in Q_{0,\Lambda r}\setminus Q_{0,3r}$, $\varepsilon>0$, $i\in\{1,\ldots,\varepsilon^{-1}\}$, every $K$ satisfying $L\ge 10K$, where $L=\varepsilon r/4$, every $\delta>0$ satisfying $\delta=\varepsilon^2$, and every $C\in\mathcal S_i$,
\begin{align*}
\E\left[(Z_{L}^i)^2\ind{ \mathcal C_e(0; \mathcal Q_{z, i}^c)=C}\right]\le C_1L^4X_{r,i}^{K-\mathrm{line-good}}(C)^2\bP\left[C_e(0; \mathcal Q_{z, i}^c)=C\right].
\end{align*}
\end{lem}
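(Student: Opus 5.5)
As in the proof of Lemma~\ref{PZlowerbd}, I would first use that $Z_L^i$ depends only on edges not touching $C$ (connections are off $C$ and inside $\mathcal Q_{z,i}$). This gives
\[
\E\bigl[(Z_L^i)^2\ind{\mathcal C_e(0;\mathcal Q_{z,i}^c)=C}\bigr]=\bP[\mathcal C_e(0;\mathcal Q_{z,i}^c)=C]\sum_{y,w\in S_L^i}\bP\bigl[y\leftrightarrow R_y,\ w\leftrightarrow R_w\bigr],
\]
where $R_y:=\mathrm{Reg}_i'(C)\cap Q_{y,L}$ and the connections are on $Q_{\cdot,2L}$ and off $C$. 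I would drop the off-$C$ and on-box restrictions, since this only increases probabilities. It then suffices to show
\[
\sum_{y,w\in S_L^i}\bP[y\leftrightarrow R_y,\,w\leftrightarrow R_w]\lesssim L^4 N^2,
\qquad N:=X_{r,i}^{K-\mathrm{line-good}}(C).
\]

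Next I would take a union bound over targets $a\in R_y$, $b\in R_w$ and apply the tree-graph (BK) inequality to $\{y\leftrightarrow a,\ w\leftrightarrow b\}$. There are two cases.

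\textbf{Case 1: disjoint connections.} The events occur disjointly, contributing $\tau(y-a)\tau(w-b)$. Summing over $y\in S_L^i$ with $\|y-a\|\lesssim L$ gives, by \eqref{bxbd}, $\sum_y\tau(y-a)\lesssim L^2$. The double sum is then at most $\sum_{a,b}L^2\cdot L^2=L^4N^2$.

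\textbf{Case 2: shared connection.} All four points lie in one cluster. By the tree-graph bound this gives
\[
\sum_{u,v}\tau(y-u)\tau(a-u)\tau(u-v)\tau(w-v)\tau(b-v),
\]
together with degenerate variants where $u=v$ or branch points coincide with endpoints. Summing $y$ and $w$ over all of $\Z^d$ (a bound that loses nothing essential) yields at most a factor $L^2\cdot L^2$, using that $y$ and $w$ range over boxes of size $L$. More precisely, $\sum_{y\in Q_{a,2L}}\tau(y-u)\lesssim L^2$ uniformly in $u$ by \eqref{bxbd}. What remains is $\sum_{u,v}\tau(a-u)\tau(u-v)\tau(v-b)$, the three-fold convolution of $|\cdot|^{2-d}$, which by \eqref{HHSbound} is $\lesssim\tnorm{a-b}^{6-d}$. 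Since $d>6$ this is bounded, but I need it summable in pairs. So the contribution is
\[
L^4\sum_{a,b\in\mathrm{Reg}_i'(C)}\tnorm{a-b}^{6-d},
\]
where $a,b$ are restricted so that $y,w$ exist, i.e.\ $a,b$ lie in the $O(L)$ neighbourhood of the annulus face; this restriction is harmless.

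\textbf{Main obstacle.} The crux is to show $\sum_{a,b}\tnorm{a-b}^{6-d}\lesssim N^2$, or even $\lesssim N$. For fixed $a$, I would use the density property established in the proof of Lemma~\ref{PZlowerbd}: the points of $\mathrm{Reg}_i'(C)$ are $3K$-separated, and $|Q_{a,s}\cap\mathrm{Reg}_i'(C)|\lesssim s^{2}(\log s)^7$. Dyadic summation then gives
\[
\sum_b\tnorm{a-b}^{6-d}\lesssim 1+\sum_{t}2^{t(6-d)}\,2^{2t}t^7<\infty
\qquad\text{whenever } d>8.
\]
For $6<d\le 8$ this crude count fails. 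I would then not integrate $y,w$ over all of space; instead I would keep $\|y-a\|,\|w-b\|\le 2L$ and exploit that the $u,v$ sums are localized. Alternatively, I would bound $\sum_b\tnorm{a-b}^{6-d}$ over $b\in Q_{a,4L}$ crudely by $L^{8-d}\,(\log L)^7\lesssim$ a power of $L$ smaller than $L^2$. Since $N>\delta r^2=\varepsilon^2 r^2\asymp L^2$ (here the hypothesis $\delta=\varepsilon^2$ is used), this is $\lesssim N$. This gives $\sum_{a,b}\lesssim N\cdot N$. The degenerate tree terms are handled identically with fewer convolutions. Combining Cases 1 and 2 yields the claim.
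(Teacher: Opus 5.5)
\textbf{The gap is in your Case~2.} The tree-graph bound for $\tau_4(y,a,w,b)$ sums over all three pairings of the four leaves, not only the one you wrote, $\{y,a\}\,|\,\{w,b\}$.

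The pairing $\{y,b\}\,|\,\{a,w\}$ is harmless and goes exactly like yours; it is the paper's second term.

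The pairing $\{y,w\}\,|\,\{a,b\}$ is the paper's third term: $\sum_{u,v}\tau(y-u)\tau(w-u)\tau(u-v)\tau(a-v)\tau(b-v)$. Your method fails on it. Here $y$ and $w$ attach to the same branch point $u$, so after extracting $L^2\cdot L^2$ you are left with $\sum_u\tau(u-v)=\infty$.

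This is not a removable technicality: this pairing carries the dominant contribution. Take the diagonal $a=b$, $y\neq w$, which you say is handled ``with fewer convolutions''. Set $X:=|\{y\in S_L^i\cap Q_{a,L}: y\leftrightarrow a\}|$. Then $\sum_{y,w}\bP[y\leftrightarrow a,\,w\leftrightarrow a]=\E[X^2]\ge(\E X)^2/\bP(X>0)\gtrsim L^4/L^{-2}=L^6$. Here $\E X\asymp L^2$ by the two-point bound, and $\bP(X>0)\lesssim L^{-2}$ by the one-arm bound. Your single-topology formula gives only $O(L^4)$ per $a$ there. So the inequality you claim is false, and no argument yielding $L^4$ per pair $(a,b)$ can be right.

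\textbf{What is needed} is what the paper does in its cases $x_1=x_2,\ y_1\neq y_2$ and in the third term of $x_1\neq x_2,\ y_1\neq y_2$. First sum out $y$ and the $y$--$w$ branch point: $\sum_u\tau(w-u)\tau(u-v)\lesssim\tnorm{w-v}^{4-d}$. Then split according to whether $v$ lies within $L/8$ of $w$. This gives $L^{10-d}+L^6\tnorm{a-b}^{4-d}$ per pair, and $L^6$ per $a$ on the diagonal.

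Summing over pairs then requires $\sum_{a,b}\tnorm{a-b}^{4-d}\lesssim N$. This is where the $s^2(\log s)^7$ density of regular points and $d>6$ enter, since $\sum_t 2^{t(6-d)}t^7<\infty$. The result is a total of order $L^6N$, and $L^6N\lesssim L^4N^2$ holds only because $N>\delta r^2=\varepsilon^2r^2=16L^2$. That is the real role of $\delta=\varepsilon^2$.

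You had these ingredients but aimed them at the wrong term. The sum you call the main obstacle is trivial: for $d>6$, $\tnorm{a-b}^{6-d}\le 1$, so $\sum_{a,b}\tnorm{a-b}^{6-d}\le N^2$ outright, which is exactly how the paper closes that term. The $d>8$ versus $6<d\le8$ discussion, and the unjustified restriction to $b\in Q_{a,4L}$, are therefore unnecessary.
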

\begin{proof}
First, we have
\begin{align*}
    &\E\left[(Z_{L}^i)^2\ind{ \mathcal C_e(0; \mathcal Q_{z, i}^c)=C}\right]\\
    &\le \sum_{x_1,x_2 \in \mathrm{Reg}_i'(C)}\sum_{\substack{y_1\in S_L^i\cap Q_{x_1,L}\\ y_2\in S_L^i\cap Q_{x_2,L}}}\bP\left[y_1 \xlongleftrightarrow[~]{\mathrm{off}~C}x_1,y_2 \xlongleftrightarrow[~]{\mathrm{off}~C}x_2, C_e(0; \mathcal Q_{z, i}^c)=C \right]\\
    &\le \bP\left[C_e(0; \mathcal Q_{z, i}^c)=C \right]\sum_{x_1,x_2 \in \mathrm{Reg}_i'(C)}\sum_{\substack{y_1\in S_L^i\cap Q_{x_1,L}\\ y_2\in S_L^i\cap Q_{x_2,L}}}\bP\left[y_1 \longleftrightarrow x_1,y_2 \longleftrightarrow x_2\right],
\end{align*}
where the last inequality follows from the independence of the two events and by dropping the condition ``off $C$''. We bound the last sum, by considering four cases: $x_1=x_2$ and $y_1=y_2$; $x_1=x_2$ and $y_1\neq y_2$; $x_1\neq x_2$ and $y_1=y_2$; and $x_1\neq x_2$ and $y_1\neq y_2$.

In the case $x_1=x_2$ and $y_1=y_2$, the sum is easily estimated as follows:
\begin{align}
    \sum_{x_1\in \mathrm{Reg}_i'(C)}\sum_{y_1\in S_L^i\cap Q_{x_1,L}}\bP[y_1 \longleftrightarrow x_1]
    &\lesssim  \sum_{x_1\in \mathrm{Reg}_i'(C)}\sum_{y_1\in S_L^i\cap Q_{x_1,L}} \tnorm{x_1-y_1}^{2-d}\nonumber\\
    &\lesssim L^2X_{r,i}^{K-\mathrm{line-good}}(C) \label{easy}
\end{align}
where in the second inequality, we used \eqref{bxbd}.

For the case $x_1=x_2$ and $y_1\neq y_2$, there exists a vertex $w$ at which the paths from $x_1$ to $y_1$ and from $x_1$ to $y_2$ bifurcate. Then by using the BK inequality for the connection events to this point, we obtain
\begin{align}
   \lefteqn{\sum_{x_1\in \mathrm{Reg}_i'(C)}\sum_{y_1,y_2\in S_L^i\cap Q_{x_1,L}}\bP[y_1 \longleftrightarrow x_1,\:y_2 \longleftrightarrow x_1]}\nonumber\\
   &\le \sum_{x_1\in \mathrm{Reg}_i'(C)}\sum_{y_1,y_2\in S_L^i\cap Q_{x_1,L}}\sum_{w}\tau(y_1-w)\tau(y_2-w)\tau(w-x_1)\nonumber\\
   &=\sum_{x_1\in \mathrm{Reg}_i'(C)}\sum_{y_1\in S_L^i\cap Q_{x_1,L}}\sum_{w}\tau(y_1-w)\tau(w-x_1)\sum_{y_2\in S_L^i\cap Q_{x_1,L}}\tau(y_2-w)\nonumber\\
   &\lesssim L^2 \sum_{x_1\in \mathrm{Reg}_i'(C)}\sum_{y_1\in S_L^i\cap Q_{x_1,L}}G(y_1,x_1)\nonumber\\
   &\lesssim L^6X_{r,i}^{K-\mathrm{line-good}}(C)\label{easy2}
\end{align}
where we define
\begin{align*}
G(x,y):=\tnorm{x-y}^{4-d}.
\end{align*}
Here, the second and final inequalities follow from \eqref{bxbd}. Similarly, in the case $x_1\neq x_2$ and $y_1=y_2$, we obtain an upper bound of order $L^6X_{r,i}^{K-\mathrm{line-good}}(C)$ for the sum.

It remains to consider the case  $x_1\neq x_2$ and $y_1\neq y_2$. For the paths from $x_1$ to $y_1$, and from $x_2$ to $y_2$, there are two possibilities: either they are disjoint, or they intersect. If they intersect, let $u$ and $v$ be the first and last intersection points, respectively, along the path from $x_1$ to $y_1$. By applying the BK inequality, we obtain
\begin{align*}
 \lefteqn{\bP\left[y_1 \longleftrightarrow x_1,y_2 \longleftrightarrow x_2\right]}\\
 &\le \tau(x_1-y_1)\tau(x_2-y_2)+\sum_{u,v}\tau(x_1-u)\tau(u-v)\tau(v-y_1)\tau(x_2-v)\tau(u-y_2)\\
  &\hskip10mm+\sum_{u,v}\tau(x_1-u)\tau(u-v)\tau(v-y_1)\tau(x_2-u)\tau(v-y_2)
\end{align*}
For the first term, we obtain from \eqref{easy} that
\begin{align*}
  \sum_{x_1,x_2 \in \mathrm{Reg}_i'(C)}\sum_{\substack{y_1\in S_L^i\cap Q_{x_1,L}\\ y_2\in S_L^i\cap Q_{x_2,L}}}\tau(x_1-y_1)\tau(x_2-y_2)\lesssim L^4X_{r,i}^{K-\mathrm{line-good}}(C)^2.
\end{align*}
As for the second term, it is possible to follow an argument similar to \eqref{easy2} to give that
\begin{align*}
\lefteqn{\sum_{x_1,x_2 \in \mathrm{Reg}_i'(C)}\sum_{\substack{y_1\in S_L^i\cap Q_{x_1,L}\\ y_2\in S_L^i\cap Q_{x_2,L}}}\sum_{u,v}\tau(x_1-u)\tau(u-v)\tau(v-y_1)\tau(x_2-v)\tau(u-y_2)}\\
&=\lefteqn{\sum_{x_1,x_2 \in \mathrm{Reg}_i'(C)}\sum_{u,v}\tau(x_1-u)\tau(u-v)\tau(x_2-v)\sum_{\substack{y_1\in S_L^i\cap Q_{x_1,L}\\ y_2\in S_L^i\cap Q_{x_2,L}}}\tau(v-y_1)\tau(u-y_2)}\\
&\lesssim L^4\sum_{x_1,x_2 \in \mathrm{Reg}_i'(C)}\sum_{u}\tnorm{x_1-u}^{2-d}\sum_{v}\tnorm{u-v}^{2-d}\tnorm{x_2-v}^{2-d}\hspace{100pt}\\
&\lesssim L^4\sum_{x_1,x_2 \in \mathrm{Reg}_i'(C)}\tnorm{x_1-x_2}^{6-d}\\
&\lesssim L^4X_{r,i}^{K-\mathrm{line-good}}(C)^2.
\end{align*}
Here, the first inequality follows from \eqref{bxbd}, the second from \eqref{HHSbound}, and the final inequality simply follows from the fact that $\tnorm{x_1-x_2}^{6-d}\le 1$.

For the remaining term, by summing over $y_1$ and $v$, we obtain
\begin{align*}
 &\sum_{\substack{y_1\in S_L^i\cap Q_{x_1,L}\\ y_2\in S_L^i\cap Q_{x_2,L}}}\sum_{u,v}\tau(x_1-u)\tau(u-v)\tau(v-y_1)\tau(x_2-u)\tau(v-y_2)\\
 &\lesssim L^2 \sum_{y\in S_L^i\cap Q_{x_2,L}}\sum_{u}\tau(x_1-u)G(u,y)\tau(x_2-u)\\
 &\lesssim   L^2 \sum_{y\in S_L^i\cap Q_{x_2,L}}\sum_{u \in Q_{y,L/8}}\tau(x_1-u)G(u,y)\tau(x_2-u)\\
 &\qquad+ L^2 \sum_{y\in S_L^i\cap Q_{x_2,L}}\sum_{u \notin Q_{y,L/8}}\tau(x_1-u)G(u,y)\tau(x_2-u).
\end{align*}
For the case where $u \in Q_{y,L/8}$, using the bound $\tau(x_1-u)\tau(x_2-u)\lesssim L^{2(2-d)}$ and \eqref{bxbd}, we obtain
\begin{align*}
   L^2 \sum_{y\in S_L^i\cap Q_{x_2,L}}\sum_{u \in Q_{y,L/8}}\tau(x_1-u)G(u,y)\tau(x_2-u)
   &\lesssim L^2\cdot L^{2(2-d)}\sum_{y\in S_L^i\cap Q_{x_2,L}}\sum_{u \in Q_{y,L/8}}G(u,y)\\
   &\lesssim L^{10-d}.
\end{align*}
For the case where $u \notin Q_{y,L/8}$, applying the bound $G(u,y)\lesssim L^{4-d}$ together with \eqref{HHSbound} yields
\begin{align*}
   \lefteqn{L^2 \sum_{y\in S_L^i\cap Q_{x_2,L}}\sum_{u \notin Q_{y,L/8}}\tau(x_1-u)G(u,y)\tau(x_2-u)}\\
   &\lesssim L^2L^{4-d}\sum_{y\in S_L^i\cap Q_{x_2,L}}\sum_{u \notin Q_{y,L/8}}\tau(x_1-u)\tau(x_2-u)\\
   &\lesssim L^6G(x_1,x_2).
\end{align*}
Therefore, summing over $x_1,x_2 \in \mathrm{Reg}'_i(C)$, we obtain
\begin{align}\label{alldistinct}
  &\sum_{x_1,x_2 \in \mathrm{Reg}_i'(C)}\sum_{\substack{y_1\in S_L^i\cap Q_{x_1,L}\\ y_2\in S_L^i\cap Q_{x_2,L}}}\sum_{u,v}\tau(x_1-u)\tau(u-v)\tau(v-y_1)\tau(x_2-u)\tau(v-y_2)\nonumber\\
   &\lesssim \sum_{x_1,x_2 \in \mathrm{Reg}_i'(C)} (L^{10-d}+L^6G(x_1,x_2))\nonumber\\
    &\le L^{10-d}X_{r,i}^{K-\mathrm{line-good}}(C)^2+ L^6\sum_{x_1 \in \mathrm{Reg}_i'(C)}\sum_{t=\log_2K}^{\infty}2^{t(4-d)}|B_t(x_1)|
\end{align}
where $B_t(x_1)=\partial \mathcal Q_{z,i}\cap (Q_{x_1,2^{t+1}}\setminus Q_{x_1,2^{t}})$. It follows from Definition~\ref{def:Kreg} that
\begin{align*}
    |B_t(x_1)|\le (2^{t+1})^2(\log 2^{t+1})^7\lesssim t^7(2^{t+1})^2.
\end{align*}
Since $\sum_{t=\log_2K}^{\infty}2^{t(6-d)}t^7$ converges for $d>6$, the second term of \eqref{alldistinct} is bounded above by $L^6X_{r,i}^{K-\mathrm{line-good}}(C)$.

Combining the above estimates, we obtain
\begin{align*}
    \lefteqn{\sum_{x_1,x_2 \in \mathrm{Reg}_i'(C)}\sum_{\substack{y_1\in S_L^i\cap Q_{x_1,L}\\ y_2\in S_L^i\cap Q_{x_2,L}}}\bP[y_1 \longleftrightarrow x_1,y_2 \longleftrightarrow x_2]}\\
    &\lesssim L^6X_{r,i}^{K-\mathrm{line-good}}(C)+L^4X_{r,i}^{K-\mathrm{line-good}}(C)^2\\
    &\lesssim L^4X_{r,i}^{K-\mathrm{line-good}}(C)^2,
\end{align*}
where we used the fact that $X_{r,i}^{K-\mathrm{line-good}}(C)>\delta r^2$, $L=\varepsilon r/4$ and $\delta=\varepsilon^2$ to obtain the final equality. This completes the proof.
\end{proof}

We are now ready to put the various pieces together to conclude with the main estimate of this section. To state this, we define the volume of the cluster in $A_{z,i}$ by setting
\begin{align}
V(A_{z,i})=\left|\mathcal C\big(0; \mathcal Q_{z,{i-1}}^c\big)\cap A_{z,i} \right|.\label{voldef}
\end{align}

\begin{lem}\label{Iterating} Let $\Lambda>3$. There exist strictly positive constants $c$ and $c'$ such that for every $z \in Q_{0,\Lambda r}\setminus Q_{0,3r}$, $\eta>0$, $i\in\{1,\ldots,\varepsilon^{-1}\}$, where $\varepsilon=c'\eta^{1/4}$, and $C\in\mathcal S_i$, it holds that
\begin{align*}
\bP\left[V(A_{z,i})\le \eta r^4\mid \mathcal C_e(0; \mathcal Q_{z, i}^c)=C\right]\le 1-c.
\end{align*}
\end{lem}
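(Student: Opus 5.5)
The plan is to apply the Paley--Zygmund inequality to $Z_L^i$ under the conditional law given $\{\mathcal C_e(0;\mathcal Q_{z,i}^c)=C\}$, with $L=\varepsilon r/4$. Then I would check that $Z_L^i$ is a lower bound for $V(A_{z,i})$ and that its typical size is at least $\eta r^4$ once $\varepsilon=c'\eta^{1/4}$.

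\emph{Step 1: $Z_L^i$ counts points of $V(A_{z,i})$.} Each $y\in S_L^i$ counted in $Z_L^i$ is joined, inside $Q_{y,2L}$ and off $C$, to some $x'\in\mathrm{Reg}_i'(C)$. The point $x'$ is joined to $0$ through its fully open segment $L_x$ and a path in $\mathcal Q_{z,i}^c$. Since $2L=\varepsilon r/2<\varepsilon r$, the box $Q_{y,2L}$ lies inside $\mathcal Q_{z,i}\setminus\mathcal Q_{z,i-1}$. Moreover $S_L^i\subset A_{z,i}$. So every such $y$ lies in $\mathcal C(0;\mathcal Q_{z,i-1}^c)\cap A_{z,i}$, and hence
\[V(A_{z,i})\ge Z_L^i \quad\text{on } \{\mathcal C_e(0;\mathcal Q_{z,i}^c)=C\}.\]
The only care needed here is with the geometry of the bent segments $L_x$ near the edges of the box; since they have length at most $dK\le L$, they stay within the annulus.

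\emph{Step 2: Paley--Zygmund.} Write $\bP_C$ and $\E_C$ for the conditional law and expectation. Fix $C\in\mathcal S_i$ and set $X=X_{r,i}^{K-\mathrm{line-good}}(C)>\delta r^2$, with $\delta=\varepsilon^2$ as in Lemma~\ref{PZupperbd}. Take $K$ large but fixed, and take $r$ large so that $L\ge 10K$ (the case of small $r$ is trivial or absorbed). Lemmas~\ref{PZlowerbd} and~\ref{PZupperbd} give
\[\E_C[Z_L^i]\ge c_1L^2X, \qquad \E_C[(Z_L^i)^2]\le C_1L^4X^2 .\]
Paley--Zygmund then yields
\[\bP_C\bigl(Z_L^i\ge \tfrac12 c_1L^2X\bigr)\ge \frac{c_1^2}{4C_1}=:c .\]
On this event,
\[Z_L^i\ge \tfrac12 c_1L^2\delta r^2=\tfrac12 c_1(\varepsilon r/4)^2\varepsilon^2r^2=\tfrac{c_1}{32}\,\varepsilon^4r^4 .\]

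\emph{Step 3: choice of $\varepsilon$.} Choose $c'$ with $\tfrac{c_1}{32}c'^4>1$. Then $\varepsilon=c'\eta^{1/4}$ gives $\tfrac{c_1}{32}\varepsilon^4r^4>\eta r^4$. Combining with Step 1,
\[\bP_C\bigl(V(A_{z,i})\le\eta r^4\bigr)\le 1-c .\]

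The main obstacle is Step 1. One must check that the constraint ``off $C$'' together with ``on $Q_{y,2L}$'' really produces a connection to $0$ lying in $\mathcal Q_{z,i-1}^c$, and that counted vertices lie in $A_{z,i}$ rather than leaking into $\mathcal Q_{z,i-1}$. This is ensured by the choice $L=\varepsilon r/4$ and the definition of $S_L^i$. The constants $c_1$ and $C_1$ are uniform in $C$, $i$ and $z$, so $c$ is uniform as well.
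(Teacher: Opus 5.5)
Your proposal is correct and follows the paper's proof: you apply Paley--Zygmund to $Z_L^i$ under the conditional law, feed it Lemmas~\ref{PZlowerbd} and~\ref{PZupperbd} together with $X_{r,i}^{K-\mathrm{line-good}}(C)>\delta r^2$, $\delta=\varepsilon^2$, $L=\varepsilon r/4$, and take $c'$ large. The paper uses the threshold $\eta r^4/(c_1L^2X_{r,i}^{K-\mathrm{line-good}}(C))$ in place of your $1/2$, which makes no real difference. One small correction to your Step~1: $Q_{y,2L}$ is not contained in $\mathcal Q_{z,i}$, since $y$ lies within $\ell_\infty$-distance $L$ of $\partial\mathcal Q_{z,i}$. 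However, the bound $Z_L^i\le V(A_{z,i})$ only needs $Q_{y,2L}\subset\mathcal Q_{z,i-1}^c$, and this does hold because every $y\in S_L^i$ is at $\ell_\infty$-distance more than $3L$ from $\mathcal Q_{z,i-1}$.
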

\begin{proof}
Since $Z_{L}^i\le V(A_{z,i})$ for $L=\varepsilon r/4$, it suffices to show that there exists a constant $c$ such that
\begin{align*}
\bP\left[Z_{L}^i> \eta r^4\mid \mathcal C_e(0; \mathcal Q_{z, i}^c)=C\right]\ge c.
\end{align*}
By applying the Paley-Zygmund inequality together with Lemmas~\ref{PZlowerbd} and \ref{PZupperbd}, we obtain
\begin{align*}
\lefteqn{\bP\left[Z_{L}^i> \eta r^4\mid \mathcal C_e(0; \mathcal Q_{z, i}^c)=C\right]}\\
 &\ge  \bP\left[Z_{L}^i> \eta r^4\big(c_1L^2X_{r,i}^{K-\mathrm{line-good}}(C)\big)^{-1}\E[Z_{L}^i|\mathcal C_e(0; \mathcal Q_{z, i}^c)=C]\mid\mathcal C_e(0; \mathcal Q_{z, i}^c)=C\right]\\
 &\ge \left(1-\frac{\eta r^4}{c_1L^2X_{r,i}^{K-\mathrm{line-good}}(C)}\right)^2\frac{\left(\E[Z_{L}^i|\mathcal C_e(0; \mathcal Q_{z, i}^c)=C]\right)^2}{\E[(Z_{L}^i)^2|\mathcal C_e(0; \mathcal Q_{z, i}^c)=C]}\\
 &\ge \bigg(1-\frac{\eta r^2}{c_1\delta L^2 }\bigg)^2\frac{c_1^2}{C_1}\\
 &=c
\end{align*}
where $c_1$ and $C_1$ are the constants appearing in Lemma~\ref{PZlowerbd} and Lemma~\ref{PZupperbd}, respectively, and we have used that  $X_{r,i}^{K-\mathrm{line-good}}(C)>\delta r^2$ to deduce the final inequality. Moreover, the final equality is a consequence of the choices $L=\varepsilon r/4$, $\delta=\varepsilon^2$ and $\varepsilon=c'\eta^{1/4}$. This completes the proof.
\end{proof}

\subsection{Proof of Theorem \ref{thm:ulmb}}\label{proofsec}

To complete the proof of Theorem \ref{thm:ulmb}, we will apply Lemma \ref{Iterating} repeatedly for the different annuli in our decomposition. We will use the notation for volumes of annuli from \eqref{voldef}, and also define the event $E_i$ by setting
\begin{align*}
E_i=\left\{X_{r,i}^{K-\mathrm{line-good}}>\delta r^2, V(A_{z,i})\le \eta r^4\right\}.
\end{align*}

\begin{proof}[Proof of Theorem \ref{thm:ulmb}] For $\eta>0$, let $\varepsilon$, $\alpha$ and $\delta$ be given by
\begin{align}\label{conditions}
\varepsilon=c'\eta^{1/4},\qquad\alpha=4C(K)\varepsilon^2,\qquad\delta=\varepsilon^2,
\end{align}
where $c'$ is the constant appearing in Lemma~\ref{Iterating}, and $C(K)=\frac{K^d}{p_c^{2dK}}$ is the same constant as in the proof of Lemma~\ref{KLgood}. We then bound the probability of interest as follows:
\begin{align}
\lefteqn{\bP\left[|\mathcal C\cap Q_{z,2r}|\le \eta r^4\mid0 \leftrightarrow Q_{z,r}\right]}\nonumber\\
&\le \sum_{i=\varepsilon^{-1}/2}^{\varepsilon^{-1}}\bP\left[X_{r,i}^{K-\mathrm{line-good}}\le \delta r^2\mid0 \leftrightarrow Q_{z,r}\right]\nonumber\\
&\hspace{15pt}+\bP\left[|\mathcal C\cap Q_{z,2r}|\le \eta r^4\mbox{ and }X_{r,i}^{K-\mathrm{line-good}}> \delta r^2, \forall i\in \{\varepsilon^{-1}/2,\dots,\varepsilon^{-1}\}\mid0\leftrightarrow Q_{z,r}\right]\nonumber\\
&\le  \sum_{i=\varepsilon^{-1}/2}^{\varepsilon^{-1}}\bP\left[X_{r,i}^{K-\mathrm{line-good}}\le \delta r^2\mid0 \leftrightarrow Q_{z,r}\right]+\frac{1}{\bP[0 \leftrightarrow Q_{z,r}]}\bP\left[E_i,\forall i\in \{\varepsilon^{-1}/2,\dots,\varepsilon^{-1}\}\right]. \label{mass}
\end{align}
Applying Lemma~\ref{KLgood}, the first term can be bounded above as
\begin{align*}
\lefteqn{\sum_{i=\varepsilon^{-1}/2}^{\varepsilon^{-1}}\bP\left[X_{r,i}^{K-\mathrm{line-good}}\le \delta r^2\mid0 \leftrightarrow Q_{z,r}\right]}\\
  &\lesssim_{\Lambda} \sum_{i=\varepsilon^{-1}/2}^{\varepsilon^{-1}}\left[r^{d+1}e^{-c(\log \alpha r^2)^4}+\frac{\alpha}{(i\varepsilon)^2}+ r^2\exp(-\delta r^2/2)\right]\\
    &\lesssim_{\Lambda} \varepsilon^{-1}\left[r^{d+1}e^{-c(\log \alpha r^2)^4}+\alpha + r^2\exp(-\delta r^2/2)\right].
\end{align*}
For the second term on the right-hand side of \eqref{mass}, let $\tilde{\mathcal S}_i$ denote the set of possible extended configurations for the level $i$ annulus, $C$ say, that are connected to the origin in $\mathcal Q_{z,i}^c$, that satisfy  $X_{r,i}^{K-\mathrm{line-good}}(C)> \delta r^2$, and also for which the events $E_i$ occur for every $j=i+1,\cdots,\varepsilon^{-1}$. (Note this refines the definition of $\mathcal{S}_i$ above Lemma \ref{PZlowerbd}.) By applying Lemma~\ref{Iterating}, we obtain that for any $\varepsilon^{-1}/2\le j\le \varepsilon^{-1}-1$,
\begin{align*}
\bP\left[E_i,\forall i\in \{j,\dots,\varepsilon^{-1}\}\right]&=\sum_{C \in \tilde{\mathcal S}_j}\bP\left[V(A_{z,j})\le \eta r^4\mid C_e(0; \mathcal Q_{z, j}^c)=C\right]\bP\left[C_e(0; \mathcal Q_{z, j}^c)=C\right]\\
&\le (1-c)\bP\left[E_i,\forall i\in \{j+1,\dots,\varepsilon^{-1}\}\right].
\end{align*}
Iterating this inequality until reaching $\bP[E_{\varepsilon^{-1}}]$ yields
\begin{align*}
\bP\left[E_i,\forall i\in \{\varepsilon^{-1}/2,\dots,\varepsilon^{-1}\}\right]
&\le (1-c)\bP\left[E_i,\forall i\in \{\varepsilon^{-1}/2+1,\dots,\varepsilon^{-1}\}\right]\\
&\hspace{28pt}\vdots\\
& \le (1-c)^{\varepsilon^{-1}/2-1}\bP[E_{\varepsilon^{-1}}]\\
&\lesssim (1-c)^{\varepsilon^{-1}/2}\bP[0 \leftrightarrow Q_{z,2r}].
\end{align*}
Thus, by applying Lemma~\ref{lambda}, we obtain
\begin{align*}
    \frac{1}{\bP[0 \leftrightarrow Q_{z,r}]}\bP\left[E_i,\forall i\in \{\varepsilon^{-1}/2,\dots,\varepsilon^{-1}\}\right]\lesssim  (1-c)^{\varepsilon^{-1}/2}\frac{\bP[0 \leftrightarrow Q_{z,2r}]}{\bP[0 \leftrightarrow Q_{z,r}]}\lesssim_{\Lambda} (1-c)^{\varepsilon^{-1}/2}.
\end{align*}
Combining the above previous inequalities provides
\begin{align*}
\lefteqn{\bP\left[|\mathcal C\cap Q_{z,2r}|\le \eta r^4\mid0 \leftrightarrow Q_{z,r}\right]}\\&\lesssim_{\Lambda} \varepsilon^{-1}\left[r^{d+1}e^{-c(\log \alpha r^2)^4}+\alpha+ r^2\exp(-\delta r^2/2)\right]+(1-c)^{\varepsilon^{-1}/2}.
\end{align*}
Now, the upper bound here converges to $\varepsilon^{-1}\alpha+(1-c)^{\varepsilon^{-1}/2}$ as $r\rightarrow\infty$, and since, by the choices at \eqref{conditions}, $\alpha$ is of the order of $\varepsilon^2$, and $\varepsilon$ is of the order of $\eta^{1/4}$, this yields the desired bound.
\end{proof}

\subsection{Proof of Theorem \ref{thm:mainLMB}}\label{proofsec2}

Given Theorem \ref{thm:ulmb}, the proof of Theorem \ref{thm:mainLMB} is now straightforward. For the proof of the second claim, we use the following estimate on the tail of the distribution of the size of $\mathcal{C}$,
\begin{equation}\label{voltail}
\bP\left(|\mathcal{C}|\geq R\right)\asymp R^{-1/2},
\end{equation}
which, as recalled at the start of this section, holds under our hypotheses.

\begin{proof}[Proof of Theorem \ref{thm:mainLMB}]
Let $\Lambda\geq 3$, and define
\[\mathcal{A}_{\Lambda,R}:=\left\{\mathrm{diam}\mathcal{C}\in[R,\Lambda R]\right\}.\]
Furthermore, for $\delta\in (0,1)$, suppose $(z_i)_{i=1}^n$ are the points of
\[\left(Q_{0,\Lambda R}\backslash Q_{0,3\delta R/10}\right)\cap\left(\left(\frac{\delta}{10}R\right)\Z^d\right),\]
so that $n\asymp (\Lambda/\delta)^d$, and set, for $\eta\in (0,1)$,
\[\mathcal{B}_{\Lambda,\delta,\eta,R}:=\left\{\left|\mathcal{C}\cap Q_{z_i,\delta R/5}\right|> \eta R^4\mbox{ for all $z_i$ such that }
\left|\mathcal{C}\cap Q_{z_i,\delta R/10}\right|\neq 0\right\}.\]
(As we have commonly done, when $\delta R/10$ is not an integer, we consider $\lfloor \delta R/10\rfloor$.) Now, suppose $\mathcal{A}_{\Lambda,R}\cap\mathcal{B}_{\Lambda,\delta,\eta,R}$ holds. We claim that, for all $x\in \mathcal{C}$, there exists a $z_i$ such that $x\in Q_{z_i,7\delta R/10}$ and $|\mathcal{C}\cap Q_{z_i,\delta R/10}|\neq 0$. Indeed, if $x\in \mathcal{C}\backslash Q_{3\delta R/10}$, then there must be a $z_i$ such that $x\in Q_{z_i,\delta R/10}$, which automatically gives that $|\mathcal{C}\cap Q_{z_i,\delta R/10}|\neq 0$. On the other hand, since $\mathrm{diam}\mathcal{C}\geq R$, then there must exist a $z_i$ with $\|z_i\|\in[3\delta R/10,4\delta R/10]$ such that $|\mathcal{C}\cap Q_{z_i,\delta R/10}|\neq 0$. For $x\in \mathcal{C}\cap Q_{0,3\delta R/10}$, it holds for this choice of $z_i$ that $x\in Q_{z_i,7\delta R/10}$. Consequently, in either case,
\[\left|\mathcal{C}\cap Q_{x,\delta R}\right|\geq \left|\mathcal{C}\cap Q_{z_i,\delta R/5}\right|>\eta R^4.\]
This implies
\[\inf_{x\in \mathcal{C}}\left|\mathcal{C}\cap Q_{x,\delta R}\right|>\eta R^4.\]

From the above argument, we obtain that
\begin{align*}
    \bP\left(\inf_{x\in \mathcal{C}}\left|\mathcal{C}\cap Q_{x,\delta R}\right|\leq \eta R^4\mid0\leftrightarrow Q_{0,R}^c\right)&\leq \bP\left(\mathcal{A}_{\Lambda,R}^c\cup\mathcal{B}_{\Lambda,\delta,\eta,R}^c\mid0\leftrightarrow Q_{0,R}^c\right)\\
    &\leq \bP\left(\mathcal{A}_{\Lambda,R}^c\mid0\leftrightarrow Q_{0,R}^c\right)+\bP\left(\mathcal{B}_{\Lambda,\delta,\eta,R}^c\mid0\leftrightarrow Q_{0,R}^c\right).
\end{align*}
For the first of these terms, we simply note that
\begin{align}
\bP\left(\mathcal{A}_{\Lambda,R}^c\mid0\leftrightarrow Q_{0,R}^c\right)&\le\bP\left(0\leftrightarrow Q_{0,\Lambda R/2}^c\mid0\leftrightarrow Q_{0,R}^c\right)\nonumber\\
&=\frac{\bP\left(0\leftrightarrow Q_{0,\Lambda R/2}^c\right)}{\bP\left(0\leftrightarrow Q_{0,R}^c\right)}\nonumber\\
&\lesssim \frac{1}{\Lambda^2},\label{lambd}
\end{align}
where we apply \eqref{onearm} to deduce the final inequality; here we use that, since $0\in\mathcal C$, the event $\{\mathrm{diam}\,\mathcal{C}>\Lambda R\}$ implies $0\leftrightarrow Q^c_{0,\Lambda R/2}$. As for the second, we have that
\begin{align*}
\lefteqn{\bP\left(\mathcal{B}_{\Lambda,\delta,\eta,R}^c\mid0\leftrightarrow Q_{0,R}^c\right)}\\
&=\bP\left(\left|\mathcal{C}\cap Q_{z_i,\delta R/5}\right|\leq  \eta R^4\mbox{ for some $z_i$ such that }
\left|\mathcal{C}\cap Q_{z_i,\delta R/10}\right|\neq 0\mid0\leftrightarrow Q_{0,R}^c\right)\\
&\lesssim \left(\frac{\Lambda}{\delta}\right)^d\sup_{i=1,\dots,n}\bP\left(\left|\mathcal{C}\cap Q_{z_i,\delta R/5}\right|\leq  \eta R^4\mbox{ and }\left|\mathcal{C}\cap Q_{z_i,\delta R/10}\right|\neq0\mid0\leftrightarrow Q_{0,R}^c\right)\\
&\leq \left(\frac{\Lambda}{\delta}\right)^d\sup_{i=1,\dots,n}\frac{\bP\left(0\leftrightarrow Q_{z_i,\delta R/10}\right)}{\bP\left(0\leftrightarrow Q_{0,R}^c\right)}
\bP\left(\left|\mathcal{C}\cap Q_{z_i,\delta R/5}\right|\leq  \eta R^4\mid0\leftrightarrow Q_{z_i,\delta R/10}\right)\\
&\leq C_{\Lambda,\delta}\sup_{z\in Q_{0,\Lambda R}\backslash Q_{0,3\delta R/10}}\bP\left(\left|\mathcal{C}\cap Q_{z,\delta R/5}\right|\leq  \eta R^4\mid0\leftrightarrow Q_{z,\delta R/10}\right),
\end{align*}
where the final inequality is a consequence of \eqref{onearm} and Lemma \ref{lambda}, with the constant $C_{\Lambda,\delta}$ only depending on $\Lambda$ and $\delta$. By Theorem \ref{thm:ulmb}, the final expression converges to zero as $R\rightarrow\infty$ and then $\eta\rightarrow0$. Hence, recalling \eqref{lambd}, subsequently taking $\Lambda\rightarrow \infty$ yields the first part of the result.

For the second part, we start by observing that
\begin{align*}
\bP\left(\mathrm{diam}\mathcal{C}\leq \varepsilon R,\:|\mathcal{C}|\geq R^4\right)&\leq \bP\left(|\mathcal{C}\cap Q_{0,\varepsilon R}|\geq R^4\right)\\
&\leq R^{-4} \sum_{x\in Q_{0,\varepsilon R}}\tau(x)\\
&\lesssim  R^{-4}(\varepsilon R)^2\\
&=\varepsilon^2R^{-2},
\end{align*}
where we have applied the bound on $\tau(x)$ from \eqref{taubound} and \eqref{bxbd}. Hence,
\begin{align*}
    \lefteqn{\bP\left(\inf_{x\in \mathcal{C}}\left|\mathcal{C}\cap Q_{x,\delta R}\right|\leq \eta R^4\mid|\mathcal{C}|\geq R^4\right)}\\
    &\leq \varepsilon^2+\bP\left(\inf_{x\in \mathcal{C}}\left|\mathcal{C}\cap Q_{x,\delta R}\right|\leq \eta R^4,\:0\leftrightarrow Q_{0,{\varepsilon R/2}}^c\mid|\mathcal{C}|\geq R^4\right)\\
    &\leq \varepsilon^2+\frac{\bP\left(0\leftrightarrow Q_{0,{\varepsilon R/2}}^c\right)}{\bP\left(|\mathcal{C}|\geq R^4\right)}\bP\left(\inf_{x\in \mathcal{C}}\left|\mathcal{C}\cap Q_{x,\delta R}\right|\leq \eta R^4\mid0\leftrightarrow Q_{0,{\varepsilon R/2}}^c\right)\\
    &\lesssim \varepsilon^2+\varepsilon^{-2}\bP\left(\inf_{x\in \mathcal{C}}\left|\mathcal{C}\cap Q_{x,\delta R}\right|\leq \eta R^4\mid0\leftrightarrow Q_{0,{\varepsilon R/2}}^c\right),
\end{align*}
where the final inequality follows from \eqref{onearm} and \eqref{voltail}. Now, by the first part of the theorem, we have that the second term converges to zero as $R\rightarrow\infty$ and then $\eta\rightarrow0$. Thus, the proof is completed on letting $\varepsilon\rightarrow 0$.
\end{proof}

\section{Convergence of the rescaled cluster measure}
\label{sec:measureconv}

Let us sketch the argument for the convergence of the rescaled cluster
measure. The classical method of moments does not apply, for two reasons:
\begin{itemize}
\item the restriction events $\{\mu(\R^d)>\eps\}$ are invisible to moments
(their indicators are not expressible through $k$-point functions);
\item $\bN$ is not a finite measure: it is infinite, though
$\sigma$-finite (Proposition~\ref{prop:totalmass},
Section~\ref{sec:sbmstructure}).

\end{itemize}
We resolve
this by \emph{size-biasing}: for $\varphi_0\in\Cc$ with $\varphi_0\not\equiv0$ we set
\begin{equation}\label{eq:sizebias}
\nuh_n \;:=\; \mu(\varphi_0)\,\nu_n(d\mu),
\qquad
\Nh \;:=\; \mu(\varphi_0)\,\bN(d\mu).
\end{equation}
Unlike $\bN$, the size-biased measure $\Nh$ is now a
\emph{finite} measure: its total mass is
$\Nh(\cM)=\bN[\mu(\varphi_0)]=\int G\varphi_0\in(0,\infty)$, finite because
$\varphi_0$ has compact support and the Green function $G$ is locally integrable
(the $k=1$ case of the first-moment identity, Proposition~\ref{prop:occmoments};
see \eqref{eq:polar}).
Weighting by $\mu(\varphi_0)$ raises the order of every moment by one, so the
moments of $\nuh_n$ are again rescaled $k$-point functions supplied by
\cite{CCHS26} (this convergence of moments is established in
Sections~\ref{sec:trees} and~\ref{sec:percolation}); the limit $\Nh$ is determined by its moments
(Lemma~\ref{lem:determinacy}), and we prove that $\{\nuh_n\}$ is tight using the one-arm bound
\cite{KN11} (Lemma~\ref{lem:tightness}). Hence $\nuh_n\Rightarrow\Nh$
(Theorem~\ref{thm:biased}, Section~\ref{sec:biased}), and \emph{un-biasing} --- dividing by
$\mu(\varphi_0)$ and exhausting with $\varphi_0\uparrow\mathbf 1$ --- recovers
the $\sigma$-finite convergence of $\nu_n$ to $\bN$, Theorem~\ref{thm:mainA}
(Section~\ref{sec:sigmafinite}).

\subsection{Tree integrals: uniform bounds, moment measures, exponential-moment growth}
\label{sec:trees}

Everything in this section is valid for $d>4$: the only dimensional input is
the integrability at infinity of the squared Green kernel. The percolation hypotheses --- and with them the restriction $d>6$ ---
enter only from Section~\ref{sec:percolation} onward.

\subsubsection{Binary trees and tree integrals}\label{sec:treedef}

For $k\ge1$ let $\T_k$ denote the set of binary trees with leaf set
$\{0,1,\dots,k\}$: trees in which the $k+1$ labelled \emph{leaves} have
degree $1$ and every other (\emph{internal}) vertex has degree $3$. Such a
tree has $k-1$ internal vertices and $2k-1$ edges, and
\begin{equation}\label{eq:treecount}
|\T_k|=(2k-3)!!\;\le\;2^k\,k!\,,
\end{equation}
with the convention $(-1)!!=1$ (so $|\T_1|=1$, the single edge). Here $(2k-3)!!=(2k-3)(2k-5)\cdots3\cdot1$ denotes the double factorial, the product of the odd integers up to $2k-3$; the count $|\T_k|=(2k-3)!!$ follows by induction, since every tree in $\T_k$ arises from a unique tree in $\T_{k-1}$ by subdividing one of its $2k-3$ edges with a new internal vertex to which leaf~$k$ is attached, so that $|\T_k|=(2k-3)\,|\T_{k-1}|$. The leaf
labelled $0$ plays the role of the \emph{root}, which will be pinned at the
origin. For $T\in\T_k$, with internal vertex set $V_{\mathrm{int}}(T)$ and edge
set $E(T)$, and for pairwise distinct $z_1,\dots,z_k\in\R^d\setminus\{0\}$,
define
\begin{equation}\label{eq:IT}
I_T(z_1,\dots,z_k)
:=\int_{(\R^d)^{V_{\mathrm{int}}(T)}}
\prod_{\{a,b\}\in E(T)}G(u_a-u_b)
\prod_{v\in V_{\mathrm{int}}(T)} du_v,
\qquad u_0:=0,\ u_i:=z_i .
\end{equation}
Here $G$ is the Green function of Brownian motion of Section~\ref{sec:notation}, $G(x)=c_d|x|^{-(d-2)}$.
When $V_{\mathrm{int}}(T)=\emptyset$ --- equivalently $k=1$ --- the single tree $T\in\T_1$ has the one edge
$\{0,1\}$, so $I_T(z_1)=G(u_0-u_1)=G(z_1)$; this is the case used in the
two-point normalization below.
Convergence of these integrals (for $d>4$) is part of
Lemma~\ref{lem:envelope} below.

\begin{remark}\label{rem:treeconvention}
In \cite{CCHS26} the $k$-point limit is expressed through binary trees on
the $k$ labelled leaves $y_0,\dots,y_{k-1}$ (with $k-2$ internal vertices and
$2k-3$ edges), all leaves being free arguments. Specializing $y_0=0$
identifies their tree class for the $(m{+}1)$-point function with our
$\T_m$.
\end{remark}

\subsubsection{A uniform domination bound}

Set
\[
\bar G(x):=|x|^{-(d-2)},\qquad x\in\R^d\setminus\{0\},
\]
so that $G=c_d\,\bar G$ (recall $c_d=\Gamma(d/2-1)/(2\pi^{d/2})$ from Section~\ref{sec:notation}). The following domination bound serves two purposes: it
shows the tree integrals \eqref{eq:IT} converge for $d>4$, with a bound that grows exponentially with the size of the tree, and it provides the integrable dominating
function for the lattice moment sums of Section~\ref{sec:percolation}.

\begin{lemma}[Uniform domination bound]\label{lem:envelope}
Let $d>4$ and $R_0\ge1$. There is a constant $C_\star=C_\star(R_0)$$<\infty$
such that for
every $k\ge1$, every $T\in\T_k$, and all measurable
$\phi_1,\dots,\phi_k$ with $|\phi_i|\le\mathbf 1_{\overline B_{R_0}}$,
\begin{equation}\label{eq:envelope}
\int_{(\R^d)^{k}}\int_{(\R^d)^{V_{\mathrm{int}}(T)}}
\prod_{\{a,b\}\in E(T)}\bar G(u_a-u_b)
\prod_{v\in V_{\mathrm{int}}(T)}du_v\,
\prod_{i=1}^k|\phi_i(z_i)|\,dz_i
\;\le\;C_\star^{\,k},
\end{equation}
with $u_0:=0$ and $u_i:=z_i$ as in \eqref{eq:IT}. In particular
$I_T(z_1,\dots,z_k)<\infty$ for Lebesgue-a.e.\ $(z_1,\dots,z_k)$, and, after
enlarging $C_\star$ if necessary,
\[
\int_{(\R^d)^k}I_T(\vec z)\prod_i|\phi_i(z_i)|\,dz_i\le
C_\star^{\,k}.
\]

\end{lemma}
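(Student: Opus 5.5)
We will prove the bound by induction on $k$, peeling off one leaf at a time, using the recursive structure of $\T_k$ recalled after \eqref{eq:treecount}: every $T\in\T_k$ ($k\ge2$) arises from a unique $T'\in\T_{k-1}$ by subdividing an edge $\{a,b\}$ of $T'$ with a new internal vertex $w$ and attaching leaf $k$ to $w$. Removing leaf $k$ and $w$ therefore amounts to replacing the factor $\bar G(u_a-w)\bar G(w-u_b)\bar G(w-z_k)$ by $\bar G(u_a-u_b)$, after integrating out $z_k$ and $w$.

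The key estimate is a \emph{one-leaf insertion bound}: for all $x,y\in\R^d$,
\[
\int_{\R^d}\int_{\R^d}\bar G(x-w)\,\bar G(w-y)\,\bar G(w-z)\,\mathbf 1_{\overline B_{R_0}}(z)\,dz\,dw\;\le\;C\,\bar G(x-y).
\]
Everything is nonnegative, so Tonelli lets us carry out the integrations in this order. The steps are as follows.

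\emph{Step 1.} Show that $h(w):=\int_{\overline B_{R_0}}\bar G(w-z)\,dz\le C_{R_0}\min(1,|w|^{2-d})\le C'_{R_0}\langle w\rangle^{2-d}$, using local integrability of $\bar G$ near the origin and its decay at infinity.

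\emph{Step 2.} Bound $\int \bar G(x-w)\bar G(w-y)h(w)\,dw$. This is a three-kernel convolution. Splitting $\R^d$ into the regions where $w$ is closest to $x$, closest to $y$, or far from both gives a bound $C\bar G(x-y)$ times a factor involving $h$. The cleanest way to see this is to observe that near $x$ the product is $\lesssim |x-y|^{2-d}\int_{|w-x|<|x-y|/2}|x-w|^{2-d}h(w)\,dw$. This is bounded uniformly because $\int \bar G(x-w)\langle w\rangle^{2-d}dw$ is finite only if $d>4$, and even then it is bounded by $C\langle x\rangle^{4-d}\le C$. In the far region we use $\bar G(x-w)\bar G(w-y)\lesssim \bar G(x-y)\,|w-x|^{2-d}$ together with $\int |w-x|^{2-d}\langle w\rangle^{2-d}dw<\infty$ for $d>4$. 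The latter is exactly the squared-Green-kernel integrability that the section flags as the only dimensional input.

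\emph{Step 3.} Iterate. The bound gives $\Phi(T)\le C\,\Phi(T')$ with $u_a,u_b$ held fixed, where $\Phi$ denotes the left side of \eqref{eq:envelope}. The base case $k=1$ is $\int_{\overline B_{R_0}}\bar G(z)\,dz<\infty$. Hence $\Phi(T)\le C_\star^k$ with $C_\star=\max(C,\int_{\overline B_{R_0}}\bar G)$, uniformly in $T$.

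The a.e.\ finiteness of $I_T$ then follows from Tonelli, taking $\phi_i=\mathbf 1_{\overline B_{R_0}}$ for each $R_0$. The final display follows since $I_T=c_d^{2k-1}\times$ (the $\bar G$ integral), so it suffices to enlarge $C_\star$ by a factor $\max(1,c_d)^2$.

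The main obstacle is Step 2: obtaining the three-kernel bound with a constant that does not depend on $x$ and $y$, which may be arbitrary internal vertices anywhere in $\R^d$. The subtle case is when $x$ and $y$ are close to each other but far from $\overline B_{R_0}$, or near it. This is handled by the region split above, together with the fact that $h$ is bounded, which controls the region near $x$ and $y$ even when $|x-y|$ is small.
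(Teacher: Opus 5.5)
Your argument is correct, and it takes a genuinely different route from the paper's.

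\textbf{The paper's route.} It integrates the tree from the leaves toward the root, carrying the pointwise envelope $m(u)=(1+|u|)^{-(d-2)}$. Each leaf yields a function of its parent's position bounded by $c_1 m$ (Claim~1). Each internal vertex merges two children bounded by $\kappa m$ and $\kappa' m$ into one bounded by $c_2\kappa\kappa' m$ (Claim~2, via $m^2\in L^1$ for $d>4$). The root edge is evaluated at the end using $m(0)=1$.

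\textbf{Your route.} You use the edge-subdivision recursion underlying $|\T_k|=(2k-3)|\T_{k-1}|$, together with a single leaf-removal estimate
\[
\int\!\int\bar G(x-w)\,\bar G(w-y)\,\bar G(w-z)\,\mathbf 1_{\overline B_{R_0}}(z)\,dz\,dw\le C\,\bar G(x-y).
\]
Because this bound is uniform in $x,y\in\R^d$, it applies whether the endpoints of the subdivided edge are the root, a leaf or internal vertices. Tonelli then gives $\Phi(T)\le C\,\Phi(T')$ with no special treatment of the root.

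\textbf{On your Step~2.} The three-region split is sound. It can be compressed into the global inequality
\[
\bar G(x-w)\,\bar G(w-y)\le 2^{d-2}\,\bar G(x-y)\bigl(\bar G(x-w)+\bar G(w-y)\bigr),
\]
which holds because $\max(|x-w|,|w-y|)\ge|x-y|/2$. This reduces everything to $\sup_x\int\bar G(x-w)(1+|w|)^{2-d}\,dw<\infty$. That is the same squared-kernel integrability for $d>4$ that the paper uses in the form $m^2\in L^1$.

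\textbf{What each buys.} Your route is more modular: one lemma, and each step bounds a diagram by a smaller diagram with the other variables frozen, in the style of tree-graph bounds. The paper's route instead produces an explicit decay envelope for every partially integrated subtree. That pointwise form (Claim~1) is reused later, e.g.\ in Lemma~\ref{lem:growingsupport}.
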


\begin{proof}
Since the integrand depends on the $\phi_i$ only through $|\phi_i|$, we may assume $0\le\phi_i\le\mathbf 1_{\overline B_{R_0}}$. By scaling we may take $R_0=1$ (rescaling space by
$R_0$ multiplies the left side of \eqref{eq:envelope} by $R_0^{2(2k-1)}$ ---
a factor $R_0^{d}$ per integration variable against $R_0^{-(d-2)}$ per edge
kernel --- which is again geometric in $k$). Set
\[
m(u):=(1+|u|)^{-(d-2)} .
\]
We will use repeatedly the elementary polar-coordinate identity
\begin{equation}\label{eq:polar}
\int_{|v|\le R}|v|^{-(d-2)}\,dv=\frac{\omega_{d-1}}{2}\,R^2
\qquad(R>0),
\end{equation}
where $\omega_{d-1}$ is the surface area of the unit sphere (the exponent
$d-1$ from the Jacobian minus $d-2$ from the kernel leaves $r^1$).
We make two claims.

\emph{Claim 1 (leaf reduction).} There is $c_1=c_1(d)$ such that for
$0\le\phi\le\mathbf 1_{\overline B_1}$,
\[
f_\phi(u):=\int\bar G(u-z)\phi(z)\,dz\;\le\;c_1\,m(u).
\]
Indeed, for $|u|\ge2$ and $z\in\overline B_1$ one has $|u-z|\ge|u|-1$, and
since $t\mapsto\frac{1+t}{t-1}$ is decreasing on $(1,\infty)$ with value $3$
at $t=2$, $(|u|-1)^{-(d-2)}\le 3^{d-2}(1+|u|)^{-(d-2)}$; thus
$\bar G(u-z)\le 3^{d-2}m(u)$, and integrating over $z\in\overline B_1$
gives $f_\phi(u)\le 3^{d-2}(\omega_{d-1}/d)\,m(u)$ (here $\omega_{d-1}/d=|B_1|$ is the volume of the unit ball). For $|u|\le2$,
$\int_{B_1}\bar G(u-z)\,dz\le\int_{B_3}|w|^{-(d-2)}dw<\infty$ (finite
by \eqref{eq:polar} with $R=3$), while $m(u)\ge(1+2)^{-(d-2)}=3^{-(d-2)}$
there, so $f_\phi(u)\le c\,m(u)$. Taking $c_1$ the larger of the two constants
proves the claim.

\emph{Claim 2 (merging).} There is $c_2=c_2(d)$ such that if
$0\le f\le\kappa\,m$ and $0\le g\le\kappa'\,m$, then
\[
h(u):=\int\bar G(u-w)\,f(w)g(w)\,dw\;\le\;c_2\,\kappa\kappa'\,m(u).
\]
To see this, note $fg\le\kappa\kappa' m^2$ and
$m^2(w)=(1+|w|)^{-2(d-2)}\in L^1(\R^d)$ because $2(d-2)>d$ for $d>4$; write
$c_m:=\|m^2\|_{L^1}$. For $|u|\le2$,
\[
h(u)\le\kappa\kappa'\Bigl[\int_{|u-w|\le1}|u-w|^{-(d-2)}\,dw
+c_m\Bigr]\le c\,\kappa\kappa'\;\le\;c'\,\kappa\kappa'\,m(u),
\]
the local integral being finite by \eqref{eq:polar} with $R=1$, and the last inequality using only that $|u|\le2$, so that $m(u)\ge3^{-(d-2)}$.
For $|u|\ge2$, split the integral into three regions:
(a) $|w|\le|u|/2$: then $\bar G(u-w)\le(|u|/2)^{-(d-2)}$, contributing
at most $2^{d-2}c_m\kappa\kappa'|u|^{-(d-2)}\le c\,\kappa\kappa'\,m(u)$ (using $|u|^{-(d-2)}\le2^{d-2}m(u)$, valid since $|u|\ge2$);
(b) $|w|\ge|u|/2$ and $|u-w|\le|u|/2$: then
$m^2(w)\le(1+|u|/2)^{-2(d-2)}$, while by the substitution $v=u-w$ and
\eqref{eq:polar},
\[
\int_{|u-w|\le|u|/2}|u-w|^{-(d-2)}\,dw
=\int_{|v|\le|u|/2}|v|^{-(d-2)}\,dv=\frac{\omega_{d-1}}{2}\Bigl(\frac{|u|}{2}\Bigr)^2
\le c|u|^2;
\]
hence this region contributes at most
$c\,\kappa\kappa'\,|u|^{2}(1+|u|/2)^{-2(d-2)}
\le c'\,\kappa\kappa'(1+|u|)^{-(d-2)}=c'\,\kappa\kappa'\,m(u)$, using $1+|u|/2\ge(1+|u|)/2$,
$|u|\le1+|u|$ and $2(d-2)-2\ge d-2$, i.e.\
$d\ge4$;
(c) $|w|\ge|u|/2$ and $|u-w|\ge|u|/2$: then
$\bar G(u-w)\le(|u|/2)^{-(d-2)}$, contributing at most
$2^{d-2}c_m\kappa\kappa'|u|^{-(d-2)}\le c\,\kappa\kappa'\,m(u)$ (again $|u|^{-(d-2)}\le2^{d-2}m(u)$ since $|u|\ge2$). As each of the three regions is bounded by a multiple of $\kappa\kappa'\,m(u)$, Claim 2 follows.

Now fix $T\in\T_k$ and orient it away from the root leaf $0$. Integrate out
the variables in order of decreasing graph distance from the root, as
follows: each non-root leaf $i$ is first replaced by the function
$f_{\phi_i}$ of its parent's position (Claim 1); then, iteratively, each
internal vertex $v$ whose two children have already been reduced to
functions $f,g$ of $u_v$ with $f\le\kappa m$, $g\le\kappa'm$ is integrated
out, producing the function $h\le c_2\kappa\kappa' m$ of its parent's
position (Claim 2). After all $k-1$ internal vertices are processed, there
remains the edge into the root. If $k=1$ the tree is the single edge
$\{0,1\}$ and there is nothing left to integrate: the left side of
\eqref{eq:envelope} equals $f_{\phi_1}(0)\le c_1\,m(0)=c_1$ by Claim~1
alone. If $k\ge2$ the root's unique neighbour is an internal vertex; once
every other internal vertex is processed, the two subtree weights
$f\le\kappa\,m$ and $g\le\kappa'\,m$ hang at this neighbour, and the final
integral
\[
\int \bar G(0-u)\,f(u)g(u)\,du
\]
is Claim 2's computation evaluated at the root, giving at most
$c_2\,\kappa\kappa'\,m(0)=c_2\,\kappa\kappa'$. Each application of
Claim 1 contributes a factor $c_1$ (there are $k$ of them) and each
application of Claim 2 a factor $c_2$ (at most $k-1$ of them), so the total
is at most $C_\star^k$ with $C_\star=C_\star(R_0)$. Tonelli's
theorem (all integrands nonnegative) justifies the iterated integration and
yields the a.e.\ finiteness of $I_T$ and \eqref{eq:envelope}. For the final
display, each of the $2k-1$ edges of $T$ carries $G=c_d\bar G$, so the left
side gains a factor $c_d^{\,2k-1}\le\bigl((1\vee c_d)^{2}\bigr)^{k}$, which is
absorbed by enlarging $C_\star$ to $(1\vee c_d)^{2}C_\star$.
\end{proof}

\subsubsection{Moment measures of the occupation measure under $\bN$}

We next express the moments of $\mu$ under $\bN$ as sums of tree integrals.
The starting point is the classical moment formula for superprocesses at
fixed times, which goes back to Dynkin \cite{Dyn88} and, in the graphical
form convenient here, to Adler \cite[Theorem~3.1]{Adl93}; see also
\cite[Theorem 2.3]{Eth00}. We state it under $\bN$, in the normalization fixed
by \eqref{eq:laplace}.

\begin{proposition}[Fixed-time moment formula]
For $k\ge1$, $t_1,\dots,t_k>0$ and $\phi_1,\dots,\phi_k\in\Cc$,
\begin{equation}\label{eq:timemoments}
\bN\Bigl[\prod_{i=1}^k X_{t_i}(\phi_i)\Bigr]
=\gamma^{\,k-1}\sum_{T\in\T_k}
\int
\prod_{\{a,b\}\in E(T)} p_{\,s_b-s_a}\bigl(y_b-y_a\bigr)
\prod_{v\in V_{\mathrm{int}}(T)} ds_v\,dy_v\,
\prod_{i=1}^k\phi_i(z_i)\,dz_i ,
\end{equation}
where each vertex $a$ of $T$ carries a space-time point $(s_a,y_a)$; the
root is pinned at $(0,0)$, the leaf $i$ at $(t_i,z_i)$; the edges
$\{a,b\}$ are written with $a$ the endpoint closer to the root; internal
vertices are integrated over $(0,\infty)\times\R^d$; and $p_s\equiv0$ for
$s\le0$ (which enforces the genealogical time-ordering). Here $\gamma$ is the branching parameter of \eqref{eq:laplace} (Section~\ref{sec:SBM}) and $p_s(x)=(2\pi s)^{-d/2}e^{-|x|^2/2s}$ is the heat kernel of Section~\ref{sec:notation}.
\end{proposition}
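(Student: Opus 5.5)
The plan is to derive \eqref{eq:timemoments} from the Laplace functional characterization \eqref{eq:laplace}, passed to the canonical measure through the Poisson cluster representation. Fix $t_1,\dots,t_k>0$, $\phi_i\in\Cc$, and $\theta=(\theta_1,\dots,\theta_k)\in[0,\infty)^k$. By the Markov property and \eqref{eq:laplace}, applied iteratively backwards from the largest $t_i$, we have $\E_m[\exp(-\sum_i\theta_iX_{t_i}(\phi_i))]=e^{-\langle w^\theta_0,m\rangle}$. Here $w^\theta$ is the mild solution of
\[
w^\theta_s(x)=\sum_{i}\theta_i\mathbf 1_{\{s<t_i\}}P_{t_i-s}\phi_i(x)-\frac{\gamma}{2}\int_s^\infty P_{r-s}\bigl((w^\theta_r)^2\bigr)(x)\,dr ,
\]
where $P$ is the heat semigroup. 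This is the backward equation with sources $\theta_i\phi_i$ injected at times $t_i$. Taking $m=\eps\delta_0$ and comparing with the Poisson representation $\E_{\eps\delta_0}[e^{-F}]=\exp(-\eps\,\bN[1-e^{-F}])$ (equivalently, dividing by $\eps$ and letting $\eps\downarrow0$, as in the definition of $\bN$) gives
\[
\bN\Bigl[1-e^{-\sum_i\theta_iX_{t_i}(\phi_i)}\Bigr]=w^\theta_0(0).
\]

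Next I expand $w^\theta$ in $\theta$. I look for coefficients $w_S$, indexed by nonempty $S\subseteq\{1,\dots,k\}$, of the multilinear monomials $\theta_S=\prod_{i\in S}\theta_i$. Only the multilinear part matters, since I will take $\partial_{\theta_1}\cdots\partial_{\theta_k}$ at $\theta=0$. Inserting the ansatz into the mild equation gives two rules. For singletons, $w_{\{i\}}(s,x)=P_{t_i-s}\phi_i(x)=\int p_{t_i-s}(z-x)\phi_i(z)dz$. For $|S|\ge2$,
\[
w_S=-\frac{\gamma}{2}\int_s^\infty P_{r-s}\Bigl(\sum_{A\sqcup B=S,\,A,B\neq\emptyset}w_Aw_B\Bigr)dr=-\gamma\int_s^\infty P_{r-s}\Bigl(\sum_{\{A,B\}}w_Aw_B\Bigr)dr ,
\]
where the second sum is over unordered splits. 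Unfolding this recursion, each term corresponds to a rooted binary tree with leaf set $S$. Each internal vertex carries an integrated space-time point, each edge carries a heat kernel $p_{s_b-s_a}(y_b-y_a)$ oriented away from the root (which is zero when $s_b\le s_a$), and each internal vertex contributes a factor $-\gamma$. The map from recursive unordered splits to binary trees with leaves $S\cup\{\text{root}\}$ is a bijection. So I will check that $w_{\{1,\dots,k\}}(0,0)$ equals $(-1)^{k-1}\gamma^{k-1}$ times the sum over $T\in\T_k$ in \eqref{eq:timemoments}, with $k-1$ internal vertices.

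On the left-hand side, $\partial_{\theta_1}\cdots\partial_{\theta_k}\bN[1-e^{-\sum\theta_iX_{t_i}(\phi_i)}]$ at $\theta=0$ equals $(-1)^{k-1}\bN[\prod_iX_{t_i}(\phi_i)]$, so the signs cancel and the formula follows.

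The main obstacle is justifying this term-by-term differentiation, that is, finiteness of the moments and of the Taylor expansion. I would first try a bootstrap by induction on $k$. The tree expressions are finite, since heat kernels are integrable in time up to the finite horizon $\max t_i$ and the $\phi_i$ are bounded. For $\theta\ge0$, Taylor's theorem with remainder for $1-e^{-x}$ holds with signed bounds. I would combine this with monotone convergence in $\theta_i\downarrow0$ along one coordinate at a time, using difference quotients of nonnegative quantities, to identify the $k$-th mixed moment given the lower ones. As a fallback, I would use analyticity. For $\theta$ in a small complex neighbourhood of $0$ the mild equation has a unique solution by a contraction argument, analytic in $\theta$, because the data are bounded and the time horizon is finite. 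This gives finite exponential moments $\bN[e^{c\sum X_{t_i}(\phi_i)}-1]<\infty$ for small $c>0$, which legitimizes differentiating under $\bN$. Alternatively, I could cite \cite[Theorem~3.1]{Adl93} for $\bP_{\eps\delta_0}$ and read off the $\eps$-linear coefficient.
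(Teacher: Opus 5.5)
Your argument is correct, but it is not what the paper does. The paper gives no derivation of this proposition: it quotes the classical fixed-time moment formula of Dynkin \cite{Dyn88} and Adler \cite[Theorem~3.1]{Adl93} (see also \cite[Theorem~2.3]{Eth00}) and records it directly under $\bN$ in the normalization of \eqref{eq:laplace}.

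Your route has three steps: the multi-time log-Laplace equation in mild form; the identity $\bN[1-e^{-F}]=w^\theta_0(0)$, read off from $\E_{\eps\delta_0}[e^{-F}]=\exp(-\eps\,\bN[1-e^{-F}])$; and extraction of the coefficient of $\theta_1\cdots\theta_k$ through the recursion over unordered splits. This is a standard way such formulas are derived. Its advantage is that it makes explicit what the paper leaves to the reader:
\begin{itemize}
\item the transfer from $\bP_m$ to $\bN$;
\item the constant $\gamma^{k-1}$, which is the $\tfrac{\gamma}{2}$ of \eqref{eq:laplace} doubled by the two orderings of each split;
\item the matching signs $(-1)^{k-1}$ on both sides;
\item the bijection between recursive splittings and $\T_k$.
\end{itemize}
The citation buys brevity and outsources the analysis.

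There are two small points on the justification.

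First, iterating \eqref{eq:laplace} through the Markov property uses it with the non-compactly supported data $\theta_i\phi_i+w^\theta_{t_i}$. You therefore need the standard extension of the Laplace characterization from $\Cc$ to bounded nonnegative measurable functions.

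Second, your analytic fallback contains a step that is true but not automatic. Analyticity of $\theta\mapsto w^\theta_0(0)$ near $0$ does yield $\bN[e^{cF}-1]<\infty$ for $F=\sum_iX_{t_i}(\phi_i)$. However, this needs a Pringsheim--Landau type theorem, applied to $\theta\mapsto\bN[F e^{-\theta F}]$, the Laplace transform of the finite measure $F\,d\bN$.

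You can avoid exponential moments altogether. Once the contraction argument gives smoothness of $\theta\mapsto w^\theta_0(0)$ on $[0,\delta)^k$, induction on $|S|$ gives
\[
\bN\Bigl[\prod_{i\in S}X_{t_i}(\phi_i)\,e^{-\sum_j\theta_jX_{t_j}(\phi_j)}\Bigr]=(-1)^{|S|-1}\prod_{i\in S}\partial_{\theta_i}w^\theta_0(0).
\]
Each step uses $(1-e^{-hx})/h\uparrow x$ as $h\downarrow0$, so monotone convergence identifies each one-sided difference quotient; finiteness of the moments is then a by-product. This is the clean form of your bootstrap.
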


\begin{proposition}[Occupation moment measures]\label{prop:occmoments}
For every $k\ge1$ and $\phi_1,\dots,\phi_k\in\Ccg$,
\begin{equation}\label{eq:occmoments}
\bN\Bigl[\prod_{i=1}^k\mu(\phi_i)\Bigr]
=\gamma^{\,k-1}\sum_{T\in\T_k}\int_{(\R^d)^k}
I_T(z_1,\dots,z_k)\prod_{i=1}^k\phi_i(z_i)\,dz_i ,
\end{equation}
the integral converging absolutely. In particular the identity holds
for all $\phi_i\in\Ccg$, not only nonnegative ones.
Equivalently, as locally finite measures off the diagonals,
\[\bN\left[\mu(dz_1)\cdots\mu(dz_k)\right]
=\gamma^{k-1}\sum_{T\in\T_k}I_T(\vec z)\,d\vec z.\]
\end{proposition}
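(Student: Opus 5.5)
We prove the identity by integrating the fixed-time moment formula \eqref{eq:timemoments} over the times $t_1,\dots,t_k$. First take $\phi_1,\dots,\phi_k\in\Cc$ nonnegative. Since $\mu(\phi_i)=\int_0^\infty X_{t_i}(\phi_i)\,dt_i$, Tonelli's theorem (applied under the $\sigma$-finite measure $\bN$ and in the time variables, everything being nonnegative) gives
\[
\bN\Bigl[\prod_{i=1}^k\mu(\phi_i)\Bigr]=\int_{(0,\infty)^k}\bN\Bigl[\prod_{i=1}^k X_{t_i}(\phi_i)\Bigr]\prod_i dt_i .
\]
We then insert \eqref{eq:timemoments} and exchange the $dt_i$ integrals with the spatial integrals, again by Tonelli.

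For a fixed tree $T$, the remaining task is to integrate out all times $(s_v)_{v\in V_{\mathrm{int}}(T)}$ and $(t_i)$ with the spatial positions held fixed. We reparametrize by edge lengths. Each non-root vertex $b$ has a unique parent $a$, and we set $r_b:=s_b-s_a$. Since the root sits at time $0$, the map from the vertex times $(s_b)_{b\neq 0}$ to $(r_b)_{b\neq0}$ is a triangular linear bijection with unit Jacobian. The constraint $p_s\equiv0$ for $s\le0$ turns its image into the full orthant $(0,\infty)^{E(T)}$. The time integral therefore factorizes as
\[
\prod_{\{a,b\}\in E(T)}\int_0^\infty p_{r}(y_b-y_a)\,dr=\prod_{\{a,b\}\in E(T)}G(y_b-y_a),
\]
using $G=\int_0^\infty p_s\,ds$ off the origin. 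This yields exactly $I_T(z_1,\dots,z_k)$ after the internal spatial variables are integrated, so \eqref{eq:occmoments} holds for nonnegative $\phi_i\in\Cc$. The resulting right-hand side is finite by Lemma~\ref{lem:envelope}, because $\phi_i\le\|\phi_i\|_\infty\mathbf 1_{\overline B_{R_0}}$ for a suitable $R_0$. The set where two leaves or a leaf and the origin coincide has Lebesgue measure zero, so the pointwise definition of $I_T$ off the diagonals is harmless.

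To extend to general $\phi_i\in\Ccg$, we write $\phi_i=\phi_i^+-\phi_i^-$, where $\phi_i^\pm$ are nonnegative, continuous and compactly supported, and expand both sides multilinearly. Each term is finite by the previous step, and the finiteness also gives $\bN[\prod_i\mu(|\phi_i|)]<\infty$, so the left side is an absolutely convergent expectation. Absolute convergence of the right side is likewise Lemma~\ref{lem:envelope} applied to $|\phi_i|$. Both sides are multilinear, hence they agree. The measure form follows because products of functions in $\Ccg$ determine locally finite measures on the open set $\{z_i\neq z_j,\ z_i\neq0\}$.

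The main obstacle is justifying the interchanges. We rely on Tonelli, which needs only nonnegativity and no finiteness, and finiteness comes afterwards from Lemma~\ref{lem:envelope}. A second point of care is citing \eqref{eq:timemoments} under $\bN$ for all $t_i>0$ with the correct normalization $\gamma^{k-1}$. We take it as stated.
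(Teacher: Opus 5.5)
Your proposal is correct and follows essentially the same route as the paper. Both arguments apply Tonelli, insert the fixed-time formula \eqref{eq:timemoments}, and integrate out the times so that each heat kernel becomes a factor $G$; finiteness then comes from Lemma~\ref{lem:envelope}, and the extension uses $\phi_i=\phi_i^+-\phi_i^-$ together with multilinearity. The only cosmetic difference is that you change variables to edge lengths all at once, whereas the paper integrates the times iteratively from the leaves towards the root; this is the same computation.
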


\begin{proof}
We first take $\phi_i\in\Cc$ nonnegative.
Since $\mu(\phi_i)=\int_0^\infty X_{t}(\phi_i)\,dt$ and everything is
nonnegative, Tonelli's theorem gives
\[
\bN\Bigl[\prod_{i=1}^k\mu(\phi_i)\Bigr]
=\int_{(0,\infty)^k}\bN\Bigl[\prod_{i=1}^kX_{t_i}(\phi_i)\Bigr]
\,dt_1\cdots dt_k ,
\]
and we may substitute \eqref{eq:timemoments} and integrate in any order
(by Lemma~\ref{lem:envelope}, the final expression is finite, which justifies
the use of Tonelli throughout). Perform the time integrations as follows.
Each leaf time $t_i$ appears in exactly one heat kernel, namely along the
edge into leaf $i$ from its parent $a$; integrating,
\[
\int_0^\infty p_{\,t_i-s_a}(z_i-y_a)\,dt_i
=\int_0^\infty p_u(z_i-y_a)\,du=G(z_i-y_a),
\]
using $p_u\equiv0$ for $u\le0$. Next, process the internal vertices in order
of decreasing distance from the root: when $v$ is processed, the kernels
along its two child edges have already been integrated, so $s_v$ appears
only in the kernel of the edge from its parent $a$, and
\[
\int_{0}^\infty p_{\,s_v-s_a}(y_v-y_a)\,ds_v=G(y_v-y_a).
\]
Finally the root edge contributes $\int_0^\infty
p_{s}(y)\,ds=G(y)$ with the root pinned at $(0,0)$. After all time
integrations, every edge of $T$ carries a factor $G$ of the spatial
displacement and the internal spatial variables remain, producing exactly
$I_T(z_1,\dots,z_k)$ upon integrating $y_v$, $v\in V_{\mathrm{int}}(T)$.
Summing over $T\in\T_k$ and integrating against $\prod\phi_i$ gives
\eqref{eq:occmoments}; finiteness is Lemma~\ref{lem:envelope}. The
measure-level statement follows since $\Cc$ is measure-determining (see, e.g., \cite{Kall}).
Finally, for general $\phi_i\in\Ccg$ write
$\phi_i=\phi_i^+-\phi_i^-$ with $\phi_i^\pm\in\Cc$; the domination bound
\eqref{eq:envelope} applied to $|\phi_i|$ gives absolute convergence of every
term, so Fubini and multilinearity extend the identity from $\Cc$ to
$\Ccg$.
\end{proof}

\subsubsection{Exponential-moment growth}

\begin{corollary}[Exponential-moment bound]\label{cor:carleman}
Let $k\ge1$ and $\phi_1,\dots,\phi_k\in\Ccg$ be supported in $\overline B_{R_0}$ with
$\|\phi_i\|_\infty\le M_0$. Then
\begin{equation}\label{eq:momentbound}
\bN\Bigl[\prod_{i=1}^k|\mu(\phi_i)|\Bigr]
\;\le\;\gamma^{k-1}\,(2k-3)!!\,\bigl(C_\star M_0\bigr)^{k}
\;\le\;A^{\,k}\,k!\,,
\end{equation}
for $A=A(R_0,M_0)$. Consequently, for every
$\varphi\in\Ccg$ supported in $\overline B_{R_0}$ with $\|\varphi\|_\infty\le M_0$, the
moment generating function of $\mu(\varphi)$ under the size-biased measure
$\Nh=\mu(\varphi_0)\,\bN$ is finite near the origin:
\begin{equation}\label{eq:mgf}
\Nh\bigl[e^{\theta\,\mu(\varphi)}\bigr]
=\sum_{k\ge0}\frac{\theta^k}{k!}\,\bN\bigl[\mu(\varphi_0)\,\mu(\varphi)^k\bigr]
<\infty\qquad\text{for } |\theta|<1/A',
\end{equation}
where $A'=A'(R_0,M_0,\varphi_0)$. The same holds jointly: for
$\varphi^1,\dots,\varphi^j\in\Ccg$ supported in $\overline B_{R_0}$ with
$\|\varphi^i\|_\infty\le M_0$, the joint MGF
$\Nh[\exp(\sum_i\theta_i\mu(\varphi^i))]$ is finite for $|\theta|$ small
enough, by H\"older's inequality.
\end{corollary}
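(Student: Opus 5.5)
The bound \eqref{eq:momentbound} will follow directly from Proposition~\ref{prop:occmoments} and Lemma~\ref{lem:envelope}. Since the moment formula \eqref{eq:occmoments} is for $\prod\mu(\phi_i)$ rather than $\prod|\mu(\phi_i)|$, I first reduce to nonnegative test functions. I use $|\mu(\phi_i)|\le\mu(|\phi_i|)$ and note that $|\phi_i|\in\Cc$ is supported in $\overline B_{R_0}$ with $|\phi_i|\le M_0\mathbf 1_{\overline B_{R_0}}$. Proposition~\ref{prop:occmoments} applied to $|\phi_1|,\dots,|\phi_k|$ then gives
\[
\bN\Bigl[\prod_{i=1}^k|\mu(\phi_i)|\Bigr]\le\gamma^{k-1}\sum_{T\in\T_k}\int I_T(\vec z)\prod_i|\phi_i(z_i)|\,dz_i .
\]
Writing $|\phi_i|=M_0\psi_i$ with $0\le\psi_i\le\mathbf 1_{\overline B_{R_0}}$, the final display of Lemma~\ref{lem:envelope} bounds each tree term by $M_0^kC_\star^k$. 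Summing over the $(2k-3)!!$ trees from \eqref{eq:treecount} gives the first inequality. For the second inequality I use $(2k-3)!!\le 2^kk!$ and $\gamma^{k-1}\le(1\vee\gamma)^k$, which gives $A:=2(1\vee\gamma)C_\star M_0$.

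For \eqref{eq:mgf}, fix $\varphi_0\in\Cc$ and enlarge $R_0$ and $M_0$ so that $\varphi_0$ obeys the same support and sup-norm bounds as $\varphi$; this choice is where the dependence of $A'$ on $\varphi_0$ comes in. Applying \eqref{eq:momentbound} with $k+1$ factors ($\phi_1=\varphi_0$ and $\phi_2=\dots=\phi_{k+1}=\varphi$) gives
\[
\bN\bigl[\mu(\varphi_0)|\mu(\varphi)|^k\bigr]\le A^{k+1}(k+1)! .
\]
Hence
\[
\sum_k\frac{|\theta|^k}{k!}\,\bN\bigl[\mu(\varphi_0)|\mu(\varphi)|^k\bigr]\le A\sum_k(k+1)(A|\theta|)^k<\infty
\]
for $|\theta|<1/A$, so $A'=A$ works. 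The integrand $\mu(\varphi_0)e^{|\theta||\mu(\varphi)|}$ is nonnegative, so Tonelli applies and shows $\Nh[e^{|\theta||\mu(\varphi)|}]<\infty$. Dominated convergence (Fubini) then justifies exchanging the sum and the integral in the series expansion of $e^{\theta\mu(\varphi)}$, which yields the identity in \eqref{eq:mgf}.

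For the joint statement, I bound
\[
\exp\Bigl(\sum_{i=1}^j\theta_i\mu(\varphi^i)\Bigr)\le\prod_{i=1}^j\exp\bigl(|\theta_i||\mu(\varphi^i)|\bigr)
\]
and apply H\"older's inequality with $j$ equal exponents under the finite measure $\Nh$. This gives
\[
\Nh\Bigl[\exp\Bigl(\sum_i\theta_i\mu(\varphi^i)\Bigr)\Bigr]\le\prod_i\Nh\bigl[e^{j|\theta_i||\mu(\varphi^i)|}\bigr]^{1/j},
\]
and each factor is finite once $j|\theta_i|<1/A'$, by the single-function case. There is no real obstacle here. The one point needing care is that the moment identity must be applied to the nonnegative functions $|\phi_i|$ (not to $\phi_i$) so that Tonelli is legitimate; the extension to signed $\Ccg$ in Proposition~\ref{prop:occmoments} is not needed.
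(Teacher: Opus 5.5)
Your proposal is correct and follows the paper's proof essentially step for step. You use the moment formula with the envelope bound and the tree count for \eqref{eq:momentbound}, apply the $(k+1)$-function bound with $\varphi_0$ absorbed into $R_0,M_0$ for the MGF, and use H\"older for the joint statement. Your explicit reduction via $|\mu(\phi_i)|\le\mu(|\phi_i|)$ and your Tonelli/Fubini justification of the series identity are slightly more careful than the paper's write-up, which bounds $\sum_k\theta^k a_k/k!$ without spelling these points out.
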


\begin{proof}
Each summand on the right-hand side of \eqref{eq:occmoments} is finite and bounded uniformly in $T$ by the second display of Lemma~\ref{lem:envelope}: applied to $|\phi_i|/M_0\le\mathbf 1_{\overline B_{R_0}}$ it gives $\int_{(\R^d)^k}I_T(\vec z)\prod_i|\phi_i(z_i)|\,d\vec z\le(C_\star M_0)^k$. The first bound of \eqref{eq:momentbound} combines this with the tree count \eqref{eq:treecount} and $(2k-3)!!\le2^kk!$. For \eqref{eq:mgf}, set
$a_k:=\bN[\mu(\varphi_0)\,\mu(\varphi)^k]$; by \eqref{eq:momentbound} (applied
to the $k+1$ functions $\varphi_0,\varphi,\dots,\varphi$, all supported in a
common ball and bounded by $\max(\|\varphi_0\|_\infty,M_0)$) we have
$a_k\le (A')^{k+1}(k+1)!$ for a suitable $A'$, so
$\sum_k\theta^k a_k/k!\le A'\sum_k((k+1)(A'\theta)^k)<\infty$ for
$|\theta|<1/A'$. The joint statement follows from the one-dimensional one by
H\"older: $\Nh[\prod_i e^{\theta_i\mu(\varphi^i)}]
\le\prod_i\Nh[e^{j\theta_i\mu(\varphi^i)}]^{1/j}$, finite when each
$|j\theta_i|<1/A'$.
\end{proof}

\begin{remark}
The bound \eqref{eq:momentbound} says that $\mu(\phi)$ has, after
size-biasing, a finite exponential moment under $\bN$ --- in sharp contrast
with the total mass $\mu(\R^d)$ of Proposition~\ref{prop:totalmass} (in
Section~\ref{sec:sbmstructure} below), which has
none. It is the compact support of $\phi$ that produces the gain: large values
of $\mu(\phi)$ require the SBM to linger near $\supp\phi$, an entropic cost,
whereas large total mass is achieved by spreading out.
\end{remark}

\subsection{Percolation input and convergence of moments}
\label{sec:percolation}

\subsubsection{The hypotheses: history and known inputs}\label{sec:assumptions}

Let $d>6$ and consider critical bond percolation on $\Z^d$. We review the
history of the hypotheses \eqref{eq:2pt} and \eqref{eq:guiding-kp} and record
two further known theorems.

The uniform two-point function bound \eqref{eq:2pt} is known to hold in the mean-field regime:
for the nearest-neighbour model the $x$-space two-point asymptotics are due to
Hara \cite{Har08} in sufficiently high dimensions, with the threshold lowered
to $d\ge11$ by Fitzner and van der Hofstad \cite{FH17}; for sufficiently
spread-out models in all $d>6$ they are due to Hara, van der Hofstad and Slade \cite{HHS03}. Based on the two-point asymptotics, Kozma
and Nachmias \cite{KN11} established the one-arm bound of
Proposition~\ref{prop:KN}, originally for $d\ge19$, with the dimensional
requirement reduced to $d\ge11$ by \cite{FH17}. More recently, Liu and Slade
\cite{LS26,LS26spread} considerably simplified the proofs of the two-point
asymptotics for both the nearest-neighbour and spread-out models, while also
obtaining \eqref{taubound} with a refined error term; their approach is based
on weak derivatives and Fourier analysis on $L^p$-spaces. Only the upper bound is used quantitatively
below; the lower bound enters through Proposition~\ref{prop:KN}.

The explicit form of the constants $\mathfrak a$ and $\lambda$ of \eqref{eq:guiding-kp}, how they are pinned down by the low-order cases, and the dictionary to the normalization of \cite{CCHS26} are collected in Appendix~\ref{app:dictionary}.

\begin{proposition}[Tree-graph inequality; Aizenman and Newman
\cite{AN84}]\label{prop:AN}
For all $m\ge1$ and \emph{all} $x_1,\dots,x_m\in\Z^d$ (not necessarily
distinct, and not necessarily distinct from $0$),
\begin{equation}\label{eq:AN}
\tau_{m+1}(0,x_1,\dots,x_m)\;\le\;
\sum_{T\in\T_m}\ \sum_{(z_v)_{v\in V_{\mathrm{int}}(T)}\in(\Z^d)^{V_{\mathrm{int}}(T)}}
\ \prod_{\{a,b\}\in E(T)}\tau_2(w_a,w_b),
\end{equation}
where $w_0:=0$, $w_i:=x_i$ for leaves and $w_v:=z_v$ for internal vertices.
The bound holds verbatim when some of the points coincide: with the
convention that a repeated point is not repeated in the connection event,
$\tau_{m+1}(0,x_1,\dots,x_m)$ is then a connection function of the distinct
points among $0,x_1,\dots,x_m$, which is still dominated by the displayed
sum (the right-hand side only grows when leaves are placed at coinciding
positions).
\end{proposition}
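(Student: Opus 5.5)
The proof follows the classical Aizenman--Newman argument by induction on $m$, using the BK inequality. For $m=1$ the right-hand side is the single edge $\{0,1\}$, so the claim is the identity $\tau_2(0,x_1)=\tau_2(0,x_1)$. For general $m$, on the event that $0,x_1,\dots,x_m$ all lie in one open cluster, we fix an open path $\pi$ from $0$ to $x_m$. Then $x_m$ joins the connected set $\{0,x_1,\dots,x_{m-1}\}$ of the cluster at some vertex $u$. The precise combinatorial statement we aim for is:
\[
\{0,x_1,\dots,x_m\ \text{connected}\}\subseteq\bigcup_{u}\bigcup_{e}\Bigl(\{\text{the tree on }0,x_1,\dots,x_{m-1}\text{ with }u\text{ inserted on edge }e\}\circ\{u\leftrightarrow x_m\}\Bigr).
\]
Here a minimal subtree of the open cluster spanning $0,x_1,\dots,x_m$ is decomposed into edge-disjoint open paths along the branching structure of a binary tree $T\in\T_m$, with branch points $z_v$.

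To make this rigorous we will use the standard statement: if $0,x_1,\dots,x_m$ are in one cluster, then there exist a tree $T\in\T_m$ and vertices $(z_v)$ such that the events $\{w_a\leftrightarrow w_b\}$, $\{a,b\}\in E(T)$, occur \emph{disjointly}. This is proved by induction. A skeleton realising $\tau_m(0,x_1,\dots,x_{m-1})$ is a union of edge-disjoint open paths, and we take an open path from $x_m$ to it, stopped at its first hitting vertex $u$. The path is edge-disjoint from the skeleton up to $u$. If $u$ lies on the path segment joining $w_a$ and $w_b$, we subdivide that edge at the new internal vertex $z_v=u$ and attach leaf $m$. The subdivided segments $w_a\to u$ and $u\to w_b$ are subpaths of the original path, so disjointness is preserved. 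This is exactly the recursive construction behind the count $|\T_m|=(2m-3)\,|\T_{m-1}|$ in \eqref{eq:treecount}. A union bound over $T$ and $(z_v)$, followed by the BK inequality (\cite[Theorem~2.12]{Grim}) applied to the $2m-1$ disjoint connection events, gives \eqref{eq:AN}.

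Degenerate cases need care. The hitting point $u$ may coincide with an existing vertex $w_a$, including a leaf or the root, and several points may coincide. We handle these by allowing $z_v$ to take any value in $\Z^d$, including $w_a$, and using the convention $\tau_2(x,x)=1$, so that zero-length paths contribute trivially disjoint events. The tree remains binary, with the internal vertex simply located at $w_a$. Consequently the sum over all $(z_v)\in(\Z^d)^{V_{\mathrm{int}}(T)}$ covers every case. For coinciding $x_i$, the connection event is the one for the distinct points. It is contained in the event with duplicated leaves attached through $\tau_2(x_i,x_i)=1$ edges, which shows that the right-hand side only grows.

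The main obstacle is checking that the disjointness claim survives these degenerate subdivisions. That is the step where we will argue most carefully: a disjoint witness set of edges must be exhibited explicitly, with empty witness sets for zero-length connections.
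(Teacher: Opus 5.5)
Your proposal is correct, and it is the classical Aizenman--Newman argument: you build an edge-disjoint open skeleton along a binary tree $T\in\T_m$ inductively, attaching $x_m$ at its first hitting point $u$; you then take a union bound over $T$ and $(z_v)$ and apply the BK inequality, with $\tau_2(x,x)=1$ and empty witness sets covering coincident points. The paper does not reprove the proposition and simply relies on \cite{AN84}; the one detail to make explicit is that the skeleton paths should be taken self-avoiding (loop-erase if needed), so that splitting a path at $u$ really gives two edge-disjoint pieces.
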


\begin{proposition}[One-arm bound; Kozma and Nachmias \cite{KN11}]
\label{prop:KN}
Under \eqref{eq:2pt} there are $0<K_2'\le K_2<\infty$ such
that
\[
K_2'\,r^{-2}\;\le\;\bP\bigl(0\leftrightarrow\partial B_r\bigr)\;\le\;K_2\,r^{-2},
\qquad r\ge1 .
\]
A streamlined derivation of these one-arm estimates is given by Asselah, Schapira and Sousi \cite{ASS}.
\end{proposition}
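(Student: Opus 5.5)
We will not reprove this statement from scratch. It is \cite[Theorem~1]{KN11} (dimension threshold lowered by \cite{FH17}), with the streamlined proof of \cite{ASS}. What remains is to check that the only model input used there is \eqref{eq:2pt} together with $d>6$ and the BK and tree-graph inequalities (Proposition~\ref{prop:AN}). We also note the trivial conversion between the Euclidean ball $B_r$ and the box $Q_{0,r}$ in \eqref{onearm}: since $Q_{0,r/\sqrt d}\subset B_r\subset Q_{0,r}$, the two formulations differ only by constants. The proof has two halves of very different difficulty, and we would organise it as follows.

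\emph{Lower bound.} We argue exactly as in Lemma~\ref{lambda}. Let $X:=|\mathcal C\cap (B_{2r}\setminus B_r)|$, so that $\{X\ge1\}\subset\{0\leftrightarrow\partial B_r\}$. By the lower bound in \eqref{eq:2pt}, $\E X\gtrsim r^{d}\cdot r^{2-d}=r^2$. By Proposition~\ref{prop:AN} with $m=2$ and the convolution bounds \eqref{bxbd}--\eqref{HHSbound}, $\E X^2\lesssim r^2+r^6$; this is where $d>6$ makes the triangle-type sums converge. Paley--Zygmund then gives $\bP(0\leftrightarrow\partial B_r)\ge(\E X)^2/\E X^2\gtrsim r^{-2}$.

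\emph{Upper bound.} We follow the recursive argument of \cite{KN11,ASS}. Set $p(r):=\bP(0\leftrightarrow\partial B_r)$ and split according to the volume of the cluster. The large-volume part is handled by Markov's inequality and the upper bound in \eqref{eq:2pt}:
\[
\bP\bigl(|\mathcal C\cap B_r|\ge \eps r^4\bigr)\le \frac{\E|\mathcal C\cap B_r|}{\eps r^4}\lesssim \eps^{-1}r^{-2}.
\]
On the complementary event $\{0\leftrightarrow\partial B_r,\ |\mathcal C\cap B_r|<\eps r^4\}$, averaging over the spheres of radius $\rho\in[r/2,r]$ shows that the cluster restricted to $B_\rho$ has at most $\eps r^3$ pioneer points on some sphere $\partial B_\rho$. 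By BK, one of these pioneers must connect, off the explored cluster, to distance $\gtrsim r$. A naive union bound then gives a recursion of the type
\[
p(r)\le C\eps^{-1}r^{-2}+(\text{number of pioneers})\cdot p(r/2)\cdot(\text{cost of reaching }\partial B_{r/2}).
\]
The induction hypothesis $p(s)\le A s^{-2}$ closes this recursion only if the pioneer count is of order $r^2$ rather than $r^3$, and a purely first-moment count does not achieve this.

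\emph{Main obstacle.} The hard step is exactly the regularity/second-moment mechanism already reused in Lemmas~\ref{Kregpoint}--\ref{Iterating}. One shows that, with overwhelming probability, most pioneers are $K$-regular. Each regular pioneer then independently (via the box-exploration and Azuma argument) has conditional probability bounded below of seeding $\gtrsim r^4$ further volume; the first-moment bound for this is Lemma~\ref{PZlowerbd} and the second-moment bound is Lemma~\ref{PZupperbd}, both of which need $d>6$. Consequently, on the event that the cluster reaches $\partial B_r$ with many pioneers, the volume is small only with probability bounded away from one. Iterating this over dyadic scales gives $p(r)\le C'\eps^{-1}r^{-2}+(1-c)\,p(r/2)\cdot\tfrac14$ up to constants, which closes the induction $p(r)\le Ar^{-2}$ for $A$ large.
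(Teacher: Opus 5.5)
Insofar as your proposal rests on \cite{KN11} (with \cite{FH17} for $d\ge 11$ and \cite{ASS} for a streamlined proof), it coincides with the paper, which does not prove Proposition~\ref{prop:KN} at all but only cites it. Your lower-bound sketch is also correct. It is the same Paley--Zygmund computation as in the lower bound of Lemma~\ref{lambda}, with two small fixes: take $X=|\mathcal C\cap(B_{2r}\setminus\overline B_r)|$, so that $X\ge1$ really forces $\mathcal C\not\subset\overline B_r$; and note that the second-moment sums only need $d>4$, not $d>6$.

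The upper-bound sketch, however, does not close, and the gap is in your final recursion.

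\emph{The few-pioneer term.} In your own bookkeeping, consider reaching $\partial B_r$ through an intermediate sphere of radius $\rho\in[r/4,r/2]$ that carries at most $\delta r^2$ pioneers. By BK and a union bound this costs about $\delta r^2\,p(\rho)\,p(r-\rho)$, which is quadratic in $p$. Under $p(s)\le As^{-2}$ it is of order $\delta A^2r^{-2}$, which forces $\delta\lesssim 1/A$.

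\emph{The many-pioneer term.} Here Paley--Zygmund yields volume with probability bounded below only at level $\asymp\delta^2r^4$. Lemma~\ref{PZupperbd} requires $X\gtrsim L^2$, so with $\delta r^2$ pioneers one takes $L\asymp\sqrt\delta\,r$; this is the choice $\delta=\eps^2$, $\eta\asymp\eps^4$ in Lemma~\ref{Iterating}. Each pioneer seeds volume only of order $L^2$ in mean. A single regular pioneer does not seed $\gtrsim r^4$ volume with probability bounded below (that event has probability $\asymp r^{-2}$). The box-exploration/Azuma argument only shows that most pioneers are regular.

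\emph{Why the recursion fails.} The Markov term is therefore $\gtrsim\delta^{-2}r^{-2}\gtrsim A^2r^{-2}$, which cannot be absorbed into $Ar^{-2}$. Using \eqref{voltail} instead of Markov improves it only to $\delta^{-1}r^{-2}$, which balances against $\delta A^2r^{-2}$ only up to constants you do not control. Your recursion $p(r)\le C'\eps^{-1}r^{-2}+(1-c)\,p(r/2)\cdot\tfrac14$ hides exactly this:
\begin{itemize}
\item the quadratic few-pioneer term has disappeared;
\item $\eps$ is treated as independent of $A$;
\item the factor $\tfrac14$ is not supplied by a single application of the ``bounded away from one'' estimate, which only gives $(1-c)p(r/2)\le 4(1-c)Ar^{-2}$. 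Iterating over sub-annuli, as in the proof of Theorem~\ref{thm:ulmb}, only multiplies the number of quadratic terms.
\end{itemize}

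\emph{Circularity.} Several lemmas you propose to recycle take \eqref{onearm} as an input: the upper bound of Lemma~\ref{lambda}, Lemma~\ref{pioneerpoint}, and hence Lemma~\ref{KLgood} and the proof of Theorem~\ref{thm:ulmb}. Quoting them here would be circular.

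Your proposal therefore stands only as a citation, exactly as in the paper. The genuinely hard step of \cite{KN11}, closing the multiscale argument without presupposing $r^{-2}$ decay, is not reproduced by your sketch.
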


From this point until the end of Section~\ref{sec:onearm} we assume $d>6$;
the hypotheses \eqref{eq:2pt} and/or \eqref{eq:guiding-kp} in force are
indicated in each statement.

\subsubsection{Exact lattice-to-continuum rewriting}

Recall the definitions \eqref{eq:empirical} of $\mu_n$ and $\nu_n$, and recall the branching parameter $\gamma$ of the super-Brownian motion (Section~\ref{sec:SBM}), identified in Section~\ref{sec:guiding} as
\begin{equation}\label{eq:gammadef}
\gamma=\lambda/\mathfrak a .
\end{equation}
For $m\ge1$ and $\phi_1,\dots,\phi_m\in\Cc$ we have, by definition,
\begin{equation}\label{eq:momid}
\int_{\cM}\prod_{i=1}^m\mu(\phi_i)\,d\nu_n
=n^2\,\E\Bigl[\prod_{i=1}^m\mu_n(\phi_i)\Bigr]
=\frac{n^{2}}{\mathfrak a^m n^{4m}}
\sum_{x_1,\dots,x_m\in\Z^d}\prod_{i=1}^m\phi_i(x_i/n)\;
\tau_{m+1}(0,x_1,\dots,x_m),
\end{equation}
where coinciding arguments among $0,x_1,\dots,x_m$ are allowed in the sum. Writing each lattice sum as an integral over unit cells,
$\sum_{x\in\Z^d}g(x)=n^{d}\int_{\R^d}g(\fl{ny})\,dy$, identity
\eqref{eq:momid} becomes \emph{exactly}
\begin{equation}\label{eq:Fn}
\int\prod_{i=1}^m\mu(\phi_i)\,d\nu_n=\int_{(\R^d)^m}F_n^{(m)}(\vec y)\,d\vec y,
\end{equation}
where
\begin{equation}\label{eq:Fndef}
F^{(m)}_n(\vec y):=\mathfrak a^{-m}\,n^{(d-4)m+2}\,
\tau_{m+1}\bigl(0,\fl{ny_1},\dots,\fl{ny_m}\bigr)
\prod_{i=1}^m\phi_i\Bigl(\tfrac{\fl{ny_i}}{n}\Bigr).
\end{equation}
The power $n^{(d-4)m+2}$ collects the three scalings in \eqref{eq:momid}: a factor $n^2$ from $\nu_n=n^2\bP(\mu_n\in\cdot)$; a factor $n^{-4m}$ from the $m$ occupation factors, each carrying $(\mathfrak a n^4)^{-1}$ through $\mu_n(\R^d)=|\mathcal C|/(\mathfrak a n^4)$; and a factor $n^{dm}$ from writing the $m$ lattice sums as cell integrals. Together, $n^{2-4m+dm}=n^{(d-4)m+2}$.
The rewriting is exact; the convergence of the moments $\int\prod_i\mu(\phi_i)\,d\nu_n$ to $\bN[\prod_i\mu(\phi_i)]$ (Proposition~\ref{prop:momconv}) will be proved by dominated
convergence applied directly to \eqref{eq:Fn}--\eqref{eq:Fndef}, with no
Riemann-sum error term.

\begin{lemma}[Pointwise domination]\label{lem:domination}
Assume \eqref{eq:2pt} and let
$\phi_1,\dots,\phi_m$ be supported in $\overline B_{R_0}$ with
$\|\phi_i\|_\infty\le M_0$. There is $K_3$ such that for all
$n\ge1$ and all $\vec y\in(\R^d)^m$,
\begin{equation}\label{eq:dombound}
F^{(m)}_n(\vec y)\;\le\;\bigl(K_3/\mathfrak a\bigr)^{m}K_3^{m-1}\,M_0^m
\sum_{T\in\T_m} D_T(\vec y)\prod_{i=1}^m
\mathbf 1_{\overline B_{R_0+\sqrt d}}(y_i),
\end{equation}
where
\[
D_T(\vec y):=\int_{(\R^d)^{V_{\mathrm{int}}(T)}}
\prod_{\{a,b\}\in E(T)}\bar G(u_a-u_b)
\prod_{v}du_v ,\qquad u_0=0,\ u_i=y_i .
\]
Moreover $\int_{(\R^d)^m}\sum_{T}D_T(\vec y)\prod_i
\mathbf 1_{\overline B_{R_0+\sqrt d}}(y_i)\,d\vec y<\infty$.
\end{lemma}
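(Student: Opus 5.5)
The plan is to combine the Aizenman--Newman tree-graph inequality (Proposition~\ref{prop:AN}) with the uniform two-point bound \eqref{eq:2pt}, and then compare lattice sums over internal vertices with continuum integrals of $\bar G$.

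\emph{Step 1 (support).} If $\phi_i(\fl{ny_i}/n)\ne0$, then $|\fl{ny_i}/n|\le R_0$. Since $|y_i-\fl{ny_i}/n|\le\sqrt d/n\le\sqrt d$, we get $y_i\in\overline B_{R_0+\sqrt d}$. This gives the indicator factors, and the factor $M_0^m$ comes from $\|\phi_i\|_\infty\le M_0$.

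\emph{Step 2 (tree-graph bound and lattice-to-continuum).} Apply \eqref{eq:AN} with $x_i=\fl{ny_i}$; the coinciding points are allowed by the Proposition. Bound each factor by $\tau_2(w_a,w_b)\le K_1(1\vee|w_a-w_b|)^{-(d-2)}$. Then write the sum over internal vertices $z_v\in\Z^d$ as $n^d\int g(\fl{nu_v})\,du_v$, exactly as in \eqref{eq:Fn}. After rescaling, each edge factor becomes $n^{-(d-2)}(n^{-1}\vee|\fl{nu_a}/n-\fl{nu_b}/n|)^{-(d-2)}$. Counting powers of $n$: $2m-1$ edges contribute $n^{-(d-2)(2m-1)}$, and $m-1$ internal vertices contribute $n^{d(m-1)}$. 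Together with the prefactor $n^{(d-4)m+2}$, the total exponent is $(d-4)m+2-(d-2)(2m-1)+d(m-1)=0$.

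\emph{Step 3 (the key comparison).} We must show that, integrated over the internal cells, $(n^{-1}\vee|\fl{nu}/n-\fl{nw}/n|)^{-(d-2)}\le K\,\bar G(u-w)$ up to constants. Pointwise this fails when $|u-w|\lesssim 1/n$ but the cells differ. I would handle it as follows. When $|u-w|\ge 2\sqrt d/n$, the discretization error is at most half the distance, so the bound holds with constant $2^{d-2}$. When $|u-w|<2\sqrt d/n$, the left side is at most $n^{d-2}\le (2\sqrt d)^{d-2}|u-w|^{-(d-2)}$. So in fact a pointwise bound $\le K\bar G(u-w)$ holds in all cases, with $K$ depending only on $d$, and the root (pinned at $0$, exactly on the lattice) is covered the same way.

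This yields $(K_1K)^{2m-1}$, which has the claimed form $K_3^m K_3^{m-1}$, together with the $\mathfrak a^{-m}$ prefactor. The sum over $T\in\T_m$ produces $\sum_T D_T$.

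\emph{Step 4 (integrability).} The finiteness of $\int\sum_T D_T\prod\mathbf 1_{\overline B_{R_0+\sqrt d}}$ is exactly Lemma~\ref{lem:envelope} applied with radius $R_0+\sqrt d$ and $\phi_i=\mathbf 1$, since $d>4$ and $|\T_m|<\infty$.

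The main obstacle is Step~3, the uniform comparison of the discretized kernel with $\bar G$ near the diagonal. The case split above is intended to settle it, and the rest is bookkeeping.
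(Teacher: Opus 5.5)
Your proposal is correct and follows the paper's proof essentially step for step: tree-graph inequality, rewriting the internal lattice sums as cell integrals, a pointwise comparison of each rounded edge factor with $n^{-(d-2)}\bar G$, the vanishing power count, the support estimate $|y_i|\le R_0+\sqrt d$, and Lemma~\ref{lem:envelope} at radius $R_0+\sqrt d$ for integrability. The only difference is in Step~3, where you split on $|u-w|\gtrless 2\sqrt d/n$ rather than on whether $\lfloor nu\rfloor=\lfloor nw\rfloor$ as in Remark~\ref{rem:discretize}; your own case analysis shows that your initial worry (that the pointwise bound fails near the diagonal) is unfounded.
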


\begin{remark}[Discretization of the two-point function]
\label{rem:discretize}
The proof below rests on a single deterministic comparison between the
discrete kernel and its continuum envelope, which we isolate here. For all
$u,u'\in\R^d$ and $n\ge1$, with $\bar G(x)=|x|^{-(d-2)}$,
\begin{equation}\label{eq:discretize}
\bigl(1\vee|\fl{nu}-\fl{nu'}|\bigr)^{-(d-2)}
\;\le\; c(d)\,n^{-(d-2)}\,\bar G(u-u'),
\qquad c(d):=(1+\sqrt d)^{\,d-2}.
\end{equation}
There are two cases. If $\fl{nu}\neq\fl{nu'}$, then $|\fl{nu}-\fl{nu'}|\ge1$,
so the left-hand side equals $|\fl{nu}-\fl{nu'}|^{-(d-2)}$; since
$|u-u'|\le|\fl{nu}-\fl{nu'}|/n+\sqrt d/n\le(1+\sqrt d)|\fl{nu}-\fl{nu'}|/n$, we
get $n^{-(d-2)}\bar G(u-u')=\bigl(n|u-u'|\bigr)^{-(d-2)}\ge(1+\sqrt
d)^{-(d-2)}|\fl{nu}-\fl{nu'}|^{-(d-2)}$, which is \eqref{eq:discretize} with
constant $(1+\sqrt d)^{d-2}$. If $\fl{nu}=\fl{nu'}$, the left-hand side is $1$,
while $|u-u'|\le\sqrt d/n$ forces
$n^{-(d-2)}\bar G(u-u')=\bigl(n|u-u'|\bigr)^{-(d-2)}\ge d^{-(d-2)/2}$, so
\eqref{eq:discretize} holds with $c(d)=d^{(d-2)/2}$. As $(1+\sqrt d)^{d-2}\ge
d^{(d-2)/2}$, the constant $c(d)=(1+\sqrt d)^{d-2}$ covers both cases.
Consequently, under the two-point bound \eqref{eq:2pt}, the rounded
two-point function obeys
\begin{equation}\label{eq:tauenvelope}
\tau_2\bigl(\fl{nu},\fl{nu'}\bigr)
\le K_1\bigl(1\vee|\fl{nu}-\fl{nu'}|\bigr)^{-(d-2)}
\le K_1\,c(d)\,n^{-(d-2)}\,\bar G(u-u') .
\end{equation}
\end{remark}

\begin{proof}
Apply the tree-graph inequality \eqref{eq:AN} to
$\tau_{m+1}(0,\fl{ny_1},\dots,\fl{ny_m})$ (valid for every $\vec y$,
including configurations where some $\fl{ny_i}$ coincide or vanish, by
Proposition~\ref{prop:AN}) and rewrite each internal lattice
sum as an integral over unit cells, $\sum_{z\in\Z^d}=n^d\int_{\R^d}
(\cdot)(\fl{nw})\,dw$. The resulting expression is an integral over
$(w_v)_{v\in V_{\mathrm{int}}}$ of a product over the $2m-1$ edges of factors
$\tau_2(\fl{nu_a},\fl{nu_b})$ evaluated at the rounded points, times
$n^{d(m-1)}$. By Remark~\ref{rem:discretize}
(equation~\eqref{eq:tauenvelope}), each edge factor is at most
$K_1c(d)\,n^{-(d-2)}\bar G(u_a-u_b)$. Collecting the powers of $n$,
three sources contribute: the prefactor $n^{(d-4)m+2}$ from the definition of
$F^{(m)}_n$ in \eqref{eq:Fn}; a factor $n^{d(m-1)}$ from rewriting the
$|V_{\mathrm{int}}(T)|=m-1$ internal lattice sums as cell integrals
($\sum_{z\in\Z^d}=n^d\int(\cdot)(\fl{nw})\,dw$, one factor $n^d$ per internal
vertex); and a factor $n^{-(d-2)}$ from each of the $|E(T)|=2m-1$ edges, i.e.\
$n^{-(d-2)(2m-1)}$ in total. Their product is
\[
n^{(d-4)m+2}\cdot n^{d(m-1)}\cdot n^{-(d-2)(2m-1)}=n^{0}=1
\qquad\text{for every }m\ge1
\]
(the exponent vanishes identically in $m$; this is the algebraic
identity underlying the choice of normalization \eqref{eq:empirical}).
Finally, $\phi_i(\fl{ny_i}/n)\le M_0\mathbf 1\{|y_i|\le R_0+\sqrt d\}$
(if $|y_i|>R_0+\sqrt d$ then $|\fl{ny_i}/n|\ge|y_i|-\sqrt d/n>R_0$ for all $n\ge1$, off the support of $\phi_i$). This proves
\eqref{eq:dombound}. The integrability of the dominating function is
Lemma~\ref{lem:envelope} with support radius $R_0+\sqrt d$, together with the
tree count \eqref{eq:treecount}.

\end{proof}

\subsubsection{Convergence of moments}

\begin{proposition}[Moment convergence]\label{prop:momconv}
 Assume \eqref{eq:2pt} and \eqref{eq:guiding-kp}. Then for every
$m\ge1$ and $\phi_1,\dots,\phi_m\in\Cc$,
\begin{equation}\label{eq:momconv}
\lim_{n\to\infty}\int_\cM\prod_{i=1}^m\mu(\phi_i)\,d\nu_n
=\gamma^{\,m-1}\sum_{T\in\T_m}\int_{(\R^d)^m}I_T(\vec y)
\prod_{i=1}^m\phi_i(y_i)\,dy_i
=\bN\Bigl[\prod_{i=1}^m\mu(\phi_i)\Bigr],
\end{equation}
with $\gamma$ as in \eqref{eq:gammadef}. Moreover, if $R_0\ge1$ and
$M_0<\infty$ are such that each $\phi_i$ is supported in $\overline B_{R_0}$ with
$\|\phi_i\|_\infty\le M_0$, then
\begin{equation}\label{eq:uniformmoments}
\sup_{n\ge1}\;\int\prod_{i=1}^m\mu(\phi_i)\,d\nu_n
\;\le\;A_1^{\,m}\,m!
\end{equation}
for a constant $A_1=A_1(R_0,M_0)$.
\end{proposition}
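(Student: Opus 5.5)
The plan is to apply dominated convergence directly to the exact rewriting \eqref{eq:Fn}--\eqref{eq:Fndef}. For the limit \eqref{eq:momconv}, fix $m\ge1$ and $\phi_1,\dots,\phi_m\in\Cc$ supported in $\overline B_{R_0}$ with $\|\phi_i\|_\infty\le M_0$.

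\emph{Pointwise limit.} For Lebesgue-a.e.\ $\vec y\in(\R^d)^m$, the points $0,y_1,\dots,y_m$ are pairwise distinct. At such $\vec y$, hypothesis \eqref{eq:guiding-kp} gives
\[
n^{(d-4)m+2}\tau_{m+1}(0,\fl{ny_1},\dots,\fl{ny_m})\to\mathfrak a\lambda^{m-1}\sum_{T\in\T_m}I_T(\vec y).
\]
The test functions also converge, $\phi_i(\fl{ny_i}/n)\to\phi_i(y_i)$, by continuity and since $|\fl{ny_i}/n-y_i|\le\sqrt d/n$. Hence
\[
F^{(m)}_n(\vec y)\to\mathfrak a^{1-m}\lambda^{m-1}\sum_T I_T(\vec y)\prod_i\phi_i(y_i)=\gamma^{m-1}\sum_T I_T(\vec y)\prod_i\phi_i(y_i),
\]
using $\gamma=\lambda/\mathfrak a$ from \eqref{eq:gammadef}.

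\emph{Domination and first equality.} Lemma~\ref{lem:domination} gives an $n$-independent integrable dominating function. Dominated convergence then yields the first equality in \eqref{eq:momconv}.

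\emph{Second equality.} This is exactly Proposition~\ref{prop:occmoments} with $k=m$. Note that $\Cc\subset\Ccg$.

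\emph{Uniform bound \eqref{eq:uniformmoments}.} Integrate the domination bound \eqref{eq:dombound} over $\vec y$. Lemma~\ref{lem:envelope}, applied with support radius $R_0+\sqrt d$ (which requires $d>4$, true here), bounds each tree's contribution $\int D_T\prod_i\mathbf 1_{\overline B_{R_0+\sqrt d}}(y_i)\,d\vec y$ by $C_\star(R_0+\sqrt d)^m$, uniformly in $T$. Summing over the $|\T_m|=(2m-3)!!\le 2^m m!$ trees from \eqref{eq:treecount} gives
\[
\sup_n\int\prod_i\mu(\phi_i)\,d\nu_n\le (K_3/\mathfrak a)^m K_3^{m-1}M_0^m\,2^m m!\,C_\star^m\le A_1^m m!,
\]
with $A_1=2K_3^2M_0C_\star/\mathfrak a$ (taking $K_3\ge1$, which we may), a constant depending only on $R_0,M_0$ and the model.

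\emph{Where care is needed.} The only delicate point is the a.e.\ pointwise convergence: \eqref{eq:guiding-kp} is stated only for pairwise distinct nonzero $y_i$. These exceptional configurations form a null set, so they are harmless. The lattice coincidences at finite $n$ (where $\fl{ny_i}$ coincide or vanish) are covered by the domination lemma, which rests on Proposition~\ref{prop:AN} holding for all configurations. Everything else is bookkeeping already carried out in Lemmas~\ref{lem:envelope} and \ref{lem:domination}.
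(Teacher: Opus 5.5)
Your proposal is correct and follows the paper's proof essentially line by line: dominated convergence applied to the exact rewriting \eqref{eq:Fn}--\eqref{eq:Fndef}, with Lemma~\ref{lem:domination} supplying the $n$-independent integrable majorant, \eqref{eq:guiding-kp} giving the a.e.\ pointwise limit off the null diagonal, Proposition~\ref{prop:occmoments} giving the second equality, and Lemma~\ref{lem:envelope} together with the tree count \eqref{eq:treecount} giving the uniform bound \eqref{eq:uniformmoments}. Your explicit constant $A_1=2K_3^2M_0C_\star/\mathfrak a$, with $K_3\ge1$ and $C_\star=C_\star(R_0+\sqrt d)$, simply makes quantitative the paper's final one-line remark.
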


\begin{proof}
By \eqref{eq:Fn} the left side of \eqref{eq:momconv} equals
$\int F^{(m)}_n$, and by Lemma~\ref{lem:domination} the integrand
$F^{(m)}_n$ is dominated, \emph{for every} $\vec y$, by the fixed integrable
function on the right of \eqref{eq:dombound}. It therefore suffices to
identify the a.e.\ pointwise limit. For Lebesgue-a.e.\ $\vec y$ the
coordinates $y_1,\dots,y_m$ are pairwise distinct and nonzero --- the diagonal
$\{y_i=y_j \text{ for some } i\neq j\}\cup\{y_i=0\}$ being a Lebesgue-null
set, which dominated convergence ignores. At such $\vec y$ the rounded points
$\fl{ny_1},\dots,\fl{ny_m}$ are eventually distinct and nonzero, so
\eqref{eq:guiding-kp} applies; since each $\phi_i$ is continuous,
$\phi_i(\fl{ny_i}/n)\to\phi_i(y_i)$, and
$\mathfrak a^{-m}n^{(d-4)m+2}\tau_{m+1}(0,\fl{n\vec y})\to
\mathfrak a^{1-m}\lambda^{m-1}\sum_TI_T(\vec y)
=\gamma^{m-1}\sum_TI_T(\vec y)$. Hence, a.e., 
\[F^{(m)}_n(\vec y)\to\gamma^{m-1}\sum_TI_T(\vec y)\prod_i\phi_i(y_i).\]
Dominated convergence yields the first
equality in \eqref{eq:momconv}; the second is
Proposition~\ref{prop:occmoments}. The uniform bound
\eqref{eq:uniformmoments} follows from \eqref{eq:Fn}, \eqref{eq:dombound},
Lemma~\ref{lem:envelope} and \eqref{eq:treecount}.
\end{proof}

We will also need one moment estimate with a test function of growing
support.

\begin{lemma}\label{lem:growingsupport}
Assume \eqref{eq:2pt} and fix $\varphi_0\in\Cc$ supported in
$\overline B_{R_0}$, $0\le\varphi_0\le1$. There is a constant $C=C(R_0)$ such that for all $L\ge R_0\vee 1$, all
$0\le\psi\le\mathbf 1_{\overline B_{2L}}$ and all $n\ge1$,
\[
\int_\cM \mu(\varphi_0)\,\mu(\psi)\,d\nu_n\;\le\;C\,L^2 .
\]
\end{lemma}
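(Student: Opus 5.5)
The plan is to reduce the second moment to a tree-graph sum and bound it by hand. The first step is the exact rewriting \eqref{eq:Fn}--\eqref{eq:Fndef} with $m=2$, $\phi_1=\varphi_0$, $\phi_2=\psi$. This holds for any bounded measurable nonnegative test functions, since the rewriting is purely algebraic. Next apply the pointwise domination of Lemma~\ref{lem:domination}, whose proof uses only boundedness and support of the test functions, not continuity. For $m=2$ the class $\T_2$ contains a single tree: one internal vertex $u$ joined to the root $0$ and to the leaves $y_1,y_2$. This gives
\[
\int_\cM \mu(\varphi_0)\,\mu(\psi)\,d\nu_n\;\lesssim\;\int_{\overline B_{R_0+\sqrt d}}dy_1\int_{\overline B_{2L+\sqrt d}}dy_2\int_{\R^d}\bar G(u)\,\bar G(u-y_1)\,\bar G(u-y_2)\,du ,
\]
with an implicit constant depending only on $d$, $K_1$ and $\mathfrak a$, uniformly in $n$. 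One point needs checking: the coincidence cases in the tree-graph inequality (Proposition~\ref{prop:AN}) are already covered, since \eqref{eq:dombound} holds for every $\vec y$.

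The second step is to integrate out the leaves. For $y_1$, Claim~1 in the proof of Lemma~\ref{lem:envelope}, rescaled to radius $R_0+\sqrt d$, gives $\int_{\overline B_{R_0+\sqrt d}}\bar G(u-y_1)\,dy_1\le C(R_0)\,m(u)$ with $m(u)=(1+|u|)^{-(d-2)}$. For $y_2$, I will use the crude bound $\int_{\overline B_{2L+\sqrt d}}\bar G(u-y_2)\,dy_2\le\int_{|v|\le 2L+\sqrt d}|v|^{-(d-2)}dv\le C L^2$, uniformly in $u$. This follows from \eqref{eq:polar}, using that the integral of $\bar G(u-\cdot)$ over a ball is maximised when the ball is centred at $u$ (symmetric decreasing rearrangement), together with $L\ge1$.

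It then remains to check that $\int\bar G(u)\,m(u)\,du<\infty$. Near the origin this holds because $\bar G$ is locally integrable. At infinity the integrand is of order $|u|^{-2(d-2)}$, which is integrable since $2(d-2)>d$ for $d>4$, and we have $d>6$. Multiplying the three bounds gives $C(R_0)L^2$.

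I expect no real obstacle here. The only delicate point is justifying the use of Lemma~\ref{lem:domination} with the non-continuous $\psi$ and with a support radius $2L$ that grows, because the constant in Lemma~\ref{lem:envelope} depends on the radius. That is why I bound the $\psi$-leaf directly by $L^2$ rather than invoking $C_\star(2L)$. The remaining constants ($K_1$, $c(d)$, $\mathfrak a$, and $\|\psi\|_\infty\le1$, $\|\varphi_0\|_\infty\le1$) do not depend on $L$ or $n$.
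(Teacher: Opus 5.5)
Your proposal is correct and follows the paper's proof essentially step for step. Both use the exact rewriting \eqref{eq:Fn} plus Lemma~\ref{lem:domination} with $m=2$ (only boundedness and support of $\psi$ are needed), the single three-star tree of $\T_2$, a uniform-in-$u$ bound of order $L^2$ for the $y_2$-integral, Claim~1 (rescaled) for the $y_1$-integral, and integrability of $\bar G\,m$. The only cosmetic difference is that you bound the $y_2$-integral by the centred ball of radius $2L+\sqrt d$ via rearrangement, whereas the paper uses the cruder containment in $B_{4L+2\sqrt d}$; either gives $O(L^2)$.
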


\begin{proof}
By \eqref{eq:Fn} and Lemma~\ref{lem:domination} with $m=2$ (using
$\psi\le\mathbf 1_{\overline B_{2L}}$ in place of a compactly supported continuous
function, which is all the proof of Lemma~\ref{lem:domination} uses),
\[
\int\mu(\varphi_0)\mu(\psi)\,d\nu_n
\le C\int_{(\R^d)^2}\mathbf 1_{\overline B_{R_0+\sqrt d}}(y_1)\,
\mathbf 1_{\overline B_{2L+\sqrt d}}(y_2)
\int_{\R^d}\bar G(u)\bar G(y_1-u)\bar G(y_2-u)\,du\;dy_1\,dy_2 .
\]
Indeed, the sum over $\T_2$ consists of the single three-star: for $m=2$
the leaf set is $\{0,1,2\}$ and there is exactly one binary tree, namely the
one internal vertex $u$ joined to the three leaves $0,y_1,y_2$, so
$|\T_2|=(2\cdot2-3)!!=1$ and $D_T(\vec y)=\int_{\R^d}\bar G(u)\bar
G(y_1-u)\bar G(y_2-u)\,du$. Integrate $y_2$
first: $\sup_{u}\int_{B_{2L+\sqrt d}}\bar G(y_2-u)\,dy_2
\le\int_{B_{4L+2\sqrt d}}|v|^{-(d-2)}dv\le cL^2$
(the integral is largest when the singularity $u$ lies in the ball of integration, in which case $y_2-u$ ranges within $B_{4L+2\sqrt d}$). Then integrate $y_1$:
$\int_{B_{R_0+\sqrt d}}\bar G(y_1-u)\,dy_1\le c_1m(u)$ by Claim~1 of
Lemma~\ref{lem:envelope}. Finally $\int\bar G(u)\,m(u)\,du<\infty$ by
splitting into two regions: on $|u|\le1$, $\bar G(u)\,m(u)\le|u|^{-(d-2)}$
which is integrable since $d>2$ (by \eqref{eq:polar}); on $|u|\ge1$,
$\bar G(u)\,m(u)\le|u|^{-(d-2)}(1+|u|)^{-(d-2)}\le |u|^{-2(d-2)}$ which
is integrable since $2(d-2)>d$ for $d>4$.
\end{proof}

\subsection{Moment determinacy and convergence of the size-biased measures}
\label{sec:biased}

Throughout this section fix $\varphi_0\in\Cc$, $\varphi_0\not\equiv0$, and recall from \eqref{eq:sizebias} the finite measures $\nuh_n=\mu(\varphi_0)\,\nu_n$ and $\Nh=\mu(\varphi_0)\,\bN$ on $\cM$.
By Proposition~\ref{prop:momconv} (with one test function equal to
$\varphi_0$), all moments of $\nuh_n$ converge to the corresponding moments
of $\Nh$, and the total masses converge:
\begin{equation}\label{eq:hatmoments}
\int_\cM\prod_{i=1}^m\mu(\phi_i)\,d\nuh_n
\;\xrightarrow[n\to\infty]{}\;
\Nh\Bigl[\prod_{i=1}^m\mu(\phi_i)\Bigr]<\infty
\qquad(m\ge0,\ \phi_i\in\Cc),
\end{equation}
the case $m=0$ being
$\nuh_n(\cM)=\int\mu(\varphi_0)\,d\nu_n\to\bN[\mu(\varphi_0)]
=\int G\varphi_0\in(0,\infty)$, the last equality being the first-moment
identity, i.e.\ the case $k=1$ of Proposition~\ref{prop:occmoments}.
(The integral $\int G\varphi_0$ is finite because $\varphi_0$ has compact
support and $G$ is locally integrable by \eqref{eq:polar}, and positive
because $G>0$ and $\varphi_0\not\equiv0$.)

\subsubsection{Moment determinacy on $\cM$}

The determinacy input we use is the classical analytic criterion: a
finite measure on $\R^j$ whose moment generating function is finite in a
neighbourhood of the origin is determined by its moments. We record it in the
form we need.

\begin{lemma}[Multivariate determinacy via the MGF]
\label{lem:petersen}
Let $\Lambda,\Lambda'$ be finite Borel measures on $\R^j$ with equal,
finite mixed moments of all orders. Suppose there is $\delta>0$ such that
$\int e^{\delta|x|}\,d\Lambda<\infty$. Then $\Lambda=\Lambda'$.
\end{lemma}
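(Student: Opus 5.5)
The plan is to reduce to the one-dimensional case through characteristic functions, via the Cramér--Wold device and the classical univariate analyticity argument.

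\emph{Step 1: exponential moments of $\Lambda'$.} Equal mixed moments give $\int|x|^{2k}\,d\Lambda'=\int|x|^{2k}\,d\Lambda$ for every $k$, since $|x|^{2k}=(\sum_i x_i^2)^k$ is a polynomial. By Cauchy--Schwarz, all odd absolute moments of $\Lambda'$ are then also controlled by those of $\Lambda$. The hypothesis $\int e^{\delta|x|}\,d\Lambda<\infty$ yields
\[
\int|x|^{k}\,d\Lambda\le k!\,\delta^{-k}\int e^{\delta|x|}\,d\Lambda .
\]
The same bound, up to a constant and a slightly smaller $\delta$, then holds for $\Lambda'$. Summing the series gives $\int e^{\delta'|x|}\,d\Lambda'<\infty$ for any $\delta'<\delta$.

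\emph{Step 2: analytic characteristic functions.} For each $\theta\in\R^j$, consider the function
\[
f(s)=\int e^{is\langle\theta,x\rangle}\,d\Lambda(x)
\]
and its counterpart $g$ for $\Lambda'$. The exponential moments from Step~1 make both $f$ and $g$ extend holomorphically to the strip $|\operatorname{Im}s|<\delta'/|\theta|$. Differentiating under the integral at $s=0$ shows that the Taylor coefficients of $f$ and $g$ are the moments $\int\langle\theta,x\rangle^k$. These are finite linear combinations of mixed moments, so they coincide. Hence $f=g$ near $0$, and by analytic continuation along the real line (which lies inside the strip) $f=g$ on all of $\R$. Taking $s=1$ shows that the Fourier transforms of $\Lambda$ and $\Lambda'$ agree at every $\theta$.

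\emph{Step 3: conclusion.} Finite Borel measures on $\R^j$ are determined by their Fourier transforms. Applying the usual uniqueness theorem after normalizing by the common total mass, which is the zeroth moment, gives $\Lambda=\Lambda'$.

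The only step needing care is Step~1, that is, transferring exponential integrability from $\Lambda$ to $\Lambda'$. If Step~1 were skipped, $g$ would a priori only be smooth, not analytic. The even-moment trick above handles this cleanly, so I expect no real obstacle. Alternatively, one could cite the standard multivariate Carleman criterion (Petersen's result that determinacy of the marginals suffices), but the direct route above is self-contained.
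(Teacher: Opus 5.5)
Your proposal is correct and is essentially the paper's argument: you transfer an exponential moment to $\Lambda'$ via equality of even moments and Cauchy--Schwarz, the directional characteristic functions are holomorphic on a strip with Taylor coefficients given by the (equal) moments, and the identity theorem plus Fourier uniqueness (equivalent to the Cram\'er--Wold device used in the paper) conclude. The differences are cosmetic: you transfer the exponential moment on $\R^j$ via the polynomials $|x|^{2k}$ before projecting, whereas the paper does it for each projection $\langle\theta,x\rangle$. Also, to justify ``any $\delta'<\delta$'' literally, apply Cauchy--Schwarz between adjacent even moments, or note that $\int\cosh(\delta|x|)\,d\Lambda'=\int\cosh(\delta|x|)\,d\Lambda$ termwise; any $\delta'>0$ suffices anyway.
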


\begin{proof}
By the Cram\'er--Wold device it is enough to match all one--dimensional
projections, after which only the classical \emph{single}--variable analytic
criterion is used; the exponential moment enters solely to make the relevant
characteristic function holomorphic on a strip around the real axis.

\emph{Step 1: reduction to one dimension.} For $\theta\in\R^j$ let
$p_\theta$ and $p_\theta'$ denote the laws of the linear functional
$x\mapsto\langle\theta,x\rangle$ under $\Lambda$ and $\Lambda'$; these are
finite measures on $\R$ of equal total mass $m_0:=\Lambda(\R^j)=\Lambda'(\R^j)$.
A finite Borel measure on $\R^j$ is determined, among finite measures, by the
family of all such one--dimensional projections (Cram\'er--Wold,
\cite[Thm.~29.4]{Bil95}), so it suffices to show $p_\theta=p_\theta'$ for each
fixed $\theta$. Since $\int_\R t^k\,dp_\theta=\int_{\R^j}\langle\theta,x\rangle^k
\,d\Lambda$ expands by the multinomial theorem into the mixed moments of
$\Lambda$ (and likewise for $p_\theta'$), the equality of all mixed moments
gives $p_\theta$ and $p_\theta'$ a common, finite moment sequence
$m_k:=\int_\R t^k\,dp_\theta=\int_\R t^k\,dp_\theta'$, $k\ge0$.

\emph{Step 2: a finite exponential moment for each projection.} Put
$\rho:=\delta/(1+|\theta|)$. Since $|\langle\theta,x\rangle|\le|\theta|\,|x|$ and
$\rho|\theta|<\delta$,
\[
C:=\int_\R e^{\rho|t|}\,dp_\theta=\int_{\R^j}e^{\rho|\langle\theta,x\rangle|}
\,d\Lambda\le\int_{\R^j}e^{\delta|x|}\,d\Lambda<\infty .
\]
For $p_\theta'$ the exponential moment is \emph{not} assumed; it is forced
by the equality of moments, which is the point worth detailing. By
Cauchy--Schwarz and equality of the even moments,
$\int_\R|t|^k\,dp_\theta'\le(\int t^{2k}\,dp_\theta')^{1/2}m_0^{1/2}
=m_{2k}^{1/2}m_0^{1/2}$; and from $t^{2k}\le(2k)!\,\rho^{-2k}e^{\rho|t|}$ we get
$m_{2k}\le(2k)!\,\rho^{-2k}C$. Hence, with $\delta':=\rho/4$ and using
$\sqrt{(2k)!}\le 2^k k!$,
\[
\int_\R e^{\delta'|t|}\,dp_\theta'
=\sum_{k\ge0}\frac{\delta'^{\,k}}{k!}\int_\R|t|^k\,dp_\theta'
\le m_0^{1/2}C^{1/2}\sum_{k\ge0}\frac{\sqrt{(2k)!}}{k!}\Bigl(\frac{\delta'}{\rho}
\Bigr)^{k}
\le m_0^{1/2}C^{1/2}\sum_{k\ge0}2^{-k}<\infty ,
\]
and \emph{a fortiori} $\int_\R e^{\delta'|t|}\,dp_\theta\le C<\infty$ (as
$\delta'<\rho$). Thus both projections have a finite exponential moment at rate
$\delta'$.

\emph{Step 3: the single--variable analytic criterion, and the role of the
strip.} Let $p$ be either $p_\theta$ or $p_\theta'$, so $\int_\R
e^{\delta'|t|}\,dp<\infty$ by Step 2. For a complex number $z=x+iy$ with
$|y|<\delta'$ one has $|e^{izt}|=e^{-yt}\le e^{\delta'|t|}$, so the
characteristic function
\[
\widehat p(z):=\int_\R e^{izt}\,dp(t)\qquad\text{converges absolutely on}\qquad
\{\,z=x+iy:\ |y|<\delta'\,\}
\]
and is holomorphic there (differentiation under the integral is justified
by the same domination), with $\widehat p^{(k)}(0)=i^k m_k$. This is exactly
what the exponential moment buys: it upgrades $\widehat p$ from a merely
continuous function on the line $\R$ to a holomorphic function on a genuine
two--dimensional neighbourhood of $\R$, where its Taylor coefficients at $0$ ---
the moments --- determine it everywhere. Consequently $\widehat{p_\theta}$ and
$\widehat{p_\theta'}$ are holomorphic on the same strip with identical
derivatives of all orders at $0$; by the identity theorem they agree on the
strip, hence on $\R$, and a finite measure on $\R$ is determined by its
characteristic function. Therefore $p_\theta=p_\theta'$.

As $\theta\in\R^j$ was arbitrary, the Cram\'er--Wold device gives
$\Lambda=\Lambda'$. The single--variable analytic characteristic--function
criterion is classical (Billingsley \cite[Chapter~30]{Bil95}); cf.\ Petersen \cite{Pet82} for a related reduction of the multidimensional moment problem to its coordinate marginals.
\end{proof}

\begin{lemma}[Determinacy on $\cM$]\label{lem:determinacy}
Let $\Lambda,\Lambda'$ be finite Borel measures on $\cM$ such that for all
$m\ge0$ and $\phi_1,\dots,\phi_m\in\Cc$,
\[
\int\prod_{i=1}^m\mu(\phi_i)\,d\Lambda
=\int\prod_{i=1}^m\mu(\phi_i)\,d\Lambda'<\infty,
\]
and such that for every $\phi\in\Cc$ supported in a ball $B_{R}$ the
function $\mu\mapsto\mu(\phi)$ has a finite exponential moment under
$\Lambda$, i.e.\ $\int e^{\delta\mu(\phi)}\,d\Lambda<\infty$ for some
$\delta=\delta(\phi)>0$. Then $\Lambda=\Lambda'$.
\end{lemma}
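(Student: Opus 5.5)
The plan is to reduce the statement to finite-dimensional marginals and apply Lemma~\ref{lem:petersen}. For any finite family $\phi_1,\dots,\phi_j\in\Cc$, consider the map $\Pi:\cM\to\R^j$, $\Pi(\mu)=(\mu(\phi_1),\dots,\mu(\phi_j))$. It is continuous, hence Borel. The push-forwards $\Lambda\circ\Pi^{-1}$ and $\Lambda'\circ\Pi^{-1}$ are finite Borel measures on $\R^j$, with total masses $\Lambda(\cM)=\Lambda'(\cM)$ (the case $m=0$). Their mixed moments $\int x_1^{k_1}\cdots x_j^{k_j}$ are exactly the integrals $\int\prod_i\mu(\phi_i)^{k_i}\,d\Lambda$, and the hypothesis says these agree and are finite.

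For the exponential moment required by Lemma~\ref{lem:petersen}, choose $R$ so that every $\phi_i$ is supported in $B_R$. Set $\phi:=\sum_i\phi_i\in\Cc$, which is also supported in $B_R$. Since each $\phi_i\ge0$, we have $|x|\le\sum_i\mu(\phi_i)=\mu(\phi)$ on the image. Hence
\[
\int e^{\delta|x|}\,d(\Lambda\circ\Pi^{-1})\le\int e^{\delta\mu(\phi)}\,d\Lambda<\infty
\]
for $\delta=\delta(\phi)$. Lemma~\ref{lem:petersen} then gives $\Lambda\circ\Pi^{-1}=\Lambda'\circ\Pi^{-1}$ for every finite family in $\Cc$.

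The remaining step, which I expect to be the only real point, is that these finite-dimensional laws determine a finite measure on $\cM$. I would argue via the $\pi$--$\lambda$ theorem. The sets $\{\mu:(\mu(\phi_1),\dots,\mu(\phi_j))\in A\}$, with $A$ Borel and $\phi_i\in\Cc$, form a $\pi$-system. On it $\Lambda$ and $\Lambda'$ agree, and they have equal finite total mass. So they agree on the generated $\sigma$-field $\mathcal G$.

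It remains to check that $\mathcal G$ is the Borel $\sigma$-field of $\cM$. First, $\mu(\R^d)=\sup_k\mu(\chi_k)$ for an increasing sequence $\chi_k\in\Cc$ with $\chi_k\uparrow\mathbf 1$, so the total mass is $\mathcal G$-measurable. Likewise, $\mu(f)$ is $\mathcal G$-measurable for $f\in C_b$: write $f=f^+-f^-$ and approximate each part monotonically by $f^\pm\chi_k$. For these products, $f^\pm\chi_k\in\Cc$ and $\mu(f^\pm\chi_k)\uparrow\mu(f^\pm)$ by monotone convergence.

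The weak topology on the Polish space $\cM$ is generated by countably many such evaluation maps. For instance, take a countable convergence-determining family in $C_b$ (it suffices to take a countable subset of $C_c$ dense in uniform norm, together with the total-mass functional). The Borel $\sigma$-field of a separable metrizable space generated by these functionals is then the $\sigma$-field they generate. Hence $\mathcal G=\mathcal B(\cM)$ and $\Lambda=\Lambda'$.
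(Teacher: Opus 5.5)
Your proposal is correct and follows essentially the same route as the paper: push forward under finitely many evaluations $\mu\mapsto\mu(\phi_i)$, obtain the exponential moment from $|x|\le\mu(\sum_i\phi_i)$ and apply Lemma~\ref{lem:petersen}, then conclude with the $\pi$--$\lambda$ theorem on cylinder sets. The only difference is how the evaluation $\sigma$-field is identified with $\mathcal B(\cM)$: the paper cites a standard lemma and uses a dominated-convergence approximation $\mu(f)=\lim_K\mu(f\chi_K)$, while you sketch it via a countable convergence-determining family, which is also valid.
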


\begin{proof}
\emph{Step 1: finite-dimensional pushforwards agree.}
Fix $\phi^1,\dots,\phi^j\in\Cc$ (nonnegative) and let
$\Pi:\cM\to\R_+^j$, $\Pi(\mu):=(\mu(\phi^1),\dots,\mu(\phi^j))$. The
pushforwards $\Pi_*\Lambda$ and $\Pi_*\Lambda'$ are finite measures on $\R^j$
with equal mixed moments of all orders (these are exactly the mixed moments
$\int\prod_i\mu(\phi^i)^{a_i}d\Lambda$ in the hypothesis). Moreover, set
$\phi:=\phi^1+\dots+\phi^j\in\Cc$. For $\mu\in\cM$ the point
$x:=\Pi(\mu)=(\mu(\phi^1),\dots,\mu(\phi^j))$ lies in the nonnegative orthant
$\R_+^j$, so its Euclidean norm is dominated by its $\ell^1$ norm:
$|x|\le x_1+\dots+x_j=\mu(\phi^1)+\dots+\mu(\phi^j)=\mu(\phi)$. Consequently
$\int_{\R^j}e^{\delta|x|}\,d(\Pi_*\Lambda)(x)\le\int_\cM e^{\delta\mu(\phi)}
\,d\Lambda<\infty$ for some $\delta>0$, by the exponential--moment hypothesis
applied to $\phi$. By Lemma~\ref{lem:petersen}, $\Pi_*\Lambda=\Pi_*\Lambda'$.

\emph{Step 2: from cylinders to the Borel $\sigma$-algebra.}
Let $\mathcal C$ be the collection of cylinder sets
$\{\mu:\Pi(\mu)\in B\}$ over finite subfamilies $\phi^1,\dots,\phi^j\in\Cc$
and Borel $B\subset\R^j$. By Step 1, $\Lambda$ and $\Lambda'$ agree on
$\mathcal C$. The class $\mathcal C$ is a $\pi$-system (an intersection of two
cylinders is again a cylinder, over the union of the two index sets) and
generates the $\sigma$-algebra
$\sigma(\mu\mapsto\mu(\phi):\phi\in\Cc)$. By a standard fact for spaces of
measures, this evaluation $\sigma$-algebra is exactly the Borel
$\sigma$-algebra of the weak topology on $\cM$
(\cite[Lemma~4.7]{Kal17}): the Borel
$\sigma$-algebra is generated by $\{\mu\mapsto\mu(f):f\in C_b\}$, and the
maps $\mu\mapsto\mu(\phi)$, $\phi\in\Cc$, generate the same $\sigma$-algebra:
for $f\in C_b(\R^d)$ one has $\mu(f)=\lim_K\mu(f\cdot\chi_K)$ pointwise,
where $f\cdot\chi_K$ is the product of $f$ with a cutoff
$\chi_K\in C_c(\R^d)$ satisfying $\mathbf 1_{\overline B_K}\le\chi_K\le\mathbf 1_{\overline B_{K+1}}$
(so $f\cdot\chi_K\in C_c(\R^d)$). Indeed $f\cdot\chi_K\to f$ pointwise and
$|f\cdot\chi_K|\le\|f\|_\infty$, so $\mu(f\cdot\chi_K)\to\mu(f)$ by dominated
convergence, the constant $\|f\|_\infty$ being $\mu$--integrable precisely
because $\mu$ is a \emph{finite} measure ($\mu(\R^d)<\infty$); this finiteness
is all that is used. Hence $\mu\mapsto\mu(f)$ is a countable pointwise limit of
$\sigma(\Cc)$-measurable maps, and the two evaluation $\sigma$-algebras
coincide.

We conclude by Dynkin's $\pi$--$\lambda$ theorem. The $\pi$-system is
$\mathcal C$, and $\mathcal L:=\{A\in\mathcal B(\cM):\Lambda(A)=\Lambda'(A)\}$
is a $\lambda$-system: it contains the whole space $\cM$ (since
$\Lambda(\cM)=\Lambda'(\cM)<\infty$, the $m=0$ case of the hypothesis) and is
closed under proper differences and increasing unions, both because $\Lambda$
and $\Lambda'$ are finite. By Step~1, $\mathcal C\subseteq\mathcal L$; since
$\mathcal C$ generates $\mathcal B(\cM)$, the theorem gives
$\mathcal L=\mathcal B(\cM)$, i.e.\ $\Lambda=\Lambda'$.
\end{proof}

\subsubsection{Tightness}

\begin{lemma}[Tightness]\label{lem:tightness}
 Assume \eqref{eq:2pt}. Then the family
$\{\nuh_n\}_{n\ge1}$ is tight in $\cMM$: the total masses are bounded, and
for every $\eta>0$ there is a compact $\mathcal K\subset\cM$ with
$\sup_n\nuh_n(\cM\setminus\mathcal K)\le\eta$.
\end{lemma}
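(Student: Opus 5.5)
We use the standard criterion for relative compactness in $\mathcal M_F(\mathcal M_F(\R^d))$: a family of finite measures on the Polish space $\cM$ is tight if the total masses are bounded and, for every $\eta>0$, there is a compact $\mathcal K\subset\cM$ outside of which every member has mass at most $\eta$. For the compact set we take one of the form
\[
\mathcal K=\{\mu:\ \mu(\R^d)\le M,\ \mu(\overline B_{L_j}^c)\le 2^{-j}\ \forall j\ge1\},
\]
which is weakly compact by Prokhorov's theorem (uniform mass bound plus uniform tightness of the measures).

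\textbf{Step 1: total masses.} The total mass of $\nuh_n$ equals $\int\mu(\varphi_0)\,d\nu_n$. By \eqref{eq:uniformmoments} with $m=1$, this is bounded uniformly in $n$. Since $\varphi_0\in\Cc$, we may normalize $0\le\varphi_0\le1$ with support in $\overline B_{R_0}$, after multiplying by a constant.

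\textbf{Step 2: tail control.} By the union bound and Markov's inequality,
\[
\nuh_n(\mathcal K^c)\le \nuh_n\bigl(\mu(\R^d)>M\bigr)+\sum_{j}\nuh_n\bigl(\mu(\overline B_{L_j}^c)>2^{-j}\bigr).
\]
Both terms need bounds on the size-biased mass far from the origin, and Lemma~\ref{lem:growingsupport} does not apply directly to unbounded regions. The key extra input is the one-arm bound, Proposition~\ref{prop:KN}.

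The event $\mu(\overline B_L^c)>0$ forces $\mathcal C$ to reach distance $nL$. This has $\nu_n$-mass at most $n^2K_2(nL)^{-2}=K_2L^{-2}$. Size-biasing costs a factor $\mu(\varphi_0)$, which we handle by Cauchy--Schwarz:
\[
\nuh_n\bigl(\mu(\overline B_L^c)>0\bigr)\le \nu_n\bigl(\mu(\overline B_L^c)>0\bigr)^{1/2}\Bigl(\int\mu(\varphi_0)^2\,d\nu_n\Bigr)^{1/2}\lesssim L^{-1}.
\]
Here the second factor is bounded uniformly in $n$ by \eqref{eq:uniformmoments} with $m=2$. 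We therefore choose $L_j$ so that $\sum_j L_j^{-1}\lesssim\eta$, for instance $L_j=2^j/\eta$. The requirement $\mu(\overline B_{L_j}^c)\le 2^{-j}$ then holds off a set of $\nuh_n$-mass at most $C\eta$, since we have even ensured $\mu(\overline B_{L_j}^c)=0$.

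\textbf{Step 3: total mass bound.} It remains to bound $\mu(\R^d)$. On the event $\{\mu(\overline B_{L_1}^c)=0\}$ we have $\mu(\R^d)=\mu(\overline B_{L_1})\le\mu(\psi)$ with $\psi=\mathbf 1_{\overline B_{2L_1}}$. Lemma~\ref{lem:growingsupport} gives $\int\mu(\varphi_0)\mu(\psi)\,d\nu_n\le CL_1^2$, so Markov yields
\[
\nuh_n\bigl(\mu(\R^d)>M,\ \mu(\overline B_{L_1}^c)=0\bigr)\le CL_1^2/M.
\]
Taking $M=CL_1^2/\eta$ makes this at most $\eta$. Combining Steps 2 and 3, $\sup_n\nuh_n(\mathcal K^c)\lesssim\eta$.

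\textbf{Main obstacle.} The delicate point is the interplay between the size-biasing weight and the one-arm estimate. We resolve it by Cauchy--Schwarz with the uniform second moment, accepting the weaker rate $L^{-1}$, which still suffices. A further check is that the measurability of the event $\{\mu(\overline B_L^c)>0\}$ and the inclusion $\{\mu_n(\overline B_L^c)>0\}\subset\{0\leftrightarrow\partial B_{nL}\}$ are consistent with the notation of Section~\ref{sec:notation}. The only mismatch is closed versus open balls and the lattice rounding, which costs at most constant factors in $L$.
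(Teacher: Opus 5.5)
Your proposal is correct and follows the paper's proof essentially step for step. Cauchy--Schwarz against the uniform second moment \eqref{eq:uniformmoments}, combined with the one-arm bound, gives the $C/L$ far-field control; Lemma~\ref{lem:growingsupport} with Markov's inequality bounds the total mass on the event that the support stays in a ball; and Prohorov's theorem makes the set you assemble compact. The remaining differences are cosmetic: the paper balances the two total-mass terms by taking $L=M^{1/3}$ rather than tying $M$ to $L_1$, and your closed-versus-open-ball worry is unnecessary, since with the paper's notation $\{\mu_n(\overline B_L^c)>0\}=\{\mathcal C\not\subset\overline B_{Ln}\}=\{0\leftrightarrow\partial B_{Ln}\}$ holds exactly.
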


\begin{proof}
Boundedness of the masses is contained in \eqref{eq:hatmoments}
($m=0$).

\emph{Step 1 (far-field control).} Fix $K\ge1$ and recall $\supp\mu_n=n^{-1}\mathcal C$, so that
$\{\supp\mu\not\subset\overline B_K\}$ is the event $\{\mathcal C\not\subset\overline B_{Kn}\}$. By
Cauchy--Schwarz on the (positive, non-normalized) measure $\nu_n$,
\begin{equation}\label{eq:CSarm}
\begin{aligned}
\nuh_n\bigl(\{\mu:\ \supp\mu\not\subset\overline B_K\}\bigr)
&=\int\mu(\varphi_0)\,\mathbf 1\{\mathcal C\not\subset\overline B_{Kn}\}\,d\nu_n\\
&\le\Bigl(\int\mu(\varphi_0)^2\,d\nu_n\Bigr)^{\!1/2}
\Bigl(n^2\,\bP\bigl(0\leftrightarrow\partial B_{Kn}\bigr)\Bigr)^{\!1/2}
\;\le\;\frac{C_0}{K},
\end{aligned}
\end{equation}
uniformly in $n$, with $C_0:=A_1\sqrt{2K_2}$. The first factor is bounded by
the case $m=2$ of \eqref{eq:uniformmoments} with both test functions equal to
$\varphi_0$, namely $\int\mu(\varphi_0)^2\,d\nu_n\le A_1^2\,2!$; this is the role
of the second moment. The second factor uses the one-arm bound
(Proposition~\ref{prop:KN}), applicable since $Kn\ge1$: it is at most
$n^2K_2(Kn)^{-2}=K_2K^{-2}$, which yields exactly the bound $C_0/K$ in
\eqref{eq:CSarm} because both factors enter through their square roots.

\emph{Step 2 (total-mass control).} Next, for $L\ge R_0\vee1$ take $\chi_L\in C_c(\R^d)$ with
$\mathbf 1_{\overline B_L}\le\chi_L\le\mathbf 1_{\overline B_{2L}}$. On
$\{\supp\mu\subset\overline B_L\}$ one has $\mu(\R^d)=\mu(\chi_L)$, so for every
$M\ge1$, by Chebyshev and Lemma~\ref{lem:growingsupport} (applicable
because $L\ge R_0\vee1$, with $\psi=\chi_L\le\mathbf 1_{\overline B_{2L}}$),
\begin{align*}
\nuh_n\bigl(\mu(\R^d)>M\bigr)&
\le\nuh_n\bigl(\supp\mu\not\subset\overline B_L\bigr)
+\nuh_n\bigl(\mu(\chi_L)>M\bigr)\\
&\le\frac{C_0}{L}+\frac{1}{M}\int\mu(\varphi_0)\mu(\chi_L)\,d\nu_n\\
&\le\frac{C_0}{L}+\frac{C L^2}{M}.
\end{align*}
Choosing $L=M^{1/3}$ --- legitimate once $M\ge R_0^3$, so that $L\ge
R_0\vee1$ --- gives $\sup_n\nuh_n(\mu(\R^d)>M)\le C_1M^{-1/3}$ for all such
$M$.

\emph{Step 3 (assembling a compact set).} Given $\eta>0$, choose $M$ with $C_1M^{-1/3}\le\eta/2$ (and $M\ge R_0^3$, as
permitted above) and $K_j\uparrow$
with $\sum_jC_0/K_j\le\eta/2$, and set
\[
\mathcal K:=\bigl\{\mu\in\cM:\ \mu(\R^d)\le M,\
\mu\bigl(\R^d\setminus\overline B_{K_j}\bigr)\le 2^{-j}\ \forall j\bigr\}.
\]
Then $\mathcal K$ is relatively compact in the weak topology by Prohorov's
theorem: every $\mu\in\mathcal K$ has total mass $\mu(\R^d)\le M$, and for any
$\eps>0$, choosing $j$ with $2^{-j}<\eps$, every $\mu\in\mathcal K$ satisfies
$\mu(\R^d\setminus\overline B_{K_j})\le2^{-j}<\eps$; thus the compact ball $\overline B_{K_j}$
carries all but $\eps$ of the mass uniformly over $\mathcal K$. Bounded total
mass together with this uniform tightness gives, by Prohorov's theorem,
that $\overline{\mathcal K}$ is weakly compact. Moreover,
\[
\nuh_n(\cM\setminus\overline{\mathcal K})\le
\nuh_n(\mu(\R^d)>M)+\sum_j\nuh_n\bigl(\mu(\R^d\setminus\overline B_{K_j})>2^{-j}\bigr)
\le\frac{\eta}{2}+\sum_j\frac{C_0}{K_j}\le\eta,
\]
where the bound on the $j$-th term uses the inclusion
$\{\mu(\R^d\setminus\overline B_{K_j})>2^{-j}\}\subset\{\supp\mu\not\subset\overline B_{K_j}\}$
(if $\mu$ puts positive mass outside $\overline B_{K_j}$ then its support is not
contained in $\overline B_{K_j}$) together with the far-field bound \eqref{eq:CSarm} at
$K=K_j$, namely $\nuh_n(\supp\mu\not\subset\overline B_{K_j})\le C_0/K_j$.
\end{proof}

\begin{remark}
The far-field control \eqref{eq:CSarm} is the one place where polynomial
moments are powerless: $\int\mu(\varphi_0)\,\mu(\R^d\setminus B_K)\,d\nu_n$
diverges, both for $\nu_n$ and in the limit, because
$\int_{|y|>K}G(y)\,dy=\infty$. The one-arm probability supplies exactly the
missing decay, with the exponent $2$ of \cite{KN11} matching the
$K^{-2}$ exit-probability scaling of the SBM range
(Proposition~\ref{prop:range}, Section~\ref{sec:sbmstructure} below); this
is no accident, as Section~\ref{sec:onearm} makes precise.
\end{remark}

\subsubsection{Convergence of the size-biased measures}

\begin{theorem}\label{thm:biased}
 Assume \eqref{eq:2pt} and \eqref{eq:guiding-kp}. Then
for every $\varphi_0\in\Cc$ with $\varphi_0\not\equiv0$,
\[
\nuh_n\;\xRightarrow[n\to\infty]{}\;\Nh
\qquad\text{weakly in }\cMM .
\]
\end{theorem}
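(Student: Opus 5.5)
The argument is the standard tightness-plus-uniqueness scheme. By Lemma~\ref{lem:tightness} the family $\{\nuh_n\}$ has bounded total masses and is tight in $\cMM$. Prohorov's theorem for finite measures on the Polish space $\cM$ then shows that every subsequence has a further subsequence $\nuh_{n_k}$ converging weakly to some finite measure $\Lambda$ on $\cM$. It therefore suffices to show that every such subsequential limit satisfies $\Lambda=\Nh$. For this we apply Lemma~\ref{lem:determinacy} with $\Lambda'=\Lambda$ and with $\Nh$ in the role of the measure carrying the exponential moment. The exponential moment hypothesis holds for $\Nh$ by Corollary~\ref{cor:carleman}, via \eqref{eq:mgf}. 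It remains to check that $\Lambda$ and $\Nh$ have the same finite mixed moments $\int\prod_{i=1}^m\mu(\phi_i)$ for all $m\ge0$ and $\phi_i\in\Cc$.

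For the moment identification, the functional $F(\mu):=\prod_{i=1}^m\mu(\phi_i)$ is continuous on $\cM$ because each $\phi_i\in C_b$, but it is unbounded, so weak convergence does not directly give $\int F\,d\nuh_{n_k}\to\int F\,d\Lambda$. We handle this by uniform integrability. Truncate to $F\wedge M$, which is bounded and continuous, so $\int (F\wedge M)\,d\nuh_{n_k}\to\int (F\wedge M)\,d\Lambda$. The error is controlled by
\[
\int F\,\mathbf 1\{F>M\}\,d\nuh_n\le M^{-1}\int F^2\,d\nuh_n=M^{-1}\int \mu(\varphi_0)\prod_{i=1}^m\mu(\phi_i)^2\,d\nu_n .
\]
The right-hand side is a moment of $\nu_n$ of order $2m+1$ with compactly supported test functions, so it is bounded uniformly in $n$ by \eqref{eq:uniformmoments}. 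Hence $\sup_n\int F\mathbf 1\{F>M\}\,d\nuh_n\le C_m/M$. Since $F\wedge M\uparrow F$, monotone convergence shows that $\int F\,d\Lambda<\infty$ and $\int F\,d\nuh_{n_k}\to\int F\,d\Lambda$. On the other hand, \eqref{eq:hatmoments} gives $\int F\,d\nuh_n\to\Nh[F]$. So the moments of $\Lambda$ equal those of $\Nh$, and the case $m=0$ (total mass) is handled the same way with $F\equiv1$.

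Lemma~\ref{lem:determinacy} requires the exponential moment under one of the two measures; as stated it asks for it under $\Lambda$, the first argument. We apply the lemma with $\Nh$ as the first argument and $\Lambda$ as the second, which is legitimate because the proof of Lemma~\ref{lem:petersen} only needs the exponential moment on one side. The conclusion is $\Lambda=\Nh$. Since every subsequence has a further subsequence converging to the same limit $\Nh$, the whole sequence converges: $\nuh_n\Rightarrow\Nh$.

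The only delicate point is passing the unbounded polynomial functionals to the limit. This is settled by the uniform higher-moment bound \eqref{eq:uniformmoments}, so I do not expect a real obstacle here; tightness and determinacy are already provided by the preceding lemmas.
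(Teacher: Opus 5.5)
Your proposal is correct and follows the paper's proof essentially step for step. The steps match: tightness plus Prohorov, then transfer of the polynomial moments to a subsequential limit by truncation controlled through \eqref{eq:uniformmoments}, then identification via Lemma~\ref{lem:determinacy} with the exponential moment supplied on the $\Nh$ side by Corollary~\ref{cor:carleman}. You truncate $F$ itself and bound the tail by $M^{-1}\int F^2\,d\nuh_n$, whereas the paper truncates each factor and uses $R^{-1}\sum_i\int\mu(\phi_i)P\,d\nuh_n$; this difference is immaterial.

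The appeal to the proof of Lemma~\ref{lem:petersen} is unnecessary. Lemma~\ref{lem:determinacy} is stated for an arbitrary pair of finite measures, so you can simply take $\Nh$ as the first one.
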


\begin{proof}
By Lemma~\ref{lem:tightness} and Prohorov's theorem (in its abstract form
for finite measures on the Polish space $\cM$), every subsequence of
$(\nuh_n)$ has a further subsequence $(\nuh_{n_k})_{k\ge1}$ converging weakly
to some finite measure $\Lambda$ on $\cM$; it suffices to show $\Lambda=\Nh$
for every such limit point, as then the full sequence converges to $\Nh$.
Fix one such convergent subsequence $(\nuh_{n_k})$, $\nuh_{n_k}
\Rightarrow\Lambda$.

\emph{Step 1: $\Lambda$ has the moments of $\Nh$.}
Fix $m\ge1$ and $\phi_1,\dots,\phi_m\in\Cc$, and write
$P(\mu):=\prod_{i=1}^m\mu(\phi_i)$. This $P$ is unbounded, hence \emph{not}
a legitimate test function for the weak convergence
$\nuh_{n_k}\Rightarrow\Lambda$; we therefore truncate. For $R>0$ let
$P_R(\mu):=\prod_{i=1}^m(\mu(\phi_i)\wedge R)$, which is bounded and
continuous on $\cM$ in its weak topology (each $\mu\mapsto\mu(\phi_i)$ is weakly
continuous since $\phi_i\in C_b(\R^d)$, and $t\mapsto t\wedge R$ is continuous).
Since $\nuh_{n_k}\Rightarrow\Lambda$,
\begin{equation}\label{eq:PRconv}
\int P_R\,d\nuh_{n_k}\xrightarrow[k\to\infty]{}\int P_R\,d\Lambda .
\end{equation}
Moreover, uniformly in $k$ (indeed in $n$),
\[
0\le\int(P-P_R)\,d\nuh_{n_k}
\le\sum_{i=1}^m\int P\,\mathbf 1\{\mu(\phi_i)>R\}\,d\nuh_{n_k}
\le\frac1R\sum_{i=1}^m\int \mu(\phi_i)\,P\,d\nuh_{n_k}
\le\frac{C_m}{R},
\]
by the uniform moment bounds \eqref{eq:uniformmoments} (applied with $m+2$
functions); here $C_m$ depends on $m$ and on the fixed functions
$\varphi_0,\phi_1,\dots,\phi_m$ (through their common support radius and
sup-norms), but not on $k$ or $R$. We combine these as follows. The two inequalities
$P_R\le P$ and $\int(P-P_R)\,d\nuh_{n_k}\le C_m/R$ (the latter from the display
above) sandwich the truncated integral:
\[
\int P\,d\nuh_{n_k}-\frac{C_m}{R}\ \le\ \int P_R\,d\nuh_{n_k}\ \le\ \int
P\,d\nuh_{n_k}\qquad(\text{all }k,\ R>0).
\]
Now fix $R$ and let $k\to\infty$. The middle term converges to $\int
P_R\,d\Lambda$ by \eqref{eq:PRconv}; the outer terms converge because
$\int P\,d\nuh_{n_k}\to\Nh[P]$ --- this is the moment convergence
\eqref{eq:hatmoments}, \emph{not} a consequence of the weak convergence
$\nuh_{n_k}\Rightarrow\Lambda$, which does not apply to the unbounded $P$ ---,
holding for the full sequence $(\nuh_n)$ and hence along the subsequence
$(\nuh_{n_k})$.
Taking $k\to\infty$ in the sandwich therefore gives, for each fixed $R$,
\begin{equation}\label{eq:PRsandwich}
\Nh[P]-\frac{C_m}{R}\ \le\ \int P_R\,d\Lambda\ \le\ \Nh[P].
\end{equation}
Finally let $R\to\infty$. The left-hand bound tends to $\Nh[P]$; on the
middle, $P_R\uparrow P$ pointwise on $\cM$ as $R\to\infty$, so by monotone
convergence $\int P_R\,d\Lambda\uparrow\int P\,d\Lambda$. Hence
\eqref{eq:PRsandwich} forces
\[
\int P\,d\Lambda=\lim_{R\to\infty}\int P_R\,d\Lambda=\Nh[P],
\]
the finiteness $\Nh[P]<\infty$ following from Corollary~\ref{cor:carleman},
whence $P\in L^1(\Lambda)$.
The case $m=0$ (total masses) holds since $\nuh_{n_k}\Rightarrow\Lambda$ gives
$\nuh_{n_k}(\cM)\to\Lambda(\cM)$ (the constant $1$ being bounded and
continuous), together with the $m=0$ case of \eqref{eq:hatmoments},
displayed below it.

\emph{Step 2: identification.}
By Step 1, $\Lambda$ and $\Nh$ have identical (finite) moments. By
Corollary~\ref{cor:carleman} (equation~\eqref{eq:mgf}), $\Nh$ has a finite
exponential moment of $\mu(\phi)$ for every $\phi\in\Cc$ supported in a ball.
By Lemma~\ref{lem:determinacy} we have $\Lambda=\Nh$. Since every
subsequential limit equals $\Nh$, the full sequence converges:
$\nuh_n\Rightarrow\Nh$.
\end{proof}

\subsection{Structural properties of the canonical measure}
\label{sec:sbmstructure}

We now collect the structural facts about the canonical measure $\bN$ used
in the convergence statements that follow: the scaling relation, the law of the
total mass, and a Paley--Zygmund lower bound; the geometry of the range is
treated at the start of Section~\ref{sec:onearm}, where it is used. Only the Paley--Zygmund bound draws on the moment machinery of
Sections~\ref{sec:trees} and~\ref{sec:percolation} (through
Corollary~\ref{cor:carleman}); the other results follow directly from the
Laplace characterization \eqref{eq:laplace}.

\begin{proposition}[Scaling]\label{prop:scaling}
For $\lambda>0$ define $\Phi_\lambda:\cM\to\cM$ by
$(\Phi_\lambda\mu)(A):=\lambda^{-4}\mu(\lambda A)$. Then
\[
\bN\circ\bigl(\mu\mapsto\Phi_\lambda\mu\bigr)^{-1}
\;=\;\lambda^{-2}\,\bN
\qquad\text{as measures on }\cM .
\]
In particular $\supp(\Phi_\lambda\mu)=\lambda^{-1}\Ran$, so  the family
$K\mapsto\bN(\Ran\not\subset\overline B_K)$ scales as $K^{-2}$; see
Proposition~\ref{prop:range}.
\end{proposition}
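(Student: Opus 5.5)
We will prove the scaling relation through the Laplace functional of $\mu$ under $\bN$, which determines $\bN$ restricted to $\{\mu\neq0\}$. The statement only concerns such $\mu$, since $\mu=0$ iff the excursion is trivial, and these carry no $\bN$-mass. The first step is to record that for $\phi\in\Cc$,
\[
\bN\bigl[1-e^{-\mu(\phi)}\bigr]=v_\phi(0),
\qquad \tfrac12\Delta v_\phi=\tfrac{\gamma}{2}v_\phi^2-\phi \ \text{on }\R^d .
\]
Here $v_\phi$ is the unique nonnegative bounded solution, equivalently the solution of $v=G\phi-\tfrac{\gamma}{2}G(v^2)$ in integral form. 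This follows from the Poisson-cluster representation under $\bP_{\eps\delta_0}$, namely $\E_{\eps\delta_0}[e^{-\mu(\phi)}]=\exp(-\eps\bN[1-e^{-\mu(\phi)}])$. The left side is in turn given by the standard occupation-time extension of \eqref{eq:laplace}: $u_t$ solves $\partial_tu=\tfrac12\Delta u-\tfrac\gamma2u^2+\phi$ with $u_0=0$, and $v_\phi=\lim_{t\to\infty}u_t$ (\cite{Per02,LG99}).

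The second step is the scaling of this equation. Fix $\phi$ and set $\tilde\phi:=\phi\circ\lambda^{-1}$, meaning $\tilde\phi(x)=\phi(x/\lambda)$. Then $(\Phi_\lambda\mu)(\phi)=\lambda^{-4}\int\phi(y/\lambda)\,\mu(dy)=\lambda^{-4}\mu(\tilde\phi)$. Define $w(x):=\lambda^{-2}v_\phi(x/\lambda)$. A direct computation gives
\[
\tfrac12\Delta w(x)=\lambda^{-4}\bigl(\tfrac\gamma2 v_\phi^2-\phi\bigr)(x/\lambda)=\tfrac\gamma2 w(x)^2-\lambda^{-4}\tilde\phi(x).
\]
So $w=v_{\lambda^{-4}\tilde\phi}$ by uniqueness, and hence
\[
\bN\bigl[1-e^{-(\Phi_\lambda\mu)(\phi)}\bigr]=v_{\lambda^{-4}\tilde\phi}(0)=\lambda^{-2}v_\phi(0)=\lambda^{-2}\bN\bigl[1-e^{-\mu(\phi)}\bigr].
\]
Alternatively, and more within the paper's own machinery, one can check the moment identity \eqref{eq:occmoments} directly. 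Substituting $z_i=\lambda z_i'$ and $u_v=\lambda u_v'$ in $I_T$ uses $G(\lambda x)=\lambda^{-(d-2)}G(x)$. Counting $2k-1$ edges, $k-1$ internal and $k$ leaf integrations, together with the prefactor $\lambda^{-4k}$, gives $\lambda^{-4k}\lambda^{dk+d(k-1)}\lambda^{-(d-2)(2k-1)}=\lambda^{-2}$, for every $k$. We can use this as a consistency check.

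The third step is identification. The measures $\bN\circ\Phi_\lambda^{-1}$ and $\lambda^{-2}\bN$ are $\sigma$-finite on $\cM\setminus\{0\}$ and have the same functionals $\int(1-e^{-\mu(\phi)})$ for all $\phi\in\Cc$. A Lévy-measure uniqueness argument then shows they agree. Concretely, fix $\varphi_0\not\equiv0$ and compare the finite measures $(1-e^{-\mu(\varphi_0)})\,d(\cdot)$. Their Laplace transforms in the variables $\mu(\phi)$ are obtained from differences $\int(1-e^{-\mu(\varphi_0+\phi)})-\int(1-e^{-\mu(\phi)})$, so the two finite measures coincide. Letting $\varphi_0\uparrow$ covers $\{\mu\neq0\}$. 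This is the step needing the most care, namely that $\Cc$-Laplace functionals determine a measure on $\cM$ (cf.\ the $\pi$-system argument in Lemma~\ref{lem:determinacy}). The support statement is immediate: $\Phi_\lambda\mu(A)>0$ iff $\mu(\lambda A)>0$, so $\supp\Phi_\lambda\mu=\lambda^{-1}\Ran$. Therefore $\bN(\Ran\not\subset\overline B_K)=\bN(\lambda^{-1}\Ran\not\subset\overline B_{K/\lambda})$, and taking $\lambda=K$ yields $K^{-2}\bN(\Ran\not\subset\overline B_1)$.
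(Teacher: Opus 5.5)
Your argument is correct, but it takes a different route from the paper.

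\textbf{The paper's route.} The paper never uses the stationary equation. It rescales the process itself under $\bP_{\eps\delta_0}$. From the parabolic equation \eqref{eq:laplace} it checks that $\lambda^{2}u(\lambda^2t,\lambda x)$ solves the same equation. Hence $\lambda^{-2}X_{\lambda^2t}(\lambda\,\cdot)$ is a $(\tfrac12\Delta,\gamma)$-SBM started from $\lambda^{-2}\eps\delta_0$, and its occupation measure is exactly $\Phi_\lambda\mu$. The factor $\lambda^{-2}$ then falls out of the normalization $\bN=\lim_{\eps\downarrow0}\eps^{-1}\bP_{\eps\delta_0}$ through the substitution $\eps'=\lambda^{-2}\eps$. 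Identification comes for free there, as equality of laws of processes. The price is that the paper leaves implicit in which sense the limit defining $\bN$ is taken; it must be on sets bounded away from $\mu=0$.

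\textbf{Your route.} You work with $\bN$ directly through $\bN[1-e^{-\mu(\phi)}]=v_\phi(0)$. Your identification step weights by $1-e^{-\mu(\varphi_0)}$ to get finite measures whose Laplace functionals are differences of the two known functionals, and then exhausts $\{\mu\neq0\}$. This makes the $\sigma$-finite comparison fully explicit, which the paper's proof does not.

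\textbf{What your route costs.} It imports two facts:
\begin{itemize}
\item the elliptic characterization of the occupation-time Laplace functional;
\item uniqueness of nonnegative solutions of $\tfrac12\Delta v=\tfrac\gamma2 v^2-\phi$ on all of $\R^d$.
\end{itemize}
The uniqueness is true but deserves a reference. Away from $\supp\phi$ a Keller--Osserman comparison gives $v(x)\lesssim|x|^{-2}$. The difference of two solutions then solves a linear equation with nonnegative potential and vanishes at infinity, so the maximum principle forces it to be zero. You can avoid this step entirely by applying your rescaling to the parabolic $u_t$ before letting $t\to\infty$, since uniqueness there is standard.

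\textbf{Your moment check is already a full proof.} The tree-integral computation you keep as a consistency check works as a complete proof inside the paper's own framework. Apply it with $m+1$ test functions, one of which is $\varphi_0$. It shows that the finite measures $\mu(\varphi_0)\,(\bN\circ\Phi_\lambda^{-1})(d\mu)$ and $\lambda^{-2}\Nh$ have identical moments. The latter has exponential moments by Corollary~\ref{cor:carleman}, so Lemma~\ref{lem:determinacy} identifies the two measures. Letting $\varphi_0\uparrow\mathbf 1$ then gives equality on $\{\mu\neq0\}$.
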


\begin{proof}
Under $\bP_{\eps\delta_0}$, Brownian scaling together with the form of the
branching mechanism in \eqref{eq:laplace} shows that
$X^{(\lambda)}_t(A):=\lambda^{-2}X_{\lambda^2t}(\lambda A)$ is a
$(\frac12\Delta,\gamma)$-SBM started from $\lambda^{-2}\eps\,\delta_0$:
indeed if $u$ solves \eqref{eq:laplace} then
$u^{(\lambda)}(t,x):=\lambda^{2}u(\lambda^2t,\lambda x)$ solves the same
equation, and matching Laplace functionals gives the claim. Moreover
\[
\int_0^\infty X^{(\lambda)}_t\,dt
=\lambda^{-2}\int_0^\infty X_{\lambda^2 t}(\lambda\,\cdot)\,dt
=\lambda^{-4}\int_0^\infty X_{s}(\lambda\,\cdot)\,ds=\Phi_\lambda\mu .
\]
Hence, using the definition of $\bN$,
\[
\bN\bigl(\Phi_\lambda\mu\in\cdot\bigr)
=\lim_{\eps\downarrow0}\tfrac1\eps
\bP_{\lambda^{-2}\eps\,\delta_0}\bigl(\mu\in\cdot\bigr)
=\lambda^{-2}\lim_{\eps'\downarrow0}\tfrac1{\eps'}
\bP_{\eps'\delta_0}\bigl(\mu\in\cdot\bigr)
=\lambda^{-2}\,\bN\bigl(\mu\in\cdot\bigr). \qedhere
\]
\end{proof}

\begin{proposition}[Total mass]\label{prop:totalmass}
The law of $\mu(\R^d)$ under $\bN$ is atomless, has infinite mean on
$\{\mu(\R^d)>\eps\}$ for every $\eps>0$, and $\bN$ is an infinite but
$\sigma$-finite measure. The explicit Laplace transform
$\bN\bigl[1-e^{-\theta\mu(\R^d)}\bigr]=\sqrt{2\theta/\gamma}$
($\theta>0$) holds, giving the tail
$\bN\bigl(\mu(\R^d)>\eps\bigr)=\sqrt{2/(\pi\gamma)}\,\eps^{-1/2}$; this
explicit form is used for the cluster-tail consistency check
(Remark~\ref{rem:tailknown}) and for the finiteness of the restricted masses
$\bN(\mu(\R^d)>\eps)$; the rest of the paper requires only atomlessness.
\end{proposition}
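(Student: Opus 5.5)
The plan is to compute the Laplace transform of the total mass $M:=\mu(\R^d)=\int_0^\infty X_t(1)\,dt$ under $\bN$ explicitly, and then read off every other claim from the resulting Lévy-type measure on $(0,\infty)$.

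\emph{Step 1 (Laplace transform under $\bP_{\eps\delta_0}$).} I will use the standard joint Laplace functional of superprocesses (Dynkin; \cite[Ch.~II]{Per02}, \cite{LG99}), extending \eqref{eq:laplace}. For $\psi\ge0$ bounded,
\[
\E_m\Bigl[e^{-\int_0^t\langle\psi,X_s\rangle ds}\Bigr]=e^{-\langle v_t,m\rangle},
\qquad
\partial_tv=\tfrac12\Delta v-\tfrac\gamma2v^2+\psi,\quad v_0=0.
\]
I take $\psi\equiv\theta$ constant. Then $v_t$ is spatially constant and solves the Riccati ODE $\dot v=\theta-\tfrac\gamma2v^2$ with $v_0=0$. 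Its solution is $v_t=\sqrt{2\theta/\gamma}\tanh(t\sqrt{\gamma\theta/2})$, which increases to $\sqrt{2\theta/\gamma}$. Letting $t\to\infty$ by monotone convergence gives
\[
\E_{\eps\delta_0}\bigl[e^{-\theta M}\bigr]=e^{-\eps\sqrt{2\theta/\gamma}}.
\]
Since $M\le\sigma\sup_tX_t(1)<\infty$ a.s., nothing is lost in this limit.

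\emph{Step 2 (pass to $\bN$).} Using $\bN=\lim_{\eps\downarrow0}\eps^{-1}\bP_{\eps\delta_0}$, or equivalently the Poisson excursion decomposition \cite[Theorem~IV.4]{LG99}, I get
\[
\E_{\eps\delta_0}\bigl[e^{-\theta M}\bigr]=\exp\bigl(-\eps\,\bN[1-e^{-\theta M}]\bigr),
\]
because $M$ is additive over excursions. Comparing with Step 1 yields $\bN[1-e^{-\theta M}]=\sqrt{2\theta/\gamma}$. I expect this step to be the main technical point. One must justify the functional formula for the occupation integral and check that $M$ is additive over the excursions, and the Poisson identity handles both cleanly.

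\emph{Step 3 (identify the law of $M$).} First I note that $\bN(M=0)=0$: $\bN$ charges only nonzero paths, and a nonzero continuous path has $\int X_t(1)\,dt>0$. Next I use the Gamma-integral identity
\[
\int_0^\infty(1-e^{-\theta x})\,x^{-3/2}\,dx=2\sqrt{\pi\theta}.
\]
Hence the measure $n(dx)=(2\pi\gamma)^{-1/2}x^{-3/2}dx$ has the same transform $\theta\mapsto\int(1-e^{-\theta x})\,n(dx)$ as the law of $M$. To conclude uniqueness, I differentiate in $\theta$: this gives equality of $\int xe^{-\theta x}(\cdot)(dx)$ for all $\theta>0$. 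These are Laplace transforms of the $\sigma$-finite measures $x\,\bN(M\in dx)$ and $x\,n(dx)$, which are finite after multiplying by $e^{-\theta x}$. Uniqueness of Laplace transforms then gives $\bN(M\in dx)=n(dx)$ on $(0,\infty)$.

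Everything else follows from this density. The law has a density, so it is atomless. The tail is $\bN(M>\eps)=2(2\pi\gamma)^{-1/2}\eps^{-1/2}=\sqrt{2/(\pi\gamma)}\,\eps^{-1/2}$. The mean on $\{M>\eps\}$ is $\int_\eps^\infty x\cdot x^{-3/2}dx=\infty$. This tail tends to $\infty$ as $\eps\to0$, so $\bN$ is infinite. Finally, $\bN$ is $\sigma$-finite, since the whole space is the null set $\{M=0\}$ together with the union over $k$ of the finite-measure sets $\{M>1/k\}$.
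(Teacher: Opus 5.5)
Your proposal is correct and follows essentially the paper's route: compute $\E_{\eps\delta_0}[e^{-\theta\mu(\R^d)}]=e^{-\eps\sqrt{2\theta/\gamma}}$, pass to $\bN$ through the Poisson excursion decomposition, and invert the transform to get the density $(2\pi\gamma)^{-1/2}x^{-3/2}$. The differences are minor. You get the first identity from the Riccati equation $\dot v=\theta-\tfrac\gamma2v^2$ for the log-Laplace functional, where the paper uses the It\^o martingale $\exp(-\theta\int_0^tZ_s\,ds-vZ_t)$ for the Feller diffusion; its drift-cancellation condition $\tfrac12\gamma v^2=\theta$ is exactly the fixed point of your ODE. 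You also read atomlessness, $\bN(\mu(\R^d)=0)=0$, the infinite mean and $\sigma$-finiteness directly off the density, justifying the inversion by an explicit uniqueness argument, whereas the paper proves atomlessness separately via the scaling property.
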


\begin{proof}
Atomlessness follows from the scaling property
(Proposition~\ref{prop:scaling}), granted the finiteness $\bN(\mu(\R^d)>\eps)<\infty$ established independently below: $\Phi_\lambda$ multiplies
$\mu(\R^d)$ by $\lambda^{-4}$ and $\bN$ by $\lambda^{-2}$. Hence if the law
of $\mu(\R^d)$ under $\bN$ had an atom of mass $a>0$ at some value $w_0>0$,
then applying Proposition~\ref{prop:scaling} to the event
$\{\mu(\R^d)=\lambda^{-4}w_0\}$ --- whose $\Phi_\lambda$-preimage is
$\{\mu(\R^d)=w_0\}$ --- would give
$\bN\bigl(\mu(\R^d)=\lambda^{-4}w_0\bigr)
=\lambda^{2}\,\bN\bigl(\mu(\R^d)=w_0\bigr)=\lambda^{2}a$ for every
$\lambda>0$. For $\eps<w_0$ and $\lambda$ ranging over
$\bigl(1,(w_0/\eps)^{1/4}\bigr)$ this produces uncountably many distinct
atoms in $(\eps,w_0)$, each of $\bN$-mass $\ge a$, contradicting finiteness
of $\bN$ on $\{\mu(\R^d)>\eps\}$. For the explicit transform:
the total-mass process $Z_t:=X_t(\R^d)$ is, under $\bP_{\eps\delta_0}$, the
Feller diffusion $dZ_t=\sqrt{\gamma Z_t}\,dB_t$, $Z_0=\eps$, and
$W:=\mu(\R^d)=\int_0^\infty Z_t\,dt$. Set $v:=\sqrt{2\theta/\gamma}$ and
$M_t:=\exp\bigl(-\theta\int_0^t Z_s\,ds- v\,Z_t\bigr)$.
By It\^o's formula $M$ is a bounded martingale (the drift cancels by the
choice of $v$); optional stopping gives
$\E_{\eps\delta_0}[e^{-\theta W}]=e^{-\eps v}$, while the Poissonian
decomposition of $\bP_{\eps\delta_0}$ with intensity $\eps\bN$ gives
$\E_{\eps\delta_0}[e^{-\theta W}]=\exp(-\eps\,\bN[1-e^{-\theta W}])$, whence
$\bN[1-e^{-\theta W}]=\sqrt{2\theta/\gamma}$. Since
$\int_0^\infty(1-e^{-\theta x})\,x^{-3/2}dx=2\sqrt{\pi\theta}$, inverting the
transform gives the density $(2\pi\gamma)^{-1/2}x^{-3/2}$ on $(0,\infty)$ and
the stated tail.
\end{proof}

\begin{lemma}\label{lem:PZ}
Let $\phi\in\Cc$, $\phi\not\equiv0$. Then $\bN[\mu(\phi)]=\int G\phi\,
\in(0,\infty)$ and $\bN[\mu(\phi)^2]<\infty$
(Corollary~\ref{cor:carleman}); consequently
$\bN\bigl(\mu(\phi)>\eps\bigr)>0$ for all sufficiently small $\eps>0$, and
$\bN\bigl(\mu(\phi)>\eps\bigr)<\infty$ for all $\eps>0$.
\end{lemma}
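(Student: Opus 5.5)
The plan is to get the first two moments of $\mu(\phi)$ under $\bN$ from Proposition~\ref{prop:occmoments} and Corollary~\ref{cor:carleman}. Positivity of $\bN(\mu(\phi)>\eps)$ for small $\eps$ will then follow from a Paley--Zygmund-type argument adapted to the $\sigma$-finite setting. Finiteness of $\bN(\mu(\phi)>\eps)$ for every $\eps>0$ will follow from a Chebyshev (Markov) bound.

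\emph{Moments.} By the case $k=1$ of \eqref{eq:occmoments}, the tree class $\T_1$ consists of a single edge, so
\[
\bN[\mu(\phi)]=\int G\phi .
\]
This quantity lies in $(0,\infty)$ because $G>0$ is locally integrable by \eqref{eq:polar}, $\phi\ge0$ has compact support, and $\phi\not\equiv0$. The second moment $\bN[\mu(\phi)^2]$ is finite by \eqref{eq:momentbound} with $k=2$.

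\emph{Finiteness.} For every $\eps>0$, Markov's inequality, which is valid for any positive measure, gives
\[
\bN(\mu(\phi)>\eps)\le \eps^{-1}\bN[\mu(\phi)]<\infty .
\]

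\emph{Positivity.} The standard Paley--Zygmund inequality uses normalization by total mass, which is unavailable here since $\bN$ is infinite. Instead I will split the first moment on the event $\{\mu(\phi)>\eps\}$ and its complement:
\[
\bN[\mu(\phi)]\le \bN[\mu(\phi);\mu(\phi)\le\eps]+\bN[\mu(\phi);\mu(\phi)>\eps].
\]
On the small-values part, $\mu(\phi)\le \eps^{-1}\mu(\phi)^2$ is false in general; I instead use $\mu(\phi)\mathbf 1\{\mu(\phi)\le\eps\}$. This contribution tends to $0$ as $\eps\downarrow0$ by dominated convergence, with dominating function $\mu(\phi)\in L^1(\bN)$, because $\mu(\phi)\mathbf 1\{\mu(\phi)\le\eps\}\to \mu(\phi)\mathbf 1\{\mu(\phi)=0\}=0$ pointwise.

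Hence for $\eps$ small enough the large-values part satisfies $\bN[\mu(\phi);\mu(\phi)>\eps]\ge \tfrac12\int G\phi$. Cauchy--Schwarz then gives
\[
\tfrac14\Bigl(\int G\phi\Bigr)^2\le \bN[\mu(\phi)^2]\,\bN(\mu(\phi)>\eps).
\]
It follows that $\bN(\mu(\phi)>\eps)\ge (\int G\phi)^2/(4\bN[\mu(\phi)^2])>0$.

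\emph{Main obstacle.} The only delicate point is the $\sigma$-finiteness of $\bN$, which rules out the textbook Paley--Zygmund argument. The dominated-convergence step above is what handles it, since it requires only that $\mu(\phi)$ be $\bN$-integrable.
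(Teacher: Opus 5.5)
Your proposal is correct and matches the paper's proof essentially step for step: the first moment from the $k=1$ case of Proposition~\ref{prop:occmoments}, the second from Corollary~\ref{cor:carleman}, dominated convergence to make $\bN[\mu(\phi);\mu(\phi)\le\eps]$ small, then Cauchy--Schwarz for positivity and Markov for finiteness. The stray remark that $\mu(\phi)\le\eps^{-1}\mu(\phi)^2$ fails is harmless and could simply be deleted.
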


\begin{proof}
The first-moment identity $\bN[\mu(\phi)]=\int G\phi$ is the case
$k=1$ of Proposition~\ref{prop:occmoments}, and lies in $(0,\infty)$ since
$\phi\ge0$, $\phi\not\equiv0$ and $G>0$; the second-moment finiteness
$\bN[\mu(\phi)^2]<\infty$ is the case $k=2$ of
Corollary~\ref{cor:carleman}. Since $\bN[\mu(\phi)]<\infty$, dominated
convergence gives $\bN[\mu(\phi)\mathbf 1\{\mu(\phi)\le\eps\}]\to0$ as
$\eps\downarrow0$, so for $\eps$ small enough
$\bN[\mu(\phi)\mathbf 1\{\mu(\phi)>\eps\}]\ge\tfrac12\int G\phi>0$; the
Cauchy--Schwarz inequality
$\bN[\mu(\phi);\mu(\phi)>\eps]^2\le\bN[\mu(\phi)^2]\,\bN(\mu(\phi)>\eps)$ then
forces $\bN(\mu(\phi)>\eps)>0$. Finiteness of $\bN(\mu(\phi)>\eps)$ for every
$\eps>0$ is Markov's inequality together with $\bN[\mu(\phi)]<\infty$.
\end{proof}

\subsection{$\sigma$-finite convergence of the cluster law}
\label{sec:sigmafinite}

\subsubsection{Un-biasing: restricted convergence}

\begin{theorem}[Restricted convergence]\label{thm:restricted}
 Assume \eqref{eq:2pt} and \eqref{eq:guiding-kp}, and fix
$\varphi_0\in\Cc$ with $\varphi_0\not\equiv0$. Then for every $\eps>0$ outside the (at
most countable) set $\{\eps:\bN(\mu(\varphi_0)=\eps)>0\}$
(we call such $\eps$ \emph{admissible}),
\[
\nu_n\bigl(\,\cdot\,;\ \mu(\varphi_0)>\eps\bigr)
\;\xRightarrow[n\to\infty]{}\;
\bN\bigl(\,\cdot\,;\ \mu(\varphi_0)>\eps\bigr)
\qquad\text{weakly in }\cMM,
\]
and both sides are finite measures.
\end{theorem}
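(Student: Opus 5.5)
We derive the restricted convergence from Theorem~\ref{thm:biased} by dividing by the size-biasing weight. On the set $\{\mu(\varphi_0)>\eps\}$ the weight $\mu(\varphi_0)$ is bounded below by $\eps$, so we can write
\[
\nu_n(F;\,\mu(\varphi_0)>\eps)=\int F(\mu)\,\frac{\mathbf 1\{\mu(\varphi_0)>\eps\}}{\mu(\varphi_0)}\,d\nuh_n(\mu),
\]
and the same identity holds with $\nu_n,\nuh_n$ replaced by $\bN,\Nh$. Finiteness of both sides is immediate. The right-hand side is at most $\eps^{-1}\nuh_n(\cM)$, which is bounded by \eqref{eq:hatmoments}; similarly $\bN(\mu(\varphi_0)>\eps)\le\eps^{-1}\int G\varphi_0<\infty$, which is also part of Lemma~\ref{lem:PZ}.

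Next we fix $F\in C_b(\cM)$ and set $h(\mu):=F(\mu)\,\mathbf 1\{\mu(\varphi_0)>\eps\}/\mu(\varphi_0)$. This function is bounded by $\|F\|_\infty/\eps$. Because $\mu\mapsto\mu(\varphi_0)$ is weakly continuous ($\varphi_0\in C_b$), the only discontinuities of $h$ lie in the set $\{\mu:\mu(\varphi_0)=\eps\}$. For admissible $\eps$ this set has $\bN$-measure zero, hence $\Nh$-measure zero, since $\Nh\ll\bN$.

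We then apply the mapping theorem for weak convergence of finite measures, i.e.\ the portmanteau statement that $\int h\,d\nuh_n\to\int h\,d\Nh$ for bounded measurable $h$ whose discontinuity set is $\Nh$-null. To use the probability-measure version, we normalise by the total masses, which converge by \eqref{eq:hatmoments}; the limit mass $\int G\varphi_0$ is positive, so the normalisation is harmless. Together with Theorem~\ref{thm:biased} this gives
\[
\int F\,d\nu_n(\cdot\,;\mu(\varphi_0)>\eps)\to\int F\,d\bN(\cdot\,;\mu(\varphi_0)>\eps),
\]
which is exactly the weak convergence claimed in $\cMM$.

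Finally, the set of non-admissible $\eps$ is at most countable. The measure $\bN(\mu(\varphi_0)\in\cdot)$ is finite on each interval $(\eps_0,\infty)$ by Markov's inequality, so it has at most countably many atoms there, and taking the union over $\eps_0=1/k$ gives countability. There is no real obstacle in this argument; the one point needing care is that the discontinuity set is null for the size-biased limit $\Nh$, and this is precisely what admissibility of $\eps$ guarantees.
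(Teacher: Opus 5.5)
Your proposal is correct and follows the paper's own proof: you write the restricted measures as $h\,\nuh_n$ and $h\,\Nh$ with $h=\mathbf 1\{\mu(\varphi_0)>\eps\}/\mu(\varphi_0)$, which is bounded by $1/\eps$ and whose discontinuity set $\{\mu(\varphi_0)=\eps\}$ is $\Nh$-null for admissible $\eps$, and you conclude from Theorem~\ref{thm:biased} via the a.e.-continuity form of the portmanteau theorem. Normalising to probability measures and proving countability of the exceptional set via Markov's inequality are harmless elaborations of steps the paper states briefly.
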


\begin{proof}
Finiteness of $\bN(\mu(\varphi_0)>\eps)$ is Lemma~\ref{lem:PZ}. Define
\[
h(\mu):=\frac{\mathbf 1\{\mu(\varphi_0)>\eps\}}{\mu(\varphi_0)},
\qquad \mu\in\cM,
\]
with $h(\mu):=0$ when $\mu(\varphi_0)=0$. Then $0\le h\le1/\eps$, $h$
is continuous at every $\mu$ with $\mu(\varphi_0)\neq\eps$ (the map
$\mu\mapsto\mu(\varphi_0)$ is weakly continuous since
$\varphi_0\in C_b(\R^d)$), and the discontinuity set
$\{\mu(\varphi_0)=\eps\}$ is $\Nh$-null by the choice of $\eps$. Indeed
$\Nh(\mu(\varphi_0)=\eps)=\eps\,\bN(\mu(\varphi_0)=\eps)$ for $\eps>0$, so
$\eps$ is an atom of the law of $\mu(\varphi_0)$ under $\Nh$ if and only if it is
one under $\bN$; thus $\Nh$ and $\bN$ share the \emph{same} exceptional set,
which is countable (the law of $\mu(\varphi_0)$ under $\bN$ restricted to
$\{\mu(\varphi_0)>\delta\}$, $\delta>0$, is a finite measure and so has at most
countably many atoms), and the chosen $\eps$ is admissible for both.
Since
\[
\nu_n\bigl(\,\cdot\,;\mu(\varphi_0)>\eps\bigr)=h\,\nuh_n,
\qquad
\bN\bigl(\,\cdot\,;\mu(\varphi_0)>\eps\bigr)=h\,\Nh,
\]
the claim follows from Theorem~\ref{thm:biased}: if $\nuh_n\Rightarrow\Nh$
(finite measures) and $h$ is bounded and
$\Nh$-a.e.\ continuous, then $h\,\nuh_n\Rightarrow h\,\Nh$ (test against
$F\in C_b(\cM)$; $Fh$ is bounded and $\Nh$-a.e.\ continuous, and the
portmanteau theorem in its a.e.-continuity form applies).
\end{proof}

\subsubsection{The total-mass formulation}

We now prove Theorem~\ref{thm:mainA}, under the hypotheses $d>6$,
\eqref{eq:2pt} and \eqref{eq:guiding-kp}, which are all valid for the
nearest-neighbour model with $d\ge 11$.

\begin{proof}[Proof of Theorem~\ref{thm:mainA}]
Fix $F\in C_b(\cM)$, $\|F\|_\infty\le1$, and $\eps>0$. For $K\ge1$ let
$\chi_K\in\Cc$ with $\mathbf 1_{\overline B_K}\le\chi_K\le\mathbf 1_{\overline B_{K+1}}$. We
compare the events $\{\mu(\R^d)>\eps\}$ and $\{\mu(\chi_K)>\eps\}$ under
$\nu_n$: since $\mu(\chi_K)\le\mu(\R^d)$ always, and
$\mu(\chi_K)=\mu(\R^d)$ whenever $\supp\mu\subset\overline B_K$, the symmetric
difference is contained in $\{\supp\mu\not\subset\overline B_K\}$, whose
$\nu_n$-measure is
$n^2\bP(0\leftrightarrow\partial B_{Kn})\le K_2K^{-2}$ by
Proposition~\ref{prop:KN}. Hence, uniformly in $n$,
\begin{equation}\label{eq:sandwich1}
\Bigl|\int F\,\mathbf 1\{\mu(\R^d)>\eps\}\,d\nu_n
-\int F\,\mathbf 1\{\mu(\chi_K)>\eps\}\,d\nu_n\Bigr|\le K_2K^{-2}.
\end{equation}
On the $\bN$ side, $\mu(\chi_K)\uparrow\mu(\R^d)$ as $K\to\infty$, so the events
$\{\mu(\chi_K)>\eps\}$ increase to $\{\mu(\R^d)>\eps\}$; hence the restricted
measures $\bN(\,\cdot\,;\mu(\chi_K)>\eps)$ --- each a \emph{finite} measure on
$\cM$ by Lemma~\ref{lem:PZ} --- increase setwise to
$\bN(\,\cdot\,;\mu(\R^d)>\eps)$ by monotone convergence, whence
\begin{equation}\label{eq:sandwich2}
\Bigl|\bN\bigl[F;\mu(\chi_K)>\eps\bigr]
-\bN\bigl[F;\mu(\R^d)>\eps\bigr]\Bigr|
\le\bN\bigl(\mu(\R^d)>\eps\bigr)-\bN\bigl(\mu(\chi_K)>\eps\bigr)
\xrightarrow[K\to\infty]{}0 .
\end{equation}
Now fix $\delta>0$. Choose $K$ with $K_2K^{-2}\le\delta$ and with the
right side of \eqref{eq:sandwich2} at most $\delta$. We next choose a
level $\eps'\in(\eps/2,\eps)$ at which the restricted convergence
(Theorem~\ref{thm:restricted}) is available for the test function $\chi_K$;
this requires $\eps'$ to avoid the at-most-countable set
$\{s:\bN(\mu(\chi_K)=s)>0\}$ of atoms of the law of $\mu(\chi_K)$ under $\bN$
(this is the only role of ``the exceptional $\eps$-set for $\chi_K$ is
countable''). We additionally require $\bN(\eps'<\mu(\R^d)\le\eps)\le\delta$,
which is possible because the law of $\mu(\R^d)$ under $\bN$ has the explicit
atomless density of Proposition~\ref{prop:totalmass}, so this quantity tends
to $0$ as $\eps'\uparrow\eps$. Two more comparisons:
first, by Theorem~\ref{thm:restricted} applied with $(\chi_K,\eps')$,
\begin{equation}\label{eq:restrconv}
\int F\,\mathbf 1\{\mu(\chi_K)>\eps'\}\,d\nu_n
\longrightarrow \bN\bigl[F;\mu(\chi_K)>\eps'\bigr];
\end{equation}
second, the discrepancy between the levels $\eps$ and $\eps'$ is
controlled, uniformly in large $n$, by
\begin{equation}\label{eq:levelgap}
\begin{aligned}
\limsup_n\,\nu_n\bigl(\eps'<\mu(\chi_K)\le\eps\bigr)
&\le\bN\bigl(\eps'\le\mu(\chi_K)\le\eps\bigr)\\
&\le\bN\bigl(\eps'\le\mu(\R^d)\le\eps\bigr)
+\Bigl(\bN\bigl(\mu(\R^d)>\eps\bigr)-\bN\bigl(\mu(\chi_K)>\eps\bigr)\Bigr)\\
&\le \delta+\delta=2\delta .
\end{aligned}
\end{equation}
Here the first inequality is the portmanteau theorem (closed-set form) for the
weakly convergent finite measures of Theorem~\ref{thm:restricted}, applied with the weight $\chi_K$ at the level $\eps'$ (cf.\ \eqref{eq:restrconv}): writing
$\widetilde\nu_n:=\nu_n(\,\cdot\,;\mu(\chi_K)>\eps')$ and
$\widetilde\bN:=\bN(\,\cdot\,;\mu(\chi_K)>\eps')$, the set
$C:=\{\eps'\le\mu(\chi_K)\le\eps\}$ is closed in $\cM$ (as $\mu\mapsto\mu(\chi_K)$
is weakly continuous) and $\{\eps'<\mu(\chi_K)\le\eps\}=C\cap\{\mu(\chi_K)>\eps'\}$,
so $\nu_n(\eps'<\mu(\chi_K)\le\eps)=\widetilde\nu_n(C)$ and, by
\eqref{eq:restrconv} and portmanteau,
$\limsup_n\widetilde\nu_n(C)\le\widetilde\bN(C)=\bN(\eps'<\mu(\chi_K)\le\eps)
\le\bN(\eps'\le\mu(\chi_K)\le\eps)$. For the second inequality we
split the event $\{\eps'\le\mu(\chi_K)\le\eps\}$ according to whether
$\mu(\R^d)\le\eps$ or not. Since $\mu(\chi_K)\le\mu(\R^d)$, the part with
$\mu(\R^d)\le\eps$ satisfies $\eps'\le\mu(\chi_K)\le\mu(\R^d)\le\eps$, hence is
contained in $\{\eps'\le\mu(\R^d)\le\eps\}$. The part with $\mu(\R^d)>\eps$
satisfies $\mu(\chi_K)\le\eps<\mu(\R^d)$, hence is contained in
$\{\mu(\R^d)>\eps\}\setminus\{\mu(\chi_K)>\eps\}$, whose $\bN$-mass is the
displayed difference $\bN(\mu(\R^d)>\eps)-\bN(\mu(\chi_K)>\eps)$. The third
inequality uses the two controls fixed above: the first term satisfies
$\bN(\eps'\le\mu(\R^d)\le\eps)=\bN(\eps'<\mu(\R^d)\le\eps)\le\delta$, the equality
holding because $\mu(\R^d)$ is atomless under $\bN$ (no atom at $\eps'$) and the
bound by the choice of $\eps'$ above, and the second is the right side of
\eqref{eq:sandwich2} at level $\eps$, which was arranged to be $\le\delta$ in
the choice of $K$.

Assembling, write
$\nu_n[F;\mu(\R^d)>\eps]-\bN[F;\mu(\R^d)>\eps]$ as the telescoping sum of:
(a) $\nu_n[F;\mu(\R^d)>\eps]-\nu_n[F;\mu(\chi_K)>\eps]$, bounded by
$K_2K^{-2}\le\delta$ via \eqref{eq:sandwich1};
(b) $\nu_n[F;\mu(\chi_K)>\eps]-\nu_n[F;\mu(\chi_K)>\eps']$, bounded in
$\limsup_n$ by $2\delta$ (recall $\|F\|_\infty\le1$) via \eqref{eq:levelgap};
(c) $\nu_n[F;\mu(\chi_K)>\eps']-\bN[F;\mu(\chi_K)>\eps']\to0$ by
\eqref{eq:restrconv};
(d) $\bN[F;\mu(\chi_K)>\eps']-\bN[F;\mu(\R^d)>\eps]$, whose modulus is at most
(as $\|F\|_\infty\le1$) the $\bN$-mass of the symmetric difference of
$\{\mu(\chi_K)>\eps'\}$ and $\{\mu(\R^d)>\eps\}$; that symmetric difference is
contained in
$\{\eps'<\mu(\R^d)\le\eps\}\cup\bigl(\{\mu(\R^d)>\eps\}\setminus\{\mu(\chi_K)>\eps\}\bigr)$,
of $\bN$-mass at most $\delta+\delta=2\delta$ by the choice of $\eps'$ and
of $K$, via \eqref{eq:sandwich2}.
Hence $\limsup_n|\nu_n[F;\mu(\R^d)>\eps]-\bN[F;\mu(\R^d)>\eps]|\le C\delta$,
and $\delta>0$ was arbitrary.
\end{proof}

\begin{remark}[Recovery of the known cluster-tail asymptotics]
\label{rem:tailknown}
Evaluating the total-mass restriction (Theorem~\ref{thm:mainA})
on the constant test function $F\equiv1$ and using
$\mu_n(\R^d)=|\mathcal C|/(\mathfrak a n^4)$ together with the explicit tail
$\bN(\mu(\R^d)>\eps)=\sqrt{2/(\pi\gamma)}\,\eps^{-1/2}$ of
Proposition~\ref{prop:totalmass} gives, by the same monotone-interpolation in
$N$ as for the one-arm radius,
\[
\lim_{N\to\infty}\sqrt N\,\bP\bigl(|\mathcal C|\ge N\bigr)
=\sqrt{\frac{2\mathfrak a}{\pi\gamma}} .
\]
This recovers a known result: the order $\bP(|\mathcal C|\ge N)\asymp N^{-1/2}$ is classical in the high-dimensional regime (see the discussion around \eqref{voltail}), while the sharp local limit law $\bP(|\mathcal C|=N)\sim A\,N^{-3/2}$ was obtained by Hara and Slade \cite{HS00b}. We record it only as a consistency
check on the normalization \eqref{eq:empirical}, the identification of the
constant being an automatic output of the $k$-point asymptotics rather than a
new input.
\end{remark}

%
\begin{corollary}[Conditional convergence]\label{cor:conditional}\label{cor:conditionalmass}
Let either $W(\mu):=\mu(\varphi_0)$, with $\varphi_0\in\Cc$, $\varphi_0\not\equiv0$, fixed
and $\eps$ admissible in the sense of Theorem~\ref{thm:restricted} and small
enough that $\bN(\mu(\varphi_0)>\eps)>0$ (Lemma~\ref{lem:PZ}); or
$W(\mu):=\mu(\R^d)$, with $\eps>0$ arbitrary --- every $\eps$ being admissible
here, since $\mu(\R^d)$ has no atoms under $\bN$ and
$\bN(\mu(\R^d)>\eps)\in(0,\infty)$ (Proposition~\ref{prop:totalmass}). Then
\[
\bP\bigl(\mu_n\in\cdot\;\big|\;W(\mu_n)>\eps\bigr)
\;\xRightarrow[n\to\infty]{}\;
\bN\bigl(\,\cdot\,\big|\;W(\mu)>\eps\bigr)
:=\frac{\bN(\,\cdot\,;\,W(\mu)>\eps)}{\bN(W(\mu)>\eps)}
\]
weakly as probability measures on $\cM$, and
$n^{2}\,\bP\bigl(W(\mu_n)>\eps\bigr)\to\bN\bigl(W(\mu)>\eps\bigr)\in(0,\infty)$.
\end{corollary}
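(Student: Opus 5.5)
The plan is to derive both claims from the restricted weak convergence already proved, namely Theorem~\ref{thm:restricted} when $W(\mu)=\mu(\varphi_0)$ and Theorem~\ref{thm:mainA} when $W(\mu)=\mu(\R^d)$. In either case, write
\[
\widetilde\nu_n:=\nu_n\bigl(\,\cdot\,;\,W(\mu)>\eps\bigr),
\qquad
\widetilde\bN:=\bN\bigl(\,\cdot\,;\,W(\mu)>\eps\bigr).
\]
These are finite measures on $\cM$, and $\widetilde\nu_n\Rightarrow\widetilde\bN$ weakly in $\cMM$. For $W=\mu(\varphi_0)$ this is Theorem~\ref{thm:restricted} at the admissible level $\eps$. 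For $W=\mu(\R^d)$ it is exactly the statement of Theorem~\ref{thm:mainA}, which holds at every $\eps>0$.

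First, test the weak convergence against the constant function $F\equiv1$, which lies in $C_b(\cM)$. This gives
\[
\widetilde\nu_n(\cM)=n^2\,\bP\bigl(W(\mu_n)>\eps\bigr)\;\longrightarrow\;\widetilde\bN(\cM)=\bN\bigl(W(\mu)>\eps\bigr),
\]
which is the second claim. We then check that the limit lies in $(0,\infty)$. For $W=\mu(\varphi_0)$, finiteness comes from Lemma~\ref{lem:PZ}, and positivity holds because $\eps$ is taken small enough. For $W=\mu(\R^d)$, both facts come from the explicit tail $\sqrt{2/(\pi\gamma)}\,\eps^{-1/2}$ in Proposition~\ref{prop:totalmass}.

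Second, for the conditional laws, note that $\bP(\mu_n\in\cdot\mid W(\mu_n)>\eps)=\widetilde\nu_n/\widetilde\nu_n(\cM)$. The factor $n^2$ in $\nu_n$ cancels in this ratio. Since the limit mass is positive, $\widetilde\nu_n(\cM)>0$ for all large $n$, so the ratio is well defined from some point on. Now fix $F\in C_b(\cM)$. The numerator $\int F\,d\widetilde\nu_n$ converges to $\int F\,d\widetilde\bN$, and the denominator converges to a positive limit. The quotient therefore converges to $\widetilde\bN[F]/\widetilde\bN(\cM)$, which is the asserted weak convergence of probability measures.

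No step here is a real obstacle. The one point needing care is confirming that the hypotheses of the restricted convergence results are met. For $\mu(\R^d)$, Theorem~\ref{thm:mainA} already absorbs the level-$\eps$ boundary issue through the atomlessness in Proposition~\ref{prop:totalmass}. For $\mu(\varphi_0)$, the admissibility of $\eps$ guarantees that the discontinuity set $\{\mu(\varphi_0)=\eps\}$ is $\bN$-null.
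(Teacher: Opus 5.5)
Your proposal is correct and follows essentially the same route as the paper. The paper likewise divides the restricted convergence (Theorem~\ref{thm:restricted} for $\mu(\varphi_0)$, Theorem~\ref{thm:mainA} for $\mu(\R^d)$), tested against a bounded continuous $F$, by the same convergence tested against $F\equiv1$. It also takes positivity and finiteness of the normalizer from Lemma~\ref{lem:PZ} and Proposition~\ref{prop:totalmass}, respectively.
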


\begin{proof}
Divide the convergence of Theorem~\ref{thm:restricted} (for
$W(\mu)=\mu(\varphi_0)$) resp.\ Theorem~\ref{thm:mainA} (for
$W(\mu)=\mu(\R^d)$), tested against a bounded continuous $F$, by the same
convergence tested against the constant function $1$, i.e.\ by
$\nu_n(W(\mu)>\eps)\to\bN(W(\mu)>\eps)$; positivity and finiteness of the
normalizer are Lemma~\ref{lem:PZ} resp.\ Proposition~\ref{prop:totalmass}.
\end{proof}

\section{Consequences of the measure convergence: range convergence and sharp one-arm asymptotics}
\label{sec:onearm}

In this section we combine the measure convergence (Theorem~\ref{thm:mainA})
with the lower mass bound of Section~\ref{sec:lmb} and deduce the two
geometric results: the rescaled cluster converges as a compact set to the
range of super-Brownian motion (Theorem~\ref{thm:mainD}), and the one-arm
probability has an exact $r^{-2}$ asymptotic with identified constant
(Theorem~\ref{thm:mainC}). Recall
$\theta_K=\bN(\Ran\not\subset\overline B_K)=\theta_1K^{-2}$ from
Proposition~\ref{prop:range} below.

Let us sketch the argument. Weak convergence of measures cannot see whether
points of the support carry mass; the bridge is the deterministic principle of
Athreya--L\"ohr--Winter (Lemma~\ref{lem:suppconv}): weak convergence upgrades
to Hausdorff convergence of supports once a lower mass bound holds uniformly
away from the origin. That bound enters through the quantitative form,
Theorem~\ref{thm:ulmb}, via Lemma~\ref{lem:nothin}, which upgrades the
qualitative statement that the cluster \emph{reaches} a region --- a property
of the support, invisible to weak convergence --- into the statement that the
empirical measure gives that region mass of order $r^4$, to which the
portmanteau theorem applies. For the one-arm asymptotics the exit event must first be
localized by a weight $\varphi_0$, so it is not Theorem~\ref{thm:mainD}
itself that is evaluated but its $\varphi_0$-conditioned variant
(Remark~\ref{rem:rangephi}); evaluating that variant on the exit event and
removing the localization gives
$n^2\,\bP(0\leftrightarrow\partial B_{Kn})\to\theta_1K^{-2}$, and scaling in
$K$ produces the one-arm constant.

\subsection{The range of super-Brownian motion}

We first record the structural facts about the range $\Ran=\supp\mu$ under
$\bN$; the proof uses the measure convergence, Theorem~\ref{thm:mainA}.

\begin{proposition}[Range]\label{prop:range}
$\bN$-a.e.:
\begin{enumerate}
\item[\rm(i)] $\Ran=\supp\mu$ coincides with the closure of
$\{0\}\cup\bigcup_{t>0}\supp X_t$, and $0\in\Ran$;
\item[\rm(ii)] $\Ran$ is compact and connected.
\end{enumerate}
Moreover, for $K>0$,
\[
\theta_K:=\bN\bigl(\Ran\not\subset\overline B_K\bigr)\in(0,\infty),
\qquad \theta_K=\theta_1\,K^{-2}.
\]
\end{proposition}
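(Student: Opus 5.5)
The plan is to split the proposition into the classical structural statements (i)--(ii) and the quantitative statement on $\theta_K$.

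\textbf{Parts (i) and (ii).} These concern the path properties of super-Brownian motion, which I would import from the literature rather than re-derive.
\begin{itemize}
\item Under $\bN$ (equivalently, for each excursion in the Poisson decomposition of $\bP_{\eps\delta_0}$, \cite[Theorem~IV.4]{LG99}), the historical/Brownian snake representation gives $\Ran$ as the image of the compact genealogical tree under a continuous map sending the root to $0$. This yields compactness and connectedness of the closure of $\{0\}\cup\bigcup_{t>0}\supp X_t$.
\item To identify this set with $\supp\mu$, I would use continuity of $t\mapsto X_t$. For the inclusion $\supseteq$: if $x\in\supp X_t$ for some $t>0$, then $X_s(B(x,\rho))>0$ for $s$ near $t$, so $\mu(B(x,\rho))>0$.
\item For the inclusion $\subseteq$: the closed set $\overline{\bigcup_t\supp X_t}$ carries all of $\mu=\int X_t\,dt$.
\item That $0\in\Ran$ follows because $X_t\to0$ with $\supp X_t\to\{0\}$ as $t\downarrow0$ under $\bN$, via the modulus of continuity of the snake. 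Equivalently, any point of the range is linked to $0$ by a continuous Brownian path, so $0$ is a limit of points of $\supp X_t$.
\end{itemize}
I would cite \cite{LG99,Per02} for these facts.

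\textbf{Scaling $\theta_K=\theta_1K^{-2}$.} This is immediate from Proposition~\ref{prop:scaling} once $\theta_1<\infty$. We have $\supp\Phi_K\mu=K^{-1}\Ran$, so $\{\Ran\not\subset\overline B_K\}=\{\supp\Phi_K\mu\not\subset\overline B_1\}$. Hence
\[
\theta_K=\bN\circ\Phi_K^{-1}(\supp\mu\not\subset\overline B_1)=K^{-2}\theta_1 .
\]

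\textbf{Finiteness $\theta_1<\infty$.} I would use the measure convergence, as the statement indicates. The set $U:=\{\mu:\mu(\R^d\setminus\overline B_1)>0\}$ is weakly open, and $\{\Ran\not\subset\overline B_1\}$ differs from it only by measures supported in $\overline B_1$ whose support meets the sphere. To avoid this boundary issue, note that $\{\Ran\not\subset\overline B_1\}\subset\bigcup_{\rho<1}\{\mu(\R^d\setminus\overline B_\rho)>0\}$. Fix $\rho<1$ and $\eps>0$. By Theorem~\ref{thm:mainA} and the open-set portmanteau inequality,
\[
\bN\bigl(\mu(\R^d\setminus\overline B_\rho)>0,\ \mu(\R^d)>\eps\bigr)\le\liminf_n\nu_n\bigl(\mu(\R^d\setminus\overline B_\rho)>0\bigr)\le\liminf_n n^2\bP(0\leftrightarrow\partial B_{\rho n})\le K_2\rho^{-2},
\]
using Proposition~\ref{prop:KN}. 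Letting $\eps\downarrow0$ by monotone convergence (since $\mu(\R^d)>0$ on the event) and then $\rho\uparrow1$ gives $\theta_1\le K_2$.

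\textbf{Positivity $\theta_1>0$.} Take $\phi\in\Cc$, $\phi\not\equiv0$, supported in $\{2\le|x|\le3\}$. By Lemma~\ref{lem:PZ}, $\bN(\mu(\phi)>0)>0$, and on that event $\Ran\not\subset\overline B_1$.

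\textbf{Main obstacle.} The main obstacle is part (i), specifically the inclusion $\supseteq$ at times of discontinuity of supports and the claim $0\in\Ran$. These are genuinely path-level facts about SBM that are not visible to moments, and I would handle them through the Brownian snake construction rather than attempt a self-contained argument.
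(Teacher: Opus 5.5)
Your proposal is correct and follows essentially the same route as the paper. The paper likewise uses the Brownian snake representation of \cite{LG99} for (i)--(ii), with weak continuity of $t\mapsto X_t$ for the support identification. It likewise proves finiteness of $\theta_K$ via Theorem~\ref{thm:mainA}, the open-set portmanteau inequality and Proposition~\ref{prop:KN}, where your open set $\{\mu(\R^d\setminus\overline B_\rho)>0\}$ plays the role of the paper's $\{\mu(g)>0\}$. Positivity again comes from Lemma~\ref{lem:PZ}, and the $K^{-2}$ law from Proposition~\ref{prop:scaling}.

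One small remark: the ``boundary issue'' you sidestep with $\rho<1$ does not actually arise. The event $\{\Ran\not\subset\overline B_1\}$ coincides exactly with the open set $\{\mu(\R^d\setminus\overline B_1)>0\}$, since a support point $y$ with $|y|>1$ has a neighbourhood disjoint from $\overline B_1$. The slack is harmless, though, and the paper uses it too.
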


\begin{proof}[Proof sketch and references]
Parts (i)--(ii) are classical properties of the
range of super-Brownian motion under the canonical measure; we indicate
references rather than reproving them.
(i) The inclusion $\supp\mu\subset\overline{\bigcup_t\supp X_t}$ is
immediate from $\mu=\int X_t\,dt$. Conversely, if $x\in\supp X_t$ for some
$t>0$, the weak continuity of $s\mapsto X_s$ gives $X_s(B(x,r))>0$ for $s$
in a neighbourhood of $t$, hence $\mu(B(x,r))>0$, for every $r>0$. That $0\in\Ran$ will follow from the snake representation used in~(ii).

(ii) Under $\bN$, the range admits the Brownian snake representation
\[
\Ran=\{\hat W_s:\,0\le s\le\sigma\},
\]
where $s\mapsto\hat W_s$ is the (continuous) snake head, $\sigma$ is the lifetime of the snake---which coincides with the extinction time of \eqref{eq:survival}---and $\hat W_0=0$ \cite[Ch.~IV.5, display~(a)]{LG99}. Being the continuous image of the compact interval $[0,\sigma]$, the range $\Ran$ is compact and connected; and since it contains $\hat W_0=0$, this also yields the claim $0\in\Ran$ of~(i).
For the exit quantity, $\theta_K\in(0,\infty)$. We first prove finiteness. Fix
$\rho\in(0,K)$ and let $g\in C_b(\R^d)$ with $0\le g\le1$, $g\equiv0$ on
$B_{K-\rho}$ and $g>0$ on $\{|x|>K-\rho\}$. If $\Ran\not\subset\overline B_K$ then
$\Ran=\supp\mu$ contains a point $y$ with $|y|\ge K>K-\rho$, and since $\mu$
charges every neighbourhood of $y$ we have $\mu(g)>0$; thus
$\{\Ran\not\subset\overline B_K\}\subset\{\mu(g)>0\}$, which is an open subset of $\cM$
because $\mu\mapsto\mu(g)$ is weakly continuous. By the total-mass restricted
convergence (Theorem~\ref{thm:mainA}) and the portmanteau theorem for open
sets, for every $\eps>0$,
\[
  \bN\bigl(\mu(g)>0;\,\mu(\R^d)>\eps\bigr)
  \;\le\;\liminf_{n\to\infty}\nu_n\bigl(\mu(g)>0\bigr).
\]
Moreover $\mu(g)>0$ forces $\mu_n$ to charge $\{|x|>K-\rho\}$, i.e.\
$\mathcal C\not\subset\overline B_{(K-\rho)n}$, so by $\nu_n=n^2\bP(\mu_n\in\cdot)$ and the
Kozma--Nachmias one-arm upper bound (Proposition~\ref{prop:KN}), for every $n$
with $(K-\rho)n\ge1$,
\[
  \nu_n\bigl(\mu(g)>0\bigr)
  \;\le\;n^2\,\bP\bigl(0\leftrightarrow\partial B_{(K-\rho)n}\bigr)
  \;\le\;n^2\,K_2\,\bigl((K-\rho)n\bigr)^{-2}
  \;=\;K_2\,(K-\rho)^{-2}.
\]
Hence $\bN(\mu(g)>0;\,\mu(\R^d)>\eps)\le K_2(K-\rho)^{-2}$; letting
$\eps\downarrow0$ (monotone convergence, using
$\{\Ran\not\subset\overline B_K\}\subset\{\mu\ne0\}$) and then $\rho\downarrow0$ gives
$\theta_K=\bN(\Ran\not\subset\overline B_K)\le K_2K^{-2}<\infty$. Positivity follows from
Lemma~\ref{lem:PZ} applied to a test function supported in
$B_{2K}\setminus\overline B_K$ together with (i).
 The scaling
$\theta_K=\theta_1K^{-2}$ is a direct consequence of
Proposition~\ref{prop:scaling}: since
$\{\Ran\not\subset\overline B_K\}=\{\supp(\Phi_K\mu)\not\subset\overline B_1\}$ and
$\bN\circ\Phi_K^{-1}=K^{-2}\bN$, we get $\theta_K=K^{-2}\theta_1$.
\end{proof}

\subsection{No thin visited regions}

We use the lower mass bound in the quantitative form of
Theorem~\ref{thm:ulmb}, through the following consequence: at macroscopic
distances the cluster has no asymptotically thin visited regions. Recall that
$Q_{z,s}$ denotes the lattice $\ell_\infty$ box (Section~\ref{sec:notation}).

\begin{lemma}[No thin visited regions]\label{lem:nothin}
We invoke Theorem~\ref{thm:ulmb}; we also use the one-arm bound
(Proposition~\ref{prop:KN}).
Fix $K<\infty$, $\kappa\in(0,K)$ and
$\delta\in(0,\kappa/(5\sqrt d)]$. Then for every
$\eta\in(0,1]$,
\begin{multline*}
\limsup_{n\to\infty}\;
n^2\,\bP\Bigl(\exists\,x\in \mathcal C:\ \kappa n\le|x|\le Kn,\
\bigl|\mathcal C\cap \overline B(x,3\sqrt d\,\delta n)\bigr|\le\eta(\delta n)^4\Bigr)
\\
\le\;C(d)\,C_3\,K_2\Bigl(\frac{K}{\delta}\Bigr)^{d}\kappa^{-2}\,\eta^{1/4},
\end{multline*}
where $C_3=C_3(\Lambda)$ with $\Lambda:=(K+\delta)/\delta$ is the
constant of Theorem~\ref{thm:ulmb} and $K_2$ is the constant of the one-arm
bound (Proposition~\ref{prop:KN}).
\end{lemma}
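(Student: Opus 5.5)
The plan is a union bound over a grid of lattice boxes of side of order $\delta n$, feeding each box into Theorem~\ref{thm:ulmb} with $r=\fl{\delta n}$, and converting the conditional estimate into an unconditional one via Lemma~\ref{lambda} (or directly the one-arm bound). Set $r:=\fl{\delta n}$ and let $(z_j)$ be the points of $r\Z^d$ with $\lVert z_j\rVert_\infty\le Kn+r$; their number is at most $C(d)(K/\delta)^d$. Every $x\in\Z^d$ with $|x|\le Kn$ lies in some $Q_{z_j,r}$. If moreover $|x|\ge\kappa n$, then $\lVert z_j\rVert_\infty\ge |x|/\sqrt d-r\ge \kappa n/\sqrt d-\delta n\ge 4\delta n\ge 3r$, using $\delta\le\kappa/(5\sqrt d)$, so $z_j\in Q_{0,\Lambda r}\setminus Q_{0,3r}$ with $\Lambda=(K+\delta)/\delta$ (up to rounding, absorbed by enlarging $\Lambda$ slightly or for $n$ large). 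Also $Q_{z_j,2r}\subset \overline B(x,3\sqrt d\,\delta n)$, since for $y\in Q_{z_j,2r}$ we have $\lVert y-x\rVert_\infty\le 3r$, hence $|y-x|\le 3\sqrt d\, r$. Consequently, on the event in the statement there is a $j$ with $0\leftrightarrow Q_{z_j,r}$ and $|\mathcal C\cap Q_{z_j,2r}|\le\eta(\delta n)^4\le \eta' r^4$, where $\eta'=\eta(\delta n/r)^4\to\eta$ as $n\to\infty$.

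Next I bound the $j$-th term. It is at most
\[
\bP\bigl(0\leftrightarrow Q_{z_j,r}\bigr)\,\bP\bigl(|\mathcal C\cap Q_{z_j,2r}|\le\eta' r^4\,\big|\,0\leftrightarrow Q_{z_j,r}\bigr).
\]
For the first factor, I use the one-arm upper bound rather than Lemma~\ref{lambda}, to get the explicit $\kappa$-dependence: reaching $Q_{z_j,r}$ forces $\mathcal C\not\subset\overline B_{\rho}$ with $\rho\ge \lVert z_j\rVert_\infty-r\ge \kappa n/\sqrt d-2\delta n\ge c(d)\kappa n$. Hence $n^2\bP(0\leftrightarrow Q_{z_j,r})\le C(d)K_2\kappa^{-2}$. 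For the second factor, Theorem~\ref{thm:ulmb} gives $\limsup_n\sup_j(\cdots)\le C_3\eta^{1/4}$. Here I must handle the fact that $\eta'$ depends on $n$. Since $\eta'\le 2\eta$ for $n$ large, the event is monotone in $\eta$, and I apply the theorem at $2\eta$; the factor $2^{1/4}$ is absorbed into $C(d)$. Because $\eta\le1$ but the theorem requires $\eta<1$, the case $\eta=1$ follows by monotonicity with a constant adjustment.

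Summing over at most $C(d)(K/\delta)^d$ boxes and taking $\limsup_n$ (the sup over $z$ in Theorem~\ref{thm:ulmb} is uniform, so the finite sum passes through the limsup) yields $C(d)C_3K_2(K/\delta)^d\kappa^{-2}\eta^{1/4}$. I expect the main obstacle to be purely bookkeeping: checking that every grid centre used lies in the annulus $Q_{0,\Lambda r}\setminus Q_{0,3r}$ required by Theorem~\ref{thm:ulmb}, with the stated $\Lambda$ and the floor rounding of $r=\fl{\delta n}$. If the rounding does not fit exactly, I would enlarge $\Lambda$ by a constant, at the cost of $C_3$ depending on that slightly larger $\Lambda$, and argue that this is harmless for fixed $K,\delta$.
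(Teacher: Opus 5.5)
Your proposal is correct and is essentially the paper's argument: a union bound over $O((K/\delta)^d)$ boxes of radius about $\delta n$, with the reach factor bounded by the one-arm estimate (Proposition~\ref{prop:KN}) and the conditional factor bounded by Theorem~\ref{thm:ulmb}. The one difference is the choice of centres, and it explains your rounding issue with $\Lambda$. The paper uses a maximal $(\delta n/2)$-separated net inside the annulus, so its centres satisfy $\lVert\hat x\rVert_\infty\le Kn$, a margin of $\delta n$ below $\Lambda\delta n=(K+\delta)n$ that absorbs the rounding of $r$ with the exact $\Lambda=(K+\delta)/\delta$. With your cubic grid you get the same margin, without enlarging $\Lambda$, by assigning each $x$ to its nearest grid point (within $r/2$ in sup-norm).
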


\begin{figure}[htb]
\centering
\begin{tikzpicture}[scale=0.52]
\draw[thick] (0,0) circle (3);
\draw[thick] (0,0) circle (6.5);
\fill (0,0) circle (2.5pt); \node[below left] at (0,0) {$0$};
\node at (2.55,-2.15) {\small$|y|=\kappa n$};
\node at (5.35,-4.35) {\small$|y|=Kn$};
\foreach \a in {10,32,...,350} \fill[gray] ({4.1*cos(\a)},{4.1*sin(\a)}) circle (1.4pt);
\foreach \a in {22,44,...,350} \fill[gray] ({5.3*cos(\a)},{5.3*sin(\a)}) circle (1.4pt);
\draw[very thick, decorate, decoration={random steps,segment length=6pt,amplitude=2.5pt}] (0,0) -- (3.95,2.65);
\fill (3.95,2.65) circle (2.5pt); \node[below right] at (3.95,2.55) {$x$};
\fill (4.35,3.05) circle (2pt); \node[right] at (4.45,3.05) {$\hat x$};
\draw (3.65,2.35) rectangle (5.05,3.75);
\draw[dashed] (2.95,1.65) rectangle (5.75,4.45);
\draw[dotted,thick] (3.95,2.65) circle (2.45);
\node[align=left] at (8.6,4.3) {\small$Q_{\hat x,2r_n}$\\[-2pt]\small$(r_n=\delta n)$};
\node at (1.25,5.2) {\small$\overline B(x,3\sqrt d\,\delta n)$};
\end{tikzpicture}
\caption{The net construction in Lemma~\ref{lem:nothin}. A point
$x\in \mathcal C$ realizing the thin-region event lies within $\delta n/2$ of a net
point $\hat x$ (grey dots); the cluster then reaches the box
$Q_{\hat x,r_n}$ (solid), while the enlarged box
$Q_{\hat x,2r_n}$ (dashed) is contained in the ball
$\overline B(x,3\sqrt d\,\delta n)$ (dotted), so a thin ball forces a light box and
Theorem~\ref{thm:ulmb} applies.}
\label{fig:netconstruction}
\end{figure}

\begin{proof}
Denote by $\mathcal E_n$ the event in the statement, i.e.\
$\mathcal E_n=\{\exists\,x\in \mathcal C:\ \kappa n\le|x|\le Kn,\ |\mathcal C\cap
\overline B(x,3\sqrt d\,\delta n)|\le\eta(\delta n)^4\}$.
The net construction underlying the proof is illustrated in
Figure~\ref{fig:netconstruction}.
Let $N$ be a maximal subset of $\{y\in\Z^d:\kappa n\le|y|\le Kn\}$ with
the property that $|x-y|>\delta n/2$ for all distinct $x,y\in N$. By
maximality, every lattice point $y$ with $\kappa n\le|y|\le Kn$ satisfies
$|y-\hat x|\le\delta n/2$ for some $\hat x\in N$, so $N$ is a
$(\delta n/2)$-net of the lattice points of the annulus (which suffices below,
since the covered point lies in $\mathcal C\subset\Z^d$). Moreover, the Euclidean balls
$\{B(\hat x,\delta n/4):\hat x\in N\}$ are pairwise disjoint and contained in
$B_{(K+\delta)n}$, so a standard volume comparison yields
$|N|\le C(d)(K/\delta)^d$. Write
$r_n:=\delta n\to\infty$. On $\mathcal E_n$, pick a point $x\in \mathcal C$
realizing the event and $\hat x\in N$ with
$|x-\hat x|\le\delta n/2$. Then $\lVert x-\hat x\rVert_\infty\le|x-\hat x|\le\delta n/2\le r_n$,
so $x\in Q_{\hat x,r_n}$ and $0\leftrightarrow Q_{\hat x,r_n}$;
and $Q_{\hat x,2r_n}\subset \overline B(\hat x,2\sqrt d\,r_n)\subset
\overline B(x,3\sqrt d\,\delta n)$, so $|\mathcal C\cap Q_{\hat
x,2r_n}|\le\eta(\delta n)^4=\eta\,r_n^4$. Hence by a union
bound,
\begin{equation}\label{eq:nothinunion}
\bP(\mathcal E_n)\le\sum_{\hat x\in N}
\bP\bigl(0\leftrightarrow Q_{\hat x,r_n}\bigr)\cdot
\bP\Bigl(|\mathcal C\cap Q_{\hat x,2r_n}|\le\eta r_n^4
\,\Big|\,0\leftrightarrow Q_{\hat x,r_n}\Bigr).
\end{equation}
For the conditional factor, each $\hat x\in N$ satisfies
$\lVert\hat x\rVert_\infty\ge|\hat x|/\sqrt d\ge\kappa n/\sqrt d\ge
4\delta n>3r_n$ (using $\delta\le\kappa/(5\sqrt d)$, which gives
$\kappa\ge4\sqrt d\,\delta$), and
$\lVert\hat x\rVert_\infty\le|\hat x|\le(K+\delta)n=\Lambda r_n$ with
$\Lambda:=(K+\delta)/\delta>3$.
Theorem~\ref{thm:ulmb} therefore applies with this $\Lambda$ and bounds
this conditional factor --- the conditional probability, given $0\leftrightarrow Q_{\hat x,r_n}$,
that the cluster leaves at most $\eta r_n^4$ of its sites in $Q_{\hat x,2r_n}$
--- uniformly over $\hat x\in N$ by $C_3\,\eta^{1/4}+o_n(1)$, the $o_n(1)$ coming
from the $\limsup_{r\to\infty}$ in the theorem. For the reach factor, $0\leftrightarrow Q_{\hat x,r_n}$ forces $0\leftrightarrow\partial B_{(3\kappa/5)n}$
(every site of $Q_{\hat x,r_n}$ is at Euclidean distance
$\ge|\hat x|-\sqrt d\,r_n\ge(\kappa-\sqrt d\,\delta)n\ge(3\kappa/5)n$ from
the origin, using $\delta\le\kappa/(5\sqrt d)$)%
, so
by the one-arm bound (Proposition~\ref{prop:KN}),
\[
\bP\bigl(0\leftrightarrow Q_{\hat x,r_n}\bigr)
\le\bP\bigl(0\leftrightarrow\partial B_{(3\kappa/5)n}\bigr)
\le K_2\bigl((3\kappa/5)n\bigr)^{-2}=\tfrac{25}{9}K_2(\kappa n)^{-2}.
\]
Summing \eqref{eq:nothinunion} over the $|N|\le C(d)(K/\delta)^d$ net points
and multiplying by $n^2$,
\[
\limsup_{n\to\infty} n^2\,\bP(\mathcal E_n)
\le C(d)\Bigl(\frac K\delta\Bigr)^d\cdot \tfrac{25}{9}K_2\kappa^{-2}\cdot C_3\,\eta^{1/4}
= C(d)\,C_3\,K_2\Bigl(\frac K\delta\Bigr)^d\kappa^{-2}\,\eta^{1/4},
\]
which is the claim.
\end{proof}

\subsection{Diameter tightness}

Beyond the lower mass bound of Lemma~\ref{lem:nothin}, Hausdorff
convergence of the supports needs one further, standard ingredient: the cluster
does not send a thin tendril out to infinity, i.e.\ its rescaled diameter is
tight. This is the only escape route invisible both to weak convergence of
measures and to the property of Lemma~\ref{lem:nothin}, and it is
precluded by a one-line consequence of the one-arm bound. The two ingredients play orthogonal
roles in the range proof: Lemma~\ref{lem:nothin} supplies the lower mass bound
(no occupied region is asymptotically thin), forcing every limit point of the
support to carry mass, while the diameter tightness below caps the support in a
fixed ball, which is what restores compactness in the non-compact hyperspace
$\mathcal K(\R^d)$.

\begin{lemma}[Diameter tightness]\label{lem:diamtight}
Let $\eps>0$, so that
$b_\eps:=\bN(\mu(\R^d)>\eps)\in(0,\infty)$ (Proposition~\ref{prop:totalmass},
Corollary~\ref{cor:conditionalmass}). Then the rescaled radius
$\sup\{|y|:y\in n^{-1}\mathcal C\}$ is tight under
$\bP(\,\cdot\mid\mu_n(\R^d)>\eps)$: for every $\theta>0$ there is
$K=K(\theta,\eps)<\infty$ with
\[
\limsup_{n\to\infty}\
\bP\bigl(n^{-1}\mathcal C\not\subset\overline B_K\,\big|\,\mu_n(\R^d)>\eps\bigr)\ \le\ \theta .
\]
The same holds with $\mu_n(\R^d)>\eps$ replaced by $\mu_n(\varphi_0)>\eps$ for
any $\varphi_0\in\Cc$ with $\varphi_0\not\equiv0$, but only for admissible $\eps$ in the
sense of Corollary~\ref{cor:conditional}.
\end{lemma}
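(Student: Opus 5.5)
The plan is to write the conditional probability as a ratio and bound the numerator by the one-arm bound and the denominator by the restricted convergence. Since $\supp\mu_n=n^{-1}\mathcal C$, the event $\{n^{-1}\mathcal C\not\subset\overline B_K\}$ equals $\{\mathcal C\not\subset\overline B_{Kn}\}=\{0\leftrightarrow\partial B_{Kn}\}$. Therefore
\[
\bP\bigl(n^{-1}\mathcal C\not\subset\overline B_K\,\big|\,\mu_n(\R^d)>\eps\bigr)
\le\frac{n^2\,\bP(0\leftrightarrow\partial B_{Kn})}{n^2\,\bP(\mu_n(\R^d)>\eps)}
=\frac{n^2\,\bP(0\leftrightarrow\partial B_{Kn})}{\nu_n(\mu(\R^d)>\eps)} .
\]

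For the numerator, Proposition~\ref{prop:KN} gives $n^2\bP(0\leftrightarrow\partial B_{Kn})\le K_2K^{-2}$ for all $n$ with $Kn\ge1$. For the denominator, Corollary~\ref{cor:conditionalmass} (with $W=\mu(\R^d)$, where every $\eps>0$ is admissible) gives $\nu_n(\mu(\R^d)>\eps)\to b_\eps\in(0,\infty)$. Hence
\[
\limsup_n \bP\bigl(n^{-1}\mathcal C\not\subset\overline B_K\,\big|\,\mu_n(\R^d)>\eps\bigr)\le\frac{K_2}{K^2 b_\eps},
\]
and it suffices to choose $K\ge\sqrt{K_2/(\theta b_\eps)}$.

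For the variant with $W=\mu(\varphi_0)$, the argument is the same. The numerator bound does not depend on the conditioning event. For the denominator, $n^2\bP(\mu_n(\varphi_0)>\eps)\to\bN(\mu(\varphi_0)>\eps)$ by Corollary~\ref{cor:conditional}, and this limit is positive and finite for admissible $\eps$ small enough that Lemma~\ref{lem:PZ} gives positivity. This is exactly the admissibility restriction in the statement.

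There is no real obstacle here. The one point requiring care is that the denominator needs a positive limit, not merely a positive value for each $n$. That is why the proof relies on the convergence statements of Theorem~\ref{thm:mainA} and Theorem~\ref{thm:restricted}, together with the positivity and finiteness of $b_\eps$ from Proposition~\ref{prop:totalmass}, resp.\ Lemma~\ref{lem:PZ}.
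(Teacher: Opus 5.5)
Your proposal is correct and matches the paper's proof: you bound $\bP(A\mid B)\le \bP(A)/\bP(B)$, control the numerator by the Kozma--Nachmias one-arm bound, and control the denominator via $n^2\bP(\mu_n(\R^d)>\eps)\to b_\eps$ (resp.\ its $\varphi_0$ analogue from Corollary~\ref{cor:conditional}). The only cosmetic difference is that the paper replaces the limit of the denominator by the bound $\bP(\mu_n(\R^d)>\eps)\ge\frac12 b_\eps n^{-2}$ for large $n$, which gives the constant $2K_2/b_\eps$ instead of your $K_2/b_\eps$.
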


\begin{proof}
Since $\{\sup\{|y|:y\in n^{-1}\mathcal C\}>K\}=\{n^{-1}\mathcal C\not\subset\overline B_K\}=\{\mathcal C\not\subset
\overline B_{Kn}\}=\{0\leftrightarrow\partial B_{Kn}\}$, the one-arm bound
Proposition~\ref{prop:KN} gives
$\bP(n^{-1}\mathcal C\not\subset\overline B_K)\le K_2(Kn)^{-2}$. By
Corollary~\ref{cor:conditionalmass}, $n^2\bP(\mu_n(\R^d)>\eps)\to b_\eps\in
(0,\infty)$, so $\bP(\mu_n(\R^d)>\eps)\ge\tfrac12 b_\eps n^{-2}$ for all large
$n$. Dividing,
\[
\bP\bigl(n^{-1}\mathcal C\not\subset\overline B_K\,\big|\,\mu_n(\R^d)>\eps\bigr)
\ \le\ \frac{K_2(Kn)^{-2}}{\tfrac12 b_\eps n^{-2}}
\ =\ \frac{2K_2}{b_\eps}\,K^{-2},
\]
for all large $n$, the two powers of $n^{-2}$ --- one-arm and survival
--- cancelling exactly. Choosing $K$ with $2K_2 b_\eps^{-1}K^{-2}\le
\theta$ gives the claim. The $\varphi_0$ version is identical, using
Corollary~\ref{cor:conditional} (which gives
$n^2\bP(\mu_n(\varphi_0)>\eps)\to\bN(\mu(\varphi_0)>\eps)\in(0,\infty)$) in place
of Corollary~\ref{cor:conditionalmass}.
\end{proof}

Section~\ref{sec:lmb} (Theorem~\ref{thm:mainLMB}) in fact establishes a stronger, \emph{global} uniform
lower mass bound --- no occupied region is thin at \emph{any} scale, not merely
in a fixed annulus --- from which diameter tightness also follows; we do not
need that strength here, and the range proof below uses only
Lemma~\ref{lem:nothin} and Lemma~\ref{lem:diamtight}, both established within
this part.

\subsection{A deterministic support-convergence lemma}

The passage from measure convergence to range convergence rests on a
purely deterministic principle, isolated by Athreya--L\"ohr--Winter
\cite{ALW16} in the Gromov setting: on a fixed locally compact space, weak
convergence of measures upgrades to Hausdorff convergence of their
supports as compact sets precisely when a uniform lower mass bound holds
along the sequence and the supports remain uniformly bounded.
We record the version we need, on $\R^d$ with the Euclidean metric, where the
ambient space is fixed and no metric embedding is required. Our proof
follows the corresponding argument of \cite{ALW16}.

\begin{lemma}[Support convergence from a uniform lower mass bound]
\label{lem:suppconv}
Let $\mu_n,\mu$ be finite Borel measures on $\R^d$ with
$\mu_n\Rightarrow\mu$ weakly and $\mu\neq0$. Suppose:
\begin{enumerate}[label=\textup{(\roman*)},nosep,leftmargin=2.2em]
\item \emph{(Inner-uniform lower mass bound.)} For every $\rho_0>0$ and
every $\delta>0$ there exist $c=c(\rho_0,\delta)>0$ and $n_0$ such that for
all $n\ge n_0$,
\begin{equation}\label{eq:LMBdet}
\mu_n\bigl(\overline B(x,\delta)\bigr)\ \ge\ c(\rho_0,\delta)
\qquad\text{for every }x\in\supp\mu_n\text{ with }|x|\ge\rho_0\,;
\end{equation}
\item \emph{(Uniform boundedness.)} $R_\ast:=\sup_n\sup\{|x|:x\in\supp\mu_n\}<\infty$;
\item \emph{(Anchored core.)} $0\in\supp\mu$.
\end{enumerate}
Then $\supp\mu_n\to\supp\mu$ in the Hausdorff metric on the space
$\mathcal K(\R^d)$ of nonempty compact subsets of $\R^d$.
\end{lemma}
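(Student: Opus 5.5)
We prove Hausdorff convergence by establishing the two one-sided inclusions: for every $\eps>0$, eventually $\supp\mu\subset(\supp\mu_n)^{\eps}$ and $\supp\mu_n\subset(\supp\mu)^{\eps}$, where $A^\eps$ denotes the open $\eps$-neighbourhood. First note that all supports are nonempty compact sets. By (ii) the $\supp\mu_n$ lie in $\overline B_{R_\ast}$. The limit $\supp\mu$ is also contained in $\overline B_{R_\ast}$: weak convergence and portmanteau for the open set $\{|x|>R_\ast\}$ give $\mu(\{|x|>R_\ast\})\le\liminf_n\mu_n(\{|x|>R_\ast\})=0$. Finally $\mu\ne0$, and $\mu_n(\R^d)\to\mu(\R^d)>0$, so $\supp\mu_n\neq\emptyset$ for large $n$.

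\emph{Lower semicontinuity: $\supp\mu$ is eventually covered.} Cover the compact set $\supp\mu$ by finitely many balls $B(y_j,\eps/2)$ with $y_j\in\supp\mu$. Each ball has $\mu(B(y_j,\eps/2))>0$. Portmanteau for open sets then gives $\liminf_n\mu_n(B(y_j,\eps/2))>0$, so for large $n$ every such ball meets $\supp\mu_n$. Hence $\supp\mu\subset(\supp\mu_n)^{\eps}$ eventually. This direction uses only weak convergence.

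\emph{Upper semicontinuity: no stray support.} This is the step that needs the lower mass bound, and we expect it to be the main obstacle, because the bound \eqref{eq:LMBdet} is only available away from the origin. Suppose, along a subsequence, there are points $x_n\in\supp\mu_n$ with $\dist(x_n,\supp\mu)\ge\eps$. By (ii) we may pass to a further subsequence with $x_n\to x$, and then $\dist(x,\supp\mu)\ge\eps$. Hypothesis (iii) gives $0\in\supp\mu$, so $|x|\ge\eps$ and hence $|x_n|\ge\eps/2=:\rho_0$ for large $n$. This is the reason for the anchored core: it lets us exclude the region near the origin, where no mass bound is assumed. Set $\delta=\eps/4$. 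Applying (i) gives $\mu_n(\overline B(x_n,\delta))\ge c(\rho_0,\delta)$. For large $n$ we have $\overline B(x_n,\delta)\subset\overline B(x,2\delta)$, so $\mu_n(\overline B(x,2\delta))\ge c$. Portmanteau for closed sets yields $\mu(\overline B(x,2\delta))\ge\limsup_n\mu_n(\overline B(x,2\delta))\ge c>0$. Therefore $\supp\mu$ meets $\overline B(x,\eps/2)$, which contradicts $\dist(x,\supp\mu)\ge\eps$.

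Combining the two directions, $d_H(\supp\mu_n,\supp\mu)<\eps$ for all large $n$. Since $\eps>0$ was arbitrary, $\supp\mu_n\to\supp\mu$ in the Hausdorff metric on $\mathcal K(\R^d)$.
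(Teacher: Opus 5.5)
Your proof is correct and follows essentially the same route as the paper. The lower direction uses a finite cover of $\supp\mu$ and the open-set portmanteau bound; the upper direction extracts a convergent subsequence of stray points via (ii), excludes the origin via (iii), and transfers the lower mass bound (i) to $\mu$ through the closed-set portmanteau inequality. The differences are cosmetic: you run the contradiction directly with the single choice $\delta=\eps/4$ instead of first proving a pointwise upper inclusion, and you additionally check that $\supp\mu_n\neq\emptyset$ for large $n$.
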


The quantifier in~(i) ranges over the \emph{inner} radius $\rho_0$, not
over a large outer cutoff: escape of support mass to infinity --- the sole
obstruction to Hausdorff convergence once the supports are known to converge
pointwise --- is excluded by
the uniform boundedness~(ii), while the single degenerate small-scale limit
point, the origin, is supplied directly by~(iii). The mass near $0$ itself is
\emph{not} assumed to be bounded below, which is why~(i) is imposed
only away from it.

\begin{proof}
All supports lie in the fixed ball $\overline B_{R_\ast}$ by~(ii); in
particular $\supp\mu\subseteq\overline B_{R_\ast}$ (any $z$ with $|z|>R_\ast$ has
a neighbourhood $U$ disjoint from every $\supp\mu_n$, so $\mu(U)\le\liminf_n
\mu_n(U)=0$ and $z\notin\supp\mu$), so $\supp\mu$ is a nonempty compact set, and
all the sets in play belong to $\mathcal K(\R^d)$. Hausdorff convergence
$\supp\mu_n\to\supp\mu$ is the conjunction of two one-sided statements
(\cite[\S 12.3]{SW08}):
\[
\textup{(U)}\quad\sup_{x\in\supp\mu_n}\dist(x,\supp\mu)\to0,
\qquad
\textup{(L)}\quad\sup_{y\in\supp\mu}\dist(y,\supp\mu_n)\to0 .
\]

\emph{Pointwise lower inclusion} (does not use \eqref{eq:LMBdet}). Let
$z\in\supp\mu$ and $r>0$. The open ball $B(z,r)$ has $\mu(B(z,r))>0$ (as
$z\in\supp\mu$), so by the portmanteau theorem
$\liminf_n\mu_n(B(z,r))\ge\mu(B(z,r))>0$; hence
$\supp\mu_n\cap B(z,r)\neq\emptyset$ for all large $n$. Thus every
$z\in\supp\mu$ is a limit of points $x_n\in\supp\mu_n$.

\emph{Pointwise upper inclusion} (uses \eqref{eq:LMBdet} and~(iii)).
Suppose $x_n\in\supp\mu_{n}$ along a subsequence with $x_n\to x$. If $x=0$ then
$x\in\supp\mu$ by~(iii). If $x\neq0$, set $\rho_0:=|x|/2>0$, so $|x_n|\ge\rho_0$
for large $n$. For any $0<\delta\le\rho_0$ and large $n$,
$\overline B(x_n,\delta)\subseteq\overline B(x,2\delta)$, hence by
\eqref{eq:LMBdet} (applicable since $|x_n|\ge\rho_0$),
\[
\mu_n\bigl(\overline B(x,2\delta)\bigr)\ \ge\
\mu_n\bigl(\overline B(x_n,\delta)\bigr)\ \ge\ c(\rho_0,\delta)\ >\ 0 .
\]
By the portmanteau theorem for the closed ball $\overline B(x,2\delta)$
--- in the closed-set form $\mu(F)\ge\limsup_n\mu_n(F)$, which is where the
hypothesis that $\mu_n,\mu$ are \emph{finite} measures enters: it forces
$\mu_n(\R^d)\to\mu(\R^d)$, so no mass escapes and the closed-set inequality
holds ---,
$\mu(\overline B(x,2\delta))\ge\limsup_n\mu_n(\overline B(x,2\delta))\ge
c(\rho_0,\delta)>0$. As this holds for every small $\delta>0$,
$x\in\supp\mu$. In all cases the limit of a convergent sequence
$x_n\in\supp\mu_n$ lies in $\supp\mu$.

\emph{Conclusion} \textup{(U)}. If \textup{(U)} failed there would be
$\eps_0>0$ and, along a subsequence, points $x_n\in\supp\mu_n$ with
$\dist(x_n,\supp\mu)\ge\eps_0$. By~(ii) the $x_n$ are bounded, so a
sub-subsequence converges, $x_n\to x_\ast$; the upper inclusion gives
$x_\ast\in\supp\mu$, contradicting $\dist(x_n,\supp\mu)\ge\eps_0$. Hence
\textup{(U)} holds.

\emph{Conclusion} \textup{(L)}. Fix $\eps>0$. By compactness cover
$\supp\mu$ by finitely many balls $B(z_i,\eps)$, $z_i\in\supp\mu$,
$i=1,\dots,m$. By the lower inclusion each $z_i$ is a limit of points of
$\supp\mu_n$, so for all large $n$, $\dist(z_i,\supp\mu_n)<\eps$ for every $i$.
Any $y\in\supp\mu$ lies within $\eps$ of some $z_i$, whence
$\dist(y,\supp\mu_n)<2\eps$ for all large $n$; this gives \textup{(L)}. Together
\textup{(U)} and \textup{(L)} are Hausdorff convergence.
\end{proof}

\subsection{Range convergence}

We now prove Theorem~\ref{thm:mainD}, under the hypotheses $d>6$,
\eqref{eq:2pt} and \eqref{eq:guiding-kp}, which are all valid for the
nearest-neighbour model with $d\ge 11$; the lower mass bound proved in
Section~\ref{sec:lmb} enters through Lemma~\ref{lem:nothin} and requires no
additional hypothesis.

\begin{proof}[Proof of Theorem~\ref{thm:mainD}]
\emph{Tightness and Skorokhod representation.}
Write $\bP_n:=\bP(\,\cdot\mid\mu_n(\R^d)>\eps)$ and
$\bN^{(\eps)}:=\bN(\,\cdot\mid\mu(\R^d)>\eps)$, and set
$F_n:=n^{-1}\mathcal C=\supp\mu_n$. The first marginal converges,
$\mu_n\Rightarrow\mu$ under $\bP_n\to\bN^{(\eps)}$, by
Corollary~\ref{cor:conditionalmass}. The space $\mathcal
K(\R^d)$ is not compact, so tightness of the set coordinate is not automatic;
it is supplied instead by the diameter tightness of
Lemma~\ref{lem:diamtight}. Indeed, for $\theta>0$ pick $K=K(\theta)$ with
$\limsup_n\bP_n(F_n\not\subset\overline B_K)\le\theta$; the family
$\{A\in\mathcal K(\R^d):A\subseteq\overline B_K\}$ is Hausdorff-compact ---
the nonempty compact subsets of a fixed compact set form a compact space in
the Hausdorff metric (see \cite[Theorem~3.85(3)]{AB06}) ---, so
$F_n$ is tight in $\mathcal K(\R^d)$, and hence so is the pair $(\mu_n,F_n)$ in
$\cM\times\mathcal K(\R^d)$. Let $(\mu,F)$ be a pair distributed according to any joint subsequential
limit law. It suffices to show that $F=\supp\mu$ almost surely under every
such limit law: the limit law is then that of $(\mu,\supp\mu)$ under
$\bN^{(\eps)}$, the same for every subsequence, and tightness upgrades this to
convergence of the full sequence. The almost-sure arguments below are carried
out on a Skorokhod representation of one fixed convergent (further)
subsequence; since their conclusion identifies the limit \emph{law}, nothing
is lost in passing to it.

\emph{Auxiliary thin-region coordinates.} To transport the thin-region
information of Lemma~\ref{lem:nothin} to the almost-sure limit, we track it
through a countable family of $\{0,1\}$-valued coordinates. Index by the countable parameter set
$\mathcal Q:=\{(\rho,R,\delta)\in\mathbb Q^3:0<\rho<R,\ 0<\delta\le\rho/(5\sqrt d)\}$
and $\eta\in\mathbb Q\cap(0,1]$ the thin-region events
\[
\Omega_n(\rho,R,\delta,\eta)=\Bigl\{\exists\,x\in \mathcal C:\
\rho n\le|x|\le Rn,\ \bigl|\mathcal C\cap \overline B(x,3\sqrt d\,\delta n)\bigr|\le\eta(\delta n)^4\Bigr\},
\qquad Y_n^{(\rho,R,\delta,\eta)}:=\mathbf 1_{\Omega_n^c}.
\]
Augment the joint convergence with this countable family of $\{0,1\}$-valued
coordinates: along the chosen subsequence, by passing to a further
subsequence and applying the Skorokhod representation theorem on the Polish
space $\cM\times\mathcal K(\R^d)\times\{0,1\}^{\mathcal Q\times(\mathbb Q\cap(0,1])}$, we may assume that almost surely $\mu_n\to\mu$ weakly, $F_n\to F$
in the Hausdorff metric on $\mathcal K(\R^d)$ with $F$ compact ---
whence, a Hausdorff-convergent sequence of compact sets with compact limit
being uniformly bounded,
\[
R_\ast:=\sup_n\sup\{|y|:y\in F_n\}<\infty
\]
on this almost-sure event ---, and each
$Y_n^{(\rho,R,\delta,\eta)}\to Y^{(\rho,R,\delta,\eta)}\in\{0,1\}$.
(Tightness of the laws of the full vector follows from that of
$(\mu_n,F_n)$, the remaining coordinates living in a compact space; Prokhorov's
theorem and the Skorokhod representation theorem then apply.)
Writing
$b_\eps:=\bN(\mu(\R^d)>\eps)\in(0,\infty)$, we have
$\bP(\mu_n(\R^d)>\eps)\ge\tfrac12 b_\eps n^{-2}$ for large $n$ by
Corollary~\ref{cor:conditionalmass}; so by Lemma~\ref{lem:nothin},
\begin{equation}\label{eq:Omegaprob}
\bP\bigl(Y^{(\rho,R,\delta,\eta)}=0\bigr)\le\limsup_n\bP_n(\Omega_n)
\le 2b_\eps^{-1}\,C(R,\rho,\delta)\,\eta^{1/4}.
\end{equation}
Indeed, the second inequality is Lemma~\ref{lem:nothin} (applied with
$\kappa=\rho$, $K=R$) divided by the survival lower bound displayed above; the
first holds because $Y_n^{(\rho,R,\delta,\eta)}\to Y^{(\rho,R,\delta,\eta)}$ in
the discrete set $\{0,1\}$ almost surely, so Fatou's lemma applies to
$\mathbf 1_{\Omega_n}$.)

\emph{The limit satisfies the lower mass bound a.s.} Fix
$(\rho,R,\delta)\in\mathcal Q$. By \eqref{eq:Omegaprob}, along a sequence
$\eta_j\downarrow0$ we have $\sum_j\bP(Y^{(\rho,R,\delta,\eta_j)}=0)<\infty$ if
we thin $\eta_j$ so that $\sum_j\eta_j^{1/4}<\infty$; by Borel--Cantelli,
almost surely $Y^{(\rho,R,\delta,\eta_j)}=1$ for all large $j$, i.e.\ there is
a (random) $\eta_0=\eta_0(\rho,R,\delta)>0$ with
$Y^{(\rho,R,\delta,\eta_0)}=1$. Since $Y_n^{(\rho,R,\delta,\eta_0)}\to1$ and
these are $\{0,1\}$-valued, $Y_n^{(\rho,R,\delta,\eta_0)}=1$ for all large $n$,
meaning: no $x\in \mathcal C$ with $\rho n\le|x|\le Rn$ has
$|\mathcal C\cap \overline B(x,3\sqrt d\,\delta n)|\le\eta_0(\delta n)^4$. Intersecting over the
countable set $\mathcal Q$, almost surely this holds simultaneously for all
$(\rho,R,\delta)\in\mathcal Q$. We refer to the thin-region event $\Omega_n(\rho,R,\delta,\eta)$ as the
\emph{exclusion event at} $(\rho,R,\delta,\eta)$; the Borel--Cantelli argument
above thus shows that, almost surely, for every $(\rho,R,\delta)\in\mathcal Q$
there is $\eta_0>0$ such that for all large $n$ the exclusion event at
$(\rho,R,\delta,\eta_0)$ does not occur.

\emph{Verification of the hypotheses of Lemma~\ref{lem:suppconv}.} Work
on the almost-sure event above, on which $\mu_n\to\mu$ weakly,
$R_\ast<\infty$, and the thin-region exclusions hold for every
$(\rho,R,\delta)\in\mathcal Q$. Since $\mu(\R^d)>\eps$ under
$\bN^{(\eps)}$, $\mu\neq0$ a.s. We check the three hypotheses.

\emph{(i) Inner-uniform lower mass bound \eqref{eq:LMBdet}.} Fix
$\rho_0>0$ and $\delta'>0$. Choose rationals $\rho,R,\delta$ with
$0<\rho\le\rho_0$, $R>R_\ast$, $0<\delta\le\rho/(5\sqrt d)$ and
$3\sqrt d\,\delta\le\delta'$
(possible since $R_\ast<\infty$). Any $y\in\supp\mu_n=n^{-1}\mathcal C$ with
$|y|\ge\rho_0$ is $y=x/n$ for some $x\in \mathcal C$ with $\rho n\le\rho_0 n\le|x|$
and $|x|=n|y|\le R_\ast n<Rn$; off the exclusion event at
$(\rho,R,\delta,\eta_0)$ this $x$ satisfies $|\mathcal C\cap \overline B(x,3\sqrt d\,\delta n)|>\eta_0(\delta n)^4$, where $\eta_0=\eta_0(\rho,R,\delta)>0$ is the threshold fixed above, so
\[
\mu_n\bigl(\overline B(y,\delta')\bigr)\ \ge\
\mu_n\bigl(\overline B(y,3\sqrt d\,\delta)\bigr)
=\frac{|\mathcal C\cap \overline B(x,3\sqrt d\,\delta n)|}{\mathfrak a n^4}
>\frac{\eta_0\delta^4}{\mathfrak a}{}=:c(\rho_0,\delta')>0,
\]
for all large $n$, uniformly over all such $y$. This is the inner-uniform
bound~(i).

\emph{(ii) Uniform boundedness} is $R_\ast=\sup_n\sup\{|y|:y\in
\supp\mu_n\}<\infty$, established above from the Hausdorff convergence
$F_n\to F$ to a compact limit.

\emph{(iii) Anchored core} $0\in\supp\mu$ holds almost surely by
Proposition~\ref{prop:range} (the range of $\bN$ contains the origin).

Lemma~\ref{lem:suppconv} now gives $\supp\mu_n\to\supp\mu$ in the
Hausdorff metric, almost surely. Since also $F_n=\supp\mu_n\to F$ in
$\mathcal K(\R^d)$ and Hausdorff limits are unique, $F=\supp\mu$ a.s., as
required.
\end{proof}

\begin{remark}[Test-function conditioning]\label{rem:rangephi}
The proof above uses only three features of the conditioning event: that its probability is of exact
order $n^{-2}$, i.e.\
$n^2\bP(\mu_n(\R^d)>\eps)\to b_\eps\in
(0,\infty)$, that the rescaled diameter is tight under the conditional
law (Lemma~\ref{lem:diamtight}), and that $\mu_n\Rightarrow\mu$ under the
conditional law. All three hold verbatim with $\mu(\R^d)$ replaced by
$\mu(\varphi_0)$ for any $\varphi_0\in\Cc$ with $\varphi_0\not\equiv0$ and admissible $\eps$,
by Corollary~\ref{cor:conditional} (and Lemma~\ref{lem:diamtight} in its
$\varphi_0$ form). Hence the joint convergence
$(\mu_n,n^{-1}\mathcal C)\Rightarrow(\mu,\supp\mu)$ in the Hausdorff metric on
$\cM\times\mathcal K(\R^d)$ also holds under
$\bP(\,\cdot\mid\mu_n(\varphi_0)>\eps)$ toward
$\bN(\,\cdot\mid\mu(\varphi_0)>\eps)$; this is the form used in the proof of
Theorem~\ref{thm:mainC} below, where $\varphi_0$ equals $1$ on a ball and is supported in $\overline B_{K/2}$
to localize the exit event.
\end{remark}

\begin{remark}[On the exponent and the constant $C_1$]
The exponent $\tfrac14$ in Theorem~\ref{thm:ulmb} is used only through
``$\to0$ as $\eta\downarrow0$''; any positive power would do. What is essential
is the uniformity over the annulus and that the constant $C_3$ depends only on
the aspect ratio $\Lambda$. The
constraint $\lVert z\rVert_\infty>3r$ in Theorem~\ref{thm:ulmb} licenses the
inner edge of the tiling annulus (whence $\delta\le\kappa/(5\sqrt d)$ in
Lemma~\ref{lem:nothin}); the
residual core $\{|x|\lesssim\delta\}$ near the origin is not covered by
Theorem~\ref{thm:ulmb} and is instead absorbed by the
fact that $0\in\supp\mu$ a.s.\ (Proposition~\ref{prop:range}), as in the
verification of \eqref{eq:LMBdet} above.
\end{remark}

The one-arm asymptotics below is then proved in full in this section,
taking only Theorem~\ref{thm:mainD} as input.

\subsection{Sharp one-arm asymptotics as a corollary}

The exit radius of the range is an a.e.-continuous functional on
$\mathcal K(\R^d)$ (with the Hausdorff metric), so the
one-arm asymptotics follows by evaluating the range convergence on an exit
event and un-conditioning.

\begin{lemma}[Atomless exit radius]\label{lem:exitcont}
Let $\Theta:=\sup_{x\in\Ran}|x|$ be the exit radius of the range under
$\bN$. For every $K>0$, $\bN(\Theta=K)=0$; in particular the exit functional
$\Psi:\mathcal K(\R^d)\to\{0,1\}$, $\Psi(F):=\mathbf 1\{F\not\subset\overline B_K\}$
--- where $F$ denotes a generic compact set and $\mathcal K(\R^d)$ carries the
Hausdorff metric --- is continuous at $\bN$-almost every realization of the range
$F=\Ran$, and likewise under $\bN(\,\cdot\mid\mu(\varphi_0)>\eps)$.
\end{lemma}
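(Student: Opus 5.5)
The plan is to get atomlessness from the scaling relation (Proposition~\ref{prop:scaling}) together with finiteness of the exit masses (Proposition~\ref{prop:range}), in the same way Proposition~\ref{prop:totalmass} got atomlessness of $\mu(\R^d)$. Then I would read off continuity of $\Psi$ from the Hausdorff topology.

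\emph{Step 1 (atomlessness).} Under $\Phi_\lambda$ the range becomes $\lambda^{-1}\Ran$, so $\Theta(\Phi_\lambda\mu)=\lambda^{-1}\Theta(\mu)$. Proposition~\ref{prop:scaling} then gives
\[
\bN(\Theta=K/\lambda)=\lambda^{2}\,\bN(\Theta=K)\qquad\text{for every }\lambda>0 .
\]
Suppose $a:=\bN(\Theta=K)>0$. For $\lambda\in(1,2)$ the values $K/\lambda$ are uncountably many distinct points of $(K/2,K)$, and each is an atom of mass at least $a$. All these events lie in $\{\Theta> K/2\}=\{\Ran\not\subset\overline B_{K/2}\}$, whose $\bN$-mass is $\theta_{K/2}=4\theta_1K^{-2}<\infty$ by Proposition~\ref{prop:range}. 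Disjoint events of mass $\ge a$ inside a set of finite mass can only be finitely many, which is a contradiction. Hence $\bN(\Theta=K)=0$. Because the conditioned measure $\bN(\cdot\mid\mu(\varphi_0)>\eps)$ is absolutely continuous with respect to $\bN$, the same null statement holds there.

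\emph{Step 2 (continuity of $\Psi$).} The map $F\mapsto\sup_{x\in F}|x|$ is $1$-Lipschitz on $\mathcal K(\R^d)$ for the Hausdorff metric. If $d_H(F,F')<r$, every point of $F$ lies within $r$ of $F'$ and conversely, so the two suprema differ by at most $r$. Consequently $\Psi(F)=\mathbf 1\{\sup_{F}|x|>K\}$ is continuous at every $F$ with $\sup_F|x|\neq K$: on a small Hausdorff neighbourhood the supremum stays on the same side of $K$. By Step 1 and since $\Ran$ is compact and nonempty (Proposition~\ref{prop:range}), the discontinuity set $\{\Theta=K\}$ is $\bN$-null, and also null under the conditioned measure.

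\emph{Main obstacle.} The only delicate point is making sure $\bN$ has finite mass on $\{\Theta>K/2\}$, since $\bN$ itself is infinite. This is supplied by $\theta_{K/2}<\infty$ from Proposition~\ref{prop:range}, which the paper proved via the one-arm bound. I also need $\Theta$ to be measurable as a function of $\mu$. This follows from Step 2: $\Theta$ is continuous in the range, and the event $\{\Theta\ge K\}$ coincides with $\{\Ran\not\subset B_K\}$, which is measurable.
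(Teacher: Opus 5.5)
Your proof is correct and follows the paper's argument in all essentials: the continuity step is identical, namely the $1$-Lipschitz property of $\Theta$ for the Hausdorff metric plus absolute continuity of the conditioned measure. The only variation is in how you get $\bN(\Theta=K)=0$: the paper reads it off directly as $\lim_{h\downarrow0}(\theta_{K-h}-\theta_K)=0$ from the continuous tail $\theta_K=\theta_1K^{-2}$, whereas you rerun the atom-counting argument of Proposition~\ref{prop:totalmass} on the same two inputs (Proposition~\ref{prop:scaling} and $\theta_{K/2}<\infty$ from Proposition~\ref{prop:range}), which is equally valid if slightly longer.
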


\begin{proof}
By Propositions~\ref{prop:scaling} and~\ref{prop:range}, the survival
function $K\mapsto\bN(\Theta>K)=\theta_1K^{-2}$ is finite and continuous on
$(0,\infty)$; hence
$\bN(\Theta=K)=\lim_{h\downarrow0}\bigl(\theta_{K-h}-\theta_K\bigr)=0$ for
every $K>0$. For the Hausdorff metric, $\Theta(F)=\max_{x\in F}|x|$ is itself
$1$-Lipschitz on $\mathcal K(\R^d)$ (if $d_H(F,F')\le t$ then every point of one
set is within $t$ of the other, so $|\Theta(F)-\Theta(F')|\le t$); hence
$F\mapsto\mathbf 1\{F\not\subset\overline B_K\}=\mathbf 1\{\Theta(F)>K\}$ is continuous at
every $F$ with $\Theta(F)\neq K$. The discontinuity set is thus contained in
$\{\Theta=K\}$, which is $\bN$-null; the same holds under the absolutely
continuous conditioned measure.
\end{proof}

We now prove Theorem~\ref{thm:mainC}, under the same hypotheses as
Theorem~\ref{thm:mainD}.

\begin{proof}[Proof of Theorem~\ref{thm:mainC}]
Fix $K\ge1$; we first show $\lim_n n^2\bP(0\leftrightarrow\partial
B_{Kn})=\theta_K=\theta_1K^{-2}$, and then pass to general radii by monotone
interpolation.

Fix once and for all a scale $\delta=\delta(K)\in(0,1]$ small enough
that $\delta\le K/(40\sqrt d)$ --- so that $3\sqrt d\,\delta<K/8$, and
$\delta\le\kappa/(5\sqrt d)$ for the application of Lemma~\ref{lem:nothin}
with $\kappa=K/8$ below ---, and fix
$\varphi_0\in\Cc$ with
\[
\mathbf 1_{\overline B_{(K/4)+3\sqrt d\,\delta}}\le\varphi_0\le\mathbf 1_{\overline B_{K/2}}
\]
(such a $\varphi_0$ exists because $(K/4)+3\sqrt d\,\delta<K/2$; the
former annular set $A$ is replaced by the full ball on which
$\varphi_0\equiv1$). (See Figure~\ref{fig:onearmloc}.)
The nesting of the radii is
\[
\frac K8\;<\;\frac K4\;<\;\frac K4+3\sqrt d\,\delta\;<\;\frac K2\;<\;K:
\]
Lemma~\ref{lem:nothin} is applied with inner radius $\kappa=K/8$; the deposit
balls of radius $3\sqrt d\,\delta$ around points with $|x|/n\le K/4$ stay
inside the ball $\overline B_{(K/4)+3\sqrt d\,\delta}$ on which $\varphi_0\equiv1$;
$\varphi_0$ is supported in $\overline B_{K/2}$; and $B_K$ is the exit radius.

\begin{figure}[htb]
\centering
\begin{tikzpicture}[scale=0.62]
\fill[black!12] (0,0) circle (2.35); 
\draw (0,0) circle (0.85);
\draw (0,0) circle (1.85);
\draw[dashed] (0,0) circle (3.6);
\draw[thick] (0,0) circle (6.3);
\fill (0,0) circle (2pt); \node[below left] at (0,0) {$0$};
\node at (1.05,-0.35) {\tiny$\tfrac K8$};
\node at (2.1,-0.75) {\tiny$\tfrac K4$};
\node at (3.8,-1.5) {\tiny$\tfrac K2$};
\node at (6.15,-2.35) {\tiny$K$};
\draw[very thick, decorate, decoration={random steps,segment length=6pt,amplitude=2.5pt}] (0,0) -- (5.1,3.75);
\fill (1.15,0.85) circle (2pt); \node[above left=-2pt] at (1.15,0.95) {\small$\tfrac xn$};
\draw[dotted,thick] (1.15,0.85) circle (0.5);
\node[align=left] at (5.4,-4.6) {\small$\{\varphi_0\equiv1\}$ (shaded),\ \ $\supp\varphi_0\subset B_{K/2}$};
\end{tikzpicture}
\caption{Localization in the proof of Theorem~\ref{thm:mainC}. On
$\{0\leftrightarrow\partial B_{Kn}\}$ the cluster crosses the annulus
$\{K/8\le|y|\le K/4\}$; off the thin-region event, the crossing point $x$
deposits mass $\ge\eta\delta^4/\mathfrak a$ in the ball
$\overline B(x/n,3\sqrt d\,\delta)$ (dotted), which is contained in the shaded disc
$\overline B_{(K/4)+3\sqrt d\,\delta}$, on which $\varphi_0\equiv1$; so the exit event
forces $\mu_n(\varphi_0)>\eps_\eta$.}
\label{fig:onearmloc}
\end{figure} For an \emph{admissible} $\eps>0$ --- one
outside the at-most-countable set of atoms of the law of $\mu(\varphi_0)$ under
$\bN$, so that Corollary~\ref{cor:conditional} (via
Theorem~\ref{thm:restricted}) applies --- with $c_\eps:=\bN(\mu(\varphi_0)>\eps)>0$ (finite by Markov's
inequality and $\bN[\mu(\varphi_0)]=\int G\varphi_0<\infty$,
Lemma~\ref{lem:PZ})
we have $n^2\bP(\mu_n(\varphi_0)>\eps)\to c_\eps$ by the second assertion
of Corollary~\ref{cor:conditional}. The exit functional $\Psi(F)=\mathbf 1\{F\not\subset\overline B_K\}$ is bounded
and, by Lemma~\ref{lem:exitcont}, a.e.\ continuous for the conditioned limit
$\bN(\,\cdot\mid\mu(\varphi_0)>\eps)$. Testing it against the conditional
convergence of Theorem~\ref{thm:mainD} in its test-function form
(Remark~\ref{rem:rangephi}), and multiplying by
$n^2\,\bP\bigl(\mu_n(\varphi_0)>\eps\bigr)\to c_\eps$,
\[
n^2\,\bP\bigl(0\leftrightarrow\partial B_{Kn},\ \mu_n(\varphi_0)>\eps\bigr)
\;\xrightarrow[n\to\infty]{}\;\bN\bigl(\Ran\not\subset\overline B_K,\ \mu(\varphi_0)>\eps\bigr),
\]
using $\{0\leftrightarrow\partial B_{Kn}\}=\{n^{-1}\mathcal C\not\subset
\overline B_K\}=\{\supp\mu_n\not\subset\overline B_K\}$ on the support coordinate.

\emph{The right-hand side, as $\eps\downarrow0$ at this fixed
$\varphi_0$.} By monotone convergence,
\[\bN(\Ran\not\subset\overline B_K,\ \mu(\varphi_0)>\eps)\uparrow
\bN(\Ran\not\subset\overline B_K,\ \mu(\varphi_0)>0).\]
In fact,
$\{\Ran\not\subset\overline B_K\}\subset\{\mu(\varphi_0)>0\}$: $\bN$-a.e.\ we
have $0\in\Ran=\supp\mu$ (Proposition~\ref{prop:range}) and
$\varphi_0\equiv1$ on a neighbourhood of $0$, so $\mu(\varphi_0)>0$ whenever
$\mu\neq0$. Hence the right-hand side
increases to $\bN(\Ran\not\subset\overline B_K)=\theta_K$.

\emph{The left-hand side, as $\eta\downarrow0$ at this fixed
$\delta,\varphi_0$.} For $\eta\in(0,1]$ let $\mathcal E_n$ be the
thin-region event of Lemma~\ref{lem:nothin}, taken with inner radius
$\kappa=K/8$ and outer radius $K/4$ (the latter playing the role of the
parameter ``$K$'' of that lemma --- which we do not rename in the lemma, but
which is set to $K/4$ here, not to the exit radius $K$ of the present proof) and
the present $\delta,\eta$:
\[
\mathcal E_n
=\Bigl\{\exists\,x\in \mathcal C:\ \tfrac K8 n\le|x|\le\tfrac K4 n,\
|\mathcal C\cap \overline B(x,3\sqrt d\,\delta n)|\le\eta(\delta n)^4\Bigr\}
\]
(admissible since $\delta\le K/(40\sqrt d)=(K/8)/(5\sqrt d)$).
On the event $\{0\leftrightarrow\partial B_{Kn}\}$ the cluster $\mathcal C$ is
connected and contains $0$, hence meets every intermediate sphere; in
particular it has a point $x\in \mathcal C$ with $(K/8)n\le|x|\le(K/4)n$. Off
$\mathcal E_n$, that crossing point satisfies
$|\mathcal C\cap \overline B(x,3\sqrt d\,\delta n)|>\eta(\delta n)^4$, i.e.\
$\mu_n\bigl(\overline B(x/n,3\sqrt d\,\delta)\bigr)>\eta\delta^4/\mathfrak a$ ($\mathfrak a$ the
two-point constant of \eqref{eq:empirical}); since
$|x|/n\in[K/8,K/4]$, the ball $\overline B(x/n,3\sqrt d\,\delta)$ is contained in
$\overline B_{(K/4)+3\sqrt d\,\delta}$, where
$\varphi_0\equiv1$, so $\mu_n(\varphi_0)\ge\mu_n(\overline B(x/n,3\sqrt d\,\delta))
>\eta\delta^4/\mathfrak a=:\eps_\eta$. Thus
$\{0\leftrightarrow\partial B_{Kn},\ \mu_n(\varphi_0)\le\eps_\eta\}
\subset\mathcal E_n$, and by Lemma~\ref{lem:nothin} (with $\kappa=K/8$,
outer radius $K/4$ and $\delta$ fixed, so its prefactor is a constant
$C_\sharp=C_\sharp(d,K,\delta)$ --- written $C_\sharp$ to avoid the
constant $C_\star$ of Corollary~\ref{cor:carleman} ),
\[
\limsup_n n^2\,\bP\bigl(0\leftrightarrow\partial B_{Kn},\
\mu_n(\varphi_0)\le\eps_\eta\bigr)
\le\limsup_n n^2\,\bP(\mathcal E_n)
\le C_\sharp\,\eta^{1/4}\xrightarrow[\eta\downarrow0]{}0 .
\]

\emph{Conclusion.} Restrict to the countable-complement set of
$\eta$ for which $\eps_\eta=\eta\delta^4/\mathfrak a$ is an admissible level. For
such $\eta$, the displayed joint convergence at level $\eps_\eta$ gives
\[
n^2\bP\bigl(0\leftrightarrow\partial B_{Kn},\ \mu_n(\varphi_0)>\eps_\eta\bigr)
\xrightarrow[n\to\infty]{}\bN\bigl(\Ran\not\subset\overline B_K,\ \mu(\varphi_0)>\eps_\eta\bigr),
\]
while the omitted part is controlled uniformly by the thin-region estimate
just displayed:
\[
\limsup_n n^2\,\bP\bigl(0\leftrightarrow\partial B_{Kn},\ \mu_n(\varphi_0)\le\eps_\eta\bigr)
\le C_\sharp\,\eta^{1/4},\qquad C_\sharp=C_\sharp(d,K,\delta),
\]
with $C_\sharp$ from Lemma~\ref{lem:nothin}.
Adding the two and taking $\limsup_n$ and $\liminf_n$ separately,
\begin{multline*}
\bN\bigl(\Ran\not\subset\overline B_K,\ \mu(\varphi_0)>\eps_\eta\bigr)
\;\le\;\liminf_n n^2\bP(0\leftrightarrow\partial B_{Kn})
\;\le\;\limsup_n n^2\bP(0\leftrightarrow\partial B_{Kn})
\\
\;\le\;\bN\bigl(\Ran\not\subset\overline B_K,\ \mu(\varphi_0)>\eps_\eta\bigr)
+ C_\sharp\,\eta^{1/4}.
\end{multline*}
Letting $\eta\downarrow0$ along admissible levels, both ends converge to
$\bN(\Ran\not\subset\overline B_K)=\theta_K$ (the right-hand side by the monotone
convergence established above, since $\eps_\eta\downarrow0$, and
$C_\sharp\,\eta^{1/4}\to0$). Hence
\[
\lim_n n^2\,\bP\bigl(0\leftrightarrow\partial B_{Kn}\bigr)=\theta_K .
\]

\emph{Interpolation to general radii.} The previous step gives, for
each fixed $K\ge1$,
\[
(Kn)^2\,\bP\bigl(0\leftrightarrow\partial B_{Kn}\bigr)\to K^2\theta_K=\theta_1 ;
\]
take $K=1$. For $r\in[n,n+1]$, monotonicity of
$r\mapsto\bP(0\leftrightarrow\partial B_r)$ gives
\[
\Bigl(\tfrac{n}{n+1}\Bigr)^2 (n+1)^2\,
\bP\bigl(0\leftrightarrow\partial B_{n+1}\bigr)
\le r^2\,\bP\bigl(0\leftrightarrow\partial B_r\bigr)
\le\Bigl(\tfrac{n+1}{n}\Bigr)^2 n^2\,
\bP\bigl(0\leftrightarrow\partial B_{n}\bigr),
\]
and both bounds converge to $\theta_1$. Positivity and finiteness of
$\theta_1$ are part of Proposition~\ref{prop:range}.
\end{proof}

\begin{remark}[On the constant $\theta_1$]
Theorem~\ref{thm:mainC} identifies the one-arm constant as the
intrinsic SBM quantity $\theta_1=\bN(\Ran\not\subset\overline B_1)$; no closed form
for it is known. By the connections between superprocesses and semilinear
PDE going back to Dynkin \cite{Dyn91} (see also \cite[Ch.~VI]{LG99}), the
function $v(x):=\mathbb N_x(\Ran\not\subset\overline B_1)$, $|x|<1$, is the maximal
nonnegative solution of the boundary blow-up problem
$\tfrac12\Delta v=\tfrac\gamma2\,v^2$ in the open unit ball with
$v\to\infty$ at $\partial B_1$ --- a class of problems originating with
Keller \cite{Kel57} and Osserman \cite{Oss57} --- the distinction between
exiting the open and the closed unit ball being $\mathbb N_x$-null. Thus
$\theta_1=v(0)$ is characterized by a radial ODE, but is accessible only
numerically.
\end{remark}


\appendix
\section{The normalization dictionary for the $k$-point limit}\label{app:dictionary}

This appendix verifies that the convergence \eqref{eq:guiding-kp} is exactly \cite[Theorem~1]{CCHS26}, rewritten in the normalization of this paper.

The three constants entering \eqref{eq:guiding-kp} are pinned down by its
low-order cases, so that the identification $\gamma=\lambda/\mathfrak a=2d\,\mathfrak a\beta\rho$ is
not merely convention-dependent. The case $m=1$ fixes $\mathfrak a$ as the
\emph{two-point constant}, $n^{d-2}\tau_2(0,\fl{ny})\to\mathfrak a\,G(y)$ (recall
$G(y)=c_d|y|^{-(d-2)}$); $\beta=p_c/(1-p_c)$ is explicit. The case $m=2$ then
fixes $\rho$: since $|\T_2|=1$ with $I_T(y_1,y_2)=\int_{\R^d}G(w)G(y_1-w)G(y_2-w)\,dw$,
\eqref{eq:guiding-kp} reads
\[
n^{2(d-4)+2}\,\tau_3\bigl(0,\fl{ny_1},\fl{ny_2}\bigr)
\;\xrightarrow[n\to\infty]{}\;
\mathfrak a\,\lambda\int_{\R^d}G(w)G(y_1-w)G(y_2-w)\,dw,
\qquad \lambda=2d\,\mathfrak a^2\beta\rho,
\]
i.e.\ $\rho$ is the unique constant for which the rescaled three-point
function has limiting amplitude $\mathfrak a\lambda=2d\,\mathfrak a^3\beta\rho$ relative
to the normalized triangle integral $\int G(w)G(y_1-w)G(y_2-w)\,dw$.

We spell out the dictionary. Theorem~1 of \cite{CCHS26} states, for
$d>6$ and assuming the existence of the two-point limit
$\alpha_{\textsc{cchs}}:=\lim_n n^{d-2}\,\bP(0\leftrightarrow n\mathbf e_1)$,
that for \emph{distinct} $y_0,\dots,y_{k-1}\in\R^d$ (the value $0$ is a
permitted point, so pinning $y_0=0$ is a legitimate specialization),
\[
n^{(d-4)(k-1)+2}\,
\tau_k\bigl(\fl{ny_0},\dots,\fl{ny_{k-1}}\bigr)
\;\xrightarrow[n\to\infty]{}\;
\sum_{T}\alpha_{\textsc{cchs}}^{\,2k-3}\,(2d\beta\rho)^{k-2}\,
\widetilde I_T(y_0,\dots,y_{k-1}),
\]
the sum running over the binary trees on the $k$ labelled leaves
(Remark~\ref{rem:treeconvention}) and $\widetilde I_T$ denoting the tree
integral built from the \emph{bare} kernel $|u_a-u_b|^{-(d-2)}$. Their
two-point amplitude is normalized against $|x|^{-(d-2)}$, ours against
$G=c_d\,|x|^{-(d-2)}$; thus $\mathfrak a=\alpha_{\textsc{cchs}}/c_d$, while the
vertex factor $\rho$ and $\beta=p_c/(1-p_c)$ are the same in both papers.
Writing $m=k-1$, a tree in $\T_m$ has $2m-1$ edges, so
$I_T=c_d^{\,2m-1}\widetilde I_T$, and
\[
\mathfrak a\,\lambda^{m-1}I_T
=\bigl(c_d^{-1}\alpha_{\textsc{cchs}}\bigr)
\bigl(2d\,c_d^{-2}\alpha_{\textsc{cchs}}^{2}\,\beta\rho\bigr)^{m-1}
c_d^{\,2m-1}\,\widetilde I_T
=\alpha_{\textsc{cchs}}^{\,2m-1}\,(2d\beta\rho)^{m-1}\,\widetilde I_T ,
\]
which is exactly the displayed limit, since $2k-3=2m-1$ and $k-2=m-1$. In
particular no diffusion constant appears: the spatial normalization of
\cite{CCHS26} is already that of standard Brownian motion, as encoded in the
$m=1$ case above.

As a by-product, the cluster-tail constant of Remark~\ref{rem:tailknown} takes the fully explicit form
\[
\lim_{N\to\infty}\sqrt N\,\bP\bigl(|\mathcal C|\ge N\bigr)
=\sqrt{\frac{2\mathfrak a}{\pi\gamma}}=\frac{1}{\sqrt{\pi d\beta\rho}} ,
\]
using $\gamma=2d\,\mathfrak a\beta\rho$.

\section*{Acknowledgements}
This research was supported by JSPS Grant-in-Aid for Scientific Research (C) 24K06758, and the Research Institute for Mathematical Sciences, an International Joint Usage/Research Center located in Kyoto University. A.~Fribergh is supported by the Natural Sciences and Engineering Research Council of Canada (NSERC) through a Discovery Grant.

\bibliographystyle{plain}
\bibliography{references}

\end{document}